\documentclass[12pt,reqno]{amsart}
\usepackage{amsaddr}
\usepackage{amssymb,latexsym,amsmath,epsfig,amsthm,mathrsfs}
\usepackage{rotating}
\usepackage{graphicx}
\usepackage{amssymb}
\usepackage{lineno}
\usepackage{enumitem}
\usepackage{cite}
\usepackage[top=3cm, bottom=3cm, left=3cm, right=3cm]{geometry}
\usepackage[usenames]{color}
\usepackage[colorlinks=true,
linkcolor=blue,
filecolor=blue,
citecolor=blue]{hyperref}

\newtheorem{theorem}{Theorem}[section]
\newtheorem{corollary}[theorem]{Corollary}
\newtheorem{lemma}[theorem]{Lemma}
\theoremstyle{definition}
\newtheorem{definition}[theorem]{Definition}
\theoremstyle{definition}
\newtheorem{remark}[theorem]{Remark}

\theoremstyle{definition}

\newtheorem{example}{Example}
\numberwithin{equation}{section}

\begin{document}
	
	\title[Structure of sets with small product sets]{Structure of sets with small product sets in torsion-free groups, cyclic groups of prime orders and abelian groups}
	
	%%%%%%    Information for first author

\author[R K Mistri]{Raj Kumar Mistri$^{*}$}
\address{\em{\small Department of Mathematics, Indian Institute of Technology Bhilai\\
Durg, Chhattisgarh, India\\
e-mail: rkmistri@iitbhilai.ac.in}}

%%%%%%   Information for second author

\author[N Prajapati]{Nitesh Prajapati$^{**}$}
\address{\em{\small Department of Mathematics, Indian Institute of Technology Bhilai\\
Durg, Chhattisgarh, India\\
email: niteshp@iitbhilai.ac.in}}

\thanks{$^{*}$Corresponding author}
\thanks{$^{**}$The research of the author is supported by the UGC Fellowship (NTA Ref. No.: 211610023414)}
	
	%    General info
\subjclass[2010]{Primary 11P70; Secondary 11B13, 11B75}
	
\keywords{Sumsets, Product sets, Cauchy-Davenport theorem, Kneser's theorem, DeVos-Goddyn-Mohar theorem, Kemperman theorem, Brailovsky and Freiman theorem, Vosper's inverse theorem, Inverse problems, geometric progressions, Additive combinatorics.}

\begin{abstract}
Let $\ell$ and $m$ be positive integers with $\ell \leq m$, and let $\mathcal{A} = (A_1, \ldots, A_m)$ be a finite sequence of finite subsets of a group $G$ (not necessarily abelian), written multiplicatively. The {\it generalized product set} $\Pi^{\ell}(\mathcal{A})$ is the set of all elements of $G$ which can be represented as a product of exactly $\ell$ elements from $\ell$ distinct sets from $\mathcal{A}$ taken in any order. DeVos, Goddyn and Mohar obtained the nontrivial lower bound for the size of this product set when $G$ is abelian. The DeVos-Goddyn-Mohar Theorem is a fundamental result in additive combinatorics which unifies various results from zero-sum combinatorics and has connections with subsequence sums and sumsets. In this paper, we obtain an optimal lower bound for the size of generalized product set ${\Pi}^{\ell}(\mathcal{A})$ in torsion-free groups (not necessarily abelian), and characterize the structure of underlying sets in the sequence $\mathcal{A} = (A_1, \ldots, A_m)$ for which ${\Pi}^{\ell}(\mathcal{A})$ achieves the optimal lower bound. By slightly modifying the arguments of the proofs in the case of torsion-free groups, we derive such inverse theorems in cyclic groups of prime orders also. Our proof of these result also yields a new proof of DeVos-Goddyn-Mohar Theorem in $\mathbb{Z}_p$. Moreover, we extend these inverse results to arbitrary abelian groups. Furthermore, as an application, we generalize a theorem for subsequence sums due to Hamidoune in torsion-free groups, and obtain several other results for subsequence sums in arbitrary groups. 
\end{abstract}

%%% ----------------------------------------------------------------------
\maketitle
%%% ----------------------------------------------------------------------

\tableofcontents

\section{Introduction}
Let $A_1, \ldots, A_{\ell}$ be nonempty subsets of a group $G$ written additively. The {\it sumset} of the sets $A_1, \ldots, A_{\ell}$ is defined as
\begin{equation}\label{sumset}
A_1 + \cdots + A_{\ell} = \{a_1 + \cdots + a_{\ell} : a_i \in A_i ~\text{for}~ i = 1, \ldots, \ell\}.
\end{equation}

Sumsets play a central role in additive combinatorics and have been widely investigated in the literature. For detailed expositions and comprehensive results in this area, the reader may consult the books by Nathanson \cite{nath}, Tao and Vu \cite{tao}, Grynkiewicz \cite{gryn2013book}, and the references given therein. Given a nonemty subset $A$ of an abelian group $G$, and an element $x \in G$, we define $x + A = \{x + a: a \in A\}$. The stabilizer of the set $A$, denoted by $\mathrm{Stab}(A)$, is defined as 
\[\mathrm{Stab}(A) = \{g \in G: g + A = A\}.\]
For integers $x$ and $y$ with $x \leq y$, we denote the set $\{n \in \mathbb{Z}: x \leq n \leq y\}$ by $[x, y]$. The cardinality of a set $A$ is denoted by $|A|$. One of the well-known results in additive combinatorics is the {\it Cauchy-Davenport Theorem}, which establishes an optimal lower bound on the size of the sumset $A_1 + \cdots + A_{\ell}$. Cauchy-Davenport Theorem was first proved by Cauchy \cite{cauchy}, and it was later rediscovered by Davenport \cite{dav1, dav2}.

\begin{theorem}[Cauchy-Davenport Theorem]\label{cauchy-davenport-thm}
	Let $\ell \geq 2$ be an integer. Let $p$ be a prime, and let $A_1, \ldots, A_{\ell}$ be nonempty subset of $\mathbb{Z}_p$. Then
	\[|A_1 + \cdots + A_{\ell}| \geq \min (p, |A_1| + \cdots + |A_{\ell}| - \ell + 1).\]
\end{theorem}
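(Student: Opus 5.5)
We first reduce to the case $\ell = 2$ and then argue by induction on $\ell$. Suppose the two-set inequality $|A + B| \geq \min(p, |A| + |B| - 1)$ holds for all nonempty $A, B \subseteq \mathbb{Z}_p$. Write $S = A_1 + \cdots + A_{\ell-1}$, so that by the inductive hypothesis $|S| \geq \min(p, |A_1| + \cdots + |A_{\ell-1}| - \ell + 2)$. Then
\[
|A_1 + \cdots + A_{\ell}| = |S + A_\ell| \geq \min(p, |S| + |A_\ell| - 1).
\]
If $|S| = p$, this already gives $p$. Otherwise the right side is at least $\min(p, |A_1| + \cdots + |A_\ell| - \ell + 1)$. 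This step is routine.

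For $\ell = 2$ we use the Davenport transform, arguing by induction on $|B|$. If $|B| = 1$, then $|A + B| = |A|$ and the bound holds with equality. If $|A + B| = p$, there is nothing to prove. If $|A| + |B| - 1 > p$, we apply the pigeonhole argument: for every $x$, the sets $A$ and $x - B$ must intersect, so $A + B = \mathbb{Z}_p$.

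Now assume $|B| \geq 2$ and $A + B \neq \mathbb{Z}_p$. Translating $B$, we may assume $0 \in B$. The key claim is that there is some $a \in A$ with $a + B \not\subseteq A$. If not, then $A + B \subseteq A$, so $A + b = A$ for every $b \in B$. Hence the stabilizer of $A$ contains a nonzero element, and since $p$ is prime, $\mathrm{Stab}(A) = \mathbb{Z}_p$. This forces $A = \mathbb{Z}_p$, contradicting $A + B \neq \mathbb{Z}_p$.

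With such an $a$ fixed, we define
\[
A' = A \cup (a + B), \qquad B' = (A - a) \cap B.
\]
These satisfy three properties:
\begin{itemize}
\item $B'$ is nonempty, since $0 \in B'$.
\item $|B'| < |B|$, because $a + B \not\subseteq A$.
\item $|A'| + |B'| = |A| + |B|$, by inclusion–exclusion applied to $A$ and $a + B$ after translating by $a$.
\end{itemize}
We also have $A' + B' \subseteq A + B$. Indeed, $A + B' \subseteq A + B$, and for $b \in B$ and $b' \in B'$ we get $a + b + b' = (a + b') + b$ with $a + b' \in A$. Applying the induction hypothesis to the pair $(A', B')$ then gives
\[
|A + B| \geq |A' + B'| \geq \min(p, |A| + |B| - 1).
\]

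The main obstacle is choosing $a$ so that $B'$ strictly shrinks. This is the only place primality enters: we need the stabilizer of $A$ to be trivial. The remaining verifications are bookkeeping.
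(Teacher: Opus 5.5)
Your proof is correct and complete. The paper gives no proof of this statement: it quotes it as the classical theorem of Cauchy and Davenport and uses it as a black box, in Theorem \ref{zp-dir-thm} and Lemma \ref{vosper-gen-inv-thm1}.

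Your reduction from $\ell$ sets to two, via $S=A_1+\cdots+A_{\ell-1}$, is exactly the induction the paper uses for its own multi-set bounds (Lemma \ref{kemp-gen-thm-tf} and the inequality chain in the proof of Lemma \ref{vosper-gen-inv-thm1}). Your two-set argument is Davenport's transform, i.e.\ essentially the proof behind the citation. All of its verifications are right:
\begin{enumerate}
\item $0\in B'$;
\item $|B'|<|B|$;
\item $|A'|+|B'|=|A|+|B|$, via $A\cap(a+B)=a+B'$;
\item $A'+B'\subseteq A+B$.
\end{enumerate}
The pigeonhole case is superfluous, since the transform already handles every pair with $A+B\neq\mathbb{Z}_p$.

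Beyond being self-contained, your route generalizes. Primality is used only to conclude that $A+b=A$ with $b\neq 0$ forces $A=\mathbb{Z}_p$. In any abelian group you can replace this with: ``$A$ is a union of cosets of the finite nontrivial subgroup $\langle b\rangle$, so $|A+B|\ge|A|\ge p(G)$.'' The same induction then gives the abelian case of Theorem \ref{devos-extension}. Since $p(G)=\infty$ in a torsion-free group, it also gives the torsion-free abelian case of Theorem \ref{kemp-thm-tf}. The paper only cites both of these. However, the step $a+b+b'=(a+b')+b$ uses commutativity, so the argument does not directly reach the non-abelian versions of those results.
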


A nonempty subset $A$ of $G$ is called an {\it arithmetic progression with common difference $d$} if it can be expressed as $A = \{a + jd: j \in [0, |A| - 1\}$ for some $a, d \in G$ with $d \neq 0$. The following theorem due to Vosper \cite{vosper1956} characterizes the subsets $A$ and $B$ of $\mathbb{Z}_p$ when the size of the sumset $A + B$ is optimal.
\begin{theorem}\label{vosperthm}
	Let $A$ and $B$ be nonempty subsets of $\mathbb{Z}_p$ such that $|A| \geq 2$, $|B| \geq 2$ and $|A + B| \leq p -2$, where $p$ is a prime. Then
	\[|A + B| = |A| + |B| - 1\]
	if and only if the sets $A$ and $B$ are arithmetic progressions with the same common difference.
\end{theorem}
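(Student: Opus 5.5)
The direction ``if'' is a direct computation. If $A=\{a+jd: j\in[0,k-1]\}$ and $B=\{b+jd: j\in[0,r-1]\}$ with $d\neq 0$, then
\[A+B=\{a+b+jd : j\in[0,k+r-2]\}.\]
Since $p$ is prime, $d$ has order $p$. The hypothesis $|A+B|\le p-2$ then forces $k+r-1\le p$ (indeed $k+r-1<p$), so the listed elements are distinct and $|A+B|=|A|+|B|-1$.

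For ``only if'' I would argue by induction on $|B|$, using the Cauchy--Davenport Theorem (Theorem~\ref{cauchy-davenport-thm}) as the basic tool. The first step is to reduce to a complementary formulation. Let $C=\mathbb{Z}_p\setminus(-(A+B))$, so $|C|=p-|A+B|\ge 2$. No $a\in A$ and $c\in C$ satisfy $a+c\in -B$, because otherwise $-c\in A+B$. Hence $A+C\subseteq \mathbb{Z}_p\setminus(-B)$, which gives $|A+C|\le p-|B|$. Cauchy--Davenport yields $|A+C|\ge |A|+|C|-1=p-|B|$, so equality holds. The triple $(A,B,-(A+B)^c)$ is therefore symmetric under this swapping. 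I would use this to assume, whenever convenient, that $|B|$ is the smallest of the three relevant cardinalities, or that $|A|+|B|\le (p+1)/2$ or a similar normalization.

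The core is the case $|B|=2$. After an affine change of variables, $B=\{0,d\}$, and $|A\cup(A+d)|=|A|+1$ says that $A$ is a union of maximal $d$-progressions, with only one ``gap'' in the cyclic order of $\mathbb{Z}_p$ generated by $d$. So $A$ is a single progression with difference $d$, and $B$ is trivially one as well. This settles the base case.

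For the induction step with $|B|\ge 3$, I would use the standard $e$-transform (Dyson transform) or Kemperman's approach. Pick $e\in A-B$ and form $A'=A\cup(B+e)$, $B'=B\cap(A-e)$. Then $A'+B'\subseteq A+B$ and $|A'|+|B'|=|A|+|B|$. By Cauchy--Davenport this gives $|A'+B'|=|A'|+|B'|-1$, provided $B'$ is a proper nonempty subset of $B$ and $|B'|\ge 2$. Induction makes $A'$ and $B'$ progressions with a common difference $d$. Then one must lift this structure back to $A$ and $B$. This lifting, together with choosing $e$ so that $2\le |B'|<|B|$, is the main obstacle. When every choice of $e$ gives $|B'|\in\{1,|B|\}$, I would instead exploit the complementary symmetry from the first step. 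Alternatively, I would use the fact that $|A+B|=|A|+|B|-1$ forces $|A+\{b,b'\}|$ to be small for suitable pairs, and hence pin down the common difference via the $|B|=2$ case applied to $A+b_1$ versus $A+b_2$.

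To lift, I would note that $A+B\supseteq A'+B'$ are both of the extremal size. Adding back the removed elements of $B$ without increasing the sumset more than one per element then forces them to extend the $d$-progression at its ends. The hypothesis $|A+B|\le p-2$ is needed here to rule out wrap-around, since otherwise an element could fill the gap from either side.
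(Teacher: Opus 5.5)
The paper gives no proof of this statement: it is Vosper's theorem, quoted from \cite{vosper1956} and used as a black box (e.g.\ in Lemma~\ref{vosper-gen-inv-thm1}). Judged on its own, your proposal gets several pieces right: the ``if'' direction, the complementary reduction ($|A+C|=|A|+|C|-1=p-|B|$, and symmetrically for $B+C$), and the base case $|B|=2$. The inductive step, however, is not a proof. The two points you call ``the main obstacle'' are the substance of the theorem, and neither suggested remedy works as stated.

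\textbf{Choosing $e$.} You need some $e$ with $2\le |B(e)|<|B|$, and neither remedy supplies it. Passing to $(A,C)$ or $(B,C)$ does not rule out the same degenerate behaviour, and you do not show that any induction parameter drops. The claim that some pair $b,b'$ has $|A+\{b,b'\}|=|A|+1$ is unproved; it is essentially the conclusion. The degenerate case actually gives something different. Let $T=\{e: B+e\subseteq A\}$. Then $(A-b)\cap(A-b')=T$ for all distinct $b,b'\in B$, hence $|A+\{b,b'\}|=2|A|-|T|\le |A|+|B|-1$. Assuming $|B|\le|A|$, this makes $T\neq\varnothing$. Cauchy--Davenport applied to $T+B\subseteq A$ then yields $A=T+B$ with $|T+B|=|T|+|B|-1$. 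You still have to treat two subcases:
\begin{itemize}
\item $|T|=1$: then $|A|=|B|\ge 3$ and $A+b''\subseteq (A+b)\cup(A+b')$ with both intersections of size $1$, which is impossible.
\item $|T|\ge 2$: apply the induction to $(T,B)$. This requires inducting on something like $(\min(|A|,|B|),|A|+|B|)$ rather than on $|B|$ alone.
\end{itemize}

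\textbf{Lifting.} Your ``add back the removed elements of $B$ one at a time'' has no supporting count. It also ignores that $A$ is a proper subset of $A'$, so the elements of $B+e$ must be removed again. And, as you note yourself, $|A+B|\le p-2$ does not by itself exclude wrap-around. The missing observation is that you need not track $A'$ and $B'$ at all. Cauchy--Davenport forces $A+B=A'+B'$, so $A+B$ is a $d$-progression of length at most $p-2$. Hence $C=\mathbb{Z}_p\setminus(-(A+B))$ is a $d$-progression with $|C|\ge 2$ and $|A+C|=|A|+|C|-1<p$. Your base-case gap count then extends verbatim. If $A$ has maximal $d$-blocks separated by gaps $g_1,\dots,g_\beta$, then $|A+C|=|A|+\sum_i\min(g_i,|C|-1)$. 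Since $A+C\neq\mathbb{Z}_p$, some $g_i\ge|C|$, so equality forces $\beta=1$ and $A$ is a $d$-progression. The same count applied to $B+A$ shows $B$ is a $d$-progression.

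With these two arguments supplied, the induction closes. Without them, the proposal establishes only the easy direction and the case $|B|=2$.
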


This theorem can be generalized for the sumset of multiple sets (see Lemma \ref{vosper-gen-inv-thm1} and Lemma \ref{vosper-gen-inv-thm2} in Section \ref{sec-prime-order-group} for the precise statements).

Let $\ell$ and $m$ be positive integers with $\ell \leq m$, and let $\mathcal{A} = (A_1, \ldots, A_m)$ be a finite sequence of finite subsets of an abelian group $G$. Then the {\it generalized sumset} $\Sigma^{\ell}(\mathcal{A})$ is defined as the set of all elements of $G$ which can be represented as a sum of exactly $\ell$ elements from $\ell$ distinct sets from $\mathcal{A}$ 
\begin{equation}\label{gen-sumset} 
\Sigma^{\ell}(\mathcal{A}) = \{a_{i_1} + \cdots + a_{i_{\ell}}: a_{i_j} \in A_{i_j}~\text{for each}~ j \in [1, \ell]~\text{and}~ 1 \leq i_1 < \cdots < i_{\ell} \leq m\}.
\end{equation}

\begin{equation*}\label{gen-sumset-1} 
{\Sigma}^{\ell}(\mathcal{A}) = \bigcup_{\substack{\Lambda \subseteq [1, m]\\ |\Lambda| = \ell}} \left(\sum_{i \in \Lambda }A_i\right).
\end{equation*}

The sumset $\Sigma^{\ell}(\mathcal{A})$, defined as in \eqref{gen-sumset}, appeared in the work of DeVos, Goddyn and Mohar \cite{devos2009} who obtained a nontrivial lower bound for the size of the sumset which generalizes the following theorem of Kneser \cite{kneser1953}:
\begin{theorem} \label{kneser-thm}
Let $A$ and $B$ be nonempty finite subsets of an abelian group $G$. Then
\[|A + B| \geq |A + H| + |B + H| - |H|,\]
where $H$ is the stabilizer of $A + B$.
\end{theorem}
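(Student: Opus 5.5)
We prove Kneser's theorem by induction on $|B|$, using the Dyson $e$-transform, and we first reduce to the case where the stabilizer is trivial. Put $H=\mathrm{Stab}(A+B)$. Then $A+B=(A+H)+(B+H)$, and it is periodic with period $H$. Pass to the quotient $G/H$ with images $\bar A,\bar B$. There $\bar A+\bar B$ has trivial stabilizer, and $|A+B|=|H|\,|\bar A+\bar B|$, $|A+H|=|H||\bar A|$, $|B+H|=|H||\bar B|$. So it suffices to prove the following statement: \emph{if $\mathrm{Stab}(A+B)=\{0\}$, then $|A+B|\ge |A|+|B|-1$.} Below we prove the stronger statement $|A+B|\ge |A+H|+|B+H|-|H|$ directly, by induction on $|B|$, with $A,B$ ranging over all pairs of finite sets. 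If $|B|=1$ the claim is immediate, since then $H=\mathrm{Stab}(A)$, $A+H=A$ and $|B+H|=|H|$. We may also translate so that $0\in B$.

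For the inductive step, suppose $|B|\ge2$. If $A+B-B\subseteq A$, i.e. $A+(B-B)\subseteq A$, then $A$ is a union of cosets of the subgroup generated by $B-B$, and the inequality is checked directly. Otherwise choose $a\in A$ and $e\in a-B$ such that the $e$-transform
\[A(e)=A\cup(B+e),\qquad B(e)=B\cap(A-e)\]
satisfies $\emptyset\neq B(e)\subsetneq B$. Then $A(e)+B(e)\subseteq A+B$ and $|A(e)|+|B(e)|=|A|+|B|$, so the induction hypothesis applies to the pair $(A(e),B(e))$ with stabilizer $K=\mathrm{Stab}(A(e)+B(e))$. This gives
\[|A+B|\ge |A(e)+B(e)|\ge |A(e)+K|+|B(e)+K|-|K|.\]

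The main obstacle is that $K$ may be a proper subgroup unrelated to $H$, so this bound does not immediately give the bound with $H$. Following Kneser's original argument (or DeVos's short proof), we will handle this as follows. If $A+B+K=A+B$, then $K\le H$, and we use that $(A(e)+K)\cup\bigl((B+e)+K\bigr)$ has size at least $|A+K|+|B+K|-|B(e)+K|$. Then $|A+K|+|B+K|-|K|\ge |A+H|+|B+H|-|H|$, which follows by comparing coset counts: both sides count cosets, and $A+B$ is $H$-periodic. If instead $A+B+K\neq A+B$, we repeat the transform with a minimal counterexample in which $|A+B|-|A|-|B|$ is chosen minimal and $|B|$ is chosen minimal, and we derive a contradiction by considering $A+B$ restricted to a single $K$-coset, following Kneser's submodularity argument $|X\cup Y|+|X\cap Y|=|X|+|Y|$ applied to the two sumsets. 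Since the theorem is classical, the paper may simply cite Kneser \cite{kneser1953} instead of giving this argument.
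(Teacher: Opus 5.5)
The paper gives no proof of this statement. It is quoted from Kneser's 1953 paper as background, so your proposal has to stand on its own. Your preliminary steps are sound: the reduction to $\mathrm{Stab}(A+B)=\{0\}$ via $G/H$, the base case $|B|=1$, the degenerate case $A+(B-B)\subseteq A$, and the Dyson-transform facts $A(e)+B(e)\subseteq A+B$ and $|A(e)|+|B(e)|=|A|+|B|$. The argument breaks down at the step you yourself call the main obstacle, and that step is the whole content of Kneser's theorem.

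\textbf{The case $A+B+K=A+B$.} Both inequalities you invoke here are false.

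\emph{The coset comparison.} The claim $|A+K|+|B+K|-|K|\ge |A+H|+|B+H|-|H|$ for $K\le H$ already fails in a simple case. Take $G=\mathbb{Z}_2^2$, $A=\{0,a\}$, $B=\{0,b\}$ with $a\neq b$ nonzero, and $e=0$. Then $A(e)=\{0,a,b\}$, $K=\{0\}$ and $H=G$, so the claim reads $3\ge 4$.

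\emph{The union bound.} Note that $(A(e)+K)\cup((B+e)+K)=A(e)+K$. The claim $|A(e)+K|\ge |A+K|+|B+K|-|B(e)+K|$ would need every $K$-coset meeting both $A$ and $B+e$ to meet $A\cap(B+e)$, and this is not automatic. Take $G=\mathbb{Z}_2\times\mathbb{Z}$ with $A=\{(0,0),(1,0),(0,1),(0,2),(1,2)\}$, $B=\{(0,0),(1,0),(1,1)\}$ and $e=0$. Then:
\begin{itemize}
\item $A(e)=\mathbb{Z}_2\times\{0,1,2\}$ and $B(e)=K=\mathbb{Z}_2\times\{0\}$;
\item $A+B=\mathbb{Z}_2\times\{0,1,2,3\}$, so indeed $A+B+K=A+B$;
\item the claim reads $6\ge 6+4-2=8$.
\end{itemize}

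This case is actually trivial if you keep your own reduction. With $H=\{0\}$ you get $K=\{0\}$, and the induction hypothesis gives $|A+B|\ge |A(e)|+|B(e)|-1=|A|+|B|-1$ immediately.

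\textbf{The case $K\neq\{0\}$ with $A+B$ aperiodic.} This is the real gap, and you do not actually argue it. Here $A(e)+B(e)$ is a $K$-periodic proper subset of $A+B$. The inductive bound satisfies only $|A(e)+K|+|B(e)+K|-|K|\ge |A|+|B|-|K|$. So it can fall short of the target $|A|+|B|-1$ by up to $|K|-1$.

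You must show that the elements of $A+B$ lying outside $A(e)+B(e)$ make up this deficit, or else derive a contradiction with the aperiodicity of $A+B$. In the standard proofs (Kneser's original, or DeVos's short proof) this is where all the real work happens: an extremal choice of the counterexample or of $e$, plus a further counting argument over $K$-cosets. Your final sentences only name ingredients (``minimal counterexample'', ``a single $K$-coset'', the identity $|X\cup Y|+|X\cap Y|=|X|+|Y|$). They do not say to which pair the induction hypothesis is reapplied, what inequality results, or where the contradiction comes from. As it stands, the proposal sets up the standard machinery but is not a proof of the theorem.
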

To state the result of DeVos, Goddyn and Mohar, we need to fix some notations. Given subsets $A$ and $Q$ of a group $G$, we define the function
\begin{equation*}
	\chi_{A}(Q) =
	\begin{cases}
	  1, & \mbox{if } A \cap Q \neq \varnothing; \\
	  0, & \mbox{if } A \cap Q = \varnothing,
	\end{cases}
\end{equation*}
where $\varnothing$ denote the empty set. In particular, if $Q = \{a\}$, then the function $\chi_{A}(Q)$ is the characteristic function of the set $A$ which is defined as
\begin{equation*}
	\chi_{A}(a) =
	\begin{cases}
	  1, & \mbox{if } a \in A; \\
	  0, & \mbox{if } a \not\in A.
	\end{cases}
\end{equation*}
Furthermore, given a sequence $\mathcal{A} = (A_1, \ldots, A_m)$ of subsets of $G$ and a set $Q \subseteq G$, we define
\begin{equation*}
  \mu_{\mathcal{A}}(Q) = \min\left(\ell, \ \sum_{i=1}^{m}\chi_{A_i}(Q) \right).
\end{equation*}
With these notations, now we can state the theorem of DeVos, Goddyn and Mohar \cite{devos2009}.
\begin{theorem}[DeVos-Goddyn-Mohar Theorem]\label{dgm-thm}
Let $\ell$ and $m$ be positive integers such that $\ell \leq m$. Let $\mathcal{A} = (A_1, \ldots, A_m)$ be a sequence of finite subsets of an abelian group $G$. Then
\begin{equation}\label{dgm-thm-lower-bound}
  |{\Sigma}^{\ell}(\mathcal{A})| \geq |H| \left(\sum_{Q \in G/H}\mu_{\mathcal{A}}(Q) - \ell + 1 \right), 
\end{equation}
where $H$ is the stabilizer of ${\Sigma}^{\ell}(\mathcal{A})$.
\end{theorem}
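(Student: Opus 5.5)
The plan is to prove the theorem by reducing to the case where $\Sigma^{\ell}(\mathcal{A})$ is aperiodic, and then transforming $\mathcal{A}$ into a nested chain of sets. For a nested chain the generalized sumset is an ordinary sumset, to which Kneser's Theorem \ref{kneser-thm} (iterated to $\ell$ summands) applies.

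\emph{Step 1: reduction to trivial stabilizer.} Let $H=\mathrm{Stab}(\Sigma^{\ell}(\mathcal{A}))$ and put $A_i'=A_i+H$. Then
\[\Sigma^{\ell}(\mathcal{A}')=\Sigma^{\ell}(\mathcal{A})+H=\Sigma^{\ell}(\mathcal{A}).\]
Moreover $\chi_{A_i'}(Q)=\chi_{A_i}(Q)$ for every $Q\in G/H$, so $\mu$ is unchanged. Passing to the quotient $\phi:G\to G/H$, it therefore suffices to prove the following: if $\Sigma^{\ell}(\mathcal{A})$ is aperiodic, then
\[|\Sigma^{\ell}(\mathcal{A})|\ge \sum_{g\in G}\mu_{\mathcal{A}}(\{g\})-\ell+1,\qquad \mu_{\mathcal{A}}(\{g\})=\min\bigl(\ell,\#\{i:g\in A_i\}\bigr).\]
Multiplying this inequality by $|H|$ gives \eqref{dgm-thm-lower-bound}.

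\emph{Step 2: the union/intersection transform.} For $i\ne j$, replace $(A_i,A_j)$ by $(A_i\cup A_j,\,A_i\cap A_j)$. This preserves the multiplicity $\#\{i:g\in A_i\}$ of every $g$, so $\sum_g\mu(\{g\})$ is invariant. It also does not enlarge the generalized sumset; the standard check is on sums involving one or both of the two indices. Moreover $\sum_i|A_i|^2$ strictly increases unless $A_i$ and $A_j$ are comparable, so after finitely many steps we reach a chain $A_1\supseteq\cdots\supseteq A_m$.

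For a chain, any admissible sum $a_{i_1}+\cdots+a_{i_\ell}$ with $i_1<\cdots<i_\ell$ has $i_k\ge k$, hence $a_{i_k}\in A_k$. Thus $\Sigma^{\ell}(\mathcal{A})=A_1+\cdots+A_\ell$. Also $\sum_g\mu(\{g\})=|A_1|+\cdots+|A_\ell|$, since $g$ lies in exactly the first $\#\{i:g\in A_i\}$ sets of the chain. So the target reduces to
\[|A_1+\cdots+A_\ell|\ge|A_1|+\cdots+|A_\ell|-\ell+1,\]
which is exactly Kneser's bound whenever $A_1+\cdots+A_\ell$ is aperiodic.

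\emph{Step 3: the main obstacle.} The transformed sumset is only a subset of the original, so it may have a nontrivial stabilizer $K$ even though the original was aperiodic. In that case Kneser's Theorem only gives the weaker bound $\sum|A_i|-(\ell-1)|K|$. I would handle this with a minimal counterexample, minimizing first $m$ and then $\sum_i|A_i|$. Perform the transforms one at a time. Stop at the first transform after which the generalized sumset acquires a nontrivial stabilizer $K$; if no such transform occurs, Step 2 finishes the proof.

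At that point apply induction, in the form already established for periodic sumsets, to the transformed sequence with respect to $K$. This gives the bound $|K|\bigl(\sum_{Q\in G/K}\mu(Q)-\ell+1\bigr)$, which must then be compared with $\sum_g\mu(\{g\})$. The required coset inequality $|K|\,\mu(Q)\ge\sum_{g\in Q}\mu(\{g\})$ can fail, namely when many sets meet $Q$ in few points. I would treat such $K$-cosets separately: either delete a set whose contribution is redundant, which reduces $m$, or use that the original sumset misses part of a $K$-coset, and derive a contradiction from aperiodicity. I expect this comparison to be the genuinely delicate part of the argument, as it is in DeVos--Goddyn--Mohar's original proof.
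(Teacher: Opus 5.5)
Your Steps 1 and 2 are correct, and Step 2 is essentially the reduction the paper itself uses. Since the $\cup/\cap$ transform preserves multiplicities, the $j$-th set of your final chain is $\{a:\#\{i:a\in A_i\}\ge j\}$. For $j\le\ell$ this is the set $A_j'=\{a:\mu(a)\ge j\}$ of Lemma~\ref{aux-lem-dir-tf}. The paper's direct theorems use Lemma~\ref{aux-lem-existance}, which likewise produces $\ell$ sets whose ordinary sumset lies in $\Sigma^{\ell}(\mathcal{A})$ and whose sizes add up to $\sum_a\mu(a)$, and then apply Cauchy--Davenport or Kemperman.

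However, the paper never proves Theorem~\ref{dgm-thm}; it is quoted from DeVos, Goddyn and Mohar. The paper only treats $\mathbb{Z}_p$ (Theorem~\ref{zp-dir-thm}) and torsion-free groups (Theorem~\ref{productset-tf-dir-thm}). In both, the stabilizer of a finite proper sumset is automatically trivial, so your Step 3 obstacle cannot arise. For a general abelian group, Step 3 carries the whole content of the theorem, and the proposal does not supply it.

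Beyond being unexecuted, Step 3 misidentifies the difficulty. The coset inequality $|K|\mu(Q)\ge\sum_{g\in Q}\mu(\{g\})$ never fails. Multiplicities are preserved, so each $g\in Q$ lies in at most as many transformed sets as meet $Q$, whence $\mu(\{g\})\le\mu(Q)$; and $|Q|=|K|$.

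The real loss is in the constant term. The inductive bound $|K|\bigl(\sum_Q\mu(Q)-\ell+1\bigr)$ is only guaranteed to be at least $\sum_g\mu(\{g\})-(\ell-1)|K|$, which is $(\ell-1)(|K|-1)$ below the target. Writing $\Sigma'$ for the transformed sumset, this deficit must be recovered from
\[\sum_Q\Bigl(|K|\mu(Q)-\sum_{g\in Q}\mu(\{g\})\Bigr)+\bigl|\Sigma^{\ell}(\mathcal{A})\setminus\Sigma'\bigr|,\]
that is, from the holes the sets leave in the $K$-cosets they meet, plus the part of the original sumset lost under the transform. So the dangerous configurations are sets that nearly fill their $K$-cosets, the opposite of ``many sets meeting $Q$ in few points.'' Aperiodicity of $\Sigma^{\ell}(\mathcal{A})$ by itself only gives $\Sigma^{\ell}(\mathcal{A})\neq\Sigma'$, i.e.\ at least one extra element, far from $(\ell-1)(|K|-1)$. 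Producing the missing amount is exactly what the DeVos--Goddyn--Mohar proof has to accomplish.

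In addition, the induction is not well-founded as set up. The transform preserves both $m$ and $\sum_i|A_i|$, so the transformed sequence is not smaller in your ordering. Passing to $G/K$ need not decrease $\sum_i|A_i|$ either (it does not when every set meets each $K$-coset at most once). Since $G$ may be infinite, induction on $|G|$ is unavailable. Finally, the fallback of deleting a ``redundant'' set is never specified.
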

The DeVos-Goddyn-Mohar Theorem is a fundamental result in additive combinatorics which unifies various results from zero-sum combinatorics and has connections with subsequence sums and sumsets (see \cite{devos2009}; see also the book by Grynkiewicz \cite{gryn2013book}).

The underlying sets $A_1, \ldots, A_m$ in the sequence $\mathcal{A}$ will be called {\it extremal sets} if the equality holds in \eqref{dgm-thm-lower-bound}. One of the important problems in additive combinatorics is characterizing the structure of the extremal sets $A_1, \ldots, A_m$ corresponding to the sumset ${\Sigma}^{\ell}(\mathcal{A})$. This kind of problems is known as {\it inverse problems}. The objective of this paper is to investigate this problem. We solve this problem in torsion-free groups (not necessarily abelian), cyclic group of prime orders and also in arbitrary abelian group $G$. 

For convenience, the group $G$ is assumed to be written multiplicatively from now onward, unless otherwise stated. The multiplicative identity of $G$ will be denoted by $1$. The {\it product set} of the subsets  $A_1, \ldots, A_{\ell}$ of $G$ is defined as
\begin{equation}\label{product-set}
A_1 \cdots A_{\ell} = \{a_1 \cdots a_{\ell} : a_i \in A_i ~\text{for}~ i = 1, \ldots, \ell\}.
\end{equation}

Let $\ell$ and $m$ be positive integers with $\ell \leq m$, and let $\mathcal{A} = (A_1, \ldots, A_m)$ be a finite sequence of finite subsets of a group $G$ (not necessarily abelian). Then we define the {\it generalized product set} $\Pi^{\ell}(\mathcal{A})$ as the set of all elements of $G$ which can be represented as a product of exactly $\ell$ elements from $\ell$ distinct sets from $\mathcal{A}$ taken in any order. That is,
\begin{equation}\label{gen-product-set}
{\Pi}^{\ell}(\mathcal{A}) = \{a_{i_1} \cdots a_{i_{\ell}}: a_{i_j} \in A_{i_j},~\text{where}~ i_j \in [1, m]~\text{for each}~ j \in [1, \ell] ~\text{and}~ i_r \neq i_s ~\text{if}~ r \neq s\}.
\end{equation}
In other words,
\begin{equation*}\label{gen-product-set-1}
{\Pi}^{\ell}(\mathcal{A}) = \bigcup_{\substack{\Lambda \subseteq [1, m]\\ |\Lambda| = \ell}} \left(\bigcup_{\sigma \in S_{\Lambda}} \left(\prod_{i \in \Lambda }A_{\sigma(i)}\right)\right).
\end{equation*}
Clearly, 
\begin{equation}\label{product-set-symmetry}
 {\Pi}^{\ell}(\mathcal{A_{\pi}}) = {\Pi}^{\ell}(\mathcal{A})
\end{equation}
for any permutation $\pi \in S_m$, where $S_m$ denote the group of permutations on the set $\{1, \ldots, m\}$, and $\mathcal{A}_{\pi} = (A_{\pi(1)}, \ldots, A_{\pi(m)})$. 

The generalized product set ${\Pi}^{\ell}(\mathcal{A})$ has connection with product-one subsequence problems. Let $\mathbf{a} = (a_1, \ldots, a_m)$ be a sequence of elements of $G$ (not necessarily abelian), where $m$ is a positive integer. Given a positive integer $\ell$ with $\ell \leq m$, we define
\begin{equation}\label{gen-subseq-sum-eq1}
  \Pi^{\ell} (\mathbf{a}) = \{a_{i_1} \cdots a_{i_{\ell}}: i_j \in [1, m]~ \text{for}~ j = 1, \ldots, \ell\}.
\end{equation}
Thus $\Pi^{\ell} (\mathbf{a})$ is the set of all group elements which can be represented as a product of $\ell$ terms of the sequence $\mathbf{a}$ taken in any order. For the given sequence $\mathbf{a}$, if we define $\mathcal{A} = (A_1, \ldots, A_m)$, where $A_i = \{a_i\}$ for each $i \in [1, m]$. Then 
\[\Pi^{\ell} (\mathbf{a}) = \Pi^{\ell} (\mathcal{A}).\]
If the group $G$ is written additively, then we write $\Sigma^{\ell} (\mathbf{a})$ instead of $\Pi^{\ell} (\mathbf{a})$, and this definition coincides with the definition of $\Sigma^{\ell} (\mathbf{a})$ given in \cite{devos2009} in case the group $G$ is abelian also. The sequeunce sums $\Sigma^{\ell} (\mathbf{a})$ has been studied by various researchers in abelian groups (see, for example, \cite{devos2009} and the references given therein).

For a finite (not necessarily abelian) group $G$, the {\it invariant $d(G)$} is defined as the smallest positive integer $t$ such that every sequence $\mathbf{a}=(a_1,\ldots,a_m)$ over $G$ with $m \geq t$, there exists a positive integer $\ell \leq m$ such that $1 \in \Pi^{\ell} (\mathbf{a})$. Thus a deeper understanding of $\Pi^{\ell} (\mathbf{a})$ may be useful in the study of problems concerning to subsequence products (or subsequence sumswhen $G$ is written additively). For some results in this direction, the reader may refer the papers of Gao and Thangadurai \cite{gao-thanga2006}, and Grynkiewicz \cite{gryn2013}.

We remark that in contrast to the definition of ${\Sigma}^{\ell}(\mathcal{A})$ in \eqref{gen-sumset}, in the definition \ref{gen-product-set} of ${\Pi}^{\ell}(\mathcal{A})$, the subscripts $i_1, \cdots, i_{\ell}$ are not necessarily taken in increasing order. Therefore, in case of $\ell = m$, while the generalized sumset ${\Sigma}^{\ell}(\mathcal{A})$ reduces to the sumset $A_1 + \cdots + A_{\ell}$, the generalized product set ${\Pi}^{\ell}(\mathcal{A})$ does not reduce to the product set $A_1 \cdots A_{\ell}$ if $G$ is not abelian. However, this relaxation enables us to solve the inverse problems in torsion-free groups which are not necessarily abelian. In fact, if $G$ is an abelian group, the two definitions are the same.

In this paper, we obtain an optimal lower bound for the size of generalized product set ${\Pi}^{\ell}(\mathcal{A})$ in torsion-free group (not necessarily abelian), and characterize the structure of extremal sets, thereby solving the inverse problem. Furthermore, we derive inverse theorems in cyclic groups of prime order by slightly modifying the argument in the proofs for torsion-free groups. Our proof of this result also yields a new proof of DeVos-Goddyn-Mohar Theorem in $\mathbb{Z}_p$. Moreover, we extend these results to an arbitrary abelian group $G$. We also discuss some applications of our results to subsequence sums $\Pi^{\ell} (\mathbf{a})$. One of the main results is the following theorem which establishes  an optimal lower bound on the size of the generalized product set $\Pi^{\ell}(\mathcal{A})$ in a torsion-free group $G$.

\begin{theorem} \label{productset-tf-dir-thm}
	Let $\ell$ and $m$ be positive integers such that $\ell \leq m$. Let $\mathcal{A} = (A_1, \ldots, A_m)$ be a sequence of finite subsets of torsion-free group $G$, and let $A = A_1 \cup \cdots \cup A_m$. Let
	\[\mu_{\mathcal{A}} (a) = \min \bigg (\ell, \sum_{j = 1}^m \chi_{A_j}(a) \bigg),\]
	where $a \in A$. Then
	\begin{equation}\label{productset-tf-dir-thm-eq1}
		|\Pi^{\ell}(\mathcal{A})| \geq \sum_{a \in A} \mu_{\mathcal{A}} (a) - \ell + 1.
	\end{equation}
	The lower bond in \eqref{productset-tf-dir-thm-eq1} is best possible.
\end{theorem}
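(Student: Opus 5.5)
Torsion-free groups (even nonabelian ones) admit a strict total order compatible with multiplication on both sides only when they are bi-orderable, which is not true in general. So I would not rely on an ordering of $G$. Instead I would use a property that does hold in every torsion-free group: for finite nonempty $X, Y \subseteq G$ we have $|XY| \geq |X| + |Y| - 1$. This is Kemperman's theorem, or follows from the fact that torsion-free groups have no nontrivial finite subgroups. I would take this product-set inequality as the basic input, together with its iterated version $|X_1 \cdots X_\ell| \geq \sum |X_i| - \ell + 1$.

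The main argument is an induction on $\ell$, with $m$ arbitrary. We may assume all $A_i$ are nonempty, since empty sets contribute nothing to $\mu$ and can be discarded. If fewer than $\ell$ of them are nonempty, $\Pi^\ell(\mathcal{A})$ is empty; in that case I would check the bound separately, where $\sum_a \mu(a) \le \sum |A_i| \le \ell - 1$. For $\ell = 1$ we have $\Pi^1(\mathcal{A}) = A$ and $\sum_a \mu(a) = |A|$, so the bound is trivial.

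For the inductive step, the natural first attempt is to peel off an element $a$ with $\mu(a)$ maximal and reduce $\ell$ by one. A cleaner route is to reduce to a multiset statement. Each $a \in A$ may be used only a restricted number of times, so choose sets $B_1, \ldots, B_\ell$ as a ``greedy layering'': $B_j$ consists of those $a$ with $\mu(a) \geq j$. Then $\sum_j |B_j| = \sum_a \mu(a)$, and $B_1 \supseteq \cdots \supseteq B_\ell$.

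The key step is to show that $B_1 B_2 \cdots B_\ell \subseteq \Pi^\ell(\mathcal{A})$, or at least that some product of $\ell$ sets with these cardinalities lies in $\Pi^\ell(\mathcal{A})$. Take $b_j \in B_j$. We need to assign distinct indices $i_j$ with $b_j \in A_{i_j}$. This is a Hall-type matching problem. Element $b_j$ lies in at least $\min(\ell, \deg)$ sets, and $\deg(b_j) \geq j$ since $b_j \in B_j$ means $\mu(b_j) \ge j$. Processing $j = 1, \ldots, \ell$ greedily, when we reach $b_j$ at most $j - 1$ indices are already used and $b_j$ has at least $j$ available sets. So a system of distinct representatives exists. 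The order of the factors in $\Pi^\ell$ is arbitrary, so the product $b_1 \cdots b_\ell$ in the given order lies in $\Pi^\ell(\mathcal{A})$; non-commutativity causes no problem here. This gives
\[
|\Pi^\ell(\mathcal{A})| \geq |B_1 \cdots B_\ell| \geq \sum_j |B_j| - \ell + 1 = \sum_{a\in A}\mu_{\mathcal{A}}(a) - \ell + 1 .
\]
This requires all $B_j$ to be nonempty, i.e.\ some element with $\mu = \ell$. If $B_j = \emptyset$ for some $j$, the greedy argument needs modification. In that case, following DeVos--Goddyn--Mohar, I would instead build the $\ell$ factors from distinct elements spread across the sets. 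For example, choose the layering so that element multiplicities are spread across the $\ell$ slots, with each slot containing at least one element and the slots pairwise ``matchable''. I expect this degenerate case to be the main obstacle. The fallback would be an induction on $\ell$: remove one set $A_i$ together with a carefully chosen extremal element, using the sets $x\Pi^{\ell-1}$ and $\Pi^{\ell-1}y$ built from the smallest and largest elements in a left-ordering.

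Optimality is shown by example. Take $G=\mathbb{Z}$ and every $A_i = \{0, 1, \ldots, k-1\}$ with $m = \ell$. Then $|\Pi^\ell| = \ell(k-1) + 1$, which equals $\sum_a \mu(a) - \ell + 1 = k\ell - \ell + 1$.
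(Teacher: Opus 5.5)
Your layering $B_j=\{a\in A:\mu(a)\ge j\}$, combined with the greedy choice of distinct indices, is correct. However, it proves the bound only when some element has $\mu(a)=\ell$. This is exactly the paper's Lemma~\ref{aux-lem-dir-tf}, which the paper uses for Theorem~\ref{productset-tf-inv-thm2}, not for this theorem. If every element lies in fewer than $\ell$ of the sets, then $B_\ell=\emptyset$, the product $B_1\cdots B_\ell$ is empty, and Kemperman's inequality gives nothing. This is not a corner case; it is the generic one. For $\ell=2$, $m=3$ and pairwise disjoint $A_1,A_2,A_3$ you get $B_2=\emptyset$. Yet the required bound $|\Pi^2(\mathcal A)|\ge|A_1|+|A_2|+|A_3|-1$ does hold, because $(A_1\cup A_3)A_2\subseteq\Pi^2(\mathcal A)$. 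You flag this case as the main obstacle but leave it open. The fallback you sketch, using extremal elements in a left-ordering, is not available either: torsion-free groups need not be left-orderable, for the same reason you gave for bi-orderings. So the proof has a genuine gap in its main case.

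The missing idea, which is the content of the paper's Lemma~\ref{aux-lem-existance}, is to anchor slot $j$ on $A_j$ for $j\in[1,\ell]$, so that no slot is empty. The surplus multiplicity coming from $A_{\ell+1},\dots,A_m$ is then pushed into the leftmost free slots. Concretely:
\begin{enumerate}
\item Put $a$ into slot $j$ whenever $a\in A_j$.
\item In addition, put $a$ into the first $e(a)=|\{i\in[\ell+1,m]:a\in A_i\}|$ slots $j\le\ell$ with $a\notin A_j$, or into all of them if there are fewer.
\end{enumerate}
Then $a$ occupies exactly $\mu(a)$ slots, so the slot sizes add up to $\sum_{a}\mu(a)$. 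Moreover, if $j$ is the $k$-th slot with $a\notin A_j$, where $k\le e(a)$, then $a$ lies in at least $(j-k)+k=j$ of the sets $A_i$ with $i\in[1,j-1]\cup[\ell+1,m]$.

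Your greedy procedure now works with the following rule for $c_j$ in slot $j$: take index $j$ if $c_j\in A_j$, and otherwise take an unused index $i\in[1,j-1]\cup[\ell+1,m]$ with $c_j\in A_i$. This is always possible because all previously used indices lie in $[1,j-1]\cup[\ell+1,m]$ (in particular $j$ is still free), and there are only $j-1$ of them. The paper encodes the same slots through $M$, $\tau$, the sets $B_j$ and the integer $\mathcal L(a)$ of Lemma~\ref{aux-lem2}.

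Two smaller points. First, in the case where fewer than $\ell$ sets are nonempty, it is the number of nonempty sets that is at most $\ell-1$, not $\sum_i|A_i|$. In fact the inequality fails there: take $\ell=m=2$, $A_1=\{1,g\}$ with $g\neq1$, and $A_2=\emptyset$. So nonempty sets must be assumed, as the paper's proof tacitly does through Lemma~\ref{aux-lem-existance}. Second, your optimality example is fine.
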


The following corollary follows immediately from the above theorem.
\begin{corollary}\label{productset-tf-dir-thm-cor}
	Let $\ell$ and $m$ be positive integers such that $2 \leq \ell \leq m$. Let $\mathcal{A} = (A_1, \ldots, A_m)$ be a finite sequence of nonempty finite subsets of a torsion-free abelian group $G$, and let $A = A_1 \cup \cdots \cup A_m$. Let $\mu_{\mathcal{A}} (a)$ be defined as in Theorem \ref{productset-tf-dir-thm}. Then
	\begin{equation}\label{productset-tf-dir-thm-cor-eq1}
		|\Pi^{\ell}(\mathcal{A})| \geq \sum_{a \in A} \mu_{\mathcal{A}} (a) - \ell + 1.
	\end{equation}
	The lower bond in \eqref{productset-tf-dir-thm-cor-eq1} is best possible.
\end{corollary}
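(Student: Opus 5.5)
The corollary is the special case of Theorem \ref{productset-tf-dir-thm} in which the torsion-free group $G$ is abelian. My plan is therefore to check the hypotheses, apply the inequality, and then confirm that optimality survives the restriction to abelian groups and to nonempty sets with $\ell \geq 2$.

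\emph{Inequality.} A torsion-free abelian group is in particular a torsion-free group. The sets $A_i$ are nonempty finite subsets, so Theorem \ref{productset-tf-dir-thm} applies directly to $\mathcal{A}$ and yields \eqref{productset-tf-dir-thm-cor-eq1}. Since $G$ is abelian, the order of the factors in \eqref{gen-product-set} is irrelevant. Hence $\Pi^{\ell}(\mathcal{A})$ coincides with the generalized sumset $\Sigma^{\ell}(\mathcal{A})$ of \eqref{gen-sumset} written multiplicatively, and the bound may equally be read as a lower bound for that set. No further argument is needed for this part.

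\emph{Sharpness.} I cannot simply cite the extremal examples of Theorem \ref{productset-tf-dir-thm}, because they might use a nonabelian group, empty sets, or $\ell=1$. So I will give an explicit abelian example: take $G=\mathbb{Z}$, written additively, and $A_i=[0,k-1]$ for every $i\in[1,m]$, with $k\geq 1$.
\begin{itemize}
\item Every $a\in A=[0,k-1]$ lies in all $m$ sets, so $\mu_{\mathcal{A}}(a)=\min(\ell,m)=\ell$.
\item The right-hand side is therefore $k\ell-\ell+1$.
\item On the other side, $\Sigma^{\ell}(\mathcal{A})=\ell\cdot[0,k-1]=[0,\ell(k-1)]$, which has exactly $\ell(k-1)+1=k\ell-\ell+1$ elements.
\end{itemize}
Thus equality holds for all $k\geq 1$ and all $2\leq\ell\leq m$. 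The only point requiring care is to choose an example that meets the extra hypotheses, and this one does. This completes the plan.
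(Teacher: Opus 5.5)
Your proposal is correct, and for the inequality it matches the paper exactly: the paper just remarks that the corollary follows immediately from Theorem \ref{productset-tf-dir-thm}, since a torsion-free abelian group is torsion-free.

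For sharpness, the paper implicitly reuses the examples of Appendix \ref{best-lower-bound}. These already live in a cyclic subgroup $\langle g\rangle\cong\mathbb{Z}$, with nonempty sets and $\ell\geq 2$, so citing them would have been legitimate. Your family $A_1=\cdots=A_m=[0,k-1]$ is still a valid and simpler witness.

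The trade-off is this. Your example is settled by directly computing the sumset. Write it as $[0,k-1]+\cdots+[0,k-1]=[0,\ell(k-1)]$, since $\ell\cdot[0,k-1]$ usually denotes a dilate. The appendix constructions for $\ell<m$ instead bound $|\Sigma^{\ell}(\mathcal{A})|$ from above and then use the lower bound itself to conclude equality. In exchange, the appendix attains equality for arbitrary prescribed sizes $k_1\leq\cdots\leq k_m$ and genuinely different sets. Your family only covers the degenerate case where all sets coincide, so $\mu_{\mathcal{A}}\equiv\ell$.

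If you want sharpness inside a given nontrivial torsion-free abelian $G$ rather than in $\mathbb{Z}$, transport the example to $A_i=\{1,g,\ldots,g^{k-1}\}$ with $g\neq 1$, as the appendix does. Torsion-freeness makes these $k$ elements distinct and gives $\Pi^{\ell}(\mathcal{A})=\{1,g,\ldots,g^{\ell(k-1)}\}$.
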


A nonempty subet $A$ of $G$ is called a {\it geometric progression with common ratio $g$} if it can be expressed as $A = \{a, ag, ag^2, \ldots, ag^{|A| -1}\}$ for some $a, g \in G$ with $g \neq 1$. Unlike Theorem \ref{vosperthm}, if the sets $A_1, \ldots, A_m$ are the extremal sets for the generalized sumset $\Sigma^{\ell}(\mathcal{A})$ (or, the generalized product set $\Pi^{\ell}(\mathcal{A})$), then they need not be arithmetic progressions (or, geometric progressions) as Example \ref{ex-1} given after Corollary \ref{productset-tf-inv-thm1-cor} shows. We give the precise characterization of the extremal sets $A_1, \ldots, A_m$. We show that if the sets $A_1, \ldots, A_m$ are extremal, then they must satisfy some specific properties. To state the result precisely, we need some more definitions. 

\begin{definition}[Geometric progression of type $(a, g, b)$] \label{gp-characterization}
 A nonempty finite subset $A$ of a multiplicative group $G$ is called a {\it geometric progression of type $(a, g, b)$}  if  
\[A = \{ab, agb, \ldots, ag^{|A| - 1}b\}\]
for some $a, b, g \in G$, $g \neq 1$, where $1$ is the identity element of $G$.
\end{definition}

\begin{remark}\label{gp-remark}
Following properties of geometric progression of type $(a, g, b)$ will be useful. 
\begin{enumerate}
\item A geometric progression $\{a, ag^2, \ldots, ag^{k-1}\}$ with common ratio $g$, where $k \geq 2$ is an integer, is clearly a geometric progression of type $(a, g, 1)$.
  \item Let $A$ be a geometric progression of type $(a, g, b)$. Then it is clearly a geometric progression with first term $ab$ and common ratio $b^{-1}gb$, and so it is a geometric progression of type $(ab, b^{-1}gb, 1)$ also. Furthermore, it is easy to see that $A$ can be represented as a geometric progression of type $(1, aga^{- 1}, ab)$ also. If the group $G$ is abelian, then the geometric progression of type $(a, g, b)$ is clearly a geometric progression with first term $ab$ and common ratio $g$.
   
  \item If $A$ is a geometric progression of type $(a, g^{- 1}, b)$, then it is also a geometric progression of type $(a, g, g^{- (|A| - 1)}b)$.
  \item If $A$ is a geometric progression of type $(a, g, b)$, then it is also a geometric progression of type $(ac, c^{-1}gc, c^{-1}b)$ for every $c \in G$. Thus a geometric progression of type $(a, g, b)$ can be represented in many ways, where $g$ is replaced with its conjugate. But if the group $G$ is abelian, then all such representations give the geometric progression with the same common ratio $g$.
\end{enumerate}
\end{remark}

\begin{remark}
  If the group $G$ is written additively, then we will use the term {\it arithmetic progression of type $(a, g, b)$} instead of geometric progression of type $(a, g, b)$. 
\end{remark}

\begin{definition}[$(\ell, g)$- minimizing sequence of sets] \label{min-seq}
	Let $\ell$ and $m$ be positive integers with $\ell \leq m$, and let $\mathcal{A} = (A_1, \dots, A_m)$ be a finite sequence of finite subsets of a multiplicative group $G$. Let $A = A_1 \cup \cdots \cup A_m$. For each $a \in A$, let
\begin{equation}\label{min-seq-def-eq}
  \mu_{\mathcal{A}} (a) = \min \bigg (\ell, \sum_{j = 1}^m \chi_{A_j}(a) \bigg).
\end{equation}
	Then the sequence $\mathcal{A}$ is called an {\it $(\ell, g)$-minimizing sequence of sets} if there exist sets $B_1, \ldots, B_{\ell} \subseteq A$ satisfying the following conditions:
	\begin{enumerate}
		\item $|B_i| \geq 2$ for $i = 1, \ldots, \ell$.
		\item There exists $g \in G$ such that $g \neq 1$ and for each $i = 1, \ldots, \ell$, the set $B_i$ is a geometric progressions of type $(\alpha_i, g, \beta_i)$ for some $\alpha_i, \beta_i \in G$, where $\alpha_{i + 1} = \beta_i^{- 1}$ for each $i \in [1, \ell - 1]$.
		\item For each $a \in A$ we have
		\[\mu_{\mathcal{A}} (a) = \sum_{j = 1}^{\ell} \chi_{B_j}(a).\]
		\item $B_1\cdots B_{\ell} = \Pi^{\ell}(\mathcal{A})$.
	\end{enumerate}	
\end{definition}

\begin{remark} \label{rem-min-seq}
Let $\mathcal{A} = (A_1, \dots, A_m)$ be a $(\ell, g)$-minimizing sequence of sets. Then there exist sets $B_1, \ldots, B_{\ell} \subseteq A$ satisfying the conditions of Definition \ref{min-seq}. The following observation will be useful in our proofs.
\begin{enumerate}
	\item It is easy to show that $A = B_1 \cup	\cdots \cup B_{\ell}$, where $A = A_1 \cup \cdots \cup A_m$.
    \item We have
    \[\sum_{a \in A} \mu_{\mathcal{A}} (a) = \sum_{a \in A} \sum_{j = 1}^{\ell} \chi_{B_j}(a)
	  = \sum_{j = 1}^{\ell} \sum_{a \in A} \chi_{B_j}(a) = \sum_{j = 1}^{\ell} \sum_{a \in B_1 \cup \cdots \cup B_{\ell}} \chi_{B_j}(a) = \sum_{j = 1}^{\ell} |B_j|.\]

	\item If $\sum_{j = 1}^m \chi_{A_j}(a) \leq \ell$ for all $a \in A$, then we have
	\begin{align*}
		\sum_{a \in A} \mu_{\mathcal{A}} (a) = \sum_{a \in A} \min \bigg (\ell, \sum_{j = 1}^m \chi_{A_j}(a) \bigg) &= \sum_{a \in A} \sum_{j = 1}^m \chi_{A_j}(a)\\
       &= \sum_{j = 1}^m \sum_{a \in A} \chi_{A_j}(a) = \sum_{j = 1}^m |A_j|. 
	\end{align*}
	\item  Clearly, $B_1 \cdots B_{\ell} = \{\alpha_1g^j\beta_{\ell} : j \in [0, |B_1| + \cdots |B_{\ell}| - \ell + 1]\}$, and so
	\[|B_1 \cdots B_{\ell}| = |B_1| + \cdots |B_{\ell}| - \ell + 1.\]
\end{enumerate}
\end{remark}

\begin{remark}
If the group $G$ is abelian, then in view of Remark \ref{gp-characterization}, the term ``geometric progressions of type $(\alpha_i, g, \beta_i)$" in the second condition of Remark \ref{gp-remark} can be replaced with the term ``arithmetic progression." The last condition is also written as  $B_1 + \cdots + B_{\ell} = \Sigma^{\ell}(\mathcal{A})$ in this case.
\end{remark}

The next theorem characterizes the sets $A_1, \ldots, A_m$ for which the equality holds in \eqref{productset-tf-dir-thm-eq1}. 

\begin{theorem}\label{productset-tf-inv-thm1}
	Let $\ell$ and $m$ be positive integers such that $2 \leq \ell \leq m$. Let $\mathcal{A} = (A_1, \ldots, A_m)$ be a sequence of finite subsets of torsion-free group $G$, where $|A_i|  \geq 2$ for each $i  \in [1, m]$. Let $A = A_1 \cup \cdots \cup A_m$. Let $\mu_{\mathcal{A}}(a)$ be defined as in \eqref{min-seq-def-eq}. Then
	\begin{equation}\label{productset-tf-inv-thm1-eq1}
		|\Pi^{\ell}(\mathcal{A})| =  \sum_{a \in A} \mu_{\mathcal{A}}(a) - \ell + 1,
	\end{equation}
	if and only if $\mathcal{A}$ is a $(\ell, g)$-minimizing sequence of sets for some $g \in G$. Moreover, if \eqref{productset-tf-inv-thm1-eq1} holds, then the sets $A_1 \cup M, \ldots, A_m \cup M$ are geometric progressions of type $(\alpha_1, g, \beta_1), \ldots, (\alpha_m, g, \beta_m)$, respectively for some $\alpha_i, \beta_i, g \in G$ for $i = 1, \ldots, m$ with $g \neq 1$, where $M = \{a \in A : \mu_{\mathcal{A}}(a) = \ell\}$.
\end{theorem}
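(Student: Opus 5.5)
The ``if'' direction is immediate. If $\mathcal{A}$ is an $(\ell,g)$-minimizing sequence, then by condition (4) of Definition \ref{min-seq} and parts (2) and (4) of Remark \ref{rem-min-seq},
\[|\Pi^{\ell}(\mathcal{A})| = |B_1\cdots B_{\ell}| = \sum_{j=1}^{\ell}|B_j| - \ell + 1 = \sum_{a\in A}\mu_{\mathcal{A}}(a) - \ell + 1 .\]
All the work is in the converse. I will reuse the inductive proof of Theorem \ref{productset-tf-dir-thm} and track where equality is forced at each step. The external input I plan to use is the Brailovsky--Freiman inverse theorem for torsion-free groups: for finite $X,Y$ with $|X|,|Y|\ge 2$, one has $|XY|=|X|+|Y|-1$ if and only if $X$ and $Y$ are geometric progressions of types $(\alpha,g,\beta)$ and $(\beta^{-1},g,\gamma)$, with $XY$ of type $(\alpha,g,\gamma)$. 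This is exactly why Definition \ref{min-seq} links the $B_i$ by the chain condition $\alpha_{i+1}=\beta_i^{-1}$.

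The direct bound would be proved by building sets $C_1,\dots,C_\ell\subseteq A$ with three properties:
\begin{itemize}
\item $\sum_j \chi_{C_j}(a)=\mu_{\mathcal{A}}(a)$ for every $a\in A$;
\item each $C_j$ can be realized inside $\ell$ distinct $A_i$'s, suitably ordered, so that $C_1\cdots C_\ell\subseteq\Pi^{\ell}(\mathcal{A})$;
\item at most $\ell$ copies of each element are used in total.
\end{itemize}
The construction is a greedy ``layering'' by induction on $\sum_i|A_i|$, in the spirit of the DeVos--Goddyn--Mohar proof. Alternatively one can induct on $\ell$: peel off one set, apply the $(\ell-1)$ case to the remaining family, and multiply by the peeled set on the correct side (this is where non-commutativity is handled by allowing any order of the factors). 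Then the iterated Brailovsky--Freiman bound gives
\[|C_1\cdots C_\ell|\ge\sum_j|C_j|-\ell+1=\sum_a\mu_{\mathcal{A}}(a)-\ell+1 .\]

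In the equality case every inequality in this chain is tight. First, $C_1\cdots C_\ell=\Pi^{\ell}(\mathcal{A})$, which is condition (4). Second, each partial product $C_1\cdots C_k$ satisfies $|C_1\cdots C_{k-1}\cdot C_k|=|C_1\cdots C_{k-1}|+|C_k|-1$. Provided each $|C_j|\ge 2$, applying Brailovsky--Freiman successively makes every $C_j$ a geometric progression of type $(\alpha_j,g,\beta_j)$ with a common $g$ and $\alpha_{j+1}=\beta_j^{-1}$. I would normalise the common ratio using Remark \ref{gp-remark}(4): conjugating $g$ shifts the $\alpha,\beta$ in compatible ways, so the chain condition is preserved. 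Taking $B_j=C_j$ then yields conditions (1)--(4).

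I expect the main obstacle to be guaranteeing $|C_j|\ge 2$, together with the ``moreover'' part about $A_i\cup M$. For the first, I would use the hypothesis $|A_i|\ge 2$ and the freedom in the layering: if some layer were a singleton, I would swap elements between layers. Such a swap either strictly increases the lower bound, contradicting equality, or produces a new layering with all layers of size $\ge 2$.

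For the statement about $A_i\cup M$, I would run the whole argument again with a chosen $A_i$ placed in the first or a designated layer. Equality must persist for every admissible choice of layering. From this I want to conclude that $A_i\setminus M$ sits inside the progression structure, and that adjoining the elements of full multiplicity $M$ fills $A_i$ out to a full progression of type $(\alpha_i,g,\beta_i)$ with the same $g$.

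To control this I would compare two layerings that differ only in where a single element of $A_i$ is placed. The key observation is that in a torsion-free group the product of two progressions with ratio $g$ has its ``ends'' determined uniquely. An element that is not a $g$-step neighbour of the progression would create a new endpoint of $\Pi^{\ell}(\mathcal{A})$ and so increase its size by at least one, contradicting equality.
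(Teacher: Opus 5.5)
\textbf{There is a genuine gap: the key construction is missing.} Your skeleton is the same as the paper's: choose an admissible layering $C_1,\dots,C_\ell$, force equality in the iterated Kemperman bound, then apply Brailovsky--Freiman link by link. But the step that carries all the weight is not there. You need a layering with $\sum_j\chi_{C_j}=\mu_{\mathcal{A}}$, with $C_1\cdots C_\ell\subseteq\Pi^{\ell}(\mathcal{A})$, \emph{and} with $|C_j|\ge 2$ for every $j$. Without the last property the $C_j$ cannot serve as the $B_j$ of Definition \ref{min-seq}, and you never produce such a layering.

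Your proposed repair cannot work as described:
\begin{enumerate}
\item For every admissible layering the Kemperman bound is $\sum_j|C_j|-\ell+1=\sum_{a\in A}\mu_{\mathcal{A}}(a)-\ell+1$. So no swap can ``strictly increase the lower bound''. A singleton layer is invisible to the bound, since multiplying by a singleton is a translation.
\item You give no reason why a swap preserves $C_1\cdots C_\ell\subseteq\Pi^{\ell}(\mathcal{A})$.
\item The obvious layering $\{a:\mu_{\mathcal{A}}(a)\ge j\}$ has an empty last layer when $M=\varnothing$.
\item Your alternative induction also fails as stated. The product $\Pi^{\ell-1}(A_1,\dots,A_{m-1})\,A_m$ only accounts for $\sum_{a}\mu_{\mathcal{A}}(a)-|M\setminus A_m|$, so the bound is lost whenever $M\not\subseteq A_m$.
\end{enumerate}
The paper resolves this in Lemma \ref{aux-lem-existance}. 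It builds $A_j^2\supseteq A_j$ by adjoining $M$, the set $B_j$ of non-full elements lying in at least $j$ of $A_{\ell+1},\dots,A_m$, and a correction set $S_j$ governed by $\mathcal{L}(a)$. Then $|A_j^2|\ge|A_j|\ge 2$ is automatic. The real work is the Hall-type counting (Claim 1 there), which lets one choose distinct indices greedily and gives $A_1^2\cdots A_\ell^2\subseteq\Pi^{\ell}(\mathcal{A})$.

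\textbf{The ``moreover'' part is only a heuristic.} The ``new endpoint'' remark is not an argument. What you need is a second admissible layering in which $A_i\cup M$ is itself a whole layer, plus a mechanism for transferring the common ratio. The paper does this in Lemma \ref{aux-lem-inv-tf}, using the layering $A_1^2,\dots,A_{\ell-2}^2,\,A_\ell^0\cup C,\,A_{\ell-1}^0$, where $A_j^0=A_j\cup M$ and $C=A_{\ell-1}^2\setminus A_{\ell-1}^0$. Claims 2--5 there check disjointness, containment in $\Pi^{\ell}(\mathcal{A})$, and the size count. The ratios are then linked through the common layer $A_1^2$ via Lemmas \ref{gp-lem1} and \ref{gp-lem2}. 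These lemmas allow a conjugate of $g^{-1}$ as well as a conjugate of $g$, so your normalisation by conjugation alone is incomplete. The case $\ell=2$ is handled separately in Lemma \ref{aux-lem-special-case}.

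\textbf{How your induction could be rescued.} Peel off $A_m\cup M$ instead of $A_m$. Each element of $M$ lies in at least $\ell$ of the sets, so $\Pi^{\ell-1}(A_1,\dots,A_{m-1})(A_m\cup M)\subseteq\Pi^{\ell}(\mathcal{A})$, and the $\mu$-count is now exact, so equality propagates. Induction on $\ell$ (base case via Lemma \ref{aux-lem-inv1}) together with Lemma \ref{aux-lem-inv2} then gives the $B_j$, all of size at least $2$, with $B_\ell=A_m\cup M$. Reordering, and applying Lemma \ref{gp-lem2} to the fixed progression $\Pi^{\ell}(\mathcal{A})$, then unifies the ratios. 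As written, however, your proposal contains neither this argument nor the paper's construction.
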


If the group $G$ is a torsion-free abelian group, then the set $A$ is also a geometric progression with the same common ratio. The precise result is the following corollary.
\begin{corollary}\label{productset-tf-inv-thm1-cor}
	Let $\ell$ and $m$ be positive integers such that $2 \leq \ell < m$. Let $\mathcal{A} = (A_1, \ldots, A_m)$ be a finite sequence of nonempty finite subsets of a torsion-free abelian group $G$, where $|A_i|  \geq 2$ for each $i  \in [1, m]$. Let $A = A_1 \cup \cdots \cup A_m$, and let $M = \{a \in A : \mu_{\mathcal{A}}(a) = \ell\}$, where $\mu_{\mathcal{A}}(a)$ is defined as in \eqref{min-seq-def-eq}. Then
	\begin{equation}\label{productset-tf-inv-thm1-cor-eq1}
		|\Pi^{\ell}(\mathcal{A})| =  \sum_{a \in A} \mu_{\mathcal{A}}(a) - \ell + 1,
	\end{equation}
	if and only if $\mathcal{A}$ is a $(\ell, d)$-minimizing sequence of sets. Furthermore, if \eqref{productset-tf-inv-thm1-cor-eq1} holds, then the following conclusions hold:
\begin{enumerate}
  \item The sets $A_1 \cup M, \ldots, A_m \cup M$ are geometric progressions with same common ratio $g$ for some $g \in G$ with $g \neq 1$.
  \item The set $A$ is also a geometric progressions with the same common ratio $g$.
\end{enumerate} 
\end{corollary}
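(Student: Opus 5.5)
The plan is to deduce Corollary \ref{productset-tf-inv-thm1-cor} from Theorem \ref{productset-tf-inv-thm1}, using commutativity to reduce geometric progressions of type $(a,g,b)$ to ordinary ones. In an abelian group, $\Pi^{\ell}(\mathcal{A})$ coincides with the generalized product set over increasing indices. By Remark \ref{gp-remark}(2), a geometric progression of type $(\alpha,g,\beta)$ is the geometric progression $\{\alpha\beta, \alpha\beta g,\ldots\}$ with common ratio $g$.

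The equivalence \eqref{productset-tf-inv-thm1-cor-eq1} $\iff$ ``$\mathcal{A}$ is $(\ell,g)$-minimizing'' is immediate from Theorem \ref{productset-tf-inv-thm1}, since its hypotheses ($2\le \ell\le m$, torsion-free, $|A_i|\ge 2$) are all satisfied. Conclusion (1) also follows directly. The theorem gives each $A_i\cup M$ as a progression of type $(\alpha_i,g,\beta_i)$ with one common $g\neq 1$. By commutativity each is then a geometric progression with first term $\alpha_i\beta_i$ and common ratio $g$. If the theorem's proof only produces conjugates $c^{-1}gc$ for different $i$, these all equal $g$ in the abelian case by Remark \ref{gp-remark}(4).

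For conclusion (2), use the $(\ell,g)$-minimizing structure. There are sets $B_1,\ldots,B_\ell$, each a geometric progression with ratio $g$ (again by Remark \ref{gp-remark}(2)), and by Remark \ref{rem-min-seq}(1) we have $A=B_1\cup\cdots\cup B_\ell$. It therefore suffices to show that this union is a single progression with ratio $g$. Embed the relevant part of $G$ into the cyclic-coset picture: every element of $B_j$ has the form $c_j g^{k}$. The key point is that all the $B_j$ lie in one coset $c\langle g\rangle$ and that their union has no gaps.

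I would obtain this from the hypothesis $\ell<m$ by a counting argument. Suppose $A$ is not a single progression of ratio $g$, i.e.\ either the $B_j$ meet two cosets of $\langle g\rangle$ or their union in one coset has a gap. Take some $A_i$ and form a sumset using $A_i$ together with $\ell-1$ other sets. By conclusion (1), the sets $A_i\cup M$ are progressions of ratio $g$, and since $m>\ell$ there is a set not used among the $B_j$ that can be swapped in. Swapping produces an element of $\Pi^{\ell}(\mathcal{A})$ outside $B_1\cdots B_\ell$, which is itself an interval $\{\alpha_1 g^j\beta_\ell\}$ in a single coset. This contradicts condition (4) of Definition \ref{min-seq}.

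Making this swap argument precise is the main obstacle. I would handle it by ordering $\langle g\rangle$-cosets via a torsion-free ordering, so that the extremal elements of each set are well defined, and then comparing the maximal and minimal products: the maximum of $\Pi^\ell$ must be the product of the $\ell$ largest available elements counted with multiplicity $\mu$. Each $A_i\cup M$ is an interval of ratio $g$ containing $M$, so the sets overlap through $M$ or through consecutive terms. From this one concludes that $A$ is connected, hence a single geometric progression with ratio $g$.
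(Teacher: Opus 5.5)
\textbf{Verdict: genuine gap.} Your treatment of the equivalence and of conclusion (1) is fine, and it is what the paper does: a direct appeal to Theorem \ref{productset-tf-inv-thm1}, with commutativity turning type-$(\alpha_i,g,\beta_i)$ progressions into ordinary ones. The gap is in conclusion (2).

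\textbf{Why your route to (2) does not go through.} The data you start from are the sets $B_1,\ldots,B_\ell$ of Definition \ref{min-seq} with $A=B_1\cup\cdots\cup B_\ell$. These cannot by themselves force $A$ to be a progression. For $\ell=m=2$, the sets $A_1=\{1,g\}$, $A_2=\{g^{10},g^{11}\}$ form a $(2,g)$-minimizing sequence with equality, yet $A$ has a gap. So everything rests on your swap argument, and it is never actually given:
\begin{enumerate}
\item The $B_j$ are subsets of $A$, not members of $\mathcal{A}$, so ``a set not used among the $B_j$'' has no meaning.
\item No element of $\Pi^{\ell}(\mathcal{A})\setminus B_1\cdots B_\ell$ is ever exhibited.
\item The claim that all $B_j$ lie in one coset of $\langle g\rangle$ is unproved when $G$ is not cyclic.
\item Your proposed tool, that $\max\Pi^{\ell}(\mathcal{A})$ is the product of the $\ell$ largest elements of $A$ counted with multiplicity $\mu$, is false as stated, because the factors must come from distinct sets. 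For $\ell=2$, $A_1=\{g^9,g^{10}\}$, $A_2=\{1,g\}$, $A_3=\{g^2,g^3\}$, the largest product is $g^{13}$, not $g^{19}$.
\end{enumerate}

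\textbf{What your closing remark does and does not cover.} It settles the case $M\neq\emptyset$: all $A_i\cup M$ are progressions of ratio $g$ containing $M$, so repeated use of Lemma \ref{gp-lem5} glues them into $A$. But when $M=\emptyset$, ``overlap through consecutive terms'' is exactly what has to be proved. It cannot be read off from conclusion (1); compare Example \ref{ex-1}. This case genuinely occurs: for $\ell=3$, $A_1=A_2=\{1,g\}$, $A_3=A_4=\{g^2,g^3\}$ one has $|\Pi^{3}(\mathcal{A})|=6=\sum_{a\in A}\mu(a)-2$, with $M=\emptyset$ and $A_1\cap A_3=\emptyset$.

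\textbf{The missing idea} is to extract further progressions from the equality.
\begin{enumerate}
\item The set $A_1^2=A_1\cup A_{\ell+1}\cup\cdots\cup A_m\cup M$ of Lemma \ref{aux-lem-existance} is a factor of the extremal product $A_1^2\cdots A_\ell^2\subseteq\Pi^{\ell}(\mathcal{A})$.
\item By Lemma \ref{brail-gen-inv-thm-tf}, it is therefore a progression with the same ratio as $A_\ell^2=A_\ell\cup M$, i.e.\ ratio $g^{\pm1}$ by Lemma \ref{gp-lem1}.
\item Permuting $\mathcal{A}$, which \eqref{product-set-symmetry} allows, gives the same for $A_j\cup A_{\ell+1}\cup\cdots\cup A_m\cup M$ for every $j\in[1,\ell]$.
\item Since $m>\ell$, these $\ell$ progressions all contain the nonempty set $A_{\ell+1}$. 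Successive applications of Lemma \ref{gp-lem5} then show that their union, which is $A$, is a progression with ratio $g$.
\end{enumerate}
This is where $\ell<m$ is really used. It is the paper's argument for $\ell\ge 3$, and it works verbatim for $\ell=2$ as well.
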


The following corollary provides the lower bound on the size of $\Pi^{\ell}(\mathcal{A})$ if $\mu_{\mathcal{A}}(a) < \ell$ for each $a \in A$. The proof easily follows from above corollary and Remark \ref{rem-min-seq}(3).

\begin{corollary}\label{productset-tf-inv-thm1-cor1}
	Let $\ell$ and $m$ be positive integers such that $2 \leq \ell < m$. Let $\mathcal{A} = (A_1, \ldots, A_m)$ be a finite sequence of nonempty finite subsets of a torsion-free abelian group $G$, where $|A_i|  \geq 2$ for each $i  \in [1, m]$. Let $A = A_1 \cup \cdots \cup A_m$, and let $\mu_{\mathcal{A}}(a) < \ell$ for each $a \in A$, where $\mu_{\mathcal{A}}(a)$ is defined as in \eqref{min-seq-def-eq}. Then
	\begin{equation}\label{productset-tf-inv-thm1-cor1-eq1}
		|\Pi^{\ell}(\mathcal{A})| =  \sum_{i = 1}^{m}|A_i| - \ell + 1,
	\end{equation}
	if and only if $\mathcal{A}$ is a $(\ell, d)$-minimizing sequence of sets. Furthermore, if \eqref{productset-tf-inv-thm1-cor1-eq1} holds, then the following conclusions hold:
\begin{enumerate}
  \item The sets $A_1, \ldots, A_m$ are geometric progressions with same common ratio $g$ for some $g \in G$ with $g \neq 1$.
  \item The set $A$ is also a geometric progressions with the same common ratio $g$.
\end{enumerate} 
\end{corollary}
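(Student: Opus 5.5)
We plan to deduce Corollary \ref{productset-tf-inv-thm1-cor1} directly from Corollary \ref{productset-tf-inv-thm1-cor} (equivalently, from Theorem \ref{productset-tf-inv-thm1}, since an abelian torsion-free group is in particular torsion-free). The only work is translating the hypothesis $\mu_{\mathcal{A}}(a) < \ell$ into the simplified quantities in the statement.

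First, we rewrite the right-hand side. Since $\mu_{\mathcal{A}}(a) = \min(\ell, \sum_{j} \chi_{A_j}(a)) < \ell$ for every $a \in A$, we get $\sum_{j=1}^m \chi_{A_j}(a) = \mu_{\mathcal{A}}(a) < \ell$. Then Remark \ref{rem-min-seq}(3) gives $\sum_{a\in A}\mu_{\mathcal{A}}(a) = \sum_{i=1}^m |A_i|$. Hence \eqref{productset-tf-inv-thm1-cor1-eq1} is literally the same equation as \eqref{productset-tf-inv-thm1-cor-eq1}. The equivalence with $\mathcal{A}$ being an $(\ell,d)$-minimizing sequence then follows verbatim from Corollary \ref{productset-tf-inv-thm1-cor}, whose hypotheses ($2 \le \ell < m$, $|A_i| \ge 2$, torsion-free abelian $G$) coincide with ours.

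Second, we handle the structural conclusions. The hypothesis says that no $a \in A$ has $\mu_{\mathcal{A}}(a) = \ell$, so the set $M = \{a \in A : \mu_{\mathcal{A}}(a)=\ell\}$ is empty. Therefore $A_i \cup M = A_i$ for each $i$. Conclusion (1) of Corollary \ref{productset-tf-inv-thm1-cor} then says exactly that $A_1,\ldots,A_m$ are geometric progressions with a common ratio $g \neq 1$. Conclusion (2), that $A$ is a geometric progression with ratio $g$, transfers unchanged.

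There is no genuine obstacle. The only point needing care is the first step: checking that the strict inequality $\mu_{\mathcal{A}}(a)<\ell$ forces the minimum to be attained by the sum, so that Remark \ref{rem-min-seq}(3) applies. This requires only $\sum_j \chi_{A_j}(a) \le \ell$, which holds here.
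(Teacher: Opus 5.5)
Your proposal is correct and follows the paper's own argument: the paper simply states that the result follows from Corollary \ref{productset-tf-inv-thm1-cor} together with Remark \ref{rem-min-seq}(3), which is the reduction you carry out. Your observation that $M = \varnothing$, so that $A_i \cup M = A_i$, makes explicit the step the paper leaves implicit.
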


The following example shows that even if the sets $A_1 \cup M, ..., A_m \cup M$ are all geometric progressions with same common ratio, they may not satisfy the equality in \eqref{productset-tf-inv-thm1-cor-eq1}. We present an example in the torsion-free abelian group $G$, written multiplicatively.

\begin{example}\label{ex-1}
Let $\ell = 3$, $m = 5$, and let $g$ be an element of $G$ such that $g \neq 1$. Let $\mathcal{A} = (A_1, \ldots, A_5)$, where $A_1 = \{g^i : i \in [0, 3]\}$, $A_2 = \{g^i : i \in [6, 9]\}$, $A_3 = \{g^i : i \in [7, 10]\}$, $A_4 = \{g^i : i \in [8, 11]\}$, and $A_5 = \{g^i : i \in [9, 12]\}$ are subsets of $G$. Then 
	\[A = A_1 \cup \cdots \cup A_5 = \{g^i : i \in [0, 3] \cup [6, 12]\},\]
	and
	\[M = \emptyset.\]
Hence $A_1 \cup M, \ldots, A_5 \cup M$ are geometric progressions with same common ratio $g$. It is easy to see that $\mu_{\mathcal{A}}(g^i) = 1$ for each $i \in \{0, 1, 2, 3, 6, 12\}$ and $\mu_{\mathcal{A}}(g^i) = 2$ for each $i \in [7, 11]$. Therefore,
	\[\sum_{a \in A} \mu_{\mathcal{A}} (a) = 6 + 10 = 16,\]
	and so 
	\[\sum_{a \in A} \mu_{\mathcal{A}} (a) - \ell + 1 = 14.\]
Since $A_1 A_2 A_3 = \{g^i : i \in [13, 22]\}$ and $A_3 A_4 A_5 = \{g^i : i \in [22, 33]\}$, it follows that
	\[|\Pi^{3}(\mathcal{A})| \geq |(A_1 A_2 A_3) \cup (A_3 A_4 A_5)| = 21 > 14 = \sum_{a \in A} \mu_{\mathcal{A}} (a) - 3 + 1.\]
\end{example}

If $G$ is a torsion-free abelian group and $\mathcal{A} = (A_1, \ldots, A_m)$ is a sequence of nonempty finite subsets of $G$ such that $|\{g \in A : \mu (g) \geq 2 \}| \leq 1$, then we prove that the equality \eqref{productset-tf-inv-thm1-cor-eq1} does not hold in Corollary \ref{productset-tf-inv-thm1-cor}. More precisely, we prove the following theorem.

\begin{theorem}\label{productset-no-min-sq-ext-thm}
	Let $\ell$ and $m$ be positive integer such that $2 \leq \ell < m$. Then no sequence $\mathcal{A} = (A_1, \ldots, A_m)$ of finite subsets of torsion-free abelian group $G$ exists satisfying the conditions $|A_i|  \geq 2$ for each $i  \in [1, m]$, $|\{a \in A : \mu_{\mathcal{A}}(a) \geq 2 \}| \leq 1$, and 
	\[|\Pi^{\ell}(\mathcal{A})| =  \sum_{a \in A} \mu_{\mathcal{A}}(a) - \ell + 1,\]
	where $A = A_1 \cup \cdots \cup A_m$ and $\mu_{\mathcal{A}}(a)$ is defined as in \eqref{min-seq-def-eq}.
\end{theorem}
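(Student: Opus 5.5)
The plan is to argue by contradiction and lean on the structural results already stated, namely Theorem \ref{productset-tf-inv-thm1} and Corollary \ref{productset-tf-inv-thm1-cor}. Suppose a sequence $\mathcal{A}=(A_1,\ldots,A_m)$ with $|A_i|\ge 2$, $2\le \ell<m$, and $|\{a\in A:\mu_{\mathcal{A}}(a)\ge 2\}|\le 1$ attains equality. Let $x$ denote the (at most one) element with $\mu_{\mathcal{A}}(x)\ge 2$. Every other element of $A$ lies in exactly one $A_i$, so the sets $A_1,\ldots,A_m$ are pairwise disjoint except possibly at $x$.

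By Corollary \ref{productset-tf-inv-thm1-cor}, $\mathcal{A}$ is an $(\ell,g)$-minimizing sequence. Moreover each $A_i\cup M$ is a geometric progression with common ratio $g$, and $A$ itself is a geometric progression with ratio $g$. Since $G$ is torsion-free abelian, we fix a total order compatible with the group operation, chosen so that $g>1$. I would treat two cases.

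\textbf{Case 1: $\mu_{\mathcal{A}}(x)<\ell$ or $x$ does not exist.} Then $M=\varnothing$, so each $A_i$ is a block of consecutive powers $\{cg^{s},\ldots,cg^{t}\}$ of the single progression $A$. These $m$ blocks overlap in at most one point, and each has length at least $2$. Order the blocks by their minima, so they appear left to right along $A$. Here $\sum_{a\in A}\mu_{\mathcal{A}}(a)=\sum_i|A_i|$. I will exhibit more than $\sum_i|A_i|-\ell+1$ elements of $\Pi^{\ell}(\mathcal{A})$, identifying each element with its exponent of $g$.
\begin{itemize}
\item First take $A_1\cdots A_\ell$, which has size $\sum_{i\le\ell}|A_i|-\ell+1$.
\item Then, for $j=\ell+1,\ldots,m$, replace the lowest remaining block in the current $\ell$-tuple by $A_j$. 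Each such step contributes at least $|A_j|-1$ new elements above the current maximum, by the usual ordered-group argument.
\end{itemize}
This chain yields only $\sum_i|A_i|-\ell+1$ elements, so one extra element is needed.

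The extra element comes from the gaps created by near-disjointness. Since $A_{\ell+1}$ lies entirely above $A_1,\ldots,A_{\ell}$ (up to the single shared point $x$), the product $A_2\cdots A_{\ell}A_{\ell+1}$ and the product $A_1\cdots A_{\ell-1}A_{\ell+1}$ differ by a shift of at least $|A_1|\ge2$ or $|A_\ell|\ge 2$ steps. Consider the element $(\max A_1)(\max A_2)\cdots(\max A_{\ell-1})(\min A_{\ell+1})$. I expect this element to lie strictly between $\max(A_1\cdots A_\ell)$ and the elements counted in the next chain step, which gives one element not yet counted. Equivalently, Example \ref{ex-1} shows exactly this mechanism: two products such as $A_1A_2A_3$ and $A_3A_4A_5$ together cover more than the bound.

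\textbf{Case 2: $\mu_{\mathcal{A}}(x)=\ell$.} Then $M=\{x\}$, and every other element has multiplicity one. Here $\sum_a\mu_{\mathcal{A}}(a)=|A|-1+\ell$, so the target bound is $|\Pi^\ell(\mathcal{A})|=|A|$. The sets $A_i\setminus\{x\}$ are pairwise disjoint, and $x$ lies in at least $\ell$ of them. I would use the minimizing structure directly. The sets $B_1,\ldots,B_\ell$ are progressions with ratio $g$, cover $A$, and satisfy $\sum_j\chi_{B_j}=\mu_{\mathcal{A}}$. Hence every $B_j$ contains $x$, and the $B_j$ are otherwise disjoint. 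Since they are intervals of the progression $A$ through a common point, at most two of them can have length $\ge 2$ unless they coincide, which is impossible. This forces $\ell\le 2$, so $\ell=2$, with $B_1$ and $B_2$ being the parts of $A$ below and above $x$. Now $m\ge3$ sets $A_i$ have size at least $2$ and are disjoint off $x$. At least one $A_i$ avoids $x$ or lies strictly on one side of it, and its products with the other sets produce an element outside $B_1B_2$. This contradicts condition (4) of Definition \ref{min-seq}.

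The main obstacle is producing the single extra element in Case 1 rigorously, since the chain argument is tight by itself. I would first try the explicit mixed-extremes element above, using $|A_i|\ge2$ and near-disjointness to verify it is new.
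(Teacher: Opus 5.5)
\textbf{The gap is in your Case 1, which carries the substance of the theorem, and the strict inequality there is never established.} Two things go wrong.
\begin{enumerate}
\item Your count is inconsistent. If each replacement step contributed only $|A_j|-1$ new elements, the chain would give $\sum_i|A_i|-\ell+1-(m-\ell)$ elements. You would then be $m-\ell+1$ elements short, not one.
\item The proposed extra element fails. Case 1 allows $\ell\ge 3$ with $\mu_{\mathcal A}(x)=2$ and $x=\max A_\ell=\min A_{\ell+1}$. In that configuration $(\max A_1)\cdots(\max A_{\ell-1})(\min A_{\ell+1})=\max(A_1\cdots A_\ell)$. For instance, take $\ell=3$, $m=4$ and exponent blocks $[0,1],[2,3],[4,5],[5,6]$. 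Your element has exponent $1+3+5=9$, which is exactly $\max(A_1A_2A_3)$.
\end{enumerate}
So the ``one extra element'' is missing, and the heuristic you give cannot supply it.

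\textbf{The missing idea is that consecutive chain products are separated by a gap, not merely adjacent.}
\begin{itemize}
\item After relabeling, write $A_j=\{cg^e:e\in[s_j,t_j]\}$ with $s_1<\cdots<s_m$.
\item Two blocks of length $\ge2$ can share at most the single point $x$. This forces $s_{j+1}\ge t_j$ for every $j$, with equality for at most one $j$.
\item Put $S_k=A_{k+1}\cdots A_{k+\ell}$. The exponent of $\min S_{k+1}$ exceeds that of $\max S_k$ by $\sum_{j=k+1}^{k+\ell}(s_{j+1}-t_j)\ge \ell-1\ge 1$. Hence $S_0,\ldots,S_{m-\ell}$ are pairwise disjoint.
\item For $k\ge1$ you have $|S_k|=\sum_{j=k+1}^{k+\ell}|A_j|-\ell+1\ge |A_{k+\ell}|+\ell-1$.
\item Summing gives $|\Pi^{\ell}(\mathcal A)|\ge \sum_i|A_i|-\ell+1+(m-\ell)(\ell-1)$. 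This exceeds $\sum_{a\in A}\mu_{\mathcal A}(a)-\ell+1$, because in Case 1 no element lies in $\ell$ sets, so $\sum_a\mu_{\mathcal A}(a)=\sum_i|A_i|$.
\end{itemize}

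With this repair your direct-counting route is a genuine alternative to the paper's. The paper never counts. It extracts from the proof of Corollary \ref{productset-tf-inv-thm1-cor} that $A_1\cup A_{\ell+1}\cup\cdots\cup A_m\cup M$ is itself a progression with ratio $g$. Near-disjointness then forces the block $A_2$, which sits between $A_1$ and $A_{\ell+1}$, to be swallowed by that union, contradicting $|\{a:\mu_{\mathcal A}(a)\ge2\}|\le1$.

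Separately, the last step of your Case 2 (that some product ``produces an element outside $B_1B_2$'') is only asserted. It is cleaner to argue directly with exponent intervals. The $m\ge3$ intervals $A_i\cup\{x\}$ all contain $x$, each has length $\ge2$, and any two meet only in $x$. By pigeonhole, two of them contain the same neighbour of $x$, which is a contradiction. This works for every $\ell$, without using the sets $B_j$.
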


The next theorem states that if the sets $A_1, \ldots, A_m$ are extremal sets for the generalized product set $\Pi^{\ell}(\mathcal{A})$, then certain subsets of the set $A = A_1 \cup \cdots \cup A_m$  have a specific structure. The precise result is stated in the following theorem. 

\begin{theorem}\label{productset-tf-inv-thm2}
	Let $\ell$ and $m$ be positive integers such that $2 \leq \ell < m$. Let $\mathcal{A} = (A_1, \ldots, A_m)$ be a sequence of finite subsets of torsion-free group $G$ such that $|A_i|  \geq 2$ for each $i  \in [1, m]$, and let $A = A_1 \cup \cdots \cup A_m$. Let $\mu_{\mathcal{A}}(a) = \ell$ for some $a \in A$, where $\mu_{\mathcal{A}}(a)$ is defined as in \eqref{min-seq-def-eq}. Furthermore, assume that $|\{a \in A : \mu_{\mathcal{A}}(a) \geq 2\}| \geq 2$. Let
\begin{equation}\label{productset-tf-inv-thm2-eq1}
  |\Pi^{\ell}(\mathcal{A})| =  \sum_{a \in A} \mu_{\mathcal{A}}(a) - \ell + 1,
\end{equation}
and
		\[A_j' = \{a \in A : \mu_{\mathcal{A}}(a) \geq j\}\]
		for each $j \in [1, \ell]$, and let $k$ be the largest integer such that $|A_j'| \geq 2$ for each $j \in [1, k]$. Then  for each $j \in [1, k]$, the set $A_j'$ is a geometric progression of type $(\alpha_j, g, \beta_j)$ for some $\alpha_j, \beta_j, g \in G$  with $g \neq 1$, where $\alpha_{j + 1} = \beta_j^{- 1}$ for each $j \in [1, k - 1]$. 
\end{theorem}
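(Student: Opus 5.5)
The plan is to reduce the statement to Theorem \ref{productset-tf-inv-thm1}. Equality \eqref{productset-tf-inv-thm2-eq1} holds, so that theorem says $\mathcal{A}$ is an $(\ell, g)$-minimizing sequence of sets. Hence there are sets $B_1, \ldots, B_\ell \subseteq A$ with $|B_i| \geq 2$. Each $B_i$ is a geometric progression of type $(\alpha_i', g, \beta_i')$ with $\alpha_{i+1}' = \beta_i'^{-1}$, and $\mu_{\mathcal{A}}(a) = \sum_{j=1}^{\ell} \chi_{B_j}(a)$ for every $a \in A$. The task is then to show that the level sets $A_j' = \{a : \mu_{\mathcal{A}}(a) \geq j\}$ inherit this progression structure.

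First I would normalise the $B_i$ to a common frame. Put $\gamma_i = \alpha_1' \cdots \alpha_i'$, and use the chaining $\alpha_{i+1}' = \beta_i'^{-1}$ to get $\gamma_{i} = \gamma_{i-1}\beta_{i-1}'^{-1}$. I would try to show that conjugating by a fixed element puts every $B_i$ inside one cyclic coset: $B_i = c\, g^{[s_i, t_i]}\, d_i$, where $c$ is fixed and the right factors are controlled by the chain. Remark \ref{rem-min-seq}(4) shows $B_1 \cdots B_\ell$ is itself a progression in $g$. Condition (4), $B_1 \cdots B_\ell = \Pi^{\ell}(\mathcal{A})$, should then force more: because $|\Pi^{\ell}(\mathcal{A})|$ is minimal, any product of distinct $A_i$ in any order lies in this progression.

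The key step is to show that the $B_i$ can be rearranged into a \emph{nested} or \emph{chained} family. I would argue by counting that in the torsion-free setting, re-choosing the $B_i$ lets us assume $B_1 \supseteq B_2 \supseteq \cdots$ as sets of exponents (after the normalisation). The counting goes like this. Each point $a$ is covered exactly $\mu_{\mathcal{A}}(a)$ times. Replacing $B_i, B_j$ by their union and intersection keeps both the multiplicity function and the size count. The union and intersection of two intervals of exponents are again intervals, provided the intervals overlap or are adjacent. If they are disjoint with a gap, I would use the hypothesis $|\{a : \mu_{\mathcal{A}}(a) \geq 2\}| \geq 2$ and the existence of $a$ with $\mu_{\mathcal{A}}(a) = \ell$ to rule this out. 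Such an $a$ lies in all $B_i$, so all intervals share a point. After this uncrossing, $A_j' = B_j$ for $j \leq k$, since $a$ lies in at least $j$ of the nested sets iff it lies in $B_j$. So each $A_j'$ with $|A_j'| \geq 2$ is a progression with ratio $g$. The chaining relation $\alpha_{j+1} = \beta_j^{-1}$ is recovered by writing all of them in the common frame with suitable left and right factors, using Remark \ref{gp-remark}(4).

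I expect the main obstacle to be making the uncrossing rigorous in the noncommutative case. There the $B_i$ are geometric progressions of type $(\alpha_i', g, \beta_i')$ with different outer factors, and it is not immediate that they live in a common ``coordinate'' $\{c g^n d\}$. What I would try first is the common point $a_0$ with $\mu_{\mathcal{A}}(a_0) = \ell$. It lies in every $B_i$, so I would write $B_i = a_0 \cdot \{\beta_i'^{-1} g^{n} \beta_i' : n \in I_i\}$ with $I_i$ an integer interval containing $0$. I would then deduce that all the conjugates $\beta_i'^{-1} g \beta_i'$ coincide (up to inversion), using condition (4) and the size equality of Remark \ref{rem-min-seq}(4) together with torsion-freeness. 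Once they coincide, the sets are intervals in a single cyclic coordinate, the union/intersection argument applies verbatim, and the representation with $\alpha_{j+1} = \beta_j^{-1}$ follows.
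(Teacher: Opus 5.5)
Your reduction has a genuine gap at the step you yourself flag as the main obstacle, and the gap is substantive, not just a matter of rigor. Write $h_i = (\beta_i')^{-1} g \beta_i'$, so that $B_i = a_0\{h_i^n : n \in I_i\}$. From $a_0 \in B_i \cap B_{i+1}$ and the chaining $\alpha'_{i+1} = (\beta_i')^{-1}$ one computes directly that $h_{i+1} = a_0^{-1} h_i a_0$. So your claim that the $h_i$ coincide up to inversion is equivalent to the relation $a_0^{-1} h_1 a_0 = h_1^{\pm 1}$.

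The same relation is exactly what you would need for the final chaining $\alpha_{j+1} = \beta_j^{-1}$ of the nested sets $a_0 h^{J_j}$. By Lemmas \ref{aux-lem-inv2} and \ref{gp-lem1}, two such sets have minimal product only when $a_0^{-1} h a_0 = h^{\pm 1}$. Remark \ref{gp-remark}(4) only reparametrizes a progression and cannot produce this relation.

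Nothing you invoke yields it. The size identity of Remark \ref{rem-min-seq}(4) holds for every chained family, and torsion-freeness does not help. For example, work in the torsion-free group $\langle a, t \mid tat^{-1} = a^2 \rangle$ and take $B_1 = t\{a^n : n \in [0,3]\}$ and $B_2 = \{a^n : n \in [0,6]\}t$. They are chained, of types $(t, a, 1)$ and $(1, a, t)$. They satisfy $|B_1B_2| = 10 = |B_1| + |B_2| - 1$ and $t \in B_1 \cap B_2$. Moreover $B_1 \subset B_2$, since $ta^n = a^{2n}t$. Yet the level sets $(B_2, B_1)$ of $\chi_{B_1} + \chi_{B_2}$ have $|B_2 B_1| = 19$, so they are not chained. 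What rules out such configurations is that $\Pi^{\ell}(\mathcal{A})$ contains products taken in every order. Your argument never turns this into a concrete containment, so the common frame remains unproved.

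The missing idea is to apply the extremal machinery to the level sets themselves rather than to arbitrary witnesses $B_i$. This is what the paper does, and it never uses Theorem \ref{productset-tf-inv-thm1}.

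By Lemma \ref{aux-lem-dir-tf}, $A_1' \cdots A_{\ell}' \subseteq \Pi^{\ell}(\mathcal{A})$. An element of $A_j'$ lies in at least $j$ of the sets $A_i$, so distinct indices can be chosen greedily, and $\Pi^{\ell}(\mathcal{A})$ allows any order. The same lemma gives $\sum_j |A_j'| = \sum_{a} \mu_{\mathcal{A}}(a)$. Together with Lemma \ref{kemp-gen-thm-tf} and \eqref{productset-tf-inv-thm2-eq1}, this forces $|A_1' \cdots A_{\ell}'| = \sum_j |A_j'| - \ell + 1$.

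Since some $a$ has $\mu_{\mathcal{A}}(a) = \ell$, the nested sets $A_{k+1}', \ldots, A_{\ell}'$ are singletons. Right multiplication by a fixed element is injective, so dropping them gives $|A_1' \cdots A_k'| = \sum_{j \leq k} |A_j'| - k + 1$ with $k \geq 2$. Lemma \ref{brail-gen-inv-thm-tf} then yields the common ratio and the chaining simultaneously. Once this containment is available, the witnesses $B_i$, the common-frame normalization and the uncrossing are all unnecessary.
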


\textbf{Organization of the paper:} In Section $2$, we fix some notations which will be used throughout the paper. In Section \ref{sec-torsion-free-groups}, we prove the main results. This section is divided into three subsections. In Subsection \ref{subsec-gp-properties}, we prove some auxiliary lemmas related to the properties of geometric progressions in groups, and in Subsection \ref{subsec-aux-lem}, we prove additional auxiliary lemmas required for the proof of main results. In Subsection \ref{subsec-main-thm-torsionfree}, we prove the main results in torsion-free groups: Theorem \ref{productset-tf-dir-thm}, Theorem \ref{productset-tf-inv-thm1}, Corollary \ref{productset-tf-inv-thm1-cor}, Theorem \ref{productset-no-min-sq-ext-thm} and Theorem \ref{productset-tf-inv-thm2}. In Section \ref{sec-prime-order-group}, we prove analogous results in the cyclic groups of prime orders. The main results proved in this section are Theorem \ref{zp-dir-thm} and Theorem \ref{zp-inv-thm}. In Section \ref{sec-abelian-groups}, we prove analogous results in arbitrary abelian groups. The main results proved in this section are Theorem \ref{sigma-abl-dir-thm} and Theorem \ref{sigma-abl-inv-thm}. In Section \ref{applications-to-subseq-sum}, we discuss some applications of these results to obtain some new results on subsequence sums $\Pi^{\ell} (\mathbf{a})$ (see Theorem \ref{gen-subseq-sum-tf-dir-thm1}, Theorem \ref{gen-subseq-sum-gp-dir-thm1}, Theorem \ref{gen-subseq-sum-tf-dir-thm2}, Theorem \ref{gen-subseq-sum-gp-dir-thm2}, Theorem \ref{gen-subseq-sum-tf-inv-thm1} and Theorem \ref{gen-subseq-sum-tf-inv-thm2}).   

The proof of Theorem \ref{productset-tf-dir-thm} and Theorem \ref{productset-tf-inv-thm1} will require various auxiliary lemmas including the multiple set version (see Lemma \ref{kemp-gen-thm-tf} in Section \ref{sec-torsion-free-groups}) of the following theorem of Kemperman \cite{kemperman1956}.
\begin{theorem}\label{kemp-thm-tf}
    	Let $G$ be a torsion-free multiplicative group. Let $A$ and $B$ be nonempty finite subsets of $G$. Then
    \[|A B| \geq |A| + |B| - 1.\]	
\end{theorem}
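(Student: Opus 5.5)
We prove Theorem \ref{kemp-thm-tf} by reducing to the case where the group is ordered, or at least where the two sets sit inside a group with a compatible total order. The classical route, due to Kemperman, works directly in an arbitrary torsion-free group by induction, but the cleanest plan uses the e-transform (Dyson transform) adapted to nonabelian groups.

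\emph{Setup.} We argue by induction on $|B|$, with $|A|+|B|$ as a secondary parameter. Translating does not change cardinalities, since $|xAB|=|AB|$ and $|A(By)|=|ABy|$. So we may assume $1\in B$.

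\emph{Base case.} If $|B|=1$, the statement is trivial.

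\emph{Inductive step.} Suppose $|B|\ge 2$, and pick $b\in B$ with $b\neq 1$. We try to find $a\in A$ such that the transformed pair
\[
A'=A\cup aB,\qquad B'=B\cap a^{-1}A
\]
satisfies $A'B'\subseteq AB$ and $|A'|+|B'|=|A|+|B|$. This is Kemperman's version of the Dyson transform. One checks $A'B'\subseteq AB$ by cases:
\begin{itemize}
\item for $x\in A$ and $y\in B'\subseteq B$, we have $xy\in AB$;
\item for $x=ab_1$ with $b_1\in B$ and $y\in B'$, we have $ay\in A$, but $ab_1y$ need not lie in $AB$ in the nonabelian setting.
\end{itemize}
So the naive transform fails, and this is the main obstacle.

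\emph{Order-based route.} To avoid that obstacle, we instead use that a torsion-free group need not be orderable, but the finitely generated subgroup $\langle A\cup B\rangle$ still has the needed property. We try the following argument, due to Kemperman/Hamidoune, which needs only torsion-freeness.

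Assume for contradiction that $|AB|\le |A|+|B|-2$. Among all such counterexamples, choose one with $|B|\ge2$ minimal, and then $|A|$ minimal. We have $|A|\ge 2$ too, by symmetry.

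Fix $a\in A$ and consider $A_1=A\cap ABb^{-1}$ for suitable $b$. The key is Hamidoune's isoperimetric ("atom") method. Define, for finite $X$ with $|X B|<\infty$, the quantity
\[
\partial(X)=|XB|-|X|.
\]
Let $\kappa=\min \partial(X)$ over nonempty finite $X$; the counterexample gives $\kappa\le|B|-2$. An \emph{atom} is a set $X$ of minimal size attaining $\kappa$, normalized to contain $1$. Hamidoune's lemma says that two distinct atoms intersect trivially, so atoms containing $1$ coincide. Hence the atom $X$ containing $1$ satisfies $xX=X$ for all $x\in X$, which makes $X$ a finite subgroup. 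In a torsion-free group, $X=\{1\}$. Then
\[
\kappa=|B|-1>|B|-2,
\]
a contradiction.

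\emph{Remaining work and main obstacle.} The main work is proving the intersection property of atoms. This follows from submodularity,
\[
\partial(X\cup Y)+\partial(X\cap Y)\le\partial(X)+\partial(Y),
\]
using $(X\cap Y)B\subseteq XB\cap YB$. If $X\cap Y\neq\varnothing$, minimality of the atom then forces $\partial(X\cap Y)=\kappa$, so $X\cap Y=X$. For this we need $\partial(X\cup Y)\ge\kappa$, which holds by the definition of $\kappa$. I expect the care needed to be in handling the side condition that $X B$ is not everything, which is automatic here since $G$ is infinite unless trivial.
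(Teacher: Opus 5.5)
The paper gives no proof of this statement. It is quoted from Kemperman's 1956 paper and used only as a black box (iterated in Lemma~\ref{kemp-gen-thm-tf}). Your final argument is a genuinely different, self-contained route (Hamidoune's isoperimetric, or atom, method), and it is correct:
\begin{enumerate}
\item $\partial(X)=|XB|-|X|$ is a nonnegative integer, so $\kappa$ is attained.
\item $\partial(gX)=\partial(X)$, so left translates of atoms are atoms.
\item Submodularity, which follows from $(X\cup Y)B=XB\cup YB$ and $(X\cap Y)B\subseteq XB\cap YB$, shows that two atoms which meet are equal.
\item For an atom $X\ni 1$ and any $x\in X$, the atom $xX$ contains $x\in X$, so $xX=X$. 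This makes $X$ a finite subgroup, hence $X=\{1\}$ and $\kappa=|B|-1$.
\end{enumerate}
That gives the theorem directly, with no contradiction or minimal counterexample needed. Compared with citing Kemperman, your route is a short proof that uses neither commutativity nor any ordering. It also isolates exactly where torsion-freeness enters: the only finite subgroup is trivial.

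Do clean up the write-up. The Dyson-transform attempt, the minimal-counterexample setup and the set $A\cap ABb^{-1}$ are never used and should be deleted. The remark that $\langle A\cup B\rangle$ ``still has the needed property'' is false if it means orderability. Finitely generated torsion-free groups need not even be left-orderable (e.g.\ Promislow's group), which is exactly why the atom method, not an ordering argument, is the right tool here. Finally, state the translation invariance and the step $x\in xX\cap X$ explicitly. Your sketch compresses them into ``atoms containing $1$ coincide,'' but what you actually need is that $X$ and $xX$, which both contain $x$, coincide.
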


The characterization of a geometric progression given in Definition \ref{gp-characterization} helps us to generalize the following theorem of Brailovsky and Freiman \cite{brailovsky1990} for the sumset of multiple sets (see Lemma \ref{brail-gen-inv-thm-tf} in Section \ref{sec-torsion-free-groups}) which will be required in the proof Theorem \ref{productset-tf-inv-thm1} and Theorem \ref{productset-tf-inv-thm2}.

\begin{theorem}\label{brail-inv-thm-tf}
	Let $G$ be a torsion-free group. Let $A$ and $B$ be nonempty finite subsets of $G$ such that $|A|, |B| \geq 2$ and 
	\[|A B| = |A| + |B| - 1.\]
	Then $A$ and $B$ are geometric progressions of types $(a, g, 1)$ and $(1, g, b)$, respectively, where $a, b, g \in G$, $g \neq 1$.
\end{theorem}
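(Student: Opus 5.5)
The plan is to argue by induction on $|A| + |B|$. We use a noncommutative Dyson-type ($e$-)transform, in the spirit of Kemperman's proof of Theorem \ref{kemp-thm-tf}, together with the lower bound of Theorem \ref{kemp-thm-tf} itself. First we normalize. Replacing $A$ by $x^{-1}A$ and $B$ by $By^{-1}$ leaves $|AB|$ unchanged and preserves the form of the conclusion (types $(a,g,1)$ and $(1,g,b)$ just change their outer parameters). So we may assume $1 \in A \cap B$.

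\emph{Base case.} Take $|A| = 2$, say $A = \{1, g\}$ with $g \neq 1$. Then $AB = B \cup gB$, and equality means $|gB \setminus B| = 1$. Consider the graph on $B$ with edges $x \to gx$. Since $G$ is torsion-free, $\langle g\rangle$ is infinite cyclic, so every orbit of left multiplication by $g$ is infinite. Hence each component of $B$ in this graph is a finite path $\{x, gx, \ldots, g^{r}x\}$, and each such path contributes exactly one element to $gB \setminus B$. Therefore $B$ is a single path $\{g^i b : 0 \le i < |B|\}$, i.e.\ $B$ is of type $(1, g, b)$, while $A$ is of type $(1, g, 1)$. The case $|B| = 2$ is symmetric, using right multiplication.

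\emph{Inductive step.} Assume $|A|, |B| \ge 3$. For $e \in B$ consider the transform
\[A(e) = A \cup Ae, \qquad B(e) = B \cap e^{-1}B ,\]
or its mirror version, chosen according to which one Kemperman's counting controls. In either case $A(e)B(e) \subseteq AB$. Kemperman's argument shows that $e$ can be chosen so that $|A(e)| + |B(e)| \ge |A| + |B|$ and $B(e) \neq B$. By Theorem \ref{kemp-thm-tf} this forces $|A(e)B(e)| = |A(e)| + |B(e)| - 1$ together with $|A(e)| + |B(e)| = |A| + |B|$, so the transformed pair is again extremal. If $|B(e)| \ge 2$, the induction hypothesis (applied to the pair with the smaller second set, reorganizing the induction on $|B|$) gives that $A(e)$ and $B(e)$ are progressions of types $(a', g, 1)$ and $(1, g, b')$.

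We then lift this structure back to $A$ and $B$. Here we would use the equality $|A(e)| - |A| = |B| - |B(e)|$ together with $A(e)B(e) = AB$ (both sides have the same size and one contains the other). The aim is to show that $A \subseteq A(e)$ is an initial or terminal segment of the progression $A(e)$, and that $B$ extends $B(e)$ along the same ratio. The tool for this is again the orbit/path argument from the base case, applied to left and right multiplication by $g$.

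If instead every admissible $e$ yields $|B(e)| = 1$, we treat this degenerate case directly. Then $B \cap e^{-1}B = \{1\}$ for all $e \in B \setminus \{1\}$. This gives strong Sidon-type information, which should make $|AB|$ exceed $|A| + |B| - 1$ once $|A|, |B| \ge 3$. The count would go by fixing $a \in A$ and comparing $aB$ with $a'B$ for $a' \neq a$.

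I expect the main obstacle to be the lifting step in the noncommutative setting. The common ratio obtained for $(A(e), B(e))$ must act on the right of $A$ and on the left of $B$, and this is exactly why the conclusion is phrased with types $(a,g,1)$ and $(1,g,b)$ rather than as ordinary progressions with a common ratio. To handle it, we would first prove that $A^{-1}A$ and $BB^{-1}$ lie in $\langle g \rangle$, using the equality $A(e)B(e) = AB$ and the fact that every $a^{-1}a'$ with $a, a' \in A(e)$ is a power of $g$. From there, the interval structure follows by the path argument in the infinite cyclic group $\langle g\rangle \cong \mathbb{Z}$, where the problem reduces to the classical Freiman $2n-1$ situation for integers.
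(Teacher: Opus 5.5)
The paper does not prove this statement. It quotes it from Brailovsky and Freiman \cite{brailovsky1990} and uses it as a black box (via Lemma \ref{aux-lem-inv1}), so your argument has to stand on its own. Three parts of it do, after a small repair.

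\emph{Base case.} Your base case $\min(|A|,|B|)=2$ is correct; the orbit-path argument is where torsion-freeness enters.

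\emph{Lifting step.} This is correct and simpler than you fear. If $A(e)=\{a'g^i\}$ and $B(e)=\{g^jb'\}$, then $A\subseteq A(e)\subseteq a'\langle g\rangle$. Also $AB=A(e)B(e)\subseteq a'\langle g\rangle b'$, which forces $B\subseteq\langle g\rangle b'$. Everything then reduces to $|I+J|=|I|+|J|-1$ in $\mathbb{Z}$.

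\emph{Choice of $e$.} Take $e\in(A^{-1}A\cap BB^{-1})\setminus\{1\}$, not merely $e\in B$; this set is nonempty, since otherwise $|AB|=|A||B|$. Then your transform and its mirror $(A\cap Ae^{-1},\,B\cup eB)$ change $|A|+|B|$ by opposite amounts. Theorem \ref{kemp-thm-tf} together with extremality forces both changes to vanish. This gives $|B\cap e^{-1}B|=|A\cap Ae|+|B|-|A|$, and the mirror is never needed.

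The genuine gap is the degenerate case $|B(e)|=1$, which you dispose of with ``should''. By the identity above it can occur only when $|A|\ge|B|$, but there it must be handled. The count you sketch, comparing translates $aB$ and $a'B$, cannot handle it, because it never uses torsion-freeness.

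A concrete example shows this. Take a prime $p\ge 7$, $B=\{0,1,3\}$, and $A=\mathbb{Z}_p\setminus\{x,x-1,x-3\}$ with $x$ chosen so that $0\in A$. Then $B\cap(B-e)=\{0\}$ for $e=1,3$. In fact $B$ is a Sidon set, so distinct translates $a+B$ meet in at most one point. Also $|A|,|B|\ge 3$. Yet $A+B=\mathbb{Z}_p\setminus\{x\}$ by Cauchy--Davenport, so $|A+B|=p-1=|A|+|B|-1$. The degenerate case therefore needs an argument that genuinely exploits the absence of torsion, and none is given.

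Switching to the mirror transform to shrink $A$ does not rescue the induction. That shrink is always available with $|A\cap Ae^{-1}|\ge 2$ when $|A|>|B|$, but it enlarges $B$, so an induction on $|B|$ no longer applies. Alternating between the two transforms has no evident decreasing measure.

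This degenerate configuration is exactly the hard core of the Brailovsky--Freiman theorem, so the proposal as written is incomplete.
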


\section{General notation}
 Let $\mathcal{A} = (A_1, \dots, A_m)$ be a finite sequence of finite subsets of $G$, and let $A = A_1 \cup \cdots \cup A_m$. Then
 \begin{enumerate}
 	\item Recall that for each $a \in A$, we define
 	\[\mu_{\mathcal{A}}(a) = \min \bigg (\ell, \sum_{j = 1}^m \chi_{A_j}(a) \bigg).\]
 	\item For each $a \in A$, we define
 	\[\eta_{\mathcal{A}} (a) = \sum_{j = 1}^{\ell} \chi_{A_j}(a).\]
 	\item For each $a \in A$,
 	\[\tau_{\mathcal{A}} (a) = \min \biggl(\ell, \sum_{j = \ell + 1}^m \chi_{A_j}(a)\biggr).\]
    \item Clearly, if $\mu_{\mathcal{A}}(a) < \ell$ for some $a \in A$, then 
 \[\mu_{\mathcal{A}}(a) = \eta_{\mathcal{A}} (a) + \tau_{\mathcal{A}} (a).\]
 \end{enumerate}	
From now on, if the sequence $\mathcal{A}$ is clear from the context, we will drop the subscript $\mathcal{A}$ from the above expressions used in the proofs and simply write $\mu(a)$, $\eta(a)$ and $\tau(a)$ in place of $\mu_{\mathcal{A}}(a)$, $\eta_{\mathcal{A}} (a)$ and $\tau_{\mathcal{A}} (a)$, respectively.
 
\section{Generalized product set in torsion-free groups}\label{sec-torsion-free-groups}
In the next subsection, we prove some lemmas which describes the geometric progressions in a group (not necessarily abelian). In Subsection \ref{subsec-aux-lem}, we prove various auxiliary lemmas required for the proof of Theorem \ref{productset-tf-dir-thm}, Theorem \ref{productset-tf-inv-thm1}, Corollary \ref{productset-tf-inv-thm1-cor}, Theorem \ref{productset-no-min-sq-ext-thm} and Theorem \ref{productset-tf-inv-thm2}. The case $\ell = 2$ of Theorem \ref{productset-tf-inv-thm1} is separately proved as Lemma \ref{aux-lem-special-case}. In Subsection \ref{subsec-main-thm-torsionfree}, we prove the main results in torsion-free groups.

\subsection{Properties of geometric progressions in groups} \label{subsec-gp-properties}

\begin{lemma}\label{gp-lem1}
	Let $A$ be a nonempty finite subset of torsion-free group $G$ such that $|A| = m \geq 2$. Let $A$ be a geometric progression of type $(a, g, b)$ and also a geometric progression of type $(\alpha, g_1, \beta)$ for some $a, b, \alpha, \beta, g, g_1 \in G$ with $g \neq 1, g_1 \neq 1$. Then one of the following conditions holds:
	\begin{enumerate}
		\item $a = \alpha c$, $b = c^{- 1} \beta$, and $g = c^{- 1} g_1 c$ for some $c \in G$.
		\item $a = \alpha c$, $b = c^{- 1} g_1^{m - 1}\beta$, and $g = c^{- 1} g_1^{- 1} c$ for some $c \in G$.
	\end{enumerate}
\end{lemma}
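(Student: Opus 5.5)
The plan is to normalize both representations so that everything reduces to arithmetic progressions of integers inside the infinite cyclic group $\langle g_1\rangle$.

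\textbf{Step 1 (normalize).} Translate the second representation to the standard progression $\{1, g_1, \ldots, g_1^{m-1}\}$. Put
\[A' = \alpha^{-1} A \beta^{-1} = \{g_1^j : j \in [0, m-1]\}.\]
Using the first representation, set $c = \alpha^{-1} a$ and $d = b\beta^{-1}$. Then $A' = \{c g^i d : i \in [0, m-1]\}$.

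\textbf{Step 2 (rewrite as a left progression).} Let $h = c g c^{-1}$ and $e = cd$. Since $c g^i d = h^i c d = h^i e$, we get
\[\{h^i e : i \in [0,m-1]\} = \{g_1^j : j \in [0,m-1]\}.\]

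\textbf{Step 3 (move into $\langle g_1\rangle$).} Because $G$ is torsion-free and $g_1 \neq 1$, the subgroup $\langle g_1\rangle$ is infinite cyclic, so $j \mapsto g_1^j$ is an isomorphism $\mathbb{Z} \to \langle g_1\rangle$.

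Since $m \geq 2$, both $e$ and $he$ lie in $A' \subseteq \langle g_1\rangle$. Hence $e = g_1^s$ and $h = (he)e^{-1} = g_1^t$ for some integers $s, t$. Also $t \neq 0$, since $g \neq 1$ forces $h \neq 1$.

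Transporting to $\mathbb{Z}$, the identity of Step 2 becomes
\[\{s + ti : i \in [0, m-1]\} = [0, m-1].\]

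\textbf{Step 4 (solve in $\mathbb{Z}$).} Compare the spread of the two sets, i.e. maximum minus minimum: $|t|(m-1) = m-1$, so $|t| = 1$.
\begin{itemize}
\item If $t = 1$, the minimum gives $s = 0$.
\item If $t = -1$, the maximum gives $s = m-1$.
\end{itemize}

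\textbf{Step 5 (translate back).} From $a = \alpha c$, $g = c^{-1} h c$ and $b = d\beta = c^{-1} e \beta$:
\begin{itemize}
\item Case $t = 1$, $s = 0$: $g = c^{-1} g_1 c$ and $b = c^{-1}\beta$. This is condition (1).
\item Case $t = -1$, $s = m-1$: $g = c^{-1} g_1^{-1} c$ and $b = c^{-1} g_1^{m-1}\beta$. This is condition (2).
\end{itemize}

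The only step needing real input is Step 3. There we must use torsion-freeness to identify $\langle g_1\rangle$ with $\mathbb{Z}$, and $m \geq 2$ to place $h$ itself in $\langle g_1\rangle$. Everything else is bookkeeping of conjugations.
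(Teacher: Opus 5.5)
Your proof is correct and follows the same route as the paper. The paper also sets $c=\alpha^{-1}a$ and writes $ab=\alpha g_1^{r}\beta$, $agb=\alpha g_1^{s}\beta$ to get $g=c^{-1}g_1^{s-r}c$ and $ag^jb=\alpha g_1^{r+j(s-r)}\beta$ (its $r$ and $s-r$ are your $s$ and $t$), uses torsion-freeness to compare exponents in $\mathbb{Z}$, and differs only in the final integer step. There your spread comparison $|t|(m-1)=m-1$ replaces the paper's longer case analysis, which first excludes $r\in[1,m-2]$ via the neighbours $r\pm 1$ and then forces $s=1$ or $s=m-2$.
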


\begin{proof} Since $ab \in A$ and $A$ is a geometric progression of type $(\alpha, g_1, \beta)$, we may assume that $ab = \alpha g_1^r \beta$ for some $r \in [0, m - 1]$. Hence
	\[\alpha^{- 1} a = g_1^r \beta b^{- 1}.\]
	Let $c = \alpha^{- 1} a = g_1^r \beta b^{- 1}$. Then
	\begin{equation}\label{gp-lem1-eq1}	
		a = \alpha c ~\text{and}~ b = c^{- 1} g_1^r \beta.
	\end{equation}
	Since $a g b \in A = \{\alpha \beta, \alpha g_1 \beta, \ldots, \alpha g_1^{m - 1} \beta\}$, it follows that there exists $s \in [0, m - 1 ] \setminus \{r\}$ such that
	\[a g b = \alpha g_1^s \beta.\]
It follows from \eqref{gp-lem1-eq1} that
	\[\alpha c g c^{- 1} g_1^r \beta = \alpha g_1^s \beta,\]
	and so
	\begin{equation}\label{gp-lem1-eq2}
		g = c^{- 1} g_1^{s - r} c.
	\end{equation}
Hence it follows \eqref{gp-lem1-eq1} and \eqref{gp-lem1-eq2} that
\begin{equation}\label{gp-lem1-eq3}
	a g^j b = a c^{- 1} g_1^{j(s -r)} c b = \alpha g_1^{js - (j - 1)r} \beta
\end{equation}
	for each $j \in [1, m - 1]$. 
	
	\noindent {\textbf{Claim 1}} (Either $r = 0$ or $r = m-1$). For $m = 2$, it is trivial. Now assume that $m \geq 3$. If $r \in [1, m - 2]$, then it follows that there exist distinct $i_1, i_2 \in [0, m - 1]$ such that
	\[\alpha g_1^{r - 1}\beta = ag^{i_1}b,\]
	and
	\[\alpha g_1^{r + 1}\beta = ag^{i_2}b.\]
	Hence it follows from \eqref{gp-lem1-eq3} that
	\[\alpha g_1^{i_1(s - r) + r} \beta = \alpha g_1^{r - 1}\beta\]
	and 
	\[\alpha g_1^{i_2(s - r) + r} \beta = \alpha g_1^{r + 1}\beta.\]
	Therefore,
	\[g_1^{i_1(s - r) + 1} = 1 ~\text{and}~ g_1^{i_2(s - r) - 1} = 1.\]
	Since $g_1 \neq 1$ and $G$ is a torsion-free group, it follows that
	\[i_1(s - r) + 1 = 0 ~\text{and}~ i_2(s - r) - 1 = 0,\]
	and so
	\[(i_1 + i_2) (s - r) = 0\]
	which is not possible because $i_1, i_2, s, r \in [0, m - 1]$ such that $i_1 \neq i_2$ and $s \neq r$. Hence either $r = 0$ or $r = m - 1$. We consider each of these cases below.
	
	\noindent {\textbf{Case 1}} ($r = 0$). In this case, we show that $s = 1$. For $m = 2$, it is trivial. Now, we assume that $m \geq 3$. Suppose that $s \in [2, m - 1]$. Then it follows that there exists $t \in [1, m - 1]$ that 
	\[ag^tb = \alpha g_1 \beta.\]
	Hence it follows from \eqref{gp-lem1-eq3} that
	\[g_1^{ts - 1} = 1.\]
	Since $G$ is a torsion-free group, it follows that $ts = 1$, which is a contradiction. Hence we must have $s = 1$. Thus if $r = 0$, then $s = 1$, and so the first condition of the lemma holds.
	
	\noindent {\textbf{Case 2}} ($r = m - 1$). In this case, we show that $s = m - 2$. For $m = 2$, it is trivial. Now, we assume that $m \geq 3$. 
	Suppose that $s \in [0, m - 3]$. Then it follows that there exists $t \in [0, m - 2]$ such that 
	\[ag^tb = \alpha g_1^{m - 2} \beta.\]
	Hence it follows from \eqref{gp-lem1-eq3}  that
	\[g_1^{t(s - m + 1) + 1} = 1.\]
	Since $g_1 \neq 1$ and $G$ is a torsion-free group, it follows that $t(m - 1 - s) = 1$, which is a contradiction because $s \in [0, m - 3]$. Hence we must have $s = m-2$. Thus if $r = m-1$, then $s = m-2$, and so the second condition of the lemma holds. This completes the proof.
\end{proof}

\begin{lemma}\label{gp-lem2}
	Let $A$ and $B$ be nonempty finite subsets of torsion-free group $G$ such that $|A|, |B| \geq 2$. Let $A$ and $B$ are geometric progression of types $(\alpha, g_1, \beta)$ and $(p, g_1, q)$, respectively for some $\alpha, \beta, p, q, g \in G$ with $g \neq 1$. If $A$ can be represented as a geometric progression of type $(a, g, b)$, then $B$ can be represented as a geometric progression of type $(\gamma, g, \delta)$ for some $\gamma, \delta \in G$.
\end{lemma}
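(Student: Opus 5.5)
Since $A$ is a geometric progression of both types $(\alpha, g_1, \beta)$ and $(a, g, b)$, with $|A| \geq 2$, I would apply Lemma \ref{gp-lem1} (with $m = |A|$) and split into its two alternatives. This gives some $c \in G$ with either
\[g = c^{-1} g_1 c \quad\text{or}\quad g = c^{-1} g_1^{-1} c.\]
The values of $a$ and $b$ produced by the lemma are not needed. Only the relation between $g$ and $g_1$ matters, because it is the only information that transfers to $B$.

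Write $n = |B|$, so that $B = \{p g_1^j q : j \in [0, n-1]\}$.

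In the first case, $g_1 = c g c^{-1}$, hence $g_1^j = c g^j c^{-1}$ for every $j$. Then
\[p g_1^j q = (pc)\, g^j\, (c^{-1} q),\]
so $B$ is a geometric progression of type $(\gamma, g, \delta)$ with $\gamma = pc$ and $\delta = c^{-1}q$. This is just Remark \ref{gp-remark}(4) read in reverse.

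In the second case, $g_1^{-1} = c g c^{-1}$, so $g_1^j = c g^{-j} c^{-1}$. I would reverse the indexing of $B$ by putting $j = n - 1 - i$ with $i \in [0, n-1]$. Then
\[p g_1^{n-1-i} q = p\, g_1^{n-1} g_1^{-i} q = p\, g_1^{n-1} c\, g^{i} c^{-1} q.\]
So $B$ is of type $(\gamma, g, \delta)$ with $\gamma = p g_1^{n-1} c$ and $\delta = c^{-1} q$. Equivalently, one can first use Remark \ref{gp-remark}(3) to rewrite $B$ with ratio $g_1^{-1}$, and then conjugate as in the first case.

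The only point needing care is the reindexing in the second case: it must use the length $n = |B|$, not $m = |A|$, since the two sets may have different sizes. Everything else is a direct substitution.
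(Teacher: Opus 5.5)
Your proof is correct and follows the paper's argument. You apply Lemma \ref{gp-lem1} to the two representations of $A$ to get $g_1 = cgc^{-1}$ or $g_1 = cg^{-1}c^{-1}$, then conjugate, reversing the indexing of $B$ with $n = |B|$ in the second case. Your second-case first term $\gamma = p g_1^{n-1}c = pcg^{-(n-1)}$ is in fact the right one; the paper's proof writes $pcg^{|B|-1}$, a sign slip that is harmless because only the existence of $\gamma$ is claimed.
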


\begin{proof}
	Since $A = \{ab, a g b, \ldots, a g^{|A| - 1} b\} = \{\alpha\beta, \alpha g_1\beta, \ldots, \alpha g_1^{|A| - 1}\beta\}$, it follows from Lemma \ref{gp-lem1} that either $g_1 = cgc^{- 1}$ or $g_1 = cg^{- 1}c^{- 1}$ for some $c \in G$. If $g_1 = cgc^{- 1}$, then $B$ is a geometric progression of type $(p, cgc^{- 1}, q)$, and so it is a geometric progression of type $(pc, g, c^{- 1}q)$. If $g_1 = cg^{- 1}c^{- 1}$, then $B$ is a geometric progression of type $(p, cg^{- 1}c^{- 1}, q)$, and so it is a geometric progression of type $(pc g^{|B| - 1}, g, c^{- 1}q)$. Thus in any case, $B$ is a geometric progression of type $(\gamma, g, \delta)$ for some $\gamma, \delta \in G$. This completes the proof.
\end{proof}

\begin{lemma}\label{gp-lem4}
	Let $A$ and $B$ be finite subsets of torsion-free group $G$ such that $|A| \geq 2$. Let $A$ and $A \cup B$ be geometric progression. If $A \cup B$ be can be represented as a geometric progression of type $(\alpha, g, \beta)$, then the set $A$ can also be represented as a geometric progression of type $(\alpha, g^p, g^r \beta)$ for some integers $p$ and $r$ such that $p \geq 1$, $r \geq 0$.
\end{lemma}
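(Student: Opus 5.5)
We need to show: if $A$ (with $|A|\ge 2$) is a geometric progression and $A\cup B$ is a geometric progression of type $(\alpha,g,\beta)$, then $A$ is a geometric progression of type $(\alpha, g^p, g^r\beta)$ with $p\ge 1$, $r\ge 0$.

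Write $n=|A\cup B|$, so $A\cup B=\{\alpha g^i\beta : i\in[0,n-1]\}$. Since $A\subseteq A\cup B$, there is a set of exponents $I\subseteq[0,n-1]$ with $|I|=|A|=k\ge 2$ and $A=\{\alpha g^i\beta: i\in I\}$. The plan is to show that $I$ is an arithmetic progression of integers, $I=\{r, r+p, \ldots, r+(k-1)p\}$ with $p\ge1$, $r\ge0$. Then
\[A=\{\alpha (g^{p})^{j} g^{r}\beta : j\in[0,k-1]\},\]
which is a geometric progression of type $(\alpha, g^p, g^r\beta)$, and $g^p\neq 1$ because $G$ is torsion-free and $g\neq 1$.

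To prove that $I$ is an arithmetic progression, I would use the hypothesis that $A$ is a geometric progression, say of type $(a,h,b)$ with $h\neq 1$. The key step is to transport everything into the cyclic group $\langle g\rangle$. Conjugating by $\alpha$, the set $\alpha^{-1}A\beta^{-1}=\{g^i: i\in I\}$ lies in $\langle g\rangle\cong\mathbb{Z}$. It is also a geometric progression of type $(\alpha^{-1}a, h, b\beta^{-1})$, so by Remark~\ref{gp-remark}(2) it is a geometric progression with first term $x=\alpha^{-1}ab\beta^{-1}$ and ratio $h'=(b\beta^{-1})^{-1}h(b\beta^{-1})$. Both $x$ and $xh'$ lie in $\langle g\rangle$, hence $h'=x^{-1}(xh')\in\langle g\rangle$, say $h'=g^{d}$ with $d\neq0$. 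Then $\{g^i : i\in I\}=\{g^{e+jd}: j\in[0,k-1]\}$, where $x=g^e$. Since $G$ is torsion-free, $g$ has infinite order, so the exponents are determined uniquely and $I=\{e+jd : j\in[0,k-1]\}$. If $d<0$, reverse the order of the progression (as in Remark~\ref{gp-remark}(3)) to get $p=|d|\ge1$ and $r=\min I\ge0$.

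The main obstacle is the step $h'\in\langle g\rangle$. This needs care in the nonabelian setting: the ratio of the given representation of $A$ need not commute with $g$, which is why we first rewrite $A$ in the normalized form (first term, ratio) via Remark~\ref{gp-remark}(2) after conjugating into $\langle g\rangle$. Alternatively, one could apply Lemma~\ref{gp-lem1} to compare two representations, but the direct argument above seems cleaner. The rest is bookkeeping on integer exponents, relying on torsion-freeness to guarantee that distinct exponents give distinct elements.
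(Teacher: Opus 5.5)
Your proof is correct and follows essentially the same route as the paper's. Both arguments place $A$ inside a single cyclic progression: the paper rewrites $A\cup B$ as type $(\alpha\beta,g_1,1)$ with $g_1=\beta^{-1}g\beta$, while you translate to $\alpha^{-1}A\beta^{-1}\subseteq\langle g\rangle$. Both then write $A$ with a first term and a common ratio, observe that the ratio is the quotient of two elements of the progression and hence a power of $g$ (or $g_1$), and reverse the progression when the exponent step is negative.

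Your explicit use of torsion-freeness to make the exponents unique, which forces $I\subseteq[0,n-1]$ and hence $r\ge 0$, fills in a step the paper calls ``easy to see''. One small wording point: $\alpha^{-1}A\beta^{-1}$ is a two-sided translation, not a conjugation by $\alpha$.
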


\begin{proof}
	Clearly, $A \cup B$ be geometric progression of type $(\alpha_1, g_1, 1)$, where $g_1 = \beta^{- 1}g \beta$ and $\alpha_1 = \alpha \beta$. Since $A \subseteq A \cup B$ and $A \cup B = \{\alpha_1 , \alpha_1 g_1 , \ldots, \alpha_1 g_1^{m - 1}\}$, it follows that 
	\[A = \{\alpha_1 g_1^{p_1} , \alpha_1 g_1^{p_2} , \ldots, \alpha_1 g_1^{p_n}\},\]
	where $0 \leq p_1 < p_2 < \cdots < p_n$ with $|A| = n$.
	Since $A$ a geometric progression, it follows that
	\[A = \{\beta_1, \beta_1 h, \ldots, \beta_1 h^{n - 1}\}\]
	for some $\beta_1, h \in G$ with $h \neq 1$. Since $\beta_1, \beta_1 h \in A = \{\alpha_1 g_1^{p_1} , \alpha_1 g_1^{p_2} , \ldots, \alpha_1 g_1^{p_n}\}$, it follows that there exist $u, v \in \{p_1, \ldots, p_n\}$ such that 
	\begin{equation*}
		\beta_1 = \alpha_1 g_1^{p_u},
	\end{equation*}
	and 
	\begin{equation*}
		\beta_1 h = \alpha_1 g_1^{p_v}.
	\end{equation*}
	Hence
	\begin{equation*}
		h = g_1^{p_v - p_u}.
	\end{equation*}
	Therefore,
	\[A = \{\alpha_1 g_1^{p_u} , \alpha_1 g_1^{(p_v - p_u) + p_u}, \ldots, \alpha_1 g_1^{(p_v - p_u) (n - 1) + p_u}\}.\]
	It is easy to see that $(p_v - p_u) i + p_u \geq 0$ for each $i \in [0, n - 1]$. Now, if $p_v - p_u > 0$, then we choose
		\[p = p_v - p_u ~\text{and}~ r = p_u,\]
		and if $p_v - p_u < 0$, then we choose
		\[p = p_u - p_v ~\text{and}~ r = (p_v - p_u) (n - 1) + p_u.\] 
	Therefore,
	\[A = \{\alpha_1 g_1^r, \alpha_1 g_1^{p + r}, \ldots, \alpha_1 g_1^{(n - 1)p + r}\}\]
	for some $p \geq 1$ and $r \geq 0$, and so $A$ is a geometric progression of the type $(\alpha_1g_1^r, g_1^p,  1)$, and so it is of the type $(\alpha, g^p, g^r \beta)$. This completes the proof. 	
\end{proof}

\begin{lemma}\label{gp-lem5}
	Let $G$ be an abelian group. Let $A$ and $B$ be nonempty finite subsets of $G$. Let $A$ and $B$ are geometric progressions with the same common ratio $g \in G$ with $g \neq 1$. Furthermore, assume that $A \cap B \neq \emptyset$. Then $A$, $B$ and $A \cup B$ are geometric progressions with the same common ratio $g$.
\end{lemma}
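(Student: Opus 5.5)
Since $A$ and $B$ are geometric progressions with common ratio $g$, and $G$ is abelian, we can write
\[A = \{a, ag, \ldots, ag^{s-1}\} \quad \text{and} \quad B = \{b, bg, \ldots, bg^{t-1}\},\]
where $s = |A|$ and $t = |B|$. The statements about $A$ and $B$ are then part of the hypothesis, so the only thing to prove is that $A \cup B$ is a geometric progression with common ratio $g$. The plan is to express both sets as the images of integer intervals under a single map, and then use the elementary fact that two intervals of $\mathbb{Z}$ that share a point have an interval as their union.

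\textbf{Step 1 (common parametrization).} Because $A \cap B \neq \emptyset$, there are $i \in [0, s-1]$ and $j \in [0, t-1]$ with $ag^i = bg^j$. Hence $b = ag^{i-j}$. Put $k = i - j$ and define $\phi(n) = ag^n$ for $n \in \mathbb{Z}$. Then
\[A = \phi([0, s-1]) \quad \text{and} \quad B = \phi([k, k+t-1]),\]
and the two intervals $I_1 = [0, s-1]$ and $I_2 = [k, k+t-1]$ both contain the integer $i$.

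\textbf{Step 2 (union of intervals).} Two integer intervals with a common point have an interval as their union. So $I_1 \cup I_2 = [u, v]$, where $u = \min(0, k)$ and $v = \max(s-1, k+t-1)$. It follows that
\[A \cup B = \phi([u, v]) = \{ag^u, ag^{u+1}, \ldots, ag^{v}\},\]
which is the list $ag^u \cdot g^n$ for $n \in [0, v-u]$. This list has the form of a geometric progression with first term $ag^u$ and ratio $g$.

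\textbf{Step 3 (the main obstacle).} The definition of a geometric progression requires $|A \cup B| = v - u + 1$. This holds exactly when $\phi$ is injective on $[u, v]$. Abelianness alone does not give injectivity when $g$ has finite order. I would handle this as follows.

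In the torsion-free setting of this section, injectivity is automatic, since $g \neq 1$ has infinite order.

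In general, injectivity of $\phi$ on $[0, s-1]$ and on $[k, k+t-1]$ is already given, because $|A| = s$ and $|B| = t$. Injectivity can only fail on the whole interval if $v - u + 1 > \operatorname{ord}(g)$. In that case $\phi([u, v])$ is the entire cyclic coset $a\langle g\rangle$, and the natural fix is to reindex it as $\{ag^u g^n : n \in [0, \operatorname{ord}(g) - 1]\}$, which is again a geometric progression with ratio $g$.

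So in every case $A \cup B$ is a geometric progression with common ratio $g$, and together with the hypothesis on $A$ and $B$ this gives the lemma.
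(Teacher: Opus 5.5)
Your proof is correct and follows essentially the paper's route: both write $A$ and $B$ as images of integer intervals under $n \mapsto ag^n$ (the paper re-centers at a common element $\alpha \in A \cap B$), and both note that two overlapping intervals have an interval as their union. Your Step 3 handles the case where $g$ has finite order, so the union may wrap around to the full coset $a\langle g\rangle$; the paper's proof skips this distinctness point, even though its lemma is stated for arbitrary abelian groups.
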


\begin{proof}
	Let $|A| = m$, $|B| = n$, and let $\alpha \in A \cap B$. Then there exist $a \in A$ and $b \in B$ such that
	\[A = \{a g^j : j \in [0, m - 1]\},\]
	and
	\[B = \{b g^j : j \in [0, n - 1]\}.\]
	Since $\alpha \in A \cap B$, it follows that that there exist nonnegative integers $i_0 \in [0, m - 1]$ and $j_0 \in [0, n - 1]$ such that
	\[\alpha = a g^{i_0} = b g^{j_0}.\]
	Since $\alpha = a g^{i_0} = b g^{j_0}$, it follows that
	\[A = \{\alpha g^j : j \in [- i_0, m - i_0 - 1]\},\]
	and
	\[B = \{\alpha g^j : j \in [- j_0, n - j_0 - 1]\}.\]
	Let
	\[r = \max (i_0, j_0) ~\text{and}~ s = \max (m - i_0 - 1, n - j_0 - 1).\] 
	Then, 
	\[A\cup B = \{\alpha g^j : j \in [- r, s]\}.\] 
	Hence $A \cup B$ is also a geometric progression with  common ratio $g$.	
\end{proof}

\subsection{Auxiliary lemmas} \label{subsec-aux-lem}
We require the following additional lemmas for the proof of main results.

\begin{lemma}\label{kemp-gen-thm-tf}
	Let $\ell \geq 2$ be an integer. Let $G$ be a torsion-free group. Let $A_1, \ldots, A_{\ell}$ be nonempty finite subsets of $G$. Then
	\begin{equation}\label{kemp-gen-thm-tf-eq1}
		|A_1 \cdots A_{\ell}| \geq |A_1| + \cdots + |A_{\ell}| - \ell + 1.
	\end{equation}
	The lower bound in \eqref{kemp-gen-thm-tf-eq1} is best possible.
\end{lemma}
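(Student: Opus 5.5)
We argue by induction on $\ell$, using Kemperman's theorem (Theorem \ref{kemp-thm-tf}) as the base case $\ell = 2$. Suppose that \eqref{kemp-gen-thm-tf-eq1} holds for some $\ell \geq 2$ and every choice of $\ell$ nonempty finite subsets of $G$. Given $A_1, \ldots, A_{\ell+1}$, we set $C = A_1 \cdots A_{\ell}$. Then $C$ is a nonempty finite subset of $G$, and $A_1 \cdots A_{\ell+1} = C A_{\ell+1}$ by associativity of the product of sets. Applying Theorem \ref{kemp-thm-tf} to the pair $C, A_{\ell+1}$, and then the induction hypothesis to $|C|$, gives
\[
|A_1 \cdots A_{\ell+1}| \geq |C| + |A_{\ell+1}| - 1 \geq \Big(\sum_{i=1}^{\ell}|A_i| - \ell + 1\Big) + |A_{\ell+1}| - 1 = \sum_{i=1}^{\ell+1}|A_i| - (\ell+1) + 1.
\]
This is \eqref{kemp-gen-thm-tf-eq1} for $\ell+1$. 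Commutativity is never used: only associativity is needed, and Kemperman's theorem already holds in nonabelian torsion-free groups.

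For optimality, we exhibit equality. Fix $g \in G$ with $g \neq 1$ (if $G$ is trivial, every $A_i$ is a singleton and both sides equal $1$). Take $A_i = \{g^j : j \in [0, k_i - 1]\}$ with $k_i = |A_i| \geq 1$. Every product $a_1 \cdots a_{\ell}$ with $a_i \in A_i$ is a power $g^j$ with $j \in [0, \sum_i k_i - \ell]$. Conversely, every such exponent is attained, since any integer in that range can be written as $j_1 + \cdots + j_{\ell}$ with $0 \leq j_i \leq k_i - 1$.

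Since $G$ is torsion-free, the powers $g^j$ for distinct $j$ are distinct. Hence $|A_1 \cdots A_{\ell}| = \sum_i k_i - \ell + 1$, so the bound is attained. More generally, sets of types $(\alpha_i, g, \beta_i)$ with $\alpha_{i+1} = \beta_i^{-1}$ give equality, as in Remark \ref{rem-min-seq}(4).

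There is no real obstacle here. The only point needing care is the distinctness of the powers of $g$, which is exactly where torsion-freeness enters; the inductive step is routine.
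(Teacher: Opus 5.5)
Your proof is correct and follows the same route as the paper. The paper also proves the bound by induction on $\ell$, applying Theorem \ref{kemp-thm-tf} to $A_1\cdots A_{\ell}$ and $A_{\ell+1}$, and shows sharpness with geometric progressions sharing a common ratio $g$; your choice $\alpha_i=\beta_i=1$ is a special case of that, and your exponent range $[0,\sum_i k_i-\ell]$ and your linking condition $\alpha_{i+1}=\beta_i^{-1}$ are both right (the paper's write-up omits the linking condition, which is needed in the nonabelian case, and has an off-by-one in the displayed range).
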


\begin{proof}
	The lower bound in \eqref{kemp-gen-thm-tf-eq1} follows easily from Theorem \ref{kemp-thm-tf} and by applying induction on $\ell$. 

	Next we show that this lower is best possible. Let $n_1, \ldots, n_{\ell}$ be positive integers. Let $A_1, \ldots, A_{\ell}$ be subsets of $G$ such that $|A_i| = n_i$ for each $i \in [1, \ell]$, and let  for each $i \in [1, \ell]$, the set $A_i$ is a geometric progressions of type $(\alpha_i, g, \beta_i)$. Then
	\[A_1 \cdots A_{\ell} = \{\alpha_1g^j\beta_{\ell} : j \in [0, n_1 + \cdots + n_{\ell} - \ell + 1]\},\]
	and so
	\[|A_1 \cdots A_{\ell}| = |A_1| + \cdots + |A_{\ell}| - \ell + 1.\]
	Thus the lower bound in \eqref{kemp-gen-thm-tf-eq1} is best possible. This completes the proof.
\end{proof}

\begin{lemma}\label{aux-lem-inv1}
	Let $G$ be a torsion-free group. Let $A$ and $B$ be nonempty finite subsets of $G$ such that $|A| \geq 2, |B| \geq 2$, and 
	\[|A B| = |A| + |B| - 1.\]
	Then the subsets $A$ and $B$ are geometric progressions of types $(a, g, b)$ and $(b^{- 1}, g, c)$, respectively, where $a, b, c, g \in G$, $g \neq 1$.
\end{lemma}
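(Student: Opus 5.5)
This is a direct reformulation of the Brailovsky--Freiman theorem, Theorem \ref{brail-inv-thm-tf}, in the language of geometric progressions of type $(a, g, b)$. The plan is to invoke that theorem and then rewrite the resulting parameters so that the right factor of $A$ and the left factor of $B$ are mutually inverse.

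First, since $|A|, |B| \geq 2$ and $|AB| = |A| + |B| - 1$, Theorem \ref{brail-inv-thm-tf} gives $a_0, b_0, g \in G$ with $g \neq 1$ such that
\[A = \{a_0, a_0 g, \ldots, a_0 g^{|A|-1}\}, \qquad B = \{b_0, g b_0, \ldots, g^{|B|-1} b_0\}.\]
Thus $A$ is of type $(a_0, g, 1)$ and $B$ is of type $(1, g, b_0)$.

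Next we insert a conjugating element. By Remark \ref{gp-remark}(4), for every $c \in G$ the set $A$ is also a geometric progression of type $(a_0 c, c^{-1} g c, c^{-1})$. Likewise, directly from the definition,
\[B = \{c^{-1} (c^{-1} g c)^j\, c\, b_0 : j \in [0, |B|-1]\},\]
since $c^{-1} (c^{-1} g c)^j c = c^{-1} c^{-1} g^j c\, c$. This is not quite what we want. The cleaner route is to take $c = 1$ and simply note that $A$ has type $(a_0, g, 1)$ and $B$ has type $(1, g, b_0) = (1^{-1}, g, b_0)$. The statement's requirement is then satisfied with $a = a_0$, $b = 1$, $c = b_0$, because the left factor of $B$ is $b^{-1} = 1$.

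So the proof essentially reduces to quoting Theorem \ref{brail-inv-thm-tf} and renaming parameters. If one prefers a nontrivial $b$, the general form follows because $A$ of type $(a_0, g, 1)$ is also of type $(a_0 b^{-1}, g, b)$ when $b$ commutes with $g$, but this is not needed.

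The only potential obstacle is the notational mismatch: the letter $c$ in the statement is unrelated to the conjugating element of Remark \ref{gp-remark}. One must check that the statement allows $b = 1$. It does, since only $g \neq 1$ is required.
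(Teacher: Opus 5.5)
Your proof is correct and is essentially the paper's: both invoke Theorem~\ref{brail-inv-thm-tf} and rename parameters. You take $b = 1$, giving types $(a_0, g, 1)$ and $(1^{-1}, g, b_0)$; the paper conjugates the ratio to get types $(ab^{-1}, bgb^{-1}, b)$ and $(b^{-1}, bgb^{-1}, b^2)$, which also shows the commutation hypothesis in your closing side remark is unnecessary. Delete the abandoned detour, though, because the displayed equality $B = \{c^{-1}(c^{-1}gc)^j c\, b_0\}$ is false in general: that set is $\{c^{-2}g^j c^2 b_0\}$.
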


\begin{proof}
	Since $|A B| = |A| + |B| - 1$, it follows from Theorem \ref{brail-inv-thm-tf} that the subsets $A$ and $B$ are geometric progressions of types $(a, g, 1)$ and $(1, g, b)$, respectively. Hence $A$ and $B$ are geometric progression of types $(ab^{- 1}, bgb^{- 1}, b)$ and $(b^{- 1}, bgb^{- 1}, b^2)$, respectively. This completes the proof.
\end{proof}

\begin{remark}
	Lemma \ref{aux-lem-inv1} and Theorem \ref{brail-inv-thm-tf} are equivalent.
\end{remark}

\begin{lemma}\label{aux-lem-inv2}
	Let $A$ and $B$ be nonempty finite subsets of torsion-free group $G$ such that $|A|, |B| \geq 2$ and $|A B| = |A| + |B| - 1$. Let $A$ be a geometric progression of type $(a, g, b)$. Then $B$ is a geometric progression of type $(b^{- 1}, g, \delta)$ for some $\delta \in G$.
\end{lemma}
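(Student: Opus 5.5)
By Lemma \ref{aux-lem-inv1}, the hypothesis $|AB| = |A| + |B| - 1$ forces $A$ and $B$ to be geometric progressions of types $(a', h, b')$ and $(b'^{-1}, h, c')$ for some $a', b', c', h \in G$ with $h \neq 1$. The task is to convert this into a representation of $B$ that uses the given data $(a, g, b)$ of $A$. The plan is to compare the two representations of $A$ with Lemma \ref{gp-lem1} and transport the resulting conjugating element to $B$, in the same way as in the proof of Lemma \ref{gp-lem2}.

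Write $n = |A|$. Since $A$ is of type $(a, g, b)$ and of type $(a', h, b')$, Lemma \ref{gp-lem1} gives some $c \in G$ for which one of two cases holds.

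\emph{Case (1):} $a = a'c$, $b = c^{-1}b'$ and $g = c^{-1}hc$. Then $b^{-1} = b'^{-1}c$ and $h = cgc^{-1}$. Since $B = \{b'^{-1}h^j c' : 0 \le j < |B|\}$, we get
\[b'^{-1}h^j c' = b'^{-1}c\,g^j\,c^{-1}c' = b^{-1}g^j(c^{-1}c').\]
So $B$ is of type $(b^{-1}, g, c^{-1}c')$, and we take $\delta = c^{-1}c'$.

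\emph{Case (2):} $a = a'c$, $b = c^{-1}h^{n-1}b'$ and $g = c^{-1}h^{-1}c$. Now $b^{-1} = b'^{-1}h^{-(n-1)}c$ and $h = cg^{-1}c^{-1}$, so $b'^{-1} = b^{-1}c^{-1}h^{n-1} = b^{-1}g^{-(n-1)}c^{-1}$. Each element of $B$ is then
\[b'^{-1}h^j c' = b^{-1}g^{-(n-1)}g^{-j}c^{-1}c',\]
which has exponent $-(n-1)-j$ running over negative values. Reversing the progression as in Remark \ref{gp-remark}(3) would give a first term $b^{-1}g^{-(n-1)-(|B|-1)}$, not $b^{-1}$.

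The main obstacle is therefore excluding Case (2) directly. I would do this by computing $AB$ in Case (2). Here $A = \{a g^i b\}$ and $B = \{b^{-1}g^{-(n-1)-j}\delta'\}$ with $\delta' = c^{-1}c'$, so
\[AB = \{a g^{\,i-(n-1)-j}\delta' : 0 \le i \le n-1,\ 0 \le j \le |B|-1\}.\]
This is a progression of size $n + |B| - 1$, so counting does not rule the case out. The obstruction has to come from the uniqueness in Lemma \ref{gp-lem1}: the pairing $\alpha_{i+1} = \beta_i^{-1}$ is not canonical, because the reversed representation of $A$ changes $b$.

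Instead I would use the freedom in Remark \ref{gp-remark}(3) on the $B$ side. If Case (2) occurs, rewrite $A$'s Brailovsky--Freiman representation by reversing both progressions, with $h \mapsto h^{-1}$. Using Remark \ref{gp-remark}(3),(4), choose the pair from Lemma \ref{aux-lem-inv1} (equivalently, the pair $(a', 1)$, $(1, b')$ from Theorem \ref{brail-inv-thm-tf} with ratio $h$ or $h^{-1}$) so that comparing with $(a, g, b)$ lands in Case (1) of Lemma \ref{gp-lem1}. Concretely: $AB$ is also of the form $A'B'$ with $A = \{x g^i y\}$ and $B = \{y^{-1}g^{-j}z\}$, and substituting $g \to g^{-1}$ makes $A$ of type $(ag^{n-1}, g^{-1}, b)$. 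This matches Case (1) relative to the representation $(ag^{n-1}, g^{-1}, b)$, not $(a, g, b)$.

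So the delicate step is checking that the given $b$ indeed yields first term $b^{-1}$. If it fails, the plan would be to accept $\delta$ together with a possibly adjusted ratio $g^{\pm 1}$ and verify which convention the statement intends.
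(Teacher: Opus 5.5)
Your opening matches the paper's: Lemma \ref{aux-lem-inv1}, then Lemma \ref{gp-lem1}, then transport to $B$. Your Case (1) is correct. As written, though, the proposal does not prove the lemma, because you leave Case (2) open, try to exclude it, and end with an unresolved fallback.

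Case (2) cannot be excluded. Lemma \ref{aux-lem-inv1} only supplies \emph{some} admissible pair. Reversing both progressions of an admissible pair gives another admissible pair, namely $(a'h^{n-1},h^{-1},b')$ and $(b'^{-1},h^{-1},h^{|B|-1}c')$, and this pair falls into Case (2) whenever the original one falls into Case (1).

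Case (2) also does not need to be excluded. The ``obstacle'' comes from treating $b^{-1}$ as the first term of $B$. In type $(b^{-1},g,\delta)$ the first term is $b^{-1}\delta$, and $b^{-1}$ is only the left factor. Since powers of $g$ commute with one another, any power of $g$ can be moved into the right factor.

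Concretely, your own Case (2) computation already gives
\[
B=\{\,b^{-1}g^{-(n-1)-j}\,\delta' : 0\le j\le |B|-1\,\},\qquad \delta'=c^{-1}c'.
\]
Put $k=|B|-1-j$ and $s=-(n-1)-(|B|-1)$. Then $b^{-1}g^{-(n-1)-j}\delta'=b^{-1}g^{k}(g^{s}\delta')$ with $k\in[0,|B|-1]$. So $B$ is of type $(b^{-1},g,\delta)$ with $\delta=g^{-(n-1)-(|B|-1)}c^{-1}c'$.

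This is exactly Remark \ref{gp-remark}(3), applied to $B$ written as type $(b^{-1},g^{-1},g^{-(n-1)}\delta')$. Reversal keeps the left factor and changes only the right one. The same remark removes your worry about a ratio $g^{\pm1}$, since type $(b^{-1},g^{-1},\delta)$ is the same as type $(b^{-1},g,g^{-(|B|-1)}\delta)$.

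With this line added, your argument is the paper's proof. One small note on the paper: it records the right factor in Case (2) as $g^{-2(m-1)}c^{-1}\gamma$ with $m=|A|$. That is correct only when $|B|=|A|$; the exponent should be $-(|A|-1)-(|B|-1)$. This does not affect the conclusion.
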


\begin{proof}
	Since $|A B| = |A| + |B| - 1$, it follows from Lemma \ref{aux-lem-inv1} that 
	\[A = \{\alpha\beta, \alpha g_1\beta, \ldots, \alpha g_1^{|A| - 1}\beta\},\]
	and
	\[B = \{\beta^{- 1}\gamma, \beta^{- 1}g_1\gamma, \ldots, \beta^{- 1}g_1^{|B| - 1}\gamma\}.\]
	for some $\alpha, \beta, \gamma, g \in G$ with $g \neq 1$. Since $A$ is geometric progression of type $(a, g, b)$ and also of type $(\alpha, g_1, \beta)$, it follows from Lemma \ref{gp-lem1} that 
	\[a = \alpha c, b = c^{- 1} \beta, g = c^{- 1} g_1 c\]
	or
	\[a = \alpha c, b = c^{- 1} g_1^{m - 1}\beta, g = c^{- 1} g_1^{- 1} c\]
	for some $c \in G$. In the first case, $B$ is a geometric progression of type $(b^{- 1} c^{- 1}, cgc^{- 1}, \gamma)$, and so it can be represented as the geometric progression of type $(b^{- 1}, g, c^{- 1} \gamma)$. In the second case, $B$ is geometric progression of type $(b^{- 1} g^{- (m - 1)}c^{- 1}, cg^{- 1}c^{- 1}, \gamma)$, and so it can be represented as the geometric progression of type $(b^{- 1}, g, g^{- 2(m - 1)}c^{- 1} \gamma)$. Thus in each case, the set $B$ is geometric progression of type $(b^{- 1}, g, \delta)$ for some $\delta \in G$. This completes the proof.
\end{proof}

\begin{lemma}\label{brail-gen-inv-thm-tf}
	Let $\ell \geq 2$ be an integer, and let $A_1, \ldots, A_{\ell}$ be nonempty finite subsets of a torsion-free group $G$ such that $|A_i| \geq 2$ for each $i \in [1, \ell]$. Let
	\[|A_1 \cdots A_{\ell}| = |A_1| + \cdots + |A_{\ell}| - \ell + 1.\]
	Then for each $i \in [1, \ell - 1]$, the set $A_i$ is a geometric progressions of type $(\alpha_i, g, \beta_i)$ for some $\alpha_i, \beta_i \in G$, where $\alpha_{i + 1} = \beta_i^{- 1}$ and $g \neq 1$.
\end{lemma}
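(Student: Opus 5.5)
Presumably the statement is meant for each $i \in [1,\ell]$. I will prove it for all $i \in [1,\ell]$, which in particular covers $i \in [1,\ell-1]$. The approach is induction on $\ell$, using Lemma \ref{aux-lem-inv1} for the base case and Lemma \ref{aux-lem-inv2} to propagate the common ratio.

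The first step is to observe that every partial product is itself extremal. Put $P_j = A_1\cdots A_j$ and $S_j = A_{j+1}\cdots A_\ell$. By Lemma \ref{kemp-gen-thm-tf},
\[
|A_1\cdots A_\ell| = |P_j S_j| \geq |P_j| + |S_j| - 1 \geq \Big(\sum_{i\le j}|A_i| - j + 1\Big) + \Big(\sum_{i>j}|A_i| - (\ell-j) + 1\Big) - 1 .
\]
The right-hand side equals $\sum_i |A_i| - \ell + 1$, so equality holds throughout. Hence $|P_j| = \sum_{i\le j}|A_i| - j + 1$, $|S_j| = \sum_{i>j}|A_i| - (\ell - j) + 1$, and $|P_jS_j| = |P_j| + |S_j| - 1$. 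In particular, every consecutive product $A_1\cdots A_j$ is extremal, and so is $P_{j-1}A_j$ for each $j \ge 2$.

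For the base case $\ell = 2$, apply Lemma \ref{aux-lem-inv1}. It shows that $A_1$ is of type $(a,g,b)$ and $A_2$ is of type $(b^{-1},g,c)$. Setting $\alpha_1 = a$, $\beta_1 = b$, $\alpha_2 = b^{-1}$, $\beta_2 = c$ gives $\alpha_2 = \beta_1^{-1}$, as required.

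For the inductive step, assume $\ell \ge 3$. By the first step, $A_1\cdots A_{\ell-1}$ is extremal, so the induction hypothesis gives types $(\alpha_i,g,\beta_i)$ for $i \le \ell-1$ with $\alpha_{i+1} = \beta_i^{-1}$. It remains to handle $A_\ell$, and the key fact is the following. Because $\alpha_{i+1}\beta_i = 1$, the product $P_{\ell-1}$ equals $\{\alpha_1 g^j \beta_{\ell-1} : j \in [0, N]\}$ with $N = \sum_{i<\ell}|A_i| - \ell + 1$. This is the computation of Remark \ref{rem-min-seq}(4). Thus $P_{\ell-1}$ is a geometric progression of type $(\alpha_1, g, \beta_{\ell-1})$, and it has at least $2$ elements.

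Now apply Lemma \ref{aux-lem-inv2} to the pair $(P_{\ell-1}, A_\ell)$, whose product is extremal by the first step. It yields that $A_\ell$ is of type $(\beta_{\ell-1}^{-1}, g, \delta)$. Setting $\alpha_\ell = \beta_{\ell-1}^{-1}$ and $\beta_\ell = \delta$ completes the induction.

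The main obstacle is keeping the same $g$, rather than a conjugate or an inverse, across all the sets. Lemma \ref{aux-lem-inv2} is exactly the tool for this, since it fixes the left parameter of the second set as $b^{-1}$ with the same ratio $g$. The one point to verify carefully is that, in Lemma \ref{aux-lem-inv2}, the parameter $b$ refers to the chosen representation $(\alpha_1, g, \beta_{\ell-1})$ of $P_{\ell-1}$. That lemma is stated for an arbitrary representation of type $(a,g,b)$, so this holds.
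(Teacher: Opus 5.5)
Your proof is correct and is essentially the paper's argument: induction on $\ell$ with the base case from Lemma \ref{aux-lem-inv1}, the Kemperman chain (Lemma \ref{kemp-gen-thm-tf}) forcing both $A_1\cdots A_{\ell-1}$ and $(A_1\cdots A_{\ell-1})A_\ell$ to be extremal, and Lemma \ref{aux-lem-inv2} applied to the progression $A_1\cdots A_{\ell-1}$ of type $(\alpha_1,g,\beta_{\ell-1})$ to get $A_\ell$ of type $(\beta_{\ell-1}^{-1},g,\delta)$. You are also right that the conclusion has to hold for all $i\in[1,\ell]$ for the induction to close, and this is what the paper's proof actually establishes.
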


\begin{proof}
	First assume that for each $i \in [1, \ell - 1]$, the set $A_i$ is a geometric progressions of type $(\alpha_i, g, \beta_i)$ for some $\alpha_i, \beta_i \in G$, where $\alpha_{i + 1} = \beta_i^{- 1}$ and $g \neq 1$. In this case, it is easy to verify that $|A_1 \cdots A_{\ell}| = |A_1| + \cdots + |A_{\ell}| - \ell + 1$. Conversely, assume that 
\[|A_1 \cdots A_{\ell}| = |A_1| + \cdots + |A_{\ell}| - \ell + 1.\]
If $\ell = 2$, then the result follows from Lemma \ref{aux-lem-inv1}. Hence assume that $\ell \geq 3$. Suppose that Lemma \ref{brail-gen-inv-thm-tf} is true for $\ell - 1$ sets. Let $A = A_1 \cdots A_{\ell - 1}$. Then it follows from Lemma \ref{kemp-gen-thm-tf} that
	\begin{align*}
		|A_1| + \cdots + |A_{\ell}| - \ell + 1 &= |A_1 \cdots A_{\ell}|\\
		&= |A A_{\ell}| \\
		&\geq |A| + |A_{\ell}| - 1\\
		&= |A_1 \cdots A_{\ell - 1}| + |A_{\ell}| - 1\\
		&\geq (|A_1| + \cdots + |A_{\ell - 1}| - \ell + 2) + |A_{\ell}| - 1\\
		&= |A_1| + \cdots + |A_{\ell}| - \ell + 1.
	\end{align*}
	Hence
	\[|A A_{\ell}| = |A| + |A_{\ell}| - 1,\]
	and
	\[|A_1 \cdots A_{\ell - 1}| = |A_1| + \cdots + |A_{\ell - 1}| - \ell + 2.\]
	Since $|A_1 \cdots A_{\ell - 1}| = |A_1| + \cdots + |A_{\ell - 1}| - \ell + 2$, it follows from the induction hypothesis that for each $i \in [1, \ell - 1]$ the sets $A_i$ is a geometric progressions of type $(\alpha_i, g, \beta_i)$ for some $\alpha_i, \beta_i \in G$, where $\alpha_{i + 1} = \beta_i^{- 1}$ and $g \neq 1$. Therefore,
	\[A_1 = \{\alpha_1\beta_1, \alpha_1g\beta_1, \ldots, \alpha_1g^{|A_1| - 1}\beta_1\}\]
	and 
	\[A_i = \{\beta_{i - 1}^{- 1}\beta_i, \beta_{i - 1}^{- 1}g\beta_i, \ldots, \beta_{i - 1}^{- 1}g^{|A_i| - 1}\beta_i\}\]
	for each $i \in [2, \ell - 1]$. Hence
	\[A = A_1 \cdots A_{\ell - 1} = \{\alpha_1\beta_{\ell - 1}, \alpha_1g\beta_{\ell - 1}, \ldots, \alpha_1g^{(|A_1| + \cdots + |A_{\ell - 1}|- \ell + 1)}\beta_{\ell - 1}\}.\]
	Since $|A A_{\ell}| = |A|+  |A_{\ell}| - 1$ and $A$ is geometric progression of types $(\alpha_1, g, \beta_{\ell - 1})$, it follows from Lemma \ref{aux-lem-inv2} that $A_{\ell - 1}$ is geometric progression of type $(\beta_{\ell - 1}^{- 1}, g, \beta_{\ell})$ for some $\beta_{\ell} \in G$. Therefore, the sets $A_1, \ldots, A_{\ell}$ are geometric progressions of desired types. This completes the proof.
\end{proof}

\begin{lemma}\label{aux-lem-inv3}
Let $\ell$ and $m$ be integers such that $2 \leq \ell \leq  m$. Let $\mathcal{A} = (A_1, \dots, A_m)$ be a finite sequence of finite subsets of a group $G$, and let $A = A_1 \cup \cdots \cup A_m$.	If $\mathcal{A}$ is a $(\ell, g)$-minimizing sequence of sets for some $g \in G$, then
	\[|\Pi^{\ell}(\mathcal{A})| =  \sum_{a \in A} \mu (a) - \ell + 1.\]
\end{lemma}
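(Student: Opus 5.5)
The argument is essentially a direct computation using conditions (3) and (4) of Definition \ref{min-seq} together with Remark \ref{rem-min-seq}. Let $B_1, \ldots, B_\ell \subseteq A$ be the sets guaranteed by the definition, with $B_i$ a geometric progression of type $(\alpha_i, g, \beta_i)$, $\alpha_{i+1} = \beta_i^{-1}$, $g \neq 1$ and $|B_i| \geq 2$. By condition (4),
\[|\Pi^{\ell}(\mathcal{A})| = |B_1 \cdots B_{\ell}|.\]
So it suffices to compute the size of this product set and compare it with $\sum_{a\in A}\mu(a)$.

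First, I will compute $B_1 \cdots B_\ell$ explicitly. Write $B_i = \{\alpha_i g^{j}\beta_i : j \in [0, |B_i|-1]\}$. A product $b_1 \cdots b_\ell$ with $b_i = \alpha_i g^{j_i}\beta_i$ telescopes, because $\beta_i \alpha_{i+1} = 1$. It therefore equals
\[\alpha_1 g^{j_1+\cdots+j_\ell}\beta_\ell.\]
As the $j_i$ range independently over $[0,|B_i|-1]$, the exponent $j_1 + \cdots + j_\ell$ takes exactly the values in $[0, \sum_i |B_i| - \ell]$. Hence
\[B_1\cdots B_\ell = \Bigl\{\alpha_1 g^{j}\beta_\ell : j \in \Bigl[0, \sum_{i=1}^{\ell}|B_i| - \ell\Bigr]\Bigr\}.\]
This is the content of Remark \ref{rem-min-seq}(4), with its upper index corrected from $+1$ to $-\ell$.

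Second, I need these $\sum |B_i| - \ell + 1$ elements to be distinct. This requires that $g$ have order at least $\sum|B_i| - \ell + 1$. The lemma is stated for a general group $G$, but it is applied in the torsion-free setting, and there $g^j = g^{j'}$ forces $j = j'$. That settles distinctness. I would flag this point in the write-up: in a general group, distinctness follows from $g$ having sufficiently large order, and in $\mathbb{Z}_p$ it holds as long as the count does not exceed $p$. This step is the only genuine subtlety. Everything else is bookkeeping.

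Third, Remark \ref{rem-min-seq}(2), which follows from condition (3) and $A = B_1 \cup \cdots \cup B_\ell$, gives
\[\sum_{a\in A}\mu(a) = \sum_{j=1}^{\ell}|B_j|.\]
Combining the three steps gives
\[|\Pi^{\ell}(\mathcal{A})| = |B_1\cdots B_\ell| = \sum_j |B_j| - \ell + 1 = \sum_{a\in A}\mu(a) - \ell + 1,\]
as required. For completeness I would verify Remark \ref{rem-min-seq}(1). Each $a \in A$ lies in some $A_j$, so $\mu(a) \geq 1$, and condition (3) then forces $a$ to lie in some $B_j$. Conversely, each $B_j \subseteq A$ by definition.
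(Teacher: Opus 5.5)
Your argument is correct and is the same as the paper's proof, which just cites Remark~\ref{rem-min-seq}: condition (4) identifies $\Pi^{\ell}(\mathcal{A})$ with $B_1\cdots B_{\ell}$, the telescoping $\beta_i\alpha_{i+1}=1$ gives $\{\alpha_1 g^{j}\beta_{\ell} : j\in[0,\sum_i|B_i|-\ell]\}$ (you are right that the paper's upper index is a typo), and condition (3) gives $\sum_{a\in A}\mu(a)=\sum_j|B_j|$. Your caveat about distinctness is also well founded, because the lemma as literally stated for an arbitrary group is false: in $\mathbb{Z}_2$ (written additively) with $\ell=m=2$ and $A_1=A_2=B_1=B_2=\{0,1\}$, one gets $|\Sigma^{2}(\mathcal{A})|=2<3=\sum_{a\in A}\mu(a)-\ell+1$. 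So the lemma must be read with $g$ of order at least $\sum_i|B_i|-\ell+1$, which holds automatically in the torsion-free setting and is forced by the hypotheses $|\Sigma^{\ell}(\mathcal{A})|<p$ (resp.\ $<p(G)$) in the later applications.
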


\begin{proof}
The proof follows from Remark \ref{rem-min-seq}.
\end{proof} 

\begin{lemma}\label{aux-lem1}
	Let $\ell$ and $m$ be integers such that $2 \leq \ell \leq  m$. Let $\mathcal{A} = (A_1, \dots, A_m)$ be a finite sequence of finite subsets of a group $G$. Let $A = A_1 \cup \cdots \cup A_m$, and $M = \{a \in A: \mu(a) = \ell\}$. For each $j \in [1, \ell]$, let 
\begin{equation}\label{aux-lem1-eq1}
  B_j = \{a \in (A_{\ell + 1} \cup \cdots \cup A_m) \setminus M: \tau(a) \geq j\}
\end{equation}
and
\begin{equation}\label{aux-lem1-eq2}
A_j^1 = A_j \cup B_j \cup M.
\end{equation}
Then the following statements hold:
\begin{enumerate}
\item For each $a \in A$, we have
\begin{equation}\label{aux-lem1-eq3}
  \mu(a) \geq \sum_{j = 1}^{\ell} \chi_{A_j^1}(a).
\end{equation}
Moreover, if the strict inequality holds in \eqref{aux-lem1-eq3} for some $a \in A$, then
\begin{equation}\label{aux-lem1-eq4}
  a \in (A_1 \cup \cdots \cup A_{\ell}) \cap (A_{\ell + 1} \cup \cdots \cup A_m)~~ \text{and}~~ a \not\in M.
\end{equation}
\item Let 
	\[X = \biggl\{a \in \biggl((A_1 \cup \cdots \cup A_{\ell}) \cap (A_{\ell + 1} \cup \cdots 
	\cup A_m)\biggr) \setminus M : \sum_{j = 1}^{\ell} \chi_{A_j^1}(a) < \mu(a) \biggr\}.\]
Then for each $a \in X$, we have
\begin{equation}\label{aux-lem1-eq5}
 \mu(a) >  \tau(a) + \sum_{j = \tau(g) + 1}^{\ell} \chi_{A_j}(a)
\end{equation}
\end{enumerate}
\end{lemma}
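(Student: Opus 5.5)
The plan is a pointwise case analysis. Fix $a \in A$ and compare $\mu(a)$ with $\sum_{j=1}^{\ell}\chi_{A_j^1}(a)$ directly from the definitions of $\mu$, $\eta$, $\tau$, $B_j$ and $A_j^1$. I read the index $\tau(g)$ in \eqref{aux-lem1-eq5} as the evident typo for $\tau(a)$. I split according to whether $a \in M$.

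\emph{Case $a \in M$.} Then $\mu(a) = \ell$. Since $M \subseteq A_j^1$ for every $j \in [1,\ell]$, we have $\chi_{A_j^1}(a) = 1$ for each $j$. Hence $\sum_{j=1}^{\ell}\chi_{A_j^1}(a) = \ell = \mu(a)$, so equality holds in \eqref{aux-lem1-eq3}. In particular, any $a$ giving strict inequality satisfies $a \notin M$.

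\emph{Case $a \notin M$.} Then $\sum_{j=1}^m \chi_{A_j}(a) = \mu(a) < \ell$. Hence $\tau(a) = \sum_{j=\ell+1}^m \chi_{A_j}(a)$ with no truncation, and $\mu(a) = \eta(a) + \tau(a)$, as recorded in Section 2.

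\begin{enumerate}
\item By \eqref{aux-lem1-eq1}, $a \in B_j$ if and only if $a \in A_{\ell+1}\cup\cdots\cup A_m$ and $j \le \tau(a)$. Since $a \notin M$, this gives $\chi_{A_j^1}(a) = \max\bigl(\chi_{A_j}(a), [\,j \le \tau(a)\,]\bigr)$, where $[\,j\le\tau(a)\,]$ is $1$ if $j \le \tau(a)$ and $0$ otherwise. This also covers $\tau(a)=0$, where no $B_j$ contains $a$.
\item Bounding the maximum by the sum and summing over $j \in [1,\ell]$ gives
\[\sum_{j=1}^{\ell}\chi_{A_j^1}(a) \le \eta(a) + \tau(a) = \mu(a),\]
which proves \eqref{aux-lem1-eq3}. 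Here $\tau(a) \le \mu(a) < \ell$, so the count of indices $j \le \tau(a)$ is exactly $\tau(a)$.
\item The inequality is strict exactly when the two indicators overlap, i.e. when $a \in A_j$ for some $j \le \tau(a)$. This forces $\tau(a) \ge 1$, so $a \in A_{\ell+1}\cup\cdots\cup A_m$. It also forces $a \in A_j$ for some $j \le \ell$, so $a \in A_1\cup\cdots\cup A_\ell$. Combined with $a \notin M$, this is \eqref{aux-lem1-eq4}.
\end{enumerate}

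For part (2), take $a \in X$. Then $a \notin M$, so the formula $\chi_{A_j^1}(a) = \max\bigl(\chi_{A_j}(a), [\,j \le \tau(a)\,]\bigr)$ from the previous case applies. It gives $\chi_{A_j^1}(a) = 1$ for $j \le \tau(a)$, and $\chi_{A_j^1}(a) = \chi_{A_j}(a)$ for $\tau(a) < j \le \ell$. Therefore
\[\sum_{j=1}^{\ell}\chi_{A_j^1}(a) = \tau(a) + \sum_{j=\tau(a)+1}^{\ell}\chi_{A_j}(a).\]
The defining condition of $X$ says this quantity is strictly less than $\mu(a)$, which is exactly \eqref{aux-lem1-eq5}.

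There is no real obstacle here. The only points needing care are that $\tau(a)$ is untruncated when $a \notin M$, so that $\mu = \eta + \tau$ holds, and the bookkeeping showing that $\chi_{A_j^1}(a)$ is an indicator maximum rather than a sum.
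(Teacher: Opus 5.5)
Your proof is correct. For part (1) you use the same case split as the paper: $a\in M$ versus $a\notin M$, with $\mu(a)=\eta(a)+\tau(a)$ off $M$.

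The paper only proves the inequality $\mu(a)\ge\sum_{j}\chi_{A_j}(a)+\sum_{j}\chi_{B_j}(a)\ge\sum_{j}\chi_{A_j^1}(a)$. It then gets \eqref{aux-lem1-eq4} by contradiction, treating two subcases separately. Elements lying only in $A_1\cup\cdots\cup A_\ell$ have $\tau(a)=0$, and elements lying only in $A_{\ell+1}\cup\cdots\cup A_m$ have $\eta(a)=0$.

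You instead compute the sum exactly: for $a\notin M$, $\sum_{j=1}^{\ell}\chi_{A_j^1}(a)=\tau(a)+\sum_{j=\tau(a)+1}^{\ell}\chi_{A_j}(a)$. This identity gives you two things.
\begin{enumerate}
\item The strictness criterion becomes transparent. The inequality is strict exactly when $a\in A_j$ for some $j\le\tau(a)$, which is sharper than \eqref{aux-lem1-eq4}.
\item Part (2) becomes a restatement of the defining condition of $X$.
\end{enumerate}

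The second point matters. The paper's written proof stops once \eqref{aux-lem1-eq4} is established and never argues \eqref{aux-lem1-eq5}. Yet that inequality is exactly what Lemma \ref{aux-lem2} relies on to conclude $\mathcal{L}(a)>\tau(a)$. Your identity supplies the missing step, and your reading of $\tau(g)$ as $\tau(a)$ is the right one.
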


\begin{proof}
If $a \in M$, then $\mu(a) = \ell \geq \sum_{j = 1}^{\ell} \chi_{A_j^1}(a)$. If $a \not\in M$, then $\mu(a) < \ell$, and so
Therefore,
\begin{equation}\label{aux-lem1-eq8}
  \mu(a) = \sum_{j = 1}^{m} \chi_{A_j}(a)= \sum_{j = 1}^{\ell} \chi_{A_j}(a) + \sum_{j = \ell + 1}^{m} \chi_{A_j}(a).
\end{equation}
Since $a \not \in M$, it follows from \eqref{aux-lem1-eq1} that 
\[\sum_{j = \ell + 1}^{m} \chi_{A_j}(a) \geq \sum_{j = 1}^{\ell} \chi_{B_j}(a) \geq \sum_{j = 1}^{\ell} \chi_{(B_j \cup M)}(a).\]
Therefore, it follows from \eqref{aux-lem1-eq8} that
\begin{align*}
	\mu(a) \geq \sum_{j = 1}^{\ell} \chi_{A_j}(a) + \sum_{j = 1}^{\ell} \chi_{(B_j \cup M)}(a) \geq \sum_{j = 1}^{\ell} \chi_{(A_j \cup B_j \cup M)}(a) = \sum_{j = 1}^{\ell} \chi_{A_j^1}(a),
\end{align*}
which proves \eqref{aux-lem1-eq3}.

Now assume that strict inequality holds in \eqref{aux-lem1-eq3} for some $a \in A$. Then
\begin{equation}\label{aux-lem1-eq9}
\mu(a) > \sum_{j = 1}^{\ell} \chi_{A_j^1}(a),
\end{equation}
which implies that $a \not\in M$, for if $a \in M$, then
	\[\mu(a) = \ell = \sum_{j = 1}^{\ell} \chi_{A_j^1}(a),\]
which contradicts \eqref{aux-lem1-eq9}. Since $a \not \in M$, we have $\mu(a) < \ell$, and $\mu(a) = \tau(a) + \eta(a)$. Now we show that $a \in (A_1 \cup \cdots \cup A_{\ell}) \cap (A_{\ell + 1} \cup \cdots \cup A_m)$. Suppose that
	$a \notin (A_1 \cup \cdots \cup A_{\ell}) \cap (A_{\ell + 1} \cup \cdots \cup A_m)$. Then either
	\[a \in (A_1 \cup \cdots \cup A_{\ell}) \setminus (A_{\ell + 1} \cup \cdots \cup A_m \cup M)\]
	or
	\[a \in (A_{\ell + 1} \cup \cdots \cup A_m) \setminus (A_1 \cup \cdots \cup A_{\ell} \cup M).\]
	If $a \in (A_1 \cup \cdots \cup A_{\ell}) \setminus (A_{\ell + 1} \cup \cdots \cup A_m \cup M)$, then by the definition of $\tau(a)$, it follows that $\tau(a) = 0$, and so

	\begin{equation}\label{aux-lem1-eq10}
		\mu (a) = \eta(a) = \sum_{j = 1}^{\ell} \chi_{A_j}(a).
	\end{equation}
	Since $A_j \subseteq A_j^0 \subseteq A_j^1$, it follows from \eqref{aux-lem1-eq10} that
	\[\mu (a) =  \sum_{j = 1}^{\ell} \chi_{A_j}(g) \leq \sum_{j = 1}^{\ell} \chi_{A_j^1}(a),\]
	which is a contradiction. Therefore,
	\[a \in (A_{\ell + 1} \cup \cdots \cup A_m) \setminus (A_1 \cup \cdots \cup A_{\ell} \cup M),\]
	and so $\eta(a) = 0$. But this implies that
	\begin{equation}\label{aux-lem1-eq11}
		\mu (a) = \tau(a) = \sum_{j = \ell + 1}^{m} \chi_{A_j}(a).
	\end{equation}
	Since $a \notin (A_1 \cup \cdots \cup A_{\ell
	})$, it follows from the definitions of $B_j$ and $A_j^1$ that
	\[\mu (a) = \tau(a) = \sum_{j = 1}^{\ell} \chi_{B_j}(a) \leq \sum_{j = 1}^{\ell} \chi_{A_j^1}(a),\]
	which is again a contradiction. Therefore, if $\sum_{j = 1}^{\ell} \chi_{A_j^1}(a) < \mu(a)$ for some $a \in A$, then 
	\[a \in \biggl((A_1 \cup \cdots \cup A_{\ell}) \cap (A_{\ell + 1} \cup \cdots \cup A_m)\biggr)\setminus M.\]
	This proves \eqref{aux-lem1-eq4}.
\end{proof}

\begin{lemma}\label{aux-lem2}
	Let $\ell$ and $m$ be integers with $2 \leq \ell \leq m$. Let $\mathcal{A} = (A_1, \dots, A_m)$ be a finite sequence of finite subsets of a group $G$. Let $a \in A = A_1 \cup \cdots \cup A_m$ be an element satisfying the inequality
	\[1 \leq \eta(a) \leq \mu(a) < \ell.\]
	Then there exists the smallest nonnegative integer $\mathcal{L}(a)$ such that
	\begin{equation}\label{aux-lem2-eq1}
		\mathcal{L}(a) + \sum_{j = \mathcal{L}(a) + 1}^{\ell} \chi_{A_j}(a) = \mu(a).
	\end{equation}
Let $X$ and $\lambda(a)$ be defined as in Lemma \ref{aux-lem1}. Then for each $a \in X$, we have
\begin{equation}\label{aux-lem2-eq2}
	\mathcal{L}(a) > \tau(a).
\end{equation}
\end{lemma}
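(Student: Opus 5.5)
The plan is to treat the left side of \eqref{aux-lem2-eq1} as a function of a nonnegative integer and locate a zero of the difference. For $t \in [0, \ell]$ define
\[
f(t) = t + \sum_{j = t + 1}^{\ell} \chi_{A_j}(a).
\]

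\textbf{Existence of $\mathcal{L}(a)$.} I would first record two facts about $f$.
\begin{enumerate}
\item $f(0) = \eta(a) \leq \mu(a)$ and $f(\ell) = \ell > \mu(a)$.
\item $f(t+1) - f(t) = 1 - \chi_{A_{t+1}}(a) \in \{0, 1\}$, so $f$ is nondecreasing and increases by steps of at most one.
\end{enumerate}
By this discrete intermediate value argument, every integer in $[f(0), f(\ell)]$ is attained. In particular $\mu(a)$ is attained, since $\eta(a) \le \mu(a) < \ell$. Hence the set of $t \ge 0$ with $f(t) = \mu(a)$ is nonempty, and $\mathcal{L}(a)$ is its minimum.

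\textbf{The inequality $\mathcal{L}(a) > \tau(a)$ for $a \in X$.} I would use \eqref{aux-lem1-eq5} from Lemma \ref{aux-lem1}(2), which says $f(\tau(a)) < \mu(a)$; the $\tau(g)$ there is read as $\tau(a)$. I should also check that $a \in X$ satisfies the hypothesis $1 \le \eta(a) \le \mu(a) < \ell$, so that $\mathcal{L}(a)$ is defined:
\begin{enumerate}
\item $a \notin M$ gives $\mu(a) < \ell$.
\item $a \in A_1 \cup \cdots \cup A_\ell$ gives $\eta(a) \ge 1$.
\item $\eta(a) \le \mu(a)$ holds because $\mu(a) = \eta(a) + \tau(a)$ when $\mu(a) < \ell$.
\end{enumerate}
Since $f$ is nondecreasing, $f(t) \le f(\tau(a)) < \mu(a)$ for every $t \le \tau(a)$. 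So no $t \le \tau(a)$ solves \eqref{aux-lem2-eq1}, and therefore $\mathcal{L}(a) > \tau(a)$.

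\textbf{Main obstacle.} The hard step is justifying \eqref{aux-lem1-eq5} itself, since its proof is not given explicitly above. If it has to be re-derived, I would argue by contradiction. Suppose $f(\tau(a)) \ge \mu(a)$; because $f$ climbs by unit steps, some $s \le \tau(a)$ then has $f(s) = \mu(a)$. I would then try to show that the sets $A_j^1 = A_j \cup B_j \cup M$ with $j \le s$ contain $a$ via the $B_j$, since $\tau(a) \ge s$. Those with $j > s$ contain $a$ exactly when $a \in A_j$. This gives $\sum_{j=1}^{\ell}\chi_{A_j^1}(a) \ge s + \sum_{j>s}\chi_{A_j}(a) = \mu(a)$, which contradicts $a \in X$. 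One caveat: $B_j$ in \eqref{aux-lem1-eq1} requires $a \in A_{\ell+1}\cup\cdots\cup A_m$, and this holds for $a \in X$.
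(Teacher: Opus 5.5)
Your proof is correct and matches the paper's argument: the paper also shows that $P(i)=i+\sum_{j=i+1}^{\ell}\chi_{A_j}(a)$ is nondecreasing with steps of size at most one, uses the discrete intermediate value property to obtain $\mathcal{L}(a)$, and then combines \eqref{aux-lem1-eq5} with monotonicity to get $\mathcal{L}(a)>\tau(a)$; it works on $[0,\mu(a)]$ rather than $[0,\ell]$, which makes no difference. Your check that elements of $X$ satisfy the hypotheses and your derivation of \eqref{aux-lem1-eq5} are both sound and supply steps the paper omits (its proof of Lemma~\ref{aux-lem1} never establishes part (2)), though the detour through $s$ is unnecessary: $a\in B_j\subseteq A_j^1$ for $j\le\tau(a)$ and $A_j\subseteq A_j^1$ together give $\sum_{j=1}^{\ell}\chi_{A_j^1}(a)\ge f(\tau(a))$ directly.
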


\begin{proof}
	For $a \in A$, let $P : [0, \mu (a)] \rightarrow [1, \ell]$ be the function defined by
	\begin{equation}\label{aux-lem2-eq3}
		P(i) = i + \sum_{j = i + 1}^{\ell} \chi_{A_j}(a).
	\end{equation}
Clearly,
	\[P(0) = \eta (a) ~\text{and}~ \mu (a) \leq P(\mu (a)).\]
For each $i \in [0, \mu (a) - 1]$, we have
	\begin{align*}
		P(i) = i + \sum_{j = i + 1}^{\ell} \chi_{A_j}(a)
		& \leq i + \biggl( 1 + \sum_{j = i + 2}^{\ell} \chi_{A_j}(a) \biggr)\\
		&= P(i + 1) \leq i + 1 + \sum_{j = i + 1}^{\ell} \chi_{A_j}(a) = P(i) + 1.
	\end{align*}
Hence $P(i) \leq P(i + 1) \leq P(i) + 1$ for each $i \in [0, \mu (a) - 1]$, and so $P$ is an increasing function. Clearly, if $P(i) < P(j)$ for some $i, j \in [1, \mu (a)]$, then $i < j$. Furthermore, $P(j) = P(j + 1)$ for some $j \in [0, \mu (a) - 1]$ if and only if $a \in A_{j + 1}$.
	
Now we show that $P([0, \mu (a)]) = [\eta(a), P(\mu (a))]$. Suppose there exist smallest integer $t$ such that $t \in [\eta(a), P(\mu (a))]$ and $P(i) \neq t$ for each $i \in [0, \mu(a)]$. Clearly, 
	\[\eta (a) < t < P(\mu (a)).\] 
	Let $i_0$ be the largest integer such that $P(i_0) = t - 1$. Then $P(i_0) <  P(i_0 + 1)$. Since $P(i) \leq P(i + 1) \leq P(i) + 1$ for each $i \in [0, \mu(a) - 1]$, it follows that 
	\[P(i_0) = t - 1 < t \leq P(i_0 + 1) \leq P(i_0) + 1 = t,\]
This implies that $P(i_0 + 1) = t$, which is a contradiction. Therefore,
	\[P([0, \mu (a)]) = [\eta(a), P(\mu (a))].\]
	Since $\mu (a) \in [\eta(a), P(\mu (a))]$, it follows that there exists $r \in [0, \mu (a)]$ such that 
	\[P(r) = r + \sum_{j = r + 1}^{\ell} \chi_{A_j}(a) = \mu (a).\]
	Let 
	\[S = \{ i \in [0, \mu (a)] : P(i) = \mu (a)\}.\]
	Then $r \in S$. Since $S$ is nonempty finite set, it follows that $\min(S)$ exists, and so 
	\[\mathcal{L}(a) = \min (S).\]
This proves \eqref{aux-lem2-eq1}.

Now we prove the second part of the lemma. If $a \in X$, then it follows from Lemma \ref{aux-lem1} (see \eqref{aux-lem1-eq5}) that
\[ \tau (a) + \sum_{j = \tau(a) + 1}^{\ell} \chi_{A_j}(a) < \mu(a).\]
Hence it follows from \eqref{aux-lem2-eq3} that
	\[P(\tau(a)) = \tau (a) + \sum_{j = \tau(a) + 1}^{\ell} \chi_{A_j}(a) < \mu(a) \leq P(\mu (a)),\]
	and so
	\[P (\tau (a)) < \mu (a) = P (\mathcal{L}(a)).\]
	This implies that
    \[\mathcal{L}(a) > \tau (a).\]
	This proves \eqref{aux-lem2-eq2}. 
\end{proof}

\begin{lemma}\label{aux-lem-existance}
	Let $\ell$ and $m$ be positive integers such that $2 \leq \ell < m$. Let $\mathcal{A} = (A_1, \ldots, A_m)$ be a finite sequence of nonempty finite subsets of an arbitrary group $G$ written multiplicatively. Let $A = A_1 \cup \cdots \cup A_m$, and $M = \{a \in A: \mu(a) = \ell\}$. For each $j \in [1, \ell]$, let 
	\[ B_j = \{a \in (A_{\ell + 1} \cup \cdots \cup A_m) \setminus M: \tau(a) \geq j\},\]
and
\[A_j^1 = A_j \cup B_j \cup M.\]
Then for each $j \in [1, \ell]$, there exist sets $A_j^2$ and $S_j$ satisfying the following conditions:
\begin{enumerate}
\item
\begin{equation}\label{aux-lem-existance-eq1}
  A_j^2 = A_j^1 \cup S_j.
\end{equation}
\item
\begin{equation}\label{aux-lem-existance-eq2}
 S_j =  A_j^2 \setminus A_j^1.
\end{equation}
\item 
\begin{equation}\label{aux-lem-existance-eq3}
   	   \mu (a) = \sum_{j = 1}^{\ell} \chi_{A_j^2}(a)~\text{for each}~ a \in A.
   \end{equation}   
\item
\begin{equation}\label{aux-lem-existance-eq4}
	|A_1^2| + \cdots + |A_{\ell}^2| = \sum_{a \in A} \mu(a).
\end{equation}
\item
\begin{equation}\label{aux-lem-existance-eq5}
 A_1^2 \cdots A_{\ell}^2 \subseteq \Pi^{\ell}(A).
\end{equation}
\item Moreover, for each $j \in [1, \ell]$,
\begin{equation}\label{aux-lem-existance-eq6}
   |A_j^2| \geq |A_j|.
\end{equation}
\end{enumerate}
\end{lemma}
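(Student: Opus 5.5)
The plan is to build the sets $A_j^2$ by adding to each $A_j^1$ only elements that certainly give valid products in $\Pi^{\ell}(\mathcal{A})$. The additions repair the deficit of elements $a$ with $\mu(a) > \sum_{j=1}^{\ell}\chi_{A_j^1}(a)$. By Lemma \ref{aux-lem1}, every such deficient $a$ lies in the set $X$, that is, $a \in (A_1 \cup \cdots \cup A_{\ell}) \cap (A_{\ell+1}\cup\cdots\cup A_m)$ and $a \notin M$. In particular $1 \le \eta(a) \le \mu(a) < \ell$, so Lemma \ref{aux-lem2} applies and provides the integer $\mathcal{L}(a)$ with $\mathcal{L}(a) > \tau(a)$ and
\[\mathcal{L}(a) + \sum_{j=\mathcal{L}(a)+1}^{\ell}\chi_{A_j}(a) = \mu(a).\]
I would define
\[S_j = \{a \in X : \tau(a) < j \le \mathcal{L}(a)\} \setminus A_j^1\]
and set $A_j^2 = A_j^1 \cup S_j$. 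Then (1) holds by definition, (2) holds because $S_j$ is disjoint from $A_j^1$, and (6) holds trivially since $A_j \subseteq A_j^1 \subseteq A_j^2$.

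To prove (3), I split into three cases for $a \in A$.

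\emph{Case $a \in M$.} Here $a$ lies in every $A_j^1$, so the count is $\ell = \mu(a)$.

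\emph{Case $a \notin M \cup X$.} No $S_j$ contains $a$. I then need equality in \eqref{aux-lem1-eq3}, not just the inequality. I would check directly that the count is $\mu(a)$, since $a$ lies in only one of the two blocks: either $\tau(a)=0$ and the count is $\eta(a)$, or $\eta(a)=0$ and the count is $\tau(a)$ via the $B_j$. One subtlety: $a$ could lie in both blocks without being deficient. I would redefine $X$ to be all of $((A_1\cup\cdots\cup A_\ell)\cap(A_{\ell+1}\cup\cdots\cup A_m))\setminus M$, so that this overlap is absorbed into the third case.

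\emph{Case $a \in X$.} Here $a \in A_j^2$ if and only if $j \le \tau(a)$ (from $B_j$), or $\tau(a) < j \le \mathcal{L}(a)$ (from $S_j$), or $j > \mathcal{L}(a)$ and $a \in A_j$. So the count is
\[\mathcal{L}(a) + \sum_{j>\mathcal{L}(a)} \chi_{A_j}(a) = \mu(a).\]
Note that $\mathcal{L}(a) \ge \tau(a)$ holds in general: by minimality of $\mathcal{L}(a)$ and monotonicity of $P$, together with $P(\tau(a)) \le \mu(a)$, which follows from $\mu = \tau + \eta$.

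Statement (4) then follows from (3) exactly as in Remark \ref{rem-min-seq}(2), using $A_j^2 \subseteq A$.

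The main obstacle is (5). Take a product $x_1\cdots x_\ell$ with $x_j \in A_j^2$. I must assign distinct indices $i_1,\dots,i_\ell \in [1,m]$ with $x_j \in A_{i_j}$; since $\Pi^{\ell}(\mathcal{A})$ allows any order, only such a system of distinct representatives is needed, not any ordering constraint. I would assign indices greedily by the source of each $x_j$:
\begin{itemize}
\item If $x_j \in A_j$, use index $j$.
\item If $x_j \in B_j$ with $x_j \notin A_j$, or if $x_j \in S_j$, use the $j$-th smallest index $t \ge \ell+1$ with $x_j \in A_t$. This exists because $j \le \tau(x_j)$ in the first situation, and in the second because $j \le \mathcal{L}(x_j) \le \mu(x_j)$ and the availability can be arranged via the counting identity.
\item If $x_j \in M$, pick any unused index containing $x_j$.
\end{itemize}
Equal elements $x_j = x_{j'}$ need distinct indices. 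The identity in (3), which says that element $a$ occupies exactly $\mu(a)$ of the slots, gives that $a$ appears in at most $\mu(a)$ slots. That is exactly the number of available indices: at least $\mu(a)$ sets contain $a$, and for $a \in M$ at least $\ell$ do.

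A cleaner route is Hall's theorem on the bipartite graph joining slots to indices. Distinct elements use disjoint index demands only if sets are shared, and for $M$-elements plenty of indices exist. So I would verify Hall's condition by processing the non-$M$ elements first with the canonical assignments above, then filling the $M$-elements last from the at least $\ell$ indices containing them, of which at most $\ell-1$ are already used. Checking that the canonical assignments for different non-$M$ elements never collide in index is the delicate point. If a collision occurs I would fall back to a direct Hall-type argument.
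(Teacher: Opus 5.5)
Your sets $A_j^2$ coincide with the paper's: for a non-deficient $a\in((A_1\cup\cdots\cup A_\ell)\cap(A_{\ell+1}\cup\cdots\cup A_m))\setminus M$ one has $\mathcal{L}(a)=\tau(a)$, so enlarging $X$ adds nothing. Your verification of (1)--(4) and (6) is sound. The bound $\mathcal{L}(a)\ge\tau(a)$ is true, but the reason is $P(i)\le i+\eta(a)<\tau(a)+\eta(a)=\mu(a)$ for $i<\tau(a)$, not monotonicity of $P$.

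The gap is in (5), which is the real content of the lemma.

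\emph{Slots with $x_j\in S_j$.} You prescribe ``the $j$-th smallest index $t\ge\ell+1$ with $x_j\in A_t$''. But $S_j$ only contains elements with $\tau(x_j)<j$, and since $x_j\notin M$, $\tau(x_j)$ is exactly the number of such indices $t$. So this index never exists, and your rule assigns nothing to these slots. The inequality $j\le\mathcal{L}(x_j)\le\mu(x_j)$ does not rescue it, because $\mu(x_j)$ also counts memberships in $A_1,\dots,A_\ell$. You never say which of those sets to use, or why it is still free.

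\emph{Slots with $x_j\in B_j$.} Your rule is not collision-free. Take $\ell=3$, $m=5$, with $x_1$ lying only in $A_5$ and $x_2$ lying only in $A_4,A_5$. Then both slots receive index $5$. You defer collisions to ``a direct Hall-type argument''. What you offer in support (an element occupies at most $\mu(a)$ slots and lies in at least $\mu(a)$ sets) is a per-element count, not Hall's condition, since distinct elements compete for the same sets $A_t$.

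Filling the $M$-slots last is a valid remark, but it leaves the non-$M$ slots unresolved.

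The missing ingredient is the paper's Claim~1: if $a\in S_r$, then $\sum_{j=1}^{r-1}\chi_{A_j}(a)+\sum_{j=\ell+1}^{m}\chi_{A_j}(a)\ge r$. It follows because $a\notin A_r$, $a\notin M$ and $r\le\mathcal{L}(a)$ give $r+\sum_{j=r+1}^{\ell}\chi_{A_j}(a)=P(r)\le P(\mathcal{L}(a))=\mu(a)=\sum_{j=1}^{m}\chi_{A_j}(a)$.

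With this, process the slots in the order $j=1,\dots,\ell$ and choose $t_j\in([1,j]\cup[\ell+1,m])\setminus\{t_1,\dots,t_{j-1}\}$ with $x_j\in A_{t_j}$:
\begin{itemize}
\item If $x_j\in A_j$, take $t_j=j$. This index is free because each earlier $t_i$ lies in $[1,i]\cup[\ell+1,m]$.
\item Otherwise $x_j$ lies in at least $j$ sets with index in $[1,j]\cup[\ell+1,m]$, while only $j-1$ indices are used. This holds by $\tau(x_j)\ge j$ if $x_j\in B_j$, and by Claim~1 if $x_j\in S_j$. If $x_j\in M$, it holds because at most $\ell-j$ of its at least $\ell$ sets have index in $[j+1,\ell]$.
\end{itemize}
This nested invariant is what guarantees the system of distinct indices exists. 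Without it, or an explicit verification of Hall's condition, your argument for (5) is incomplete.
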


\begin{proof} If $\ell = m$, then we define $A_j^2 = A_j$ for each $j \in [1, \ell]$. In this case, the proof is obvious. Now we assume that $\ell < m$. If $a \in A$, then it follows from \eqref{aux-lem1-eq3} that
	\[\mu(a) \geq \sum_{j = 1}^{\ell} \chi_{A_j^1}(a).\]
To construct the desired sets $A_j^2$ for $j \in [1, \ell]$, we consider the following cases:
	
	\noindent {\textbf{Case 1}} ($\mu(a) = \sum_{j = 1}^{\ell} \chi_{A_j^1}(a)$ for each $a \in A$). In this case, we define $S_j = \emptyset$ and $A_j^2 = A_J^1 \cup S_j = A_j^1$ for $j \in [1, \ell]$. Then each of the sets $A_1^2, \ldots, A_{\ell}^2$ clearly satisfy \eqref{aux-lem-existance-eq1}, \eqref{aux-lem-existance-eq2} and \eqref{aux-lem-existance-eq6}. Furthermore, it is easy to see that $A = A_1^1 \cup \cdots \cup A_{\ell}^1$.
\begin{align*}
\sum_{a \in A}\mu(a) =  \sum_{a \in A}\sum_{j = 1}^{\ell} \chi_{A_j^1}(a) = \sum_{a \in A_1^1 \cup \cdots \cup A_{\ell}^1}\sum_{j = 1}^{\ell} \chi_{A_j^1}(a) &= \sum_{j = 1}^{\ell} \sum_{a \in A_1^1 \cup \cdots \cup A_{\ell}^1} \chi_{A_j^1}(a)\\
 &= |A_1^1| + \cdots + |A_{\ell}^1|\\
 & = |A_1^2| + \cdots + |A_{\ell}^2|
\end{align*}
which proves \eqref{aux-lem-existance-eq4}	
	
\noindent {\textbf{Case 2}} ($\mu(a) > \sum_{j = 1}^{\ell} \chi_{A_j^1}(a)$ for some $a \in A$). In this case, it follows from Lemma \ref{aux-lem-existance} that
$a \in \biggl((A_1 \cup \cdots \cup A_{\ell}) \cap (A_{\ell + 1} \cup \cdots 
	\cup A_m)\biggr)$ and $a \not\in M$. 
Let
	\[X = \biggl\{a \in \biggl((A_1 \cup \cdots \cup A_{\ell}) \cap (A_{\ell + 1} \cup \cdots 
	\cup A_m)\biggr) \setminus M : \sum_{j = 1}^{\ell} \chi_{A_j^1}(a) < \mu(a) \biggr\}.\]
Let $\mathcal{L}(a)$ be defined as in Lemma \ref{aux-lem2} (see \eqref{aux-lem2-eq1}). Then it follows from \eqref{aux-lem2-eq2} that $\mathcal{L}(a) > \tau(a)$.
From the definition of the sets $B_j$, we know that an element $a \in X$ lies precisely in each of the sets $B_1, \ldots, B_{\tau(a)}$, and so it lies in each of the sets $A_1^1, \ldots, A_{\ell}^1$. Now we apply the following process to construct the sets $A_1^2, \ldots, A_{\ell}^2$ from the sets $A_1^1, \ldots, A_{\ell}^1$:

\textbf{Step-1:} Pick an element $a \in X$.\\

\textbf{Step-2:} Put the element $a$ in each of the sets $A_{\tau(a) + 1}^1, \ldots, A_{\mathcal{L}(a)}^1$.\\

\textbf{Step-3:} Repeat this process for each $a \in X$.\\

\textbf{Step-4:} Once Step-1 to Step-3 are completed for each $a \in X$, the sets $A_1^1, \ldots, A_{\ell}^1$ are transformed into new sets. Name these new sets obtained from $A_1^1, \ldots, A_{\ell}^1$ as $A_1^2, \ldots, A_{\ell}^2$, respectively.\\

Define $S_j = A_j^2 \setminus A_j^1$ for $j \in [1, \ell]$. Note that for each $j \in [1, \ell]$, we have $S_j \subseteq X$ and
	\begin{equation*}
	    A_j^2 = A_j^1 \cup S_j = A_j \cup B_j \cup S_j \cup M.
	\end{equation*}
Then each of the sets $A_1^2, \ldots, A_{\ell}^2$ clearly satisfy \eqref{aux-lem-existance-eq1}, \eqref{aux-lem-existance-eq2} and \eqref{aux-lem-existance-eq6}. 

Thus in both the cases, we have constructed the sets $A_1^2, \ldots, A_{\ell}^2$ which satisfy \eqref{aux-lem-existance-eq1}, \eqref{aux-lem-existance-eq2} and \eqref{aux-lem-existance-eq6}. It remains to show that each of these sets satisfy \eqref{aux-lem-existance-eq4} and \eqref{aux-lem-existance-eq5}.

First we prove \eqref{aux-lem-existance-eq4}. To prove this, first we show that $\mu(a) = \sum_{j = 1}^{\ell} \chi_{A_j^2}(a)$ for each $a \in A$. We consider two cases.
 
\noindent\textbf{Case-1}($a \not\in X$). It follows from \eqref{aux-lem1-eq5} that for each $a \in X$,
	\[\mu(a) >  \tau(a) + \sum_{j = \tau(g) + 1}^{\ell} \chi_{A_j}(a).\]
	If $a \in A \setminus X$, then
	\begin{equation}\label{aux-lem-existance-eq7}
		\mu(a) = \sum_{j = 1}^{\ell} \chi_{A_j^1}(a).
	\end{equation} 
Since for each $j \in [1, \ell]$, we know that $S_j \subseteq X$ and $S_j \cap A_j^1 = \emptyset$, it follows that if $a \not \in X$, then $a \notin S_j$ and so it follows from \eqref{aux-lem-existance-eq1} that
    \[\chi_{A_j^1}(a) = \chi_{A_j^2}(a)\]
Therefore, it follows from \eqref{aux-lem-existance-eq7} that
     \begin{equation}\label{aux-lem-existance-eq8}
     	 \mu (a) = \sum_{j = 1}^{\ell} \chi_{A_j^2}(a)~ \text{for each}~ a \not\in X.
     \end{equation}
     
\noindent\textbf{Case-2 ($a \in X$).} In this case, it follows from the construction of the sets $A_1^2, \ldots, A_{\ell}^2$ that if $a \in X$, then $a \in A_j^2$ for each $j \in [\tau(a) + 1, \mathcal{L}(a)]$. Furthermore, we already know that $a \in B_j \subseteq A_j^2$ for each $j \in [1, \tau(a)]$. Hence
\[\sum_{j = 1}^{\mathcal{L}(a)} \chi_{A_j^2}(a) = \sum_{j = 1}^{\tau(a)} \chi_{A_j^2}(a) + \sum_{j = \tau(a) + 1}^{\mathcal{L}(a)} \chi_{A_j^2}(a) = \tau(a) + (\mathcal{L}(a) - \tau(a)) = \mathcal{L}(a).\]
	Therefore,
	\begin{equation}\label{aux-lem-existance-eq9}
		\mathcal{L}(a) = \sum_{j = 1}^{\mathcal{L}(a)} \chi_{A_j^2}(a)~ \text{for each }~ a \in X.
	\end{equation}
While constructing the sets $A_1^2, \ldots, A_{\ell}^2$, an element $a \in X$ was put only in the sets $A_{\tau(a) + 1}^1, \ldots, A_{\mathcal{L}(a)}^1$, and so $a \notin S_j$ for each $j \in [\mathcal{L}(a) + 1, \ell]$ which implies $\chi_{A_j^2}(a) = \chi_{A_j^1}(a)$ for each $j \in  [\mathcal{L}(a) + 1, \ell]$. Hence
\begin{equation}\label{aux-lem-existance-eq10}
		\sum_{j = \mathcal{L}(a) + 1}^{\ell} \chi_{A_j^2}(a) = \sum_{j = \mathcal{L}(a) + 1}^{\ell} \chi_{A_j^1}(a).
	\end{equation}
Furthermore, $a \not\in B_j$ for $j \geq \tau(a) + 1$, and if $a \in X$, then $a \not\in M$ also. This implies that $\chi_{A_j^1}(a) = \chi_{A_j}(a)$ for each $j \in  [\mathcal{L}(a) + 1, \ell]$. Hence it follows from \eqref{aux-lem-existance-eq10} that
	\begin{equation}\label{aux-lem-existance-eq11}
		\sum_{j = \mathcal{L}(a) + 1}^{\ell} \chi_{A_j^2}(a) = \sum_{j = \mathcal{L}(a) + 1}^{\ell} \chi_{A_j}(a)~ \text{for each } a\in X.
	\end{equation}

It follows from the definition of $\mathcal{L}(a)$ (see \eqref{aux-lem2-eq1}) that for each $a \in X$,
\[\mu(a) = \mathcal{L}(a) + \sum_{j = \mathcal{L}(a) + 1}^{\ell} \chi_{A_j}(a).\]
Substituting the value of $\mathcal{L}(a)$ from \eqref{aux-lem-existance-eq9}, we get
\[\mu(a) = \sum_{j = 1}^{\mathcal{L}(a)} \chi_{A_j^2}(a) + \sum_{j = \mathcal{L}(a) + 1}^{\ell} \chi_{A_j}(a).\]
Since $A_j \subseteq A_j^1 \subseteq A_j^2$ for each $j \in [1, \ell]$, it follows that $\chi_{A_j}(a) = \chi_{A_j^2}(a)$, and so the above equation implies that
\[\mu(a) = \sum_{j = 1}^{\mathcal{L}(a)} \chi_{A_j^2}(a) + \sum_{j = \mathcal{L}(a) + 1}^{\ell} \chi_{A_j^2}(a).\]
Now it follows from the above equation and \eqref{aux-lem-existance-eq11} that
\begin{equation}\label{aux-lem-existance-eq12}
  \mu(a) = \sum_{j = 1}^{\ell} \chi_{A_j^2}(a) ~\text{for each}~ a \in X.
\end{equation}
Hence it follows from \eqref{aux-lem-existance-eq7} and \eqref{aux-lem-existance-eq12} that
   \begin{equation}\label{aux-lem-existance-eq13}
   	   \mu (a) = \sum_{j = 1}^{\ell} \chi_{A_j^2}(a)~\text{for each}~ a \in A.
   \end{equation} 
This proves \eqref{aux-lem-existance-eq3}.  
Therefore,
	\[\sum_{a \in A} \mu(a) = \sum_{a \in A} \sum_{j = 1}^{\ell} \chi_{A_j^2}(a) = \sum_{j = 1}^{\ell} \sum_{a \in A}  \chi_{A_j^2}(a)= |A_1^2| + \cdots + |A_{\ell}^2|,\]
which proves \eqref{aux-lem-existance-eq4}.

To prove \eqref{aux-lem-existance-eq5}, first we prove the following claim.
	
	\noindent {\textbf{Claim 1}} (If $a \in S_r$ for some $r \in [2, \ell]$, then there exist at least $r$ sets among $A_1, \ldots, A_{r -  1}, A_{\ell + 1}, \ldots, A_m$ that contains $a$.) In this case, $a \in A_r^2 \setminus A_r^1$, and so $a \notin M$ which implies $\mu(a) < \ell$. To prove the claim, it suffices to prove that 
\begin{equation}\label{aux-lem-existance-eq14}
  \sum_{j = 1}^{r - 1} \chi_{A_j}(a) + \sum_{j = \ell + 1}^{m} \chi_{A_j}(a) \geq r.
\end{equation}
Suppose that 
	\[\sum_{j = 1}^{r - 1} \chi_{A_j}(a) + \sum_{j = \ell + 1}^{m} \chi_{A_j}(a) < r.\]
This implies that
\begin{equation}\label{aux-lem-existance-eq15}
	\mu (a) = \sum_{j = 1}^{m} \chi_{A_j}(a) = \left(\sum_{j = 1}^{r - 1} \chi_{A_j}(a) + \sum_{j = \ell + 1}^{m} \chi_{A_j}(a)\right) + \sum_{j = r}^{\ell} \chi_{A_j}(a)  < r + \sum_{j = r}^{\ell} \chi_{A_j}(a).
\end{equation}
	Since $a \not \in A_r^1$, it follows that $a \not\in A_r$. Note that $a \in B_j \subseteq A_j^2$ for all $j \in [1, \tau(a)]$. It follows from the construction of the sets $A_j^2$ that $a \in A_j^2$ for all $j \in [1, \mathcal{L}(a)]$. Therefore, it follows from \eqref{aux-lem-existance-eq15} that
	\begin{align*}
		\mu (a) < r + \sum_{j = r}^{\ell} \chi_{A_j}(a) &= r + \sum_{j = r + 1}^{\ell} \chi_{A_j}(a)\\
		&= r + \sum_{j = r + 1}^{\mathcal{L}(a)} \chi_{A_j}(a) + \sum_{j = \mathcal{L}(a) + 1}^{\ell} \chi_{A_j}(a)\\
		&\leq r + \sum_{j = r + 1}^{\mathcal{L}(a)} \chi_{A_j^2}(a) + \sum_{j = \mathcal{L}(a) + 1}^{\ell} \chi_{A_j}(a)\\
		&= r + (\mathcal{L}(a) - r) + \sum_{j = \mathcal{L}(a) + 1}^{\ell} \chi_{A_j}(a)\\
		&= \mathcal{L}(a) + \sum_{j = \mathcal{L}(a) + 1}^{\ell} \chi_{A_j}(a)\\
		&= \mu (a) 
	\end{align*}
	which is impossible. Hence
	\[\sum_{j = 1}^{r - 1} \chi_{A_j}(a) + \sum_{j = \ell + 1}^{m} \chi_{A_j}(a) \geq r\]
	which proves \eqref{aux-lem-existance-eq14} and hence Claim $1$.
	
Now we prove \eqref{aux-lem-existance-eq5}. Let $x = x_1 \cdots x_{\ell} \in A_1^2 \cdots A_{\ell}^2$ such that $x_j \in A_j^2$ for each $j \in [1, \ell]$. We show that for each $j \in [1, \ell]$, there exists $t_j \in ([1, j] \cup [\ell + 1, m]) \setminus \{t_1, \ldots, t_{j - 1}\}$ such that $x_j \in A_{t_j}$. For $j = 1$, since $x_1 \in A_1^2 = A_1^1 = A_1 \cup A_{\ell + 1} \cdots \cup A_m$, it follows that there exits $t_1 \in \{1\} \cup [\ell + 1, m]$ such that $x_1 \in A_{t_1}$. Now assume that $j \geq 2$. Suppose we have already chosen distinct integers $t_1, \ldots, t_{j - 1}$. We choose $t_j$ as follows:
	Since $x_j \in A_j^2 = A_j \cup B_j \cup M \cup S_j$, it follows that 
	\[~\text{either}~ x_j \in A_j ~\text{or}~ x_j \in B_j ~\text{or}~ x_j \in M ~\text{or}~ x_j \in S_j.\]
		We choose $t_j$ in each case as follows:
	\begin{enumerate}
		\item If $x_j \in A_j$, then we choose $t_j = j$.
		\item It $x_j \in B_j$, then $\tau(x_j) \geq j$. Since $\tau(x_j) \geq j$, it follows from the definition of $\tau(x_j)$ that $x_j$ lies in at least $j$ sets among the sets $A_{\ell + 1}, \ldots, A_m$. Therefore, we can choose $t_j \in [\ell + 1, m] \setminus \{t_1, \ldots, t_{j - 1}\}$ such that $x_j \in A_{t_j}$.
		\item It $x_j \in M$, then $\mu(x_j) = \ell$. Since $\mu(x_j) = \ell$, it follows from the definition of $\mu(x_j)$ that $x_j$ lies in at least $\ell$ sets among the sets $A_1, \ldots, A_m$, and so there exist $t_j \in ([1, j] \cup [\ell + 1, m]) \setminus \{t_1, \ldots, t_{j - 1}\}$ such that $x_j \in A_{t_j}$.
		\item If $x_j \in S_j$, then it follows from Claim $1$ that there exists $t_j \in ([1, j - 1] \cup [\ell + 1, m]) \setminus \{t_1, \ldots, t_{j - 1}\}$ such that $x_j \in A_{t_j}$.
	\end{enumerate}
Thus there exist $\ell$ distinct sets $A_{t_1}, \ldots, A_{t_{\ell}}$ such that $x_j \in A_{t_j}$ for each $j \in [1, \ell]$. Hence
	\[x = x_1 \cdots x_{\ell} \in A_{t_1} \cdots A_{t_{\ell}} \subseteq \Pi^{\ell}(\mathcal{A}).\]
Therefore,
	\[A_1^2 \cdots A_{\ell}^2  \subseteq \Pi^{\ell}(\mathcal{A}),\]
which proves \eqref{aux-lem-existance-eq5}. 
\end{proof}

\begin{lemma}\label{aux-lem-inv-tf}
	Let $\ell$ and $m$ be positive integers such that $2 \leq \ell \leq m$. Let $\mathcal{A} = (A_1, \ldots, A_m)$ be a finite sequence of finite subsets of torsion-free group $G$, and let $A = A_1 \cup \cdots \cup A_m$. Let $|A_i|  \geq 2$ for each $i  \in [1, m]$ and
	\[|\Pi^{\ell}(\mathcal{A})| =  \sum_{a \in A} \mu (a) - \ell + 1.\] 
Let $M = \{a \in A : \mu(a) = \ell\}$. Then the sets $A_1 \cup M, \ldots, A_m \cup M$ are geometric progressions of types $(a_1, g, b_1), \ldots, (a_m, g, b_m)$, respectively for some $a_i, b_i, g \in G$ for $i = 1, \ldots, m$ and $g \neq 1$.
\end{lemma}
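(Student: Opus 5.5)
The plan is to combine the construction of Lemma \ref{aux-lem-existance} with the multiple-set Kemperman bound (Lemma \ref{kemp-gen-thm-tf}) and its inverse (Lemma \ref{brail-gen-inv-thm-tf}). We use the symmetry \eqref{product-set-symmetry} to apply everything to an arbitrary reordering of $\mathcal{A}$.

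Fix an index $i \in [1,m]$. Choose a permutation $\pi \in S_m$ with $\pi(1) = i$, and work with $\mathcal{A}_\pi$. This changes neither $\Pi^{\ell}(\mathcal{A})$, nor $\mu$, nor $M$.

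When $\ell < m$, apply Lemma \ref{aux-lem-existance} to $\mathcal{A}_\pi$. It produces sets $A_1^2, \ldots, A_\ell^2$ with three properties: $\sum_j |A_j^2| = \sum_{a\in A}\mu(a)$, $A_1^2\cdots A_\ell^2 \subseteq \Pi^\ell(\mathcal{A})$, and $|A_j^2| \ge |A_j| \ge 2$. When $\ell = m$, simply take $A_j^2 = A_j$. Lemma \ref{kemp-gen-thm-tf} and the hypothesis then give
\[
\sum_{a\in A}\mu(a) - \ell + 1 = \sum_j |A_j^2| - \ell + 1 \le |A_1^2\cdots A_\ell^2| \le |\Pi^\ell(\mathcal{A})| = \sum_{a\in A}\mu(a)-\ell+1 .
\]
So equality holds throughout. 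Lemma \ref{brail-gen-inv-thm-tf} therefore shows that $A_1^2$ (the first set, built from $A_i$) is a geometric progression of type $(\alpha,h,\beta)$ for some $h\neq 1$.

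Next we identify $A_1^2$. By construction $A_1^2 = A_i \cup B_1 \cup M \cup S_1$, and $S_1=\emptyset$ because Step-2 only inserts an element into the sets indexed from $\tau(a)+1 \ge 1$ onward. So $A_1^2 = A_i\cup M\cup B_1$, where $B_1$ consists of the non-$M$ elements of the other sets. This is the main obstacle: $A_1^2$ is in general strictly larger than $A_i\cup M$, and we need the progression structure of $A_i\cup M$ itself, not of $A_1^2$.

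To get it, I would vary the permutation further. We still need to reach the sets $A_j$ with $j\le \ell$, and to kill $B_1$ as far as possible. The idea is to use the freedom to choose which sets are placed in positions $1,\ldots,\ell$. With a different placement, $A_i\cup M$ appears as a term in a different extremal chain. We then use Lemma \ref{gp-lem4} (a sub-progression of a progression has ratio $h^p$) together with the chain structure $\alpha_{j+1}=\beta_j^{-1}$. These should force $A_i\cup M$ to be a progression with ratio a conjugate of $h$.

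For a common ratio $g$ across all $i$: the chains obtained for different permutations share members (for instance the set in position $1$ or $2$). So Lemma \ref{gp-lem1} and Lemma \ref{gp-lem2} let us rewrite every ratio as a conjugate of one fixed $g$, possibly after inversion, which is absorbed via Remark \ref{gp-remark}(3),(4). Hence $A_i\cup M$ is of type $(a_i,g,b_i)$ for all $i$.

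I expect the hard step to be removing the extra elements $B_1$, i.e.\ showing that $A_i\cup M$ rather than $A_1^2$ is a progression. A fallback is to exploit equality in Kemperman at the two-set level: $|X A_1^2| = |X|+|A_1^2|-1$ with $X = A_2^2\cdots A_\ell^2$. Then compare with the configuration in which $A_i$ sits in position $2$. Since $B_1\subseteq B_j$ contributions are nested, this comparison should show that the elements of $B_1\setminus(A_i\cup M)$ lie at an end of the progression. Removing an end of a progression leaves a progression with the same ratio.
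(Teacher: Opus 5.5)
There is a genuine gap, and it is exactly the step you flagged. Putting $A_i$ in position $1$ only tells you that $A_1^2=A_i\cup M\cup B_1$ is a progression. None of your suggested ways of discarding $B_1$ is an argument: further permutations, Lemma \ref{gp-lem4}, and the claim that $B_1\setminus(A_i\cup M)$ sits at one end of the progression. That last claim is also false as stated. In $\mathbb{Z}$ take $\ell=2$ and $A_1=\{2,3\}$, $A_2=\{3,4\}$, $A_3=\{1,2\}$, $A_4=\{4,5\}$; this configuration is extremal. Then $A_1^2=[1,5]$ while $A_1\cup M=[2,4]$, so the extra elements sit at both ends.

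The missing observation is that the last position is clean; the paper uses this when it notes $A_\ell^2=A_\ell^1=A_\ell^0=A_\ell\cup M$.
\begin{itemize}
\item $B_\ell=\emptyset$, because $\tau(a)\ge\ell$ forces $\mu(a)=\ell$, so $a\in M$.
\item $S_\ell=\emptyset$, because an element $a\in X$ is inserted only up to index $\mathcal{L}(a)\le\mu(a)\le\ell-1$.
\end{itemize}
So choose $\pi$ with $\pi(\ell)=i$ instead of $\pi(1)=i$. Equality in Lemma \ref{kemp-gen-thm-tf} together with Lemma \ref{brail-gen-inv-thm-tf} then shows directly that $A_i\cup M$ is a progression. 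No removal step is needed.

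Your common-ratio step is also unjustified as written, since chains from different permutations need not share a member. What is true is the formula
\[
A_1^2=A_{\pi(1)}\cup M\cup\bigl((A_{\pi(\ell+1)}\cup\cdots\cup A_{\pi(m)})\setminus M\bigr).
\]
This holds because $S_1=\emptyset$: elements of $X$ lie in the tail and outside $M$, so $\tau(a)\ge1$ and they are inserted only at indices $\ge2$. Your stated reason, ``$\tau(a)+1\ge1$'', does not exclude index $1$.

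Consequently, for $\ell\ge3$, two orderings that differ only by swapping positions $\ell-1$ and $\ell$ have the same $A_1^2$. Lemma \ref{gp-lem2} then transfers the ratio from $A_{\pi(\ell)}\cup M$ to $A_{\pi(\ell-1)}\cup M$, and permuting finishes the argument. The paper reaches the same conclusion by hand in Claims 2--6, using the product $A_1^2\cdots A_{\ell-2}^2(A_\ell^0\cup C)A_{\ell-1}^0$.

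For $\ell=2$ there is in general no shared member, since position $1$ is then $(\bigcup_{t\ne\pi(2)}A_t)\cup M$. This is why the paper handles this case separately in Lemma \ref{aux-lem-special-case}. There the sub-progression exponents $p,q$ coming from Lemma \ref{gp-lem4} are forced to equal $1$ by an explicit torsion-freeness computation. Knowing only that two ratios are related by ``conjugate of a power'' would not suffice in a nonabelian torsion-free group, where $h$ can be conjugate to $h^n$ with $|n|>1$.
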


\begin{proof}	
	For $\ell = 2$, the lemma follows from Lemma \ref{aux-lem-special-case}. Now we assume that $\ell \geq 3$. Let $A_1^2, \ldots,  A_{\ell}^2$ be the sets as defined in Lemma \ref{aux-lem-existance}. Then it follows from Lemma \ref{aux-lem-existance} that
	\[A_1^2 \cdots A_{\ell}^2 \subseteq \Pi^{\ell}(\mathcal{A}),\]
	and
	\[|A_1^2| + \cdots + |A_{\ell}^2| =  \sum_{a \in A} \mu (a).\]
For convenience, for each $j \in [1, m]$, we denote the set $A_j \cup M$ by $A_j^0$. It is easy to see that $A_{\ell}^2 = A_{\ell}^1 = A_{\ell}^0 = A_{\ell} \cup M$.
	
\noindent {\textbf{Claim 1}} ($A_1^2, \ldots, A_{\ell - 1}^2, A_{\ell}^2$ are geometric progressions with the same common ratio). Since $|\Pi^{\ell}(\mathcal{A})| =  \sum_{a \in A} \mu (a) - \ell + 1$, it follows from Lemma \ref{aux-lem-existance} and Lemma \ref{kemp-gen-thm-tf} that
	\begin{align*}
	     \sum_{a \in A} \mu (a) - \ell + 1 &= 	|\Pi^{\ell}(\mathcal{A})|\\
	    &\geq |A_1^2 \cdots A_{\ell}^2|\\
	    &\geq |A_1^2| + \cdots + |A_{\ell}^2| - \ell + 1\\
	    &=  \sum_{a \in A} \mu (a) - \ell + 1.
	\end{align*}
	Hence 
	\[|A_1^2 \cdots A_{\ell}^2| = |A_1^2| + \cdots + |A_{\ell}^2| - \ell + 1.\]
	Since $|A_j| \geq 2$ for each $j \in [1, m]$, it follows from \eqref{aux-lem-existance-eq1} that $|A_j^2| \geq 2$ for each $j \in [1, \ell]$. Therefore, it follows from Lemma \ref{brail-gen-inv-thm-tf} that the sets $A_1^2, \ldots, A_{\ell - 2}^2, A_{\ell - 1}^2, A_{\ell}^2 = A_{\ell}^1 = A_{\ell}^0 = A_{\ell} \cup M$ are geometric progressions of type $(\alpha_1, g, \beta_1),\ldots, (\alpha_{\ell}, g, \beta_{\ell})$, respectively for some $\alpha_i, \beta_i, g \in G$, where $\alpha_{i + 1} = \beta_i^{- 1}$ for each $i \in [1, \ell - 1]$ and $g \neq 1$. This completes the proof of Claim $1$.

\noindent {\textbf{Claim 2}} (If $a \in A$ be an element such that $a \in A_{\ell} \setminus A_{\ell - 1}^0$, then $a \not\in A_{\ell - 1}^2$).
First note that if $a \in A$ such that $a \not \in A_{\ell}^0$, then $a \not\in A_{\ell}^2$ as $A_{\ell}^2 = A_{\ell}^0$. Now suppose that $a \in A_{\ell - 1}^2$. Since $A_{\ell - 1}^2 = A_{\ell - 1}^0 \cup B_{\ell - 1} \cup S_{\ell - 1}$ and $a \not \in A_{\ell - 1}^0$, it follows that $\mu(a) < \ell$ and
		\[~\text{either}~ a \in B_{\ell - 1} ~\text{or}~ a \in S_{\ell - 1}.\]
If $a \in B_{\ell - 1}$, then $\tau (a) \geq \ell - 1$, and so it follows from the definition of $\tau(a)$ that
		\begin{align*}
			\mu(a) = \sum_{j = 1}^m \chi_{A_j}(a) &= \sum_{j = 1}^{\ell} \chi_{A_j}(g) + \sum_{j = \ell + 1}^m \chi_{A_j}(a)\\
			&\geq \chi_{A_{\ell}}(a) + \sum_{j = \ell + 1}^m \chi_{A_j}(a)\\
			&\geq \chi_{A_{\ell}}(a) + \tau(a)\\
			&\geq 1 + (\ell - 1)\\
			&= \ell,
		\end{align*}
		which is a contradiction. Hence $a \in S_{\ell - 1}$. In this case, $\mathcal{L} (a) \geq \ell - 1$. Since $a \in A_{\ell}$, it follows from  \eqref{aux-lem2-eq1} that 
	\[\mu(a) = \mathcal{L}(a) + \sum_{j = \mathcal{L}(a) + 1}^{\ell} \chi_{A_j}(a) \geq \mathcal{L}(a) + \chi_{A_{\ell}}(a) \geq (\ell - 1) + 1 = \ell,\]
		which is again a contradiction. Therefore,  $a \not \in A_{\ell - 1}^2$ which proves Claim $2$. 

\noindent {\textbf{Claim 3}} (If $C = A_{\ell - 1}^2 \setminus A_{\ell - 1}^0$, then $A_{\ell}^0 \cap C = \emptyset$).
Clearly, $A_{\ell - 1}^0 \cap C = \emptyset$, and since $A_{\ell - 1}^0 = A_{\ell - 1} \cup M$, it follows that $M \cap C = \emptyset$. Therefore, it is enough to show that $A_{\ell} \cap C = \emptyset$. Let $a \in A_{\ell} \cap C$. In this case, since $a \in A_{\ell}$ and $a \not \in A_{\ell - 1}^0$, it follows from Claim that $a \notin A_{\ell - 1}^2$. Hence $a \not \in C$, which is a contradiction. Therefore, $A_{\ell} \cap C = \emptyset$ and this proves Claim $3$.
	
Now we consider the sets $A_1^2, \ldots, A_{\ell - 2}^2$, $A_{\ell}^0 \cup C$ and $A_{\ell - 1}^0$. 
	
\noindent {\textbf{Claim 4}} ($A_1^2 \cdots A_{\ell - 2}^2 (A_{\ell}^0 \cup C) A_{\ell - 1}^0 \subseteq \Pi^{\ell}(\mathcal{A})$). Let 
	\[x = x_1 \cdots x_{\ell} \in A_1^2 \cdots A_{\ell - 2}^2 (A_{\ell}^0 \cup C) A_{\ell - 1}^0,\] 
	where $x_j \in A_j^2$ for each $j \in [1, \ell - 2]$, $x_{\ell - 1} \in A_{\ell}^0 \cup C$ and $x_{\ell} \in A_{\ell - 1}^0$. Similar to the proof of \eqref{aux-lem-existance-eq5} of Lemma \ref{aux-lem-existance}, we can show that there exist distinct sets $A_{t_1}, \ldots, A_{t_{\ell - 2}}$ such that $x_j \in A_{t_j}$, where $t_j \in [1, m] \setminus \{\ell - 1, \ell\}$. Since $x_{\ell - 1} \in (A_{\ell}^0 \cup C)$, it follows that 
	\[~\text{either}~ x_{\ell - 1} \in A_{\ell}^0 ~\text{or}~ x_{\ell - 1} \in C.\] 
	If $x_{\ell - 1} \in A_{\ell}^0$, then either $x_{\ell - 1} \in A_{\ell}$ or $x_{\ell - 1} \in M$. This implies that either $x_{\ell - 1} \in A_{\ell}$ or $\mu(x_{\ell - 1}) = \ell$. In both cases, there exits $t_{\ell - 1} \in [1, m] \setminus \{t_1, \ldots, t_{\ell - 2}, \ell - 1\}$ such that $x_{\ell - 1} \in A_{t_{\ell - 1}}$. Now assume that $x_{\ell - 1} \notin A_{\ell}^0$. Then $x_{\ell - 1} \in C \subseteq B_{\ell - 1} \cup S_{\ell - 1}$. We observe the following:
	\begin{enumerate}
		\item If $x_{\ell - 1} \in B_{\ell - 1}$, then $\tau (x_{\ell - 1}) = \ell - 1$ as $x_{\ell - 1} \not\in M$. Hence there exists $t_{\ell - 1} \in [\ell + 1, m] \setminus \{t_1, \ldots, t_{\ell - 2}, \ell - 1, \ell\}$ such that $x_{\ell - 1} \in A_{t_{\ell - 1}}$.
		\item If $x_{\ell - 1} \in S_{\ell - 1}$, then it follows from Lemma \ref{aux-lem-existance} (see Claim $2$) that there exists $t_{\ell - 1} \in ([1, \ell - 2] \cup [\ell + 1, m]) \setminus \{t_1, \ldots, t_{\ell - 2}\}$ as $x_{\ell - 1} \not\in A_{\ell - 1}^0$.
	\end{enumerate}
Thus in each situation, there exits $t_{\ell - 1} \in [1, m] \setminus \{t_1, \ldots, t_{\ell - 2}, \ell - 1\}$ such that $x_{\ell - 1} \in A_{t_{\ell - 1}}$. 
	
Next, since $x_{\ell} \in A_{\ell - 1}^0 = A_{\ell - 1} \cup M$, it follows that either $x_{\ell} \in A_{\ell - 1}$ or $x_{\ell} \in M$. This implies that either $x_{\ell} \in A_{\ell - 1}$ or $\mu(x_{\ell}) = \ell$, and so there exits $t_{\ell} \in [1, m] \setminus \{t_1, \ldots, t_{\ell - 2}, t_{\ell - 1}\}$ such that $x_{\ell} \in A_{t_{\ell}}$. 

Thus we have shown that there exist distinct $\ell$ sets $A_{t_1}, \ldots, A_{t_{\ell}}$ among $A_1, \ldots, A_m$ such that $x_j \in A_{t_j}$ for each $j \in [1, \ell]$. Hence
	\[x = x_1 \cdots x_\ell \in A_{t_1} \cdots A_{t_\ell} \subseteq \Pi^{\ell}(\mathcal{A}).\]
	Therefore,
	\[A_1^2 \cdots A_{\ell - 2}^2 (A_{\ell}^0 \cup C) A_{\ell - 1}^0 \subseteq \Pi^{\ell}(\mathcal{A}),\]
which proves Claim $4$.

\noindent {\textbf{Claim 5}} ($A_1^2, \ldots, A_{\ell - 2}^2, A_{\ell}^0 \cup C, A_{\ell - 1}^0$ are geometric progressions of type $(\delta_1, g_1, \gamma_1), \ldots, (\delta_{\ell}, g_1, \gamma_{\ell})$, respectively for some $\delta_1, \gamma_1, \ldots, \gamma_{\ell}, g_1 \in G$ with $g_1 \neq 1$).
Since $|C| = |A_{\ell - 1}^2| - |A_{\ell - 1}^0|$ and $A_{\ell}^0 = A_{\ell}^2$, it follows from Claim $3$ that
\begin{align*}
 |A_1^2| + \cdots + |A_{\ell - 2}^2| + |(A_{\ell}^0 \cup C)| + |A_{\ell - 1}^0| &= |A_1^2| + \cdots + |A_{\ell - 2}^2| + |A_{\ell}^0| + (|C| + |A_{\ell - 1}^0|)\\ 
 & =  |A_1^2| + \cdots + |A_{\ell - 2}^2| + |A_{\ell - 1}^2| + |A_{\ell}^2| \\
 &= \sum_{a \in A} \mu (a).
\end{align*}
	\[\]
Now it follows from Claim $4$ and Lemma \ref{kemp-gen-thm-tf} that
	\begin{align*}
		\sum_{a \in A} \mu (a) - \ell + 1 &= 	|\Pi^{\ell}(\mathcal{A})|\\
		&\geq |A_1^2 \cdots A_{\ell - 2}^2 (A_{\ell}^0 \cup C) A_{\ell - 1}^0|\\
		&\geq |A_1^2| + \cdots + |A_{\ell - 2}^2| + |A_{\ell}^0 \cup C| + |A_{\ell - 1}^0| - \ell + 1\\
		&=  \sum_{a \in A} \mu (a) - \ell + 1. 
	\end{align*}
	Hence 
	\[|A_1^2 \cdots A_{\ell - 2}^2 (A_{\ell}^0 \cup C) A_{\ell - 1}^0| = |A_1^2| + \cdots + |A_{\ell - 2}^2| + |A_{\ell}^0 \cup C| + |A_{\ell - 1}^0| - \ell + 1.\]
Therefore, it follows from Lemma \ref{brail-gen-inv-thm-tf} that the sets $A_1^2, \ldots, A_{\ell - 2}^2, A_{\ell}^0 \cup C, A_{\ell - 1}^0$ are geometric progressions of type $(\delta_1, g_1, \gamma_1), \ldots, (\delta_{\ell}, g_1, \gamma_{\ell})$, respectively for some $\delta_1, \gamma_1, \ldots, \gamma_{\ell}, g_1 \in G$ with $g_1 \neq 1$.
	
\noindent {\textbf{Claim 6}} ( $A_{\ell - 1}^0$ and $A_{\ell}^0$ are geometric progressions of type $(\delta_{\ell}', g, \gamma_{\ell}')$ and $(\alpha_{\ell}, g, \beta_{\ell})$ for some $\delta_{\ell}', \gamma_{\ell}', \alpha_{\ell}, \beta_{\ell}, g \in G$ with $g \neq 1$). Since $\ell \geq 3$, it follows from Claim $1$ that the $A_1^2$ and $A_{\ell}^2 = A_{\ell}^0$ are geometric progressions of type $(\alpha_1, g, \beta_1)$ and $(\alpha_{\ell}, g, \beta_{\ell})$, respectively for some $\alpha_1, \alpha_{\ell}, \beta_1, \beta_{\ell}, g \in G$ with $g \neq 1$.  It follows from Claim $5$ that $A_1^2$ and $A_{\ell - 1}^0$ are geometric progressions of type $(\delta_1, g_1, \gamma_1)$ and $(\delta_{\ell}, g_1, \gamma_{\ell})$, respectively for some $\delta_1, \gamma_1, \delta_{\ell}, \gamma_{\ell}, g_1 \in G$ with $g_1 \neq 1$. Since $A_1^2$ is also a geometric progressions geometric progression of type $(\alpha_1, g, \beta_1)$, it follows from Lemma \ref{gp-lem2} that $A_{\ell - 1}^0$ can also be represented as a geometric progression of the type $(\delta_{\ell}', g, \gamma_{\ell}')$ for some $\delta_{\ell}', \gamma_{\ell}' \in G$. Thus $A_{\ell - 1}^0$ and $A_{\ell}^0$ are geometric progressions of type $(\delta_{\ell}', g_1, \gamma_{\ell}')$ and $(\alpha_{\ell}, g, \beta_{\ell})$ for some $\delta_{\ell}', \gamma_{\ell}', \alpha_{\ell}, \beta_{\ell}, g \in G$ with $g \neq 1$. This proves Claim $6$.

Now we show that the sets $A_1 \cup M, \ldots, A_m \cup M$ are geometric progressions of types $(a_1, g, b_1), \ldots, (a_m, g, b_m)$, respectively for some $a_i, b_i, g \in G$ for $i = 1, \ldots, m$ and $g \neq 1$. It follows from Claim $6$ that $A_{\ell - 1}^0$ and $A_{\ell}^0$ are geometric progressions of type $(\delta_{\ell}', g_1, \gamma_{\ell}')$ and $(\alpha_{\ell}, g, \beta_{\ell})$ for some $\delta_{\ell}', \gamma_{\ell}', \alpha_{\ell}, \beta_{\ell}, g \in G$ with $g \neq 1$. Since the sumset $\Pi^{\ell}(\mathcal{A})$ is invariant under the reordering of the sets $A_1, A_2, \ldots, A_m$, repeated application of Claim $6$ and Lemma \ref{gp-lem2} implies that the sets $A_1^0, \ldots, A_m^0$ are geometric progressions of types $(a_1, g, b_1), \ldots, (a_m, g, b_m)$, respectively for some $a_i, b_i, g \in G$ for $i = 1, \ldots, m$ and $g \neq 1$. Therefore, the sets $A_1 \cup M, \ldots, A_m \cup M$ are geometric progressions of types $(a_1, g, b_1), \ldots, (a_m, g, b_m)$, respectively for some $a_i, b_i, g \in G$ for $i = 1, \ldots, m$ and $g \neq 1$.This completes the proof.
\end{proof}

\begin{lemma}\label{aux-lem-special-case}
	Let $m$ be positive integers such that $m \geq 2$. Let $\mathcal{A} = (A_1, \ldots, A_m)$ be a finite sequence of nonempty finite subsets of a torsion-free group $G$ such that $|A_i| \geq 2$ for each $i \in [1, m]$. Let $A = A_1 \cup \cdots \cup A_m$. Then
\begin{equation}\label{aux-lem-special-case-eq}
|\Pi^{2}(\mathcal{A})| = \sum_{a \in A} \mu (a) - 1,
\end{equation}
    if and only if $\mathcal{A}$ is a $(2, g)$ minimizing sequence of sets for some $g \in G$. Furthermore, if the above equality holds, then $A_1 \cup M, \ldots, A_m \cup M$ are geometric progressions of type $(\alpha_1, g, \beta_1), \ldots, (\alpha_m, g, \beta_m)$, respectively for some $\alpha_i, \beta_i, g \in G$ for $i = 1, \ldots, m$ with $g \neq 1$, where $M = \{a \in A : \mu(a) = 2\}$.
\end{lemma}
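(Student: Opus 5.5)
The ``if'' direction is immediate from Lemma \ref{aux-lem-inv3} with $\ell = 2$, since a $(2,g)$-minimizing sequence gives $|\Pi^2(\mathcal{A})| = |B_1B_2| = |B_1| + |B_2| - 1 = \sum_{a\in A}\mu(a) - 1$ by Remark \ref{rem-min-seq}. For the converse, I will first reduce to a two-set product. Apply Lemma \ref{aux-lem-existance} with $\ell = 2$ (or take $A_j^2 = A_j$ when $m = 2$). This gives sets $A_1^2 \supseteq A_1$ and $A_2^2 = A_2 \cup M$ with $\mu(a) = \chi_{A_1^2}(a) + \chi_{A_2^2}(a)$ for every $a \in A$, $|A_1^2| + |A_2^2| = \sum_a \mu(a)$, and $A_1^2A_2^2 \subseteq \Pi^2(\mathcal{A})$. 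Theorem \ref{kemp-thm-tf} and the hypothesis then force
\[|A_1^2A_2^2| = |A_1^2| + |A_2^2| - 1 = |\Pi^2(\mathcal{A})|,\]
so $A_1^2A_2^2 = \Pi^2(\mathcal{A})$. Since $|A_j^2| \geq |A_j| \geq 2$, Lemma \ref{aux-lem-inv1} shows that $A_1^2$ and $A_2^2$ are geometric progressions of types $(a,g,b)$ and $(b^{-1},g,c)$. Taking $B_1 = A_1^2$ and $B_2 = A_2^2$ verifies all four conditions of Definition \ref{min-seq}, so $\mathcal{A}$ is $(2,g)$-minimizing.

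For the ``furthermore'' part, I must show that every $A_i \cup M$ is a geometric progression of type $(\ast, g, \ast)$. The key observation is that $\Pi^2(\mathcal{A})$ is invariant under permuting the sets, by \eqref{product-set-symmetry}. So for any ordered pair $(i,j)$ with $i \neq j$, I rerun the construction above on the permuted sequence that places $A_j$ first and $A_i$ second. The second constructed set is then exactly $A_i \cup M$, because $A_2^2 = A_2^1 = A_2 \cup B_2 \cup M$ and $B_2$ consists of non-$M$ elements with $\tau \geq 2$; such an element lies in at least two sets and so would have $\mu = 2$, a contradiction, hence $B_2 = \emptyset$. I also need $S_2 = \emptyset$: an element of $X$ has $\mu = 1$, so $\mathcal{L} \leq 1$ and nothing is added to the second set. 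The same argument as before then makes $A_i \cup M$ a geometric progression with some ratio $g_{(j,i)}$ in the sense of Lemma \ref{aux-lem-inv1}.

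The main obstacle is making the common ratio uniform across all $i$, since each permutation produces its own ratio, defined only up to conjugation and inversion. I will fix the ratio $g$ obtained for $A_2 \cup M = A_2^2$ in the original ordering. For the ordering $(A_1, A_2)$ it is already $g$. For each $i \geq 3$, I use the ordering $(A_i, A_2)$. Here the first set $A_i^2$ and the second set $A_2 \cup M$ are progressions with a common ratio $h$. Since $A_2 \cup M$ is also of type $(\ast, g, \ast)$, Lemma \ref{gp-lem2} (which rests on Lemma \ref{gp-lem1}) lets me rewrite $A_i^2$ with ratio $g$. To pass from $A_i^2$ to $A_i \cup M$, I then use the ordering $(A_2, A_i)$. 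This makes $A_i \cup M$ share a ratio $h'$ with $A_2^2$ in that ordering, and that set is $A_2 \cup B_2' \cup S_2' \cup M$, a progression sharing ratio $h'$ as well. I expect the delicate point to be linking this set back to $g$, and I will try chaining Lemma \ref{gp-lem2} through $A_2 \cup M$ directly. The cleanest route is to choose the ordering with $A_i$ first and $A_2$ second, so that the second set is exactly $A_2 \cup M$ with ratio $g$, and then apply Lemma \ref{gp-lem2} to the pair. By the symmetric ordering, $A_i \cup M$ is the second set paired with $A_1^{2}$-type sets, so I also need $A_1 \cup M$ itself, which I get from the ordering $(A_2, A_1)$ together with Lemma \ref{gp-lem2} applied via $A_2^2 \supseteq A_2 \cup M$. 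This step is where I must check carefully that $A_2 \cup M$ has a common ratio with the relevant sets.
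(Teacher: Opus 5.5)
Your proof of the equivalence is correct. For $\ell = 2$, the construction of Lemma \ref{aux-lem-existance} produces exactly $A_1^2 = A_1 \cup A_3 \cup \cdots \cup A_m \cup M$ and $A_2^2 = A_2 \cup M$. This is the paper's pair $B_1 = A_r^0 \cup C^0$, $B_2 = A_s^0$ with $r = 1$, $s = 2$, and using it uniformly spares you the paper's separate subcase $A_1 = \cdots = A_m$.

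\textbf{The gap is in the ``furthermore'' part}, where you need one ratio $g$ for every $A_i \cup M$.
\begin{itemize}
\item With $\ell = 2$, the first constructed set depends only on which set is placed second. Any ordering with $A_i$ in second position yields the pair $D_i = \bigcup_{k \neq i} A_k \cup M$ and $A_i \cup M$.
\item So your ordering $(A_i, A_2, \ldots)$ just reproduces $D_2$ and $A_2 \cup M$ and says nothing about $A_i$. The ordering $(A_2, A_i, \ldots)$ gives $D_i$ and $A_i \cup M$ with some unrelated ratio $h_i$.
\item These two pairs share no set. They are linked only by the containments $A_i \cup M \subseteq D_2$ and $A_2 \cup M \subseteq D_i$. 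Lemma \ref{gp-lem2}, however, needs one and the same set to be a progression with both ratios.
\item A containment alone only gives, by Lemma \ref{gp-lem4}, that the smaller progression has ratio a power $g^p$. For instance, in $\langle g \rangle \cong \mathbb{Z}$, the set $\{1, g^2, g^4\}$ lies inside $\{g^j : j \in [0,4]\}$.
\end{itemize}
So neither ``chaining Lemma \ref{gp-lem2} through $A_2 \cup M$'' nor applying it ``via $A_2^2 \supseteq A_2 \cup M$'' goes through. As written, the $h_i$ are never tied to $g$.

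\textbf{What is missing} is a torsion-freeness argument forcing $p = 1$, which needs a second extremal product for each $i$. For example, take $i \neq 2$.
\begin{itemize}
\item $D_2$ has type $(a, g, b)$ and $A_2 \cup M$ has type $(b^{-1}, g, c)$.
\item Since $A_i \cup M \subseteq D_2$, Lemma \ref{gp-lem4} gives $A_i \cup M$ of type $(a, g^p, g^r b)$ with $p \geq 1$.
\item The product $(A_i \cup M)D_i$ also lies in $\Pi^2(\mathcal{A})$, because elements of $M$ lie in two of the $A_k$.
\item Any element of two sets is in $M$, so $(A_i \cup M) \cap D_i = M$. Hence $|A_i \cup M| + |D_i| = |A| + |M| = \sum_{a \in A}\mu(a)$, and the product is extremal.
\item Lemma \ref{aux-lem-inv2} then makes $D_i$ of type $(b^{-1}g^{-r}, g^p, \delta)$.
\item Both $b^{-1}c$ and $b^{-1}gc$ lie in $A_2 \cup M \subseteq D_i$. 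This gives $g^{p(v-u)-1} = 1$ for some integers $u, v$, hence $p = 1$.
\end{itemize}

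The paper's Claim 2 does essentially this with the three extremal products $(A_r^0 \cup A_s^0)C^0$, $A_r^0(A_s^0 \cup C^0)$ and $A_s^0(A_r^0 \cup C^0)$. There, two consecutive terms of $C^0$ lying in a progression of ratio $g^q$ force $q = 1$. Only afterwards does the paper invoke Lemma \ref{gp-lem2}, through the genuinely common set $A_r^0$, for the remaining indices.
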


\begin{proof} 
We prove the lemma separately for $m = 2$ and $m \geq 3$.

\noindent {\textbf{Case 1}}($m = 2$). In this case, $\mathcal{A} = (A_1, A_2)$, $A = A_1 \cup A_2$, $M \subseteq A_1$, $M \subseteq A_2$, and 
\[\Pi^{2}(\mathcal{A}) = A_1 A_2 \cup A_2 A_1.\]
If $\mathcal{A}$ is a $(2, g)$ minimizing sequence of sets for some $g \in G$, then it follows from Lemma \ref{aux-lem-inv3} that
 \[|\Pi^{2}(\mathcal{A})| = \sum_{a \in A} \mu (a) - 1.\]
 Conversely, assume that 
 \begin{equation}\label{aux-lem-special-case-eq1}
 |\Pi^{2}(\mathcal{A})| = \sum_{a \in A} \mu (a) - 1.
 \end{equation}
 Then since $A_1 A_2 \subseteq \Pi^{2}(\mathcal{A})$, and $\sum_{a \in A} \mu (a) = |A_1| + |A_2|$, it follows from \eqref{aux-lem-special-case-eq1} and Lemma \ref{kemp-gen-thm-tf} that
 \begin{align}\label{aux-lem-special-case-eq2}
 |A_1| + |A_2| - 1 = \sum_{a \in A} \mu (a) - 1 =  |\Pi^{2}(\mathcal{A})| \geq |A_1 A_2| \geq |A_1| + |A_2| - 1.
 \end{align} 
This implies that 
\[|A_1 A_2| = |A_1| + |A_2| - 1,\]
and so it follows from  Lemma \ref{brail-gen-inv-thm-tf} that $A_1$ and $A_2$ are geometric progressions of type $(\alpha_1, g, \beta_1)$ and $(\alpha_2, g, \beta_2)$, respectively for some $\alpha_1, \alpha_2, \beta_1, \beta_2, g \in G$ with $g \neq 1$. Let $B_1 = A_1$ and $B_2 = A_2$. Then it is easy to see that the sets $B_1$ and $B_2$ satisfy the conditions of Definition \ref{min-seq}, and hence the sequence $\mathcal{A} = (A_1, A_2)$ is a minimizing sequence of sets. Furthermore, since $A_1 \cup M = A_1$ and $A_2 \cup M = A_2$, it follows that $A_1 \cup M$ and $A_2 \cup M$ are geometric progressions of type $(\alpha_1, g, \beta_1)$ and $(\alpha_2, g, \beta_2)$, respectively for some $\alpha_1, \alpha_2, \beta_1, \beta_2, g \in G$ with $g \neq 1$. 

\noindent {\textbf{Case 2}}	($m \geq 3$). If $\mathcal{A}$ is a $(2, g)$ minimizing sequence of sets for some $g \in G$, then it follows from Lemma \ref{aux-lem-inv3} that
	\[|\Pi^{2}(\mathcal{A})| = \sum_{a \in A} \mu (a) - 1.\]
Conversely, assume that
 \begin{equation}\label{aux-lem-special-case-eq3}
 |\Pi^{2}(\mathcal{A})| = \sum_{a \in A} \mu (a) - 1.
 \end{equation}
We consider two subcases.

\noindent {\textbf{Subcase 2.1}} ($A_1 = \cdots = A_m$). In this case, we have
	\[\Pi^{2}(\mathcal{A}) = A_1 A_1,\]
	and so
	\[|A_1 A_1| = |\Pi^{2}(\mathcal{A})| = \sum_{a \in A} \mu (a) - 1 = |A_1| + |A_1| - 1.\]
Now the lemma follows from the argument similar to that in Case $1$.
	
	\noindent {\textbf{Subcase 2.2}} ($A_r \neq A_s$ for some $r, s \in [1, m]$). Let 
	\[A_r^0 = A_r \cup M, A_s^0 = A_s \cup M,\]
and let
\[C = \bigcup_{k \in [1, m] \setminus \{r, s\}} A_k.\]
Write $C^0 = C \cup M$.
	
	\noindent {\textbf{Claim 1}} ($|(A_r^0 \cup C^0)| + |A_s^0 | = \displaystyle\sum_{a \in A} \mu (a)$). Clearly,
\begin{align}\label{aux-lem-special-case-eq4}
		\sum_{a \in A} \mu (a) = \sum_{a \in A \setminus M} \mu (a) + \sum_{a \in M} \mu (a) &= \sum_{a \in A \setminus M} 1 + \sum_{a \in M} 2 \notag\\
 &= (|A \setminus M|) + 2|M| = |A| + |M|.
	\end{align}
	
	Now we show that $(A_r^0 \cup C^0) \cap A_s^0 = M$. Let $Y = \displaystyle\bigcup_{k \in [1, m] \setminus \{s\}} A_k$. Then 
\[A_r^0 \cup C^0 = Y \cup M,\]
	and so
	\[(A_r^0 \cup C^0) \cap A_s^0 = (Y \cup M) \cap (A_{s} \cup M) = (Y \cap A_{s}) \cup M.\]
It is obvious that if $a \in A_i \cap A_j$ for some $i, j \in [1, m]$ with $i \neq j$, then $a \in M$. Hence
	\[Y \cap A_{s} \subseteq M,\]
	and so
	\[(A_r^0 \cup C^0) \cap A_s^0 = (Y \cap A_{s}) \cup M = M.\]
	Therefore,
	\begin{align}\label{aux-lem-special-case-eq5}
		|(A_r^0 \cup C^0)| + |A_s^0|  = |(A_r^0 \cup C^0) \cup A_s^0 | + |(A_r^0 \cup C^0) \cap A_s^0 | &= |A_1 \cup \cdots \cup A_m \cup M| + |M| \notag \\
		&= |A| + |M|.
	\end{align}
	Therefore, it follows from \eqref{aux-lem-special-case-eq4} and \eqref{aux-lem-special-case-eq5} that
\begin{equation}\label{aux-lem-special-case-eq6}
 |(A_r^0 \cup C^0)| + |A_s^0 | = \sum_{a \in A} \mu (a)
\end{equation}
which proves Claim $1$. 
Now it follows from \eqref{aux-lem-special-case-eq3}, \eqref{aux-lem-special-case-eq6} and Lemma \ref{kemp-gen-thm-tf} that
\begin{align}\label{aux-lem-special-case-eq7}
\sum_{a \in A} \mu (a) - 1 =  |\Pi^{2}(\mathcal{A})| \geq |(A_r^0 \cup C^0) A_s^0| \geq |A_r^0 \cup C^0| + |A_s^0| - 1 = \sum_{a \in A} \mu (a) - 1.
 \end{align} 
Therefore, it follows from \eqref{aux-lem-special-case-eq7} and from the fact $(A_r^0 \cup C^0) A_s^0 \subseteq \Pi^{2}(\mathcal{A})$ that
\begin{equation}\label{aux-lem-special-case-eq8}
 |(A_r^0 \cup C^0) A_s^0| = |A_r^0 \cup C^0| + |A_s^0| - 1 
\end{equation}
and 
\begin{equation}\label{aux-lem-special-case-eq9}
(A_r^0 \cup C^0) A_s^0 = \Pi^{2}(\mathcal{A}).
\end{equation} 
Let $B_1 = A_r^0 \cup C^0$ and $B_2 = A_s^0$. Then $B_1 B_2 = \Pi^{2}(\mathcal{A})$. Also, it follows from \eqref{aux-lem-special-case-eq8} and Lemma \ref{brail-gen-inv-thm-tf} that $B_1$ and $B_2$ are geometric progressions of type $(b_1, g, c_1)$ and $(c_1^{-1}, g, c_2)$, respectively for some $b_1, c_1, c_2, g \in G$ with $g \neq 1$. Next we show that for each $a \in A$,
\begin{equation}\label{aux-lem-special-case-eq10}
\mu(a) = \sum_{j=1}^{2}\chi_{B_j}(a).
\end{equation}
Clearly, either $\mu(a) = 1$ or $\mu(a) = 2$. If $\mu(a) = 1$, then $a$ lies in exactly one of the sets $A_1, \ldots, A_m$, and so it lies exactly in one of the sets $B_1$ and $B_2$, and so $\displaystyle\sum_{j=1}^{2}\chi_{B_j}(a) = \mu(a)$ in this case. If $\mu(a) = 2$, then $a \in M$. Hence $a \in B_1 \cap B_2$ which again shows that $\displaystyle\sum_{j=1}^{2}\chi_{B_j}(a) = \mu(a)$. This proves \eqref{aux-lem-special-case-eq10}. Also, it is easy to see that $|B_1| \geq 2$ and $|B_2| \geq 2$. Therefore, the sets $B_1$ and $B_2$ satisfy the conditions of Definition \ref{min-seq}, and hence the sequence $\mathcal{A} = (A_1, \ldots ,A_m)$ is a $(2, g)$-minimizing sequence of sets. 

\noindent \textbf{Claim 2} ($A_r^0$ and $A_s^0$ are geometric progressions of type $(\alpha_r, g, \beta_r)$ and $(\alpha_s, g, \beta_s)$, respectively for some $\alpha_r, \beta_r, \alpha_s, \beta_s, g \in G$ with $g \neq 1$). First, the following identities follows from similar argument used to prove \eqref{aux-lem-special-case-eq8} and \eqref{aux-lem-special-case-eq9}:

\begin{equation}\label{aux-lem-special-case-eq11}
 (A_r^0 \cup A_s^0) C^0 = \Pi^{2}(\mathcal{A})~ \text{and}~ |(A_r^0 \cup A_s^0) C^0| = |(A_r^0 \cup A_s^0)| + |C^0| - 1,
\end{equation}
 
\begin{equation}\label{aux-lem-special-case-eq12}
 A_r^0 (A_s^0 \cup C^0) = \Pi^{2}(\mathcal{A})~ \text{and}~ |A_r^0 (A_s^0 \cup C^0)| = |A_r^0| + |(A_s^0 \cup C^0)| - 1
 \end{equation}
and 
\begin{equation}\label{aux-lem-special-case-eq13}
 A_s^0 (A_r^0 \cup C^0) = \Pi^{2}(\mathcal{A})~ \text{and}~ |A_s^0 (A_r^0 \cup C^0)| = |A_s^0| + |(A_r^0 \cup C^0)| - 1.
\end{equation}
It follows from \eqref{aux-lem-special-case-eq11} and Lemma \ref{aux-lem-inv1} that $A_r^0 \cup A_s^0$, $C^0$ are geometric progressions of types $(a, g, b)$ and $(b^{-1}, g, c)$, respectively for some $a, b, c, g \in G$ with $g \neq 1$, and so
\[A_r^0 \cup A_s^0 = \{ab, agb, \ldots, ag^{u - 1}b\},\]
and
\[C^0 = \{b^{- 1}c, b^{- 1}g c, \ldots, b^{- 1}g^{v - 1}c\},\]
where $u = |A_r^0 \cup A_s^0|$ and $v = |C^0|$. 

It follows from \eqref{aux-lem-special-case-eq12} and Lemma \ref{aux-lem-inv1} that $A_r^0$ and $(A_s^0 \cup C^0)$ are geometric progressions of types $(a_r, g_r, b_r)$ and $(b_r^{-1}, g_r, c_r)$, respectively for some $a_r, b_r, c_r, g_r \in G$ with $g_r \neq 1$. Hence $A_r^0$ is a geometric progression with first term $a_r b_r$ and common ratio $b_r^{-1}g_r b_r$. Since $A_r^0 \subseteq A_r^0 \cup A_s^0$ and $A_r^0$ is a geometric progression, it follows from Lemma \ref{gp-lem4} that there exist integers $p$ and $k$ such that $p \geq 1, k \geq 0$ and 
\[A_r^0 = \{ag^k b, ag^{p + k}b, \ldots, ag^{(n_1 - 1)p + k} b\},\]
where $n_1 = |A_r^0|$. Similarly, it follows from \eqref{aux-lem-special-case-eq13} and Lemma \ref{aux-lem-inv1} that $A_s^0$ and $(A_r^0 \cup C^0)$ are geometric progressions of types $(a_s, g_s, b_s)$ and $(b_s^{-1}, g_s, c_s)$, respectively for some $a_s, b_s, c_s, g_s \in G$ with $g_s \neq 1$. Hence $A_s^0$ is a geometric progression with first term $a_s b_s$ and common ratio $b_s^{-1}g_s b_s$. Since $A_s^0 \subseteq A_r^0 \cup A_s^0$ and $A_s^0$ is a geometric progression, it follows from Lemma \ref{gp-lem4} that there exist integers $q$ and $t$ such that $q \geq 1, t \geq 0$ and 
	\[A_s^0 = \{ag^t b, ag^{q + t}b, \ldots, ag^{(n_2 - 1)q + t} b\},\]
where $n_2 = |A_s^0|$. Without loss of generality, we may assume that $ab \in A_r^0$. Then $k = 0$, and so
\[A_r^0 = \{ab, ag^p b, \ldots, ag^{(n_1 - 1)p}b\}.\]

Since $A_s^0 = \{ag^t b, ag^{q + t}b, \ldots, ag^{(n_2 - 1)q + t} b\}$ is a geometric progression of type $(ag^t, g^q, b)$, it follows from \eqref{aux-lem-special-case-eq13} and Lemma \ref{aux-lem-inv2} that
	\[A_r^0 \cup C^0 = \{b^{- 1}c_1, b^{- 1}g^q c_1, \ldots, b^{- 1}g^{(n_3 - 1) q}c_1\}\]
	for some $c_1 \in G$, where $n_3 = |A_r^0 \cup C^0|$. Since $C^0 \subseteq A_r^0 \cup C^0$ and $b^{- 1}c, b^{- 1}gc \in C^0$, it follows that there exist $i, j \in [0, n_3 - 1]$ such that
	\[b^{- 1}c = b^{- 1}g^{iq}c_1 ~\text{and}~ b^{- 1}gc = b^{- 1}g^{jq}c_1.\]
	Hence 
	\[c = g^{iq}c_1 ~\text{and}~ gc = g^{jq}c_1.\]
	This implies that
	\[g^{(j - i) q - 1} = 1.\]
	Since $g \neq 1$ and $G$ is a torsion-free group, it follows that $(j - i) q = 1$, and so $q = 1$. Hence
	\[A_s^0 = \{ag^t b, ag^{t + 1}b, \ldots, ag^{n_2 + t - 1} b\},\]
	and so $A_s^0$ is geometric progression of type $(ag^t, g, b)$.
	
Since $A_r^0 = \{ab, ag^p b, \ldots, ag^{(n_1 - 1)p}b\}$ is a geometric progression of type $(a, g^p, b)$, it follows from \eqref{aux-lem-special-case-eq12} and Lemma \ref{aux-lem-inv2} that
	\[A_s^0 \cup C^0 = \{b^{- 1}c_2, b^{- 1}g^pc_2, \ldots, b^{- 1}g^{(n_4 - 1) p}c_2\}\]
	for some $c_2 \in G$, where $n_4 = |A_s^0 \cup C^0|$. Since $C^0 \subseteq A_s^0 \cup C^0$ and $b^{- 1}c, b^{- 1}gc \in C^0$, it follows that there exist $i, j \in [0, n_4 - 1]$ such that
	\[b^{- 1}c = b^{- 1}g^{ip}c_2 ~\text{and}~ b^{- 1}gc = b^{- 1}g^{jp}c_2.\]
	Hence 
	\[c = g^{ip}c_2 ~\text{and}~ gc = g^{jp}c_2.\]
This implies that
	\[g^{(j - i) p - 1} = 1.\]
	Since $g \neq 1$ and $G$ is a torsion-free group, it follows that $(j - i) p = 1$, and so $p = 1$.
	Hence
	\[A_r^0 = \{ab, ag b, \ldots, ag^{(n_1 - 1)}b\},\]
	and so $A_r^0$ is geometric progression of type $(a, g, b)$. Therefore, $A_r^0$ and $A_s^0$ are geometric progressions of type $(\alpha_r, g, \beta_r)$ and $(\alpha_s, g, \beta_s)$, respectively, where $\alpha_r = a, \beta_r = b, \alpha_s = ag^t, \beta_s = b$. This proves Claim $2$.

Finally, we prove that if \eqref{aux-lem-special-case-eq} holds, then the sets $A_1 \cup M, \ldots, A_m \cup M$ are geometric progressions of type $(\alpha_1, g, \beta_1), \ldots, (\alpha_m, g, \beta_m)$, respectively for some $\alpha_i, \beta_i, g \in G$ for $i = 1, \ldots, m$ with $g \neq 1$. 

Consider the set $A_i^0$, where $i \in [1, m]\setminus \{r, s\}$. If $A_i = A_r$, then $A_i^0$ is a geometric progression of type $(\alpha_r, g, \beta_r)$. If $A_i \neq A_r$, then we can prove by similar argument that $A_i^0$ and $A_r^0$ are geometric progressions of type $(a_i, g', b_i)$ and $(a_r, g', b_r)$, respectively for some $a_1, b_1, a_r, b_r, g' \in G$ with $g' \neq 1$. Now an application of Lemma \ref{gp-lem2} implies that $A_i^0$ and $A_r^0$ are geometric progressions of type $(\alpha_i, g, \beta_i)$ and $(\alpha_r, g, \beta_r)$, respectively for some $\alpha_i, \beta_i, \alpha_r, \beta_r, g \in G$ with $g \neq 1$. Thus the sets $A_1 \cup M, \ldots, A_m \cup M$ are geometric progressions of type $(\alpha_1, g, \beta_1), \ldots, (\alpha_m, g, \beta_m)$, respectively for some $\alpha_i, \beta_i, g \in G$ for $i = 1, \ldots, m$ with $g \neq 1$. This completes the proof.
\end{proof}

\begin{lemma}\label{aux-lem-dir-tf}
	Let $\ell$ and $m$ be positive integers such that $2 \leq \ell \leq m$. Let $\mathcal{A} = (A_1, \ldots, A_m)$ be a finite sequence of nonempty finite subsets of an arbitrary group $G$, and let $A = A_1 \cup \cdots \cup A_m$. Furthermore, assume that $\mu(a) = \ell$ for some $a \in A$. For each $j \in [1, \ell]$, define $A_j' = \{a \in A : \mu(a) \geq j\}$. Then
\begin{equation}\label{aux-lem-dir-tf-eq1}
  A_1' \cdots A_{\ell}' \subseteq \Pi^{\ell}(\mathcal{A}),
\end{equation}
and
\begin{equation}\label{aux-lem-dir-tf-eq2}
	|A_1'| + \cdots + |A_{\ell}'| =  \sum_{a \in A} \mu (a).
\end{equation}
\end{lemma}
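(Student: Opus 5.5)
The two assertions are of different nature: \eqref{aux-lem-dir-tf-eq2} is a double-counting identity, and \eqref{aux-lem-dir-tf-eq1} is a combinatorial selection argument in the style of the proof of \eqref{aux-lem-existance-eq5} in Lemma \ref{aux-lem-existance}. The hypothesis that $\mu(a) = \ell$ for some $a$ only guarantees that every $A_j'$ is nonempty, so that the product $A_1' \cdots A_{\ell}'$ is nonempty and meaningful. I would prove the identity first and the inclusion second.

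For \eqref{aux-lem-dir-tf-eq2}, note that $\mu(a) \in [1, \ell]$ for each $a \in A$. By the definition of $A_j'$, the element $a$ belongs to $A_j'$ exactly when $j \le \mu(a)$, so $\sum_{j=1}^{\ell} \chi_{A_j'}(a) = \mu(a)$. Summing over $a \in A$ and interchanging the order of summation gives
\[\sum_{a\in A}\mu(a) = \sum_{j=1}^{\ell}\sum_{a\in A}\chi_{A_j'}(a) = \sum_{j=1}^{\ell}|A_j'|,\]
exactly as in Remark \ref{rem-min-seq}(2).

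For \eqref{aux-lem-dir-tf-eq1}, take $x = x_1 \cdots x_{\ell}$ with $x_j \in A_j'$, so that $\mu(x_j) \ge j$; that is, $x_j$ lies in at least $j$ of the sets $A_1, \ldots, A_m$. I would choose distinct indices $t_1, \ldots, t_{\ell}$ with $x_j \in A_{t_j}$ greedily in the order $j = 1, 2, \ldots, \ell$. When choosing $t_j$, only the $j-1$ indices $t_1, \ldots, t_{j-1}$ are excluded, while $x_j$ lies in at least $j$ of the sets. Hence some admissible index $t_j \notin \{t_1, \ldots, t_{j-1}\}$ with $x_j \in A_{t_j}$ always exists. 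The resulting product satisfies $x = x_1 \cdots x_{\ell} \in A_{t_1} \cdots A_{t_{\ell}}$. Since the $t_j$ are distinct, and $\Pi^{\ell}(\mathcal{A})$ allows the factors to come from distinct sets taken in any order, we get $x \in \Pi^{\ell}(\mathcal{A})$. Note that the multiplication order $x_1 \cdots x_{\ell}$ is preserved, so nothing here uses commutativity; this is why the lemma holds in an arbitrary group.

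The only point needing care, and the nearest thing to an obstacle, is processing the indices in increasing order of $j$, so that the available multiplicity $\mu(x_j) \ge j$ always exceeds the number of already used indices. Everything else is routine.
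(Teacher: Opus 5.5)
Your proposal is correct and follows essentially the same route as the paper. The paper also proves the identity through $\mu(a)=\sum_{j=1}^{\ell}\chi_{A_j'}(a)$ and an interchange of summation, and proves the inclusion by the same greedy choice of indices $t_1,\ldots,t_\ell$ in increasing order of $j$, using $\mu(x_j)\ge j$. Your phrasing in terms of distinct \emph{indices} rather than distinct \emph{sets} is in fact the more precise reading of what the paper's argument needs, since $\mu$ counts positions in the sequence $\mathcal{A}$, not distinct sets.
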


\begin{proof}
Clearly, $A_{\ell}' \subseteq A_{\ell - 1}' \subseteq \cdots \subseteq A_1' ~\text{and}~ A_1' = A$. Let $x' = x_1' \cdots x_{\ell}' \in A_1' \cdots A_{\ell}'$, such that $x_j' \in A_j'$ for each $j \in [1, \ell]$. Then $\mu(x_j') \geq j$ for each $j \in [1, \ell]$. Hence for each $j \in [1, \ell]$, $x_j'$ belongs to at least $j$ distinct sets among the sets $A_1, \ldots, A_m$. Let $x_1' \in A_{i_1}$. Since $\mu(x_2') \geq 2$, it follows that there exists a set $A_{i_2}$ distinct from $A_{i_1}$ such that $x_2' \in A_{i_2}$. Suppose we have chosen the distinct sets $A_{i_1}, \ldots, A_{i_{j-1}}$ such that $x_1' \in A_{i_1}, \ldots, x_{j-1}' \in A_{i_{j-1}}$, respectively. Now since $\mu(x_j') \geq j$, it follows that $x_j'$ belongs to at least $j$ distinct sets among the sets $A_1, \ldots, A_m$. Therefore, there exists a set $A_{i_j}$ distinct from the sets $A_{i_1}, \ldots, A_{i_{j-1}}$ such that $x_j' \in A_{i_j}$. Thus we can find distinct sets $A_{i_1}, \ldots, A_{i_{\ell}}$ such that $x_j' \in A_{i_j}$ for $j = 1, \ldots, \ell$. Hence
    \[x' = x_1' \cdots x_{\ell}' \in A_{i_1} \cdots  A_{i_{\ell}} \subseteq \Pi^{\ell}(\mathcal{A}).\]
    Since $x'$ is an arbitrary element of $A_1' \cdots A_{\ell}'$, it follows that 
    \[A_1' \cdots A_{\ell}' \subseteq \Pi^{\ell}(\mathcal{A}).\]
This proves \eqref{aux-lem-dir-tf-eq1}.
    
To prove \eqref{aux-lem-dir-tf-eq2}, first note that $A = A_1' \cup \cdots \cup A_{\ell}'$ and each $a \in A$ lies only in the sets $A_1', \ldots, A_{\mu(a)}'$ among the sets $A_1', \ldots, A_{\ell}'$. Hence
    \[\mu (a) = \sum_{j = 1}^{\ell} \chi_{A_j'}(a)\]
    for each $g \in A$, and so
    \begin{align*}
    	 \sum_{a \in A} \mu (a) = \sum_{a \in A} \sum_{j = 1}^{\ell} \chi_{A_j'}(a) = \sum_{j = 1}^{\ell} \sum_{a \in A} \chi_{A_j'}(a) = \sum_{j = 1}^{\ell} \sum_{a \in A = A_1' \cup \cdots \cup A_{\ell}'} \chi_{A_j'}(a) = \sum_{j = 1}^{\ell} |A_j'|.
    \end{align*}
    This proves \eqref{aux-lem-dir-tf-eq2} and completes the proof.
\end{proof}

\subsection{Proof of main theorems and corollary} \label{subsec-main-thm-torsionfree}

\begin{proof}[Proof of Theorem \ref{productset-tf-dir-thm}]
	It follows from Lemma \ref{aux-lem-existance} that there exist sets $A_1^2, \ldots, A_{\ell}^2 \subseteq A$ such that
	\[A_1^2 \cdots A_{\ell}^2 \subseteq \Pi^{\ell}(\mathcal{A}),\]
	and
	\[|A_1^2| + \cdots + |A_{\ell}^2| =  \sum_{a \in A} \mu (a).\]
Therefore, it follows form Lemma \ref{kemp-gen-thm-tf} that
	\begin{align*}
		|\Pi^{\ell}(\mathcal{A})| &\geq |A_1^2 \cdots A_{\ell}^2| \geq |A_1^2| + \cdots + |A_{\ell}^2| - \ell + 1 = \sum_{a \in A} \mu (a) - \ell + 1.
	\end{align*}
This proves the first part of the theorem. The best possibility of lower bound in \eqref{productset-tf-dir-thm-eq1} is shown in Appendix \ref{best-lower-bound}. 
\end{proof}

\begin{proof}[Proof of Theorem \ref{productset-tf-inv-thm1}.]
If $\mathcal{A}$ is a $(\ell, g)$-minimizing sequence of sets, then it follows from Lemma \ref{aux-lem-inv3} that
\[|\Pi^{\ell}(\mathcal{A})| =  \sum_{a \in A} \mu (a) - \ell + 1.\]
Now assume that $|\Sigma^{\ell}(\mathcal{A})| =  \sum_{a \in A} \mu (a) - \ell + 1$. It follows from Lemma \ref{aux-lem-existance} that there exist sets $A_1^2, \ldots, A_{\ell}^2 \subseteq A$ satisfying such that
\[|A_j^2| \geq |A_j| \geq 2~\text{for each}~ j \in [1, \ell],\]
\[ \mu (a) = \sum_{j = 1}^{\ell} \chi_{A_j^2}(a)~\text{for each}~ a \in A,\]
	\[A_1^2 \cdots A_{\ell}^2 \subseteq \Pi^{\ell}(\mathcal{A}),\]
	and
	\[|A_1^2| + \cdots + |A_{\ell}^2| =  \sum_{a \in A} \mu (a).\]
Therefore, it follows form Lemma \ref{kemp-gen-thm-tf} that
	\begin{align*}
		\sum_{a \in A} \mu (a) - \ell + 1 = |\Pi^{\ell}(\mathcal{A})| &\geq |A_1^2 \cdots A_{\ell}^2| \\
& \geq |A_1^2| + \cdots + |A_{\ell}^2| - \ell + 1 = \sum_{a \in A} \mu (a) - \ell + 1.
	\end{align*}
Therefore,
\[A_1^2 \cdots A_{\ell}^2 = \Pi^{\ell}(\mathcal{A}),\]
and
\[|A_1^2 \cdots A_{\ell}^2| = |A_1^2| + \cdots + |A_{\ell}^2| - \ell + 1.\]
Hence, it follows from Lemma \ref{brail-gen-inv-thm-tf} that for each $j \in [1, \ell]$, the set $A_j^2$ is a geometric progressions of type $(\alpha_j, g, \beta_j)$ for some $\alpha_j, \beta_j, g \in G$ with $g \neq 1$ for each $j \in [1, \ell]$, where $\alpha_{j + 1} = \beta_j^{- 1}$ for each $j \in [1, \ell - 1]$. Thus the sets $A_1^2, \ldots, A_{\ell}^2$ satisfy the conditions $(1)-(4)$ of Definition \ref{min-seq}. Therefore, the sequence $\mathcal{A}$ is a $(\ell, g)$- minimizing sequence of sets. Furthermore, it follows from Lemma \ref{aux-lem-inv-tf} that the sets $A_1 \cup M, \ldots, A_m \cup M$ are geometric progressions of type $(\alpha_1, g, \beta_1), \ldots, (\alpha_m, g, \beta_m)$, respectively for some $\alpha_i, \beta_i, g \in G$ for $i = 1, \ldots, m$ with $g \neq 1$. This completes the proof.
\end{proof} 

\begin{proof}[Proof of Corollary \ref{productset-tf-inv-thm1-cor}]
It follows from Theorem \ref{productset-tf-inv-thm1} that $|\Pi^{\ell}(\mathcal{A})| =  \sum_{a \in A} \mu (a) - \ell + 1$ if and only if $\mathcal{A}$ is a $(\ell, g)$-minimizing sequence of sets. Also if the equality \eqref{productset-tf-inv-thm1-cor-eq1} holds, then the sets $A_1 \cup M, \ldots, A_m \cup M$ are geometric progressions with common ratio $g$ for some $g \in G$ with $g \neq 1$. Now we show that $A$ is also a geometric progression with the same common ratio $g$. Now we consider the following cases:

\noindent \textbf{Case 1} ($\ell = 2$). Consider the sets $A_1$ and $A_2$. Let $A_1^0 = A_1 \cup M$, $A_2^0 = A_2 \cup M$ and $C = A_3 \cup \cdots \cup A_m$. Let $C^0 = C \cup M$, and let $B_1 = A_1^0 \cup C^0$ and $B_2 = A_2^0$. Then it follows from the same argument (by taking $r = 1$ and $s = 2$) as given in the proof of Claim $1$ in the proof of Lemma \ref{aux-lem-special-case} that the sets $B_1 = A_1^0 \cup C^0 = A_1 \cup A_3 \cup \cdots \cup A_m \cup M$ and $B_2 = A_2^0 = A_2 \cup M$ satisfy the conditions $(1) - (4)$ of Definition \ref{min-seq}. Therefore, the sets $B_1$ and $B_2$ are geometric progression with the same common ratio $g$. Since $B_1 \cap B_2 \neq \emptyset$, it follows from  Lemma \ref{gp-lem5} that $A = B_1 \cup B_2$ is also a geometric progression with the same common ratio $g$.\\

\noindent \textbf{Case 2} ($\ell \geq 3$). 
It follows from the proof of Claim $1$ and Claim $5$ in Lemma \ref{aux-lem-inv-tf} that the sets $A_1^2, \ldots, A_{\ell - 2}^2, A_{\ell}^0 \cup C, A_{\ell - 1}^0, A_{\ell}^0$ are geometric progressions with the same common ratio $g$. Hence $A_1^2 = A_1 \cup B_1 \cup M$, $A_{\ell - 1}^0 = A_{\ell - 1} \cup M$ and $A_{\ell}^0 = A_{\ell} \cup M$ are geometric progressions with the same common ratio $g$, where $B_1 = A_{\ell + 1} \cup \cdots \cup A_m$. Since $\Pi^{\ell}(\mathcal{A})$ is invariant under reordering of the sets $A_1, A_2, \ldots, A_m$, it follows that the sets $A_1 \cup M, \ldots, A_m \cup M$, $A_1 \cup B_1 \cup M$, $A_2 \cup B_1 \cup M, \ldots, A_{\ell} \cup B_1\cup M$ are geometric progressions with the same common ratio $g$. Now by applying Lemma \ref{gp-lem5} on the sets $A_1 \cup B_1 \cup M$ and $A_2 \cup B_1\cup M$, we see that their union $A_1 \cup A_2 \cup B_1 \cup M$ is the geometric progressions with the same common ratio $g$. Again, by applying Lemma \ref{gp-lem5} on the sets $A_1 \cup A_2 \cup B_1 \cup M$ and $A_3 \cup B_1\cup M$, we see that their union $A_1 \cup A_2 \cup A_3 \cup B_1 \cup M$ is the geometric progressions with the same common ratio $g$. By repeating this process, we get that $A_1 \cup \cdots \cup A_{\ell} \cup B_1 \cup M$ is the geometric progressions with the same common ratio $g$. Since $A_1 \cup \cdots \cup A_{\ell} \cup B_1 \cup M = A$, the proof follows.
\end{proof}

\begin{proof}[Proof of Theorem \ref{productset-no-min-sq-ext-thm}]
	It follows from the proof of Corollary  \ref{productset-tf-inv-thm1-cor} that the sets $A, A_1 \cup M, \ldots, A_m \cup M$, $A_1 \cup B_1 \cup M$, $A_2 \cup B_1 \cup M, \ldots, A_{\ell} \cup B_1\cup M$ are geometric progressions with the same common ratio $g$ with $g \neq 1$, where $B_1 = A_{\ell + 1} \cup \cdots \cup A_m$. For each $j \in [1, \ell]$, let $|A_j \cup B_1 \cup M| = n_j$. Let
\[A_1 \cup B_1 \cup M = \{\alpha g^r : r \in [0, n_1 - 1]\},\]
and for each $j \in [2, \ell]$, let
\[A_j \cup B_1 \cup M = \{\alpha_j g^r : r \in [0, n_j - 1]\}.\]

For each $j \in [1, m]$, let $|A_j \cup M| = k_j$. Since for each $j \in \{1\} \cup [\ell + 1, m]$, the set $A_j \cup M $ is subset of $A_j \cup M \subseteq A_1 \cup B_1 \cup M$, and $A_j \cup M$ is also a geometric progression with the same common ratio $g$, it follows that there exist integers $r_j$ such that $r_j, k_j \in [0, r - 1]$ and
\[A_j \cup M = \{\alpha g^r : r \in [r_j, k_j + r_j - 1]\}.\]

Since $\alpha g^{r_{\ell + 1}} \in A_{\ell + 1} \cup M$ and $A_{\ell + 1} \cup M \subseteq A_j \cup B_1 \cup M$ for each $j \in [2, \ell]$, it follows that there exists an integer $t_j \in [0, n_j - 1]$ such that
\[\alpha g^{r_{\ell + 1}} = \alpha_j g^{t_j}\]
which implies that
\[\alpha_j = \alpha g^{r_{\ell + 1} - t_j}\]
for each $j \in [2, \ell]$. Therefore,
\[A_j \cup B_1 \cup M = \{\alpha_j g^r : r \in [r_{\ell + 1} - t_j, r_{\ell + 1} - t_j + n_j - 1]\}\]
for each $j \in [2, \ell]$. Now, since for each $j \in [2, \ell]$, the set $A_j \cup M$ is a subset of $A_j \cup B_1 \cup M$ and $A_j \cup M$ is also the geometric progression with the same common ratio $g$, it follows that there exist integers $i_j$ such that
\[A_j \cup M = \{\alpha g^r : r \in [r_j, k_j + r_j - 1]\}.\]
Without loss of generality, we may assume that
    \[r_1 \leq r_2 \leq \cdots \leq r_m.\]
Now if $r_j  = r_{j + 1}$ for some $j \in [1, m - 1]$, then $\alpha g^{r_j}$ and $\alpha g^{r_j + 1}$ both belong to $(A_j \cup M) \cap (A_{j + 1} \cup M)$, which contradicts the fact that $|\{a \in A : \mu (a) \geq 2 \}| \leq 1$. Hence
    \begin{equation}\label{productset-no-min-sq-ext-thm-eq1}
    	r_1 < r_2 < \cdots < r_m.
    \end{equation} 
Furthermore, if $r_{j + 1} \leq k_j + r_j - 2$ for some $j \in [1, m - 1]$, then $\alpha g^{r_{j + 1}}$ and $\alpha g^{r_{j + 1} + 1 }$ both belong to $(A_j \cup M) \cap (A_{j + 1} \cup M)$, which again contradicts the fact that $|\{a \in A : \mu (a) \geq 2 \}| \leq 1$. Hence 
    \begin{equation}\label{productset-no-min-sq-ext-thm-eq2}
    	k_j + r_j - 1 \leq r_{j + 1}
    \end{equation}
    for each $j \in [1, m - 1]$. Therefore, it follows from \eqref{productset-no-min-sq-ext-thm-eq1} and \eqref{productset-no-min-sq-ext-thm-eq2} that
\begin{equation}\label{productset-no-min-sq-ext-thm-eq3}
  [r_2, k _2 + r_2 - 1] \subseteq [r_1, k_{\ell + 1} + r_{\ell + 1} - 1].
\end{equation}
 
Since all the sets $A_j \cup M$ for $j \in \{1\} \cup [\ell + 1, m]$ are the geometric progressions with the same common ratio $g$, and their union $A_1 \cup B_1 \cup M$ is also the geometric progressions with the same common ratio $g$, it follows from \eqref{productset-no-min-sq-ext-thm-eq1} and \eqref{productset-no-min-sq-ext-thm-eq2} that
  \[~\text{either}~ k_1 + r_1 = r_{\ell + 1} ~\text{or}~ k_1 + r_1 + 1 = r_{\ell + 1},\]
and so
\[(A_1 \cup M) \cup (A_{\ell + 1} \cup M) = \{\alpha g^r: r \in [r_1, k_{\ell + 1} + r_{\ell + 1} - 1]\}.\]
Hence it follows from \eqref{productset-no-min-sq-ext-thm-eq3} that
    \[A_2 \cup M \subseteq (A_1 \cup M) \cup (A_{\ell + 1} \cup M) = A_1 \cup A_{\ell + 1} \cup M\]
which again contradicts the fact that $|\{a \in A : \mu (a) \geq 2\}| \leq 1$. Therefore, no sequence $\mathcal{A}$ satisfying the assumption of the lemma exists. This completes the proof.  
\end{proof}

\begin{proof}[Proof of Theorem \ref{productset-tf-inv-thm2}]
It follows from \eqref{productset-tf-inv-thm2-eq1} and Lemma \ref{aux-lem-dir-tf} that 
	\begin{align*}
		\sum_{i = 1}^{\ell} |A_i'| - \ell + 1 &= |\Pi^{\ell}(\mathcal{A})| \geq |A_1' + \cdots + A_{\ell}'| \geq \sum_{i = 1}^{\ell} |A_i'| - \ell + 1, 
	\end{align*} 
which implies that
	\[|A_1' \cdots A_{\ell}'| = \sum_{i = 1}^{\ell} |A_i'| - \ell + 1.\]	
Since $|\{a \in A : \mu (a) \geq 2\}| \geq 2$, it follows that $2 \leq k \leq \ell$. Since $A_{\ell}' \subseteq \cdots \subseteq A_1'$, it follows that 
\[|A_1'| \geq \cdots \geq |A_k'| \geq 2\]
and 
\[|A_{k + 1}'| = \cdots = |A_{\ell}'| = 1.\]
Hence  
	\begin{align*}
		|A_1' \cdots A_k'| = |A_1' \cdots A_{\ell}'| &= \sum_{i = 1}^{\ell} |A_i'| - \ell + 1 \\
&= \sum_{i = 1}^k |A_i'| + (\ell - k)- \ell + 1 = \sum_{i = 1}^k |A_i'| - k + 1.
	\end{align*}
	Therefore, it follows from Lemma \ref{brail-gen-inv-thm-tf} that for each $j \in [1, k]$, the set $A_j'$ is a geometric progressions of type $(\alpha_j, g, \beta_j)$ for some $\alpha_j, \beta_j, g \in G$  with $g \neq 1$, where $\alpha_{j + 1} = \beta_j^{- 1}$ for each $j \in [1, k-1]$.
 This completes the proof.
\end{proof}

\section{Generalized sumset $\Sigma^{\ell}(\mathcal{A})$ in $\mathbb{Z}_p$} \label{sec-prime-order-group}

The following theorem is a special case of Theorem \ref{dgm-thm}. Here we give an independent proof of this result.

\begin{theorem}\label{zp-dir-thm}
	Let $\ell$ and $m$ be integers such that $2 \leq \ell \leq m$. Let $\mathcal{A} = (A_1, \ldots, A_m)$ be a finite sequence of nonempty subsets of $\mathbb{Z}_p$, where $p$ is a prime. Then
	\begin{equation}\label{zp-dir-thm-eq1}
		|\Sigma^{\ell}(\mathcal{A})| \geq \min \biggl(p,  \sum_{a \in A} \mu (a) - \ell + 1\biggr).
	\end{equation}
	The lower bond in \eqref{zp-dir-thm-eq1} is best possible.
\end{theorem}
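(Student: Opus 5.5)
The plan is to reproduce the argument used for Theorem \ref{productset-tf-dir-thm}, with the Cauchy--Davenport Theorem (Theorem \ref{cauchy-davenport-thm}) taking the place of the Kemperman bound in Lemma \ref{kemp-gen-thm-tf}. Write $\mathbb{Z}_p$ additively, so that $\Pi^{\ell}(\mathcal{A})=\Sigma^{\ell}(\mathcal{A})$ and products of sets become sumsets. The construction in Lemma \ref{aux-lem-existance} and Lemma \ref{aux-lem1} is valid in an arbitrary group, since it is stated that way. It therefore applies to $\mathbb{Z}_p$ and yields sets $A_1^2,\ldots,A_{\ell}^2\subseteq A$ with the following three properties:
\[
A_1^2+\cdots+A_{\ell}^2\subseteq \Sigma^{\ell}(\mathcal{A}),\qquad |A_1^2|+\cdots+|A_{\ell}^2|=\sum_{a\in A}\mu(a),\qquad |A_j^2|\geq |A_j|\geq 1 .
\]
When $\ell=m$ one simply takes $A_j^2=A_j$. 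The case $\ell<m$ is handled by the explicit construction in that lemma. Nonemptiness of each $A_j^2$ is exactly what Cauchy--Davenport needs.

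Apply Theorem \ref{cauchy-davenport-thm} to the $\ell\ge 2$ nonempty sets $A_1^2,\ldots,A_{\ell}^2$. This gives
\[
|\Sigma^{\ell}(\mathcal{A})|\geq |A_1^2+\cdots+A_{\ell}^2|\geq \min\Bigl(p,\ \sum_{j=1}^{\ell}|A_j^2|-\ell+1\Bigr)=\min\Bigl(p,\ \sum_{a\in A}\mu(a)-\ell+1\Bigr),
\]
which is \eqref{zp-dir-thm-eq1}.

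The main point to check is that the proof of \eqref{aux-lem-existance-eq5} in Lemma \ref{aux-lem-existance} nowhere uses torsion-freeness. It should not: that argument only assigns each factor $x_j$ to a distinct index $t_j$ by counting memberships, which is pure combinatorics. One should also check that no nonemptiness hypothesis beyond $A_i\neq\varnothing$ is used. Lemma \ref{aux-lem-existance} as stated assumes only nonempty $A_i$, so this holds.

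For sharpness, take all $A_i$ to be arithmetic progressions with common difference $1$ in $\mathbb{Z}_p$, for instance $A_1=\cdots=A_m=[0,k-1]$. Then $\mu(a)=\ell$ for each of the $k$ elements, and $\Sigma^{\ell}(\mathcal{A})=[0,\ell(k-1)]$ taken modulo $p$. Its size is $\min(p,\ell k-\ell+1)$, matching the bound whether or not the sum wraps around. One could also mirror the appendix example referred to after Theorem \ref{productset-tf-dir-thm}, which would show sharpness with all sets distinct.
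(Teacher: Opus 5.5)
Your proposal is correct and follows the paper's proof exactly: you invoke Lemma \ref{aux-lem-existance} to obtain nonempty sets $A_1^2,\ldots,A_{\ell}^2$ with $A_1^2+\cdots+A_{\ell}^2\subseteq\Sigma^{\ell}(\mathcal{A})$ and $\sum_j|A_j^2|=\sum_{a\in A}\mu(a)$, and then apply the Cauchy--Davenport Theorem. The only difference is in the sharpness part. The paper merely points to the appendix constructions, while your explicit example $A_1=\cdots=A_m=[0,k-1]$ with $k\leq p$ gives $|\Sigma^{\ell}(\mathcal{A})|=\min(p,\ell k-\ell+1)$; this is simpler and suffices.
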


\begin{proof}
	It follows from Lemma \ref{aux-lem-existance} that there exist sets $A_1^2, \ldots, A_{\ell}^2 \subseteq A$ such that
	\[A_1^2 + \cdots + A_{\ell}^2 \subseteq \Sigma^{\ell}(\mathcal{A}),\]
	and
	\[|A_1^2| + \cdots + |A_{\ell}^2| =  \sum_{a \in A} \mu (a).\]
Therefore, it follows form Theorem \ref{cauchy-davenport-thm} that
	\begin{align*}
		|\Sigma^{\ell}(\mathcal{A})| &\geq |A_1^2 + \cdots + A_{\ell}^2| \geq  \min \biggl(p,  \sum_{a \in A} \mu (a) - \ell + 1\biggr).
	\end{align*}
This proves the first part of the theorem. The best possibility of lower bound in \eqref{zp-dir-thm-eq1} can be shown by the method similar to the method in Appendix \ref{best-lower-bound}. 
\end{proof}

The next theorem characterizes the extremal sets in $\mathbb{Z}_p$ for the sumset $\Sigma^{\ell}(\mathcal{A})$.

\begin{theorem}\label{zp-inv-thm}
	Let $\ell$ and $m$ be positive integers such that $2 \leq \ell < m$. Let $\mathcal{A} = (A_1, \ldots, A_m)$ be a sequence of finite subsets of $\mathbb{Z}_p$, where $p$ is prime and $|A_i|  \geq 2$ for each $i  \in [1, m]$. Let $A = A_1 \cup \cdots \cup A_m$, and let $M = \{a \in A: \mu(a) = \ell\}$. Let
	\begin{equation}\label{zp-inv-thm-eq1}
		|\Sigma^{\ell}(\mathcal{A})| <
		\begin{cases}
			p - 1, & \mbox{if } \ell = 2; \\
			p, & \mbox{if } \ell \geq 3.
		\end{cases}
	\end{equation}
Then
	\begin{equation}\label{zp-inv-thm-eq2}
		|\Sigma^{\ell}(\mathcal{A})| =  \sum_{a \in A} \mu (a) - \ell + 1,
	\end{equation}
	if and only if $\mathcal{A}$ is a $(\ell, d)$-minimizing sequence of sets. Furthermore, if \eqref{zp-inv-thm-eq1} and \eqref{zp-inv-thm-eq2} hold, then the following conclusions hold:
	\begin{enumerate}
		\item If $\ell = 2$, then $A_1 \cup M, \ldots, A_m \cup M$ are arithmetic progressions.
		\item If $\ell \geq 3$, then the sets $A, A_1 \cup M, \ldots, A_m \cup M$ are arithmetic progressions with the same common difference $d$. 
       \item Furthermore, if $\ell \geq 3$, then the set $A$ is also the arithmetic progressions with the common difference $d$. 
	\end{enumerate}
\end{theorem}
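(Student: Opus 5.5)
The plan is to transplant the torsion-free argument of Section \ref{sec-torsion-free-groups}. Kemperman's theorem is replaced by the Cauchy--Davenport Theorem \ref{cauchy-davenport-thm}. The Brailovsky--Freiman multi-set inverse (Lemma \ref{brail-gen-inv-thm-tf}) is replaced by the multi-set Vosper lemmas (Lemmas \ref{vosper-gen-inv-thm1} and \ref{vosper-gen-inv-thm2}). The hypothesis \eqref{zp-inv-thm-eq1} is exactly what makes the Vosper step legitimate.

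The easy direction is that an $(\ell,d)$-minimizing sequence gives equality. Here $\Sigma^\ell(\mathcal A)=B_1+\cdots+B_\ell$, with the $B_j$ arithmetic progressions of common difference $d$, so $|B_1+\cdots+B_\ell|=\min(p,\sum|B_j|-\ell+1)$. By \eqref{zp-inv-thm-eq1} this size is below $p$, and Remark \ref{rem-min-seq}(2) then gives \eqref{zp-inv-thm-eq2}.

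For the converse, I assume \eqref{zp-inv-thm-eq2} and take the sets $A_1^2,\dots,A_\ell^2$ of Lemma \ref{aux-lem-existance}; that lemma is valid in any group. Since $|\Sigma^\ell(\mathcal A)|<p$, Theorem \ref{cauchy-davenport-thm} gives the chain
\[\sum_{a}\mu(a)-\ell+1=|\Sigma^\ell(\mathcal A)|\ge|A_1^2+\cdots+A_\ell^2|\ge\sum_j|A_j^2|-\ell+1 .\]
This forces $A_1^2+\cdots+A_\ell^2=\Sigma^\ell(\mathcal A)$, with equality in Cauchy--Davenport and a sumset of size at most $p-2$ when $\ell=2$ (respectively at most $p-1$ when $\ell\ge3$). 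Each $|A_j^2|\ge|A_j|\ge2$, so the multi-set Vosper lemma makes all $A_j^2$ arithmetic progressions with a common difference $d$. Then conditions (1)--(4) of Definition \ref{min-seq} hold with $B_j=A_j^2$.

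For $\ell\ge3$ I would justify the weaker bound $p-1$ by the usual reduction. Equality in Cauchy--Davenport passes to $A_1^2+A_2^2$ and to $(A_1^2+A_2^2)+\cdots$. The partial sums then have size at most $p-2$ whenever a remaining summand has at least two elements, and this is why $\ell=2$ needs the stronger hypothesis.

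For the structural conclusions I would rerun the proofs of Lemma \ref{aux-lem-special-case} ($\ell=2$) and Lemma \ref{aux-lem-inv-tf} ($\ell\ge3$) verbatim with Vosper in place of Lemma \ref{aux-lem-inv1}. Every auxiliary product set built there (for instance $A_1^2+\cdots+A_{\ell-2}^2+(A_\ell^0\cup C)+A_{\ell-1}^0$, or $(A_r^0\cup C^0)+A_s^0$) lies in $\Sigma^\ell(\mathcal A)$ and has total size $\sum\mu(a)$. Hence each of them again has size $|\Sigma^\ell(\mathcal A)|<p$ and is an equality case. This yields that each $A_i\cup M$ is an arithmetic progression, with a difference compatible with the others.

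In the cyclic case, Lemmas \ref{gp-lem1}--\ref{gp-lem2} are replaced by the fact that an arithmetic progression in $\mathbb Z_p$ with $2\le|P|\le p-2$ determines its difference up to sign. This gives conclusion (2) for $\ell\ge3$, where a fixed $A_1^2$ is shared by different chains. For $\ell=2$ only conclusion (1) is claimed, since a two-term chain yields no comparison. Conclusion (3) follows as in the proof of Corollary \ref{productset-tf-inv-thm1-cor}: I glue the progressions $A_j\cup B_1\cup M$ with difference $d$ pairwise via Lemma \ref{gp-lem5}, which needs their common part $B_1\cup M$ to be nonempty.

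I expect the main obstacle to be the size control needed for the uniqueness-of-difference step. Lemma \ref{gp-lem4} used torsion-freeness essentially, because progressions of difference $d$ can wrap around modulo $p$. I would handle this by noting that every set involved is a summand of a sumset of size less than $p$, so each has size at most $p-2$. A progression that is a subset of another, with the same difference or its negative, then behaves like an interval, and the coefficient argument $(j-i)q\equiv 1$ still forces $q=\pm1$ because all exponents lie in an interval of length less than $p$.
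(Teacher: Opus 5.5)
Your proposal is correct and is essentially the paper's proof: Lemma \ref{aux-lem-existance} with Cauchy--Davenport and Lemmas \ref{vosper-gen-inv-thm1}--\ref{vosper-gen-inv-thm2} gives the equivalence, and rerunning Lemmas \ref{aux-lem-special-case} and \ref{aux-lem-inv-tf} and Corollary \ref{productset-tf-inv-thm1-cor} gives the structural conclusions (this is exactly Lemma \ref{zp-inv-lem}), with your fact that an arithmetic progression $P$ with $2\le|P|\le p-2$ determines its difference up to sign standing in for Lemma \ref{gp-lem2}, a replacement the paper leaves implicit. Your closing worry about Lemma \ref{gp-lem4} is moot, since that lemma only enters Claim~2 of Lemma \ref{aux-lem-special-case}, which conclusion (1) does not need; the proposed fix would also fail as stated, because $(j-i)q\equiv 1 \pmod p$ does not force $q=\pm1$ when $|(j-i)q|$ can exceed $p$ (for example $p=7$, $q=2$, $j-i=4$).
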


For the proof of Theorem \ref{zp-inv-thm}, we need the following lemmas.

\begin{lemma}\label{vosper-gen-inv-thm1}
	Let $\ell \geq 2$ be an integer. Let $A_1, \ldots, A_{\ell}$ be nonempmty subsets of $\mathbb{Z}_p$ such that $|A_i| \geq 2$ for each  $i \in [1, \ell]$. Let $|A_1| + \cdots + |A_{\ell}| - \ell  + 1 \leq p - 2$. Then
	\[|A_1 + \cdots + A_{\ell}| = |A_1| + \cdots + |A_{\ell}| - \ell  + 1\]
	if and only if $A_1, \ldots, A_{\ell}$ are arithmetic progressions with the same common difference.
\end{lemma}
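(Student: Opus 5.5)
The plan is to argue by induction on $\ell$, using Theorem \ref{vosperthm} for the base case and the Cauchy--Davenport Theorem (Theorem \ref{cauchy-davenport-thm}) to pass to fewer summands. The structure mirrors the proof of Lemma \ref{brail-gen-inv-thm-tf}, with Vosper's theorem in place of the Brailovsky--Freiman theorem.

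\emph{Sufficiency.} Suppose $A_i = \{a_i + jd : j \in [0, |A_i| - 1]\}$ with $d \neq 0$. Then $A_1 + \cdots + A_\ell = \{a_1 + \cdots + a_\ell + jd : j \in [0, N-1]\}$ with $N = \sum |A_i| - \ell + 1$. Since $N \leq p - 2 < p$ and $d$ generates $\mathbb{Z}_p$, these $N$ elements are distinct, so the sumset has exactly $N$ elements.

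\emph{Necessity, base case $\ell = 2$.} This is precisely Theorem \ref{vosperthm}. Its hypothesis $|A_1 + A_2| \leq p - 2$ holds because $|A_1 + A_2| = |A_1| + |A_2| - 1 \leq p - 2$.

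\emph{Necessity, inductive step $\ell \geq 3$.} Put $S = A_1 + \cdots + A_{\ell-1}$. Then
\[
|S + A_\ell| \geq \min(p, |S| + |A_\ell| - 1) \quad\text{and}\quad |S| \geq \min\Bigl(p, \sum_{i=1}^{\ell-1} |A_i| - \ell + 2\Bigr).
\]
The value $\sum_{i=1}^{\ell-1}|A_i| - \ell + 2$ is at most $\sum_{i=1}^{\ell}|A_i| - \ell + 1 - 1 \leq p - 3$, because $|A_\ell| \geq 2$. So the minimum in the bound for $|S|$ is the second term.

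Assume first that $|S| + |A_\ell| - 1 < p$. Chaining the two inequalities against the assumed equality forces both to be equalities, exactly as in the displayed chain of Lemma \ref{brail-gen-inv-thm-tf}:
\[
|S| = \sum_{i=1}^{\ell-1}|A_i| - \ell + 2 \quad\text{and}\quad |S + A_\ell| = |S| + |A_\ell| - 1.
\]
If instead $|S| + |A_\ell| - 1 \geq p$, then $|S + A_\ell| = p$, contradicting $|S + A_\ell| \leq p - 2$. The first case is therefore the only one.

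The induction hypothesis applies to $A_1, \ldots, A_{\ell-1}$, since $|S| \leq p - 3 \leq p - 2$. It gives that these sets are arithmetic progressions with a common difference $d$, and hence $S$ is an arithmetic progression with difference $d$ and $|S| \geq 2$. Theorem \ref{vosperthm} applied to $S$ and $A_\ell$ (using $|S + A_\ell| \leq p - 2$) shows that $S$ and $A_\ell$ are arithmetic progressions with a common difference $d'$.

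\emph{Main obstacle: reconciling $d$ and $d'$.} I need the fact that in $\mathbb{Z}_p$ an arithmetic progression $P$ with $2 \leq |P| \leq p - 2$ determines its common difference up to sign. To prove it, I would use $|P + \{0, e\}| = |P| + 1$ if and only if $e = \pm d$. Alternatively, I would compare consecutive-difference structure: a progression with difference $d$ has exactly $|P| - 1$ ordered pairs $(x, x + d)$ inside $P$, and a short counting argument shows a second difference $d'$ can only achieve this when $d' = \pm d$.

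The case $|P| = 2$ needs separate handling, since such a set has two possible differences. Here $|S| = 2$ would force $\ell - 1 = 1$, which is excluded by $\ell \geq 3$; so $|S| \geq 3$ and the fact applies. Hence $d' = \pm d$. Since a progression with difference $-d$ is also a progression with difference $d$, we conclude that $A_\ell$ is an arithmetic progression with difference $d$, completing the induction.
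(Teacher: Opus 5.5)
Your proof is correct and follows the paper's argument: induction on $\ell$ with Vosper's theorem as the base case, Cauchy--Davenport forcing equality in both $|S| \geq \sum_{i<\ell}|A_i| - \ell + 2$ and $|S + A_\ell| \geq |S| + |A_\ell| - 1$, the induction hypothesis applied to $A_1, \ldots, A_{\ell-1}$, and Vosper's theorem applied to $S$ and $A_\ell$. Your extra step, that a progression of length between $2$ and $p-2$ in $\mathbb{Z}_p$ determines its common difference up to sign, justifies the identification $d' = \pm d$, which the paper only asserts. Your separate treatment of $|P| = 2$ is harmless but unnecessary, since a two-element set also admits only the differences $\pm d$.
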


\begin{proof}
	If $A_1, \ldots, A_{\ell}$ are arithmetic progressions with the same common difference, then it is easy to verify that
	\[|A_1 + \cdots + A_{\ell}| = |A_1| + \cdots + |A_{\ell}| - \ell  + 1 \leq p - 2.\]
	Now we assume that $|A_1 + \cdots + A_{\ell}| = |A_1| + \cdots + |A_{\ell}| - \ell  + 1 \leq p - 2$. If $\ell = 2$, then it follows from Theorem \ref{vosperthm} that $A_1$ are $A_2$ are arithmetic progressions with same common difference. Now we assume that the result holds for $\ell - 1$ sets, where $\ell \geq 3$. Let $A = A_1 + \cdots + A_{\ell - 1}$. Since $A + A_{\ell} = A_1 + \cdots + A_{\ell} \neq \mathbb{Z}_p$, it follows that $A = A_1 + \cdots + A_{\ell - 1} \neq \mathbb{Z}_p$. Therefore, it follows from Theorem \ref{cauchy-davenport-thm} that
	\begin{align}\label{vosper-gen-inv-thm1-eq1}
		|A_1| + \cdots + |A_{\ell}| - \ell + 1 = |A_1 + \cdots + A_{\ell}| &= |A + A_{\ell}| \notag \\
        & \geq |A| + |A_{\ell}| - 1 \notag\\
		&= |A_1 + \cdots + A_{\ell - 1}| + |A_{\ell}| - 1 \notag\\
		&\geq (|A_1| + \cdots + |A_{\ell - 1}| - \ell + 2) + |A_{\ell}| - 1 \notag\\
		&= |A_1| + \cdots + |A_{\ell}| - \ell + 1.
	\end{align}
	Hence
	\[|A_1 + \cdots + A_{\ell - 1}| = |A_1| + \cdots + |A_{\ell - 1}| - \ell + 2 \leq p - 2.\]
Therefore, it follows from the induction hypothesis that $A_1, \ldots, A_{\ell - 1}$ are arithmetic progressions with the same common difference, say $d$. Therefore, $A = A_1 + \cdots + A_{\ell - 1}$ is also an arithmetic progression with the common difference $d$. Now, it also follows from \eqref{vosper-gen-inv-thm1-eq1} that
	\[|A + A_{\ell}| = |A| + |A_{\ell}| - 1 \leq p -2.\]
Therefore, it follows from Theorem \ref{vosperthm} that $A$ and $A_{\ell}$ are arithmetic progressions with the same common difference. Since $A$ is an arithmetic progression with common difference $d$, it follows that $A_{\ell}$ is also an arithmetic progression with common difference $d$. Thus $A_1, \ldots, A_{\ell}$ are arithmetic progressions with the same common difference.
\end{proof}

\begin{lemma}\label{vosper-gen-inv-thm2}
	Let $\ell \geq 3$ be a positive integer. Let $A_1, \ldots, A_{\ell}$ be nonempty subsets of $\mathbb{Z}_p$ such that $|A_i| \geq 2$ for each  $i \in [1, \ell]$. Let $A_1 + \cdots + A_{\ell} \neq \mathbb{Z}_p$. Then
	\[|A_1 + \cdots + A_{\ell}| = |A_1| + \cdots + |A_{\ell}| - \ell  + 1\]
	if and only if $A_1, \ldots, A_{\ell}$ are arithmetic progressions with the same common difference.	
\end{lemma}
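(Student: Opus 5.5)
The plan is to reduce the statement to Lemma \ref{vosper-gen-inv-thm1}. The only extra work concerns the boundary case where the sumset has exactly $p-1$ elements, and there the hypothesis $\ell \geq 3$ is what makes the argument go through. Throughout write $S = |A_1| + \cdots + |A_{\ell}| - \ell + 1$.

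\emph{Easy direction.} Suppose $A_1, \ldots, A_{\ell}$ are arithmetic progressions with common difference $d$. Then $A_1 + \cdots + A_{\ell}$ is the set of elements $a + jd$ with $j \in [0, S-1]$, read modulo $p$. Its size is therefore $\min(p, S)$. Since the sumset is not $\mathbb{Z}_p$, its size is $S$.

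\emph{Converse, reduction.} Assume $|A_1 + \cdots + A_{\ell}| = S$ and $A_1 + \cdots + A_{\ell} \neq \mathbb{Z}_p$, so $S \leq p-1$. If $S \leq p-2$, Lemma \ref{vosper-gen-inv-thm1} applies directly and we are done. So assume $S = p-1$.

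\emph{Converse, the case $S = p-1$.}
\begin{enumerate}
\item Exactly as in the chain of inequalities \eqref{vosper-gen-inv-thm1-eq1} in the proof of Lemma \ref{vosper-gen-inv-thm1}, Theorem \ref{cauchy-davenport-thm} forces equality for every partial sum. This is legitimate because no partial sum can equal $\mathbb{Z}_p$ when the full sum does not.
\item Since $G$ is abelian, we may omit any one index $i$. The $\ell-1 \geq 2$ remaining sets then satisfy
\[|\textstyle\sum_{j\neq i} A_j| = \sum_{j \neq i}|A_j| - (\ell-1) + 1 = S - (|A_i| - 1) \leq p-2,\]
using $|A_i| \geq 2$.
\item By Lemma \ref{vosper-gen-inv-thm1}, the sets $A_j$ with $j \neq \ell$ are arithmetic progressions with a common difference $d$.
\item By the same lemma, the sets $A_j$ with $j \neq 1$ are arithmetic progressions with a common difference $d'$.
\item Because $\ell \geq 3$, the set $A_2$ belongs to both families. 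It is therefore an arithmetic progression with difference $d$ and also with difference $d'$.
\end{enumerate}

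\emph{Main obstacle.} Step (5) only helps if the common difference of $A_2$ is essentially unique. We need the standard fact: a set $P \subseteq \mathbb{Z}_p$ with $2 \leq |P| \leq p-2$ is an arithmetic progression for at most the pair of differences $\pm d$. To prove it, I would note that $|P + \{0, d\}| = |P| + 1$ exactly when $P$ is an arithmetic progression of difference $\pm d$. Alternatively, I would normalize $d = 1$, so that $P$ is an interval. Then a second difference $e \neq \pm 1$ would make $P$ invariant under too many translations, or force $|P| \in \{1, p-1, p\}$.

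\emph{Size bound for $A_2$.} We need $|A_2| \leq p-2$. Using again that each $|A_i| \geq 2$ and $\ell \geq 3$,
\[|A_2| = S + \ell - 1 - \sum_{i \neq 2}|A_i| \leq (p-1) - (\ell-1) \leq p-3.\]

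\emph{Conclusion.} Hence $d' = \pm d$. Replacing $d'$ by $d$ (an arithmetic progression with difference $-d$ is also one with difference $d$), all of $A_1, \ldots, A_{\ell}$ are arithmetic progressions with the same common difference $d$. This completes the plan.
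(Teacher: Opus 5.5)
Your proposal is correct and follows the same route as the paper. In the boundary case $S=p-1$, both you and the paper force equality in the Cauchy--Davenport chain, apply Lemma~\ref{vosper-gen-inv-thm1} to $A_1,\ldots,A_{\ell-1}$ and to $A_2,\ldots,A_{\ell}$, and glue the two families through a shared set. You are more careful than the paper's final ``Therefore'', which silently assumes the common difference of the shared set $A_2$ is unique up to sign. Your bound $|A_2|\le p-\ell\le p-3$ plus the uniqueness fact fills exactly that gap. Your sketch of that fact is loose, but it closes cleanly: for an interval $P$ with $2\le|P|\le p-2$, the condition $|P\cap(P+e)|=|P|-1$ forces $e=\pm1$.
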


\begin{proof}
	If $A_1, \ldots, A_{\ell}$ are arithmetic progressions with the same common difference, then it is easy to verify
	\[|A_1 + \cdots + A_{\ell}| = |A_1| + \cdots + |A_{\ell}| - \ell  + 1.\]
	Now we assume that $|A_1 + \cdots + A_{\ell}| = |A_1| + \cdots + |A_{\ell}| - \ell  + 1 \leq p - 1$. If $|A_1 + \cdots + A_{\ell}| = |A_1| + \cdots + |A_{\ell}| - \ell  + 1 < p - 1$, then the result follows from Lemma \ref{vosper-gen-inv-thm1}. Now we assume that $|A_1 + \cdots + A_{\ell}| = |A_1| + \cdots + |A_{\ell}| - \ell  + 1 = p - 1$. By the similar argument as in the proof of Lemma \ref{vosper-gen-inv-thm1} we show that 
	\[|A_1 + \cdots + A_{\ell - 1}| = |A_1| + \cdots + |A_{\ell - 1}| - \ell  + 2 < p - 1 .\] 
	Since $\ell \geq 3$ and $|A_1 + \cdots + A_{\ell - 1}| = |A_1| + \cdots + |A_{\ell - 1}| - \ell  + 2 < p - 1$, it follows from lemma \ref{vosper-gen-inv-thm1} that $A_1, \ldots, A_{\ell - 1}$ are arithmetic progressions with the same common difference. Similarly, we show that  $A_2, \ldots, A_{\ell}$ are arithmetic progressions with the same common difference. Therefore, $A_1, \ldots, A_{\ell}$ are arithmetic progressions with the same common difference.
\end{proof}

\begin{lemma}\label{zp-inv-lem}
	Let $\ell, m$ and $p$ be positive integers such that $2 \leq \ell < m$ and $p$ is prime. Let $\mathcal{A} = (A_1, \ldots, A_m)$ be a finite sequence of nonempty subsets of $\mathbb{Z}_p$, where $|A_i|  \geq 2$ for each $i  \in [1, m]$. Let $A = A_1 \cup \cdots \cup A_m$, and let $M = \{a \in A: \mu(a) = \ell\}$. Let
	\begin{equation*}
		|\Sigma^{\ell}(\mathcal{A})| =  \sum_{a \in A} \mu (a) - \ell + 1 <
		\begin{cases}
			p - 1, & \mbox{if } \ell = 2; \\
			p, & \mbox{if } \ell \geq 3.
		\end{cases}
	\end{equation*}
Then the following conclusions hold:
	\begin{enumerate}
		\item If $\ell = 2$, then $A_1 \cup M, \ldots, A_m \cup M$ are arithmetic progressions.
		\item If $\ell \geq 3$, then the sets $A, A_1 \cup M, \ldots, A_m \cup M$ are arithmetic progressions with the same common difference. 
	\end{enumerate}
\end{lemma}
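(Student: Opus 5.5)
The plan is to transcribe the torsion-free argument (Lemma \ref{aux-lem-special-case} for $\ell = 2$ and Lemma \ref{aux-lem-inv-tf} with the proof of Corollary \ref{productset-tf-inv-thm1-cor} for $\ell \geq 3$) into $\mathbb{Z}_p$. The translation is: Lemma \ref{kemp-gen-thm-tf} becomes Theorem \ref{cauchy-davenport-thm}, and Lemma \ref{brail-gen-inv-thm-tf} becomes Lemma \ref{vosper-gen-inv-thm1} when $\ell = 2$ and Lemma \ref{vosper-gen-inv-thm2} when $\ell \geq 3$. The combinatorial Lemmas \ref{aux-lem1}, \ref{aux-lem2}, \ref{aux-lem-existance} hold in any group, so they apply verbatim. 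The hypothesis $|\Sigma^{\ell}(\mathcal{A})| < p-1$ (resp.\ $< p$) is exactly what makes the Vosper-type lemmas applicable to every auxiliary sumset we build, since each such sumset sits inside $\Sigma^{\ell}(\mathcal{A})$.

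\emph{Case $\ell \geq 3$.} Take $A_1^2,\dots,A_\ell^2$ from Lemma \ref{aux-lem-existance}. Then
\[
\sum_{a\in A}\mu(a)-\ell+1=|\Sigma^\ell(\mathcal{A})|\geq |A_1^2+\cdots+A_\ell^2|\geq \min\Bigl(p,\sum_j|A_j^2|-\ell+1\Bigr),
\]
and since the left side is $<p$, equality holds and $A_1^2+\cdots+A_\ell^2\neq\mathbb{Z}_p$. Each $|A_j^2|\geq 2$, so Lemma \ref{vosper-gen-inv-thm2} makes all $A_j^2$ arithmetic progressions with a common difference $d$. Next repeat Claims 2–5 of Lemma \ref{aux-lem-inv-tf}: swap the roles of $A_{\ell-1}^0$ and $A_\ell^0\cup C$, check the containment in $\Sigma^\ell(\mathcal{A})$ and the size identity as before, and apply Lemma \ref{vosper-gen-inv-thm2} again. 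This gives $A_{\ell-1}\cup M$ and $A_\ell\cup M$ as arithmetic progressions with a common difference $d_1$. Both families contain $A_1^2$, and $A_1^2$ is an AP in $\mathbb{Z}_p$ of size $\geq 2$ and $\leq p-2$. Such an AP determines its difference up to sign, which is the $\mathbb{Z}_p$ replacement for Lemma \ref{gp-lem1}/\ref{gp-lem2}. Hence $d_1=\pm d$, and all these sets are APs with difference $d$. Since $\Sigma^\ell$ is invariant under permuting the $A_i$, every $A_i\cup M$ and every $A_j\cup B_1\cup M$ is an AP with difference $d$. Finally, following the proof of Corollary \ref{productset-tf-inv-thm1-cor}, Lemma \ref{gp-lem5} applied to successive unions that share $B_1\cup M$ shows that $A$ is an AP with difference $d$.

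\emph{Case $\ell = 2$.} Follow Lemma \ref{aux-lem-special-case}. If all $A_i$ are equal, Vosper's Theorem \ref{vosperthm} gives the claim directly, since $|A_1+A_1|\le p-2$. Otherwise choose $A_r\neq A_s$ and put $C^0=\bigcup_{k\neq r,s}A_k\cup M$. Cauchy–Davenport gives the equalities \eqref{aux-lem-special-case-eq11}–\eqref{aux-lem-special-case-eq13} with all sumsets of size $\le p-2$. Theorem \ref{vosperthm} then makes, for instance, $A_r^0$ and $A_s^0\cup C^0$ APs with a common difference. Every set $A_i\cup M$ can be placed as one factor of such a pair, so each $A_i\cup M$ is an arithmetic progression. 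This is all that conclusion (1) asserts, so we do not need the torsion-free uniqueness-of-ratio argument (the $p=q=1$ step), which fails in $\mathbb{Z}_p$.

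The main obstacle is the uniqueness of common difference used to glue the families together, i.e.\ the $\mathbb{Z}_p$ version of Lemma \ref{gp-lem1}. The difference of an AP of length $k$ is unique up to sign only when $2\le k\le p-2$. Every set involved lies in $\Sigma^\ell(\mathcal{A})$ up to translation, so its size is at most $p-1$. The remaining risk is a set of size exactly $p-1$ when $\ell\ge3$. To exclude it, I would note that a factor of size $p-1$ together with another factor of size $\ge 2$ forces the sumset to be $\mathbb{Z}_p$, which is excluded. Making this bound explicit for the specific sets compared (e.g.\ $A_1^2$) is the step I would verify first.
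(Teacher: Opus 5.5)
Your proposal is correct and follows the paper's route. The paper also proves this lemma by transcribing Lemma \ref{aux-lem-special-case} for $\ell=2$ (pairing each $A_s\cup M$ with the union of the remaining sets and applying Vosper) and Lemma \ref{aux-lem-inv-tf} together with Case 2 of Corollary \ref{productset-tf-inv-thm1-cor} for $\ell\ge 3$, with Cauchy--Davenport and Lemmas \ref{vosper-gen-inv-thm1}, \ref{vosper-gen-inv-thm2} in place of Kemperman and Brailovsky--Freiman. Your extra observation is that each set used for gluing is a factor of size at least $2$ in a sumset that is not all of $\mathbb{Z}_p$, so its size lies in $[2,p-2]$ and its common difference is determined up to sign; this is the $\mathbb{Z}_p$ substitute for Lemmas \ref{gp-lem1} and \ref{gp-lem2} that the paper's ``adapted'' argument leaves implicit.
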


\begin{proof}
	We consider two cases:
	
	\noindent \textbf{Case 1} ($\ell = 2$). Consider any two sets $A_r$ and $A_s$, where $r, s \in [1, m]$. Let $A_r^0 = A_r \cup M$, $A_s^0 = A_s \cup M$ and $C = \displaystyle\bigcup_{k \in [1, m] \setminus \{r, s\}}A_k$. Let $C^0 = C \cup M$, and let $B_1 = A_r^0 \cup C^0$ and $B_2 = A_s^0$. Then it follows from the same argument (adapted with binary operation of addition) as given in the proof of Claim $1$ in the proof of Lemma \ref{aux-lem-special-case} that $B_2 = A_s^0 = A_s \cup M$ is an arithmetic progression, where we use Theorem \ref{cauchy-davenport-thm} and Theorem \ref{vosperthm} in place of Lemma \ref{kemp-gen-thm-tf} and Lemma \ref{brail-gen-inv-thm-tf}, respectively. Since $s \in [1, m]$ is arbitrary, it follows that the sets $A_1 \cup M, \ldots, A_m \cup M$, $A_1 \cup B_1 \cup M$, $A_2 \cup B_1 \cup M, \ldots, A_{\ell} \cup B_1\cup M$ are arithmetic progressions.\\
	
\noindent \textbf{Case 2} ($\ell \geq 3$). 
	First we show that $A_1 \cup M, \ldots, A_m \cup M$ are arithmetic progressions with the same common difference $d$ for some $d \in \mathbb{Z}_p$, with $d \neq 0$. The proof of this fact follows from the similar argument (adapted for $\mathbb{Z}_p$ with respect to the binary operation of addition) as in the proof of Lemma \ref{aux-lem-inv-tf}, where we use Theorem \ref{cauchy-davenport-thm} and Lemma \ref{vosper-gen-inv-thm2} in place of Lemma \ref{kemp-gen-thm-tf} and Lemma \ref{brail-gen-inv-thm-tf}, respectively. Furthermore, a similar modification in the argument given in Case $2$ in the proof of Corollary \ref{productset-tf-inv-thm1-cor} shows that the set $A$ is also the arithmetic progression with common difference $d$, where we use the Lemma \ref{gp-lem5} adapted with respect to the binary operation of addition. 
\end{proof}

\begin{proof}[Proof of Theorem \ref{zp-inv-thm}]
	The proof of the first part of the theorem follows from the argument similar to that in proof of Theorem  \ref{productset-tf-inv-thm1}, where we use Theorem \ref{cauchy-davenport-thm} in place of Lemma \ref{kemp-gen-thm-tf}, and we use Lemma \ref{vosper-gen-inv-thm1} and Lemma \ref{vosper-gen-inv-thm2} in place of Lemma \ref{brail-gen-inv-thm-tf}. The last part of the theorem follows from Lemma \ref{zp-inv-lem}. 
\end{proof}

\section{$\Sigma^{\ell}(\mathcal{A})$ in arbitrary groups} \label{sec-abelian-groups}
In this section, we assume that the group $G$ is written additively. Let $p(G)$ denote the order of the smallest nontrivial subgroup of a group $G$, or $\infty$ if no such subgroups exist. K\'arolyi \cite{karolyi2005} proved the following result in arbitrary finite groups.

\begin{theorem}\label{karolyi-thm}
	Let $A$ and $B$ be nonempty finite subsets of a finite group $G$. Then 
	\[|A + B| \geq \min (p(G), |A| + |B| - 1).\] 
\end{theorem}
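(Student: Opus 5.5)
The statement is K\'arolyi's theorem, so I would follow the strategy of \cite{karolyi2005}: split according to the parity of $|G|$, and handle the odd case by induction on $|G|$ through a normal series. First, if $|G|$ is even, then $G$ has an element of order $2$, so $p(G)=2$. The claim then reduces to $|A+B|\geq \min(2,|A|+|B|-1)$. This is immediate: $|A+B|\geq \max(|A|,|B|)$, and $|A+B|\geq 2$ whenever $|A|+|B|\geq 3$. So assume $|G|$ is odd. By the Feit--Thompson theorem $G$ is solvable, and I would argue by induction on $|G|$. The base case $G\cong\mathbb{Z}_q$ with $q$ prime is Theorem \ref{cauchy-davenport-thm}. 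I would also dispose of the trivial regime $|A|+|B|>|G|$ by pigeonhole: for each $x$ the sets $-a+x$ (taken over $a\in A$) and $B$ must meet, so $A+B=G$.

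For the inductive step, solvability gives a normal subgroup $H$ with $G/H\cong\mathbb{Z}_q$, $q$ prime. Every nontrivial subgroup of $H$ is a subgroup of $G$, so $p(H)\geq p(G)$. Also $q$ divides $|G|$, and a Sylow $q$-subgroup contains a subgroup of order $q$, so $q\geq p(G)$. Since $H$ is normal, left and right cosets agree. I would decompose $A=\bigcup_{i=1}^{u}A_i$ and $B=\bigcup_{j=1}^{v}B_j$, with each $A_i$ (resp.\ $B_j$) the nonempty intersection of $A$ (resp.\ $B$) with a coset of $H$. For each pair $(i,j)$, the set $A_i+B_j$ lies in a single coset. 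After translating into $H$ (left-translate $A_i$ and right-translate $B_j$ into $H$), the induction hypothesis gives $|A_i+B_j|\geq\min(p(H),|A_i|+|B_j|-1)$. The images of $A$ and $B$ in $\mathbb{Z}_q$ have sizes $u$ and $v$, so by Cauchy--Davenport the image of $A+B$ meets at least $\min(q,u+v-1)$ cosets.

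The core of the argument, and the step I expect to be the main obstacle, is the combination step. The goal is to find pairs $(i_1,j_1),\dots,(i_{u+v-1},j_{u+v-1})$ landing in pairwise distinct cosets and forming a ``staircase'': each step increases exactly one of $i$ or $j$, so every $A_i$ and every $B_j$ is visited. Summing the local bounds along such a staircase then gives one of two outcomes. Either some term already equals $p(H)\geq p(G)$, or the total is
\[\sum_{t}(|A_{i_t}|+|B_{j_t}|-1)\geq |A|+|B|-1 ,\]
because each $|A_i|$ and each $|B_j|$ is counted at least once and the $-1$'s are absorbed by the repetitions along the staircase. To construct the staircase, I would first try an $e$-transform (Dyson/Kemperman) induction on $|B|$, run at the level of cosets, which produces the Cauchy--Davenport chain with the right combinatorial shape. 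If it is cleaner, I would instead use K\'arolyi's device of ordering the pieces by size, and pass to a sub-union of the coset pieces when $u+v-1>q$.

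The remaining case is $u+v-1\geq q$, where all $q$ cosets are hit but the staircase has fewer than $u+v-1$ steps. There I would choose the pieces more carefully. The pair with largest $|A_i|+|B_j|$ is taken first. For the remaining cosets I would use pigeonhole inside each coset: if $|A_i|+|B_j|>|H|$, then $A_i+B_j$ fills the whole coset. This should still yield $|A+B|\geq\min(p(G),|A|+|B|-1)$. The bookkeeping in this saturated case is where I expect the most care to be needed.
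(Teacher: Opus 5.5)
The paper gives no proof of this statement; it is quoted from K\'arolyi. Your overall architecture matches his, and those parts are all correct:
\begin{itemize}
\item the trivial even-order case;
\item Feit--Thompson, and a normal subgroup $H$ with $G/H\cong\mathbb{Z}_q$ and $q\ge p(G)$, $p(H)\ge p(G)$;
\item the local bounds $|A_i+B_j|\ge\min(p(H),|A_i|+|B_j|-1)$;
\item the staircase count showing that $u+v-1$ pieces visiting every $A_i$ and $B_j$ give total at least $|A|+|B|-1$.
\end{itemize}

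\textbf{The gap.} It is the step you flag yourself: you never show that there are $u+v-1$ pairs $(i_t,j_t)$ forming a staircase whose cosets $\alpha_{i_t}+\beta_{j_t}$ are pairwise distinct. Cauchy--Davenport in $\mathbb{Z}_q$ only says many cosets are hit. It says nothing about which pairs realize them. The $e$-transform does not supply this either. It replaces $\bar B$ by $\bar B\cap(\bar A-e)$ and adds a translate of $\bar B$ to $\bar A$. A chain for the transformed pair does pull back to pairs with the same distinct sums. But a pair $(y+e,y')$ with $y+e\notin\bar A$ pulls back to $(y'+e,y)$, so neither the staircase shape nor the property of visiting every original piece survives. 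As written, the argument stops exactly where the work is.

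\textbf{The missing idea.} You should not require the actual pair at step $t$ to be $(i_t,j_t)$; you only need one dominated by it. This is what ``ordering the pieces by size'' buys you. It does not construct your staircase, but it makes it unnecessary.
\begin{itemize}
\item Order the pieces so that $|A_1|\ge\cdots\ge|A_u|$ and $|B_1|\ge\cdots\ge|B_v|$. Walk any index staircase $(i_t,j_t)$ from $(1,1)$ to $(u,v)$ with $i_t+j_t-1=t$.
\item If $u+v-1\le q$, Cauchy--Davenport applied to the prefixes gives $|\{\alpha_1,\dots,\alpha_{i_t}\}+\{\beta_1,\dots,\beta_{j_t}\}|\ge t$.
\item So you can pick a coset $\gamma_t=\alpha_i+\beta_j$ with $i\le i_t$ and $j\le j_t$, different from $\gamma_1,\dots,\gamma_{t-1}$.
\item Monotonicity of the sizes gives $|A_i|+|B_j|\ge|A_{i_t}|+|B_{j_t}|$. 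Hence $|A+B|\ge\sum_t\min\bigl(p(H),|A_{i_t}|+|B_{j_t}|-1\bigr)$, and your staircase count finishes the proof.
\end{itemize}

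\textbf{The case you expected to be hardest.} It is trivial. If $u+v-1>q$, then all $q$ cosets meet $A+B$, so $|A+B|\ge q\ge p(G)$. No pigeonhole bookkeeping inside cosets, and no passing to sub-unions of pieces, is needed.
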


DeVos \cite{devos2016} extended this result to arbitrary groups (not necessarily finite) by proving the following result.

\begin{theorem}\label{devos-extension}
	Let $A$ and $B$ be nonempty finite subsets of a group $G$. Then 
	\[|A + B| \geq \min (p(G), |A| + |B| - 1).\] 
\end{theorem}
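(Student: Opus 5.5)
The plan is to reduce to the finite case and to the torsion-free case where possible, and otherwise run a Kemperman–Dyson style induction on $|B|$. Throughout, $G$ is written additively but is not assumed commutative, so left and right translates must be kept apart. If $p(G)=\infty$, then $G$ has no nontrivial finite subgroup; since every element of finite order generates a nontrivial finite cyclic subgroup, $G$ is torsion-free. In that case the bound $|A+B|\ge |A|+|B|-1$ is exactly Theorem \ref{kemp-thm-tf}. If $G$ is finite, the statement is Theorem \ref{karolyi-thm}. So the real content is an infinite group $G$ that contains torsion, and for that I would give a uniform inductive argument that does not use finiteness.

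I would argue by a minimal counterexample. Choose $A,B$ with $|A+B|<\min(p(G),|A|+|B|-1)$ and $|B|$ minimal. The case $|B|=1$ is trivial, so $|B|\ge 2$. Replacing $B$ by $B-b_0$ replaces $A+B$ by $(A+B)-b_0$, so we may assume $0\in B$. The key tool is a transform $(A,B)\mapsto(A',B')$ with three properties: $A'+B'\subseteq A+B$, $|A'|+|B'|=|A|+|B|$, and $0\in B'\subsetneq B$. Minimality of $|B|$ then gives
\[
|A+B|\ge|A'+B'|\ge\min\bigl(p(G),|A'|+|B'|-1\bigr)=\min\bigl(p(G),|A|+|B|-1\bigr),
\]
which contradicts the choice of $A,B$. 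The transform can only fail to exist in a degenerate configuration, and I expect that configuration to be $A+B=A$. In that case $A$ is a union of left cosets of the subgroup $H=\langle B\rangle$. Since $A$ is finite and $H\neq\{0\}$ (because $|B|\ge 2$), $H$ is a nontrivial finite subgroup, and therefore $|A+B|=|A|\ge|H|\ge p(G)$. This is again a contradiction, so no counterexample exists.

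The main obstacle is finding a transform that is correct without commutativity. Dyson's abelian $e$-transform $A\cup(B+e)$, $B\cap(A-e)$ preserves $|A|+|B|$. However, the inclusion $(B+e)+(A-e)\subseteq A+B$ uses commutativity. The naive alternative $A\cup(A+g)$, $B\cap(-g+B)$ does satisfy the inclusion, but it does not preserve the total size.

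My first attempt would be Kemperman's noncommutative transform, working with the fibres of the multiplication map $A\times B\to A+B$ and an element $a\in A$ for which $a+B\not\subseteq A$. Concretely, I would set $A'=A\cup(a+B)$ and choose $B'=\{b\in B: a+B+b\subseteq A+B\}$-type sets, and check directly that $|A'|-|A|=|B|-|B'|$ in the relevant case. If this bookkeeping fails, my fallback is a different induction. I would fix a minimal counterexample and compare $A$ with the finite set $\{x:x+B\subseteq A+B\}$. I would then show that either this set is strictly larger than $A$, which allows an enlarged pair to be substituted, or the set is stable under right translation by a nontrivial finite subgroup. The second alternative again forces $|A+B|\ge p(G)$.
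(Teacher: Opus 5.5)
\textbf{Verdict: there is a genuine gap.} The whole content of the hard case is the transform, and the proposal never constructs one. For reference, the paper gives no proof of this statement; it is quoted from DeVos.

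Your reductions are fine. $p(G)=\infty$ does force $G$ to be torsion-free, so Theorem~\ref{kemp-thm-tf} applies, and finite $G$ is Theorem~\ref{karolyi-thm}. The normalization $0\in B$ is legitimate. When $A+B=A$, the set $A$ is indeed a union of left cosets of the finite nontrivial subgroup $\langle B\rangle$, so $|A|\ge p(G)$.

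The existence of your transform cannot be asserted abstractly. Any pair with $|A+B|\ge |A|+|B|-1$ admits one trivially: take $B'=\{0\}$ and $A'$ any $|A|+|B|-1$ elements of $A+B$. So for pairs with $|A+B|<p(G)$, the claim ``a transform exists unless $A+B=A$'' is equivalent to the theorem itself. It has to come from an explicit rule plus a proof of the size count, and the rule you sketch fails that count. Take $x,y$ generating a free subgroup (e.g.\ in $F_2\times\mathbb{Z}_p$, which has torsion), $A=\{0,x\}$, $B=\{0,x,y\}$ and $a=0$, so that $a+B\not\subseteq A$. Then $A'=\{0,x,y\}$ and $A+B=\{0,x,y,2x,x+y\}$. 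Since $y+x\notin A+B$ and $2y\notin A+B$, your $B'=\{b\in B: a+B+b\subseteq A+B\}$ is $\{0\}$. Hence $|A'|+|B'|=4<5=|A|+|B|$. In an abelian group the identity $a+B+b=(a+b)+B$ puts every $b$ with $a+b\in A$ into $B'$, which gives $|A'|+|B'|\ge |A|+|B|$ (all the induction needs). Without commutativity this breaks, and nothing in your argument uses the hypothesis $|A+B|\le |A|+|B|-2$ to repair it.

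The fallback has the same defect. Once $A$ is replaced by the saturated set $\{x: x+B\subseteq A+B\}$, you need a saturated pair with $|A+B|<\min(p(G),|A|+|B|-1)$ to be right-periodic. That is a Kneser-type claim carrying the whole difficulty, and it is not argued.

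One way to close the gap is the following.
\begin{enumerate}
\item Reduce to $A\subseteq K=\langle B\rangle$ (with $0\in B$). Write $A=\bigcup_i (x_i+A_i)$ with $A_i\subseteq K$ and the $x_i+K$ distinct left cosets. Then $A+B$ is the disjoint union of the sets $x_i+(A_i+B)$, and $p(K)\ge p(G)$, so it suffices to treat $A\subseteq K$. For finite $K$ this is Theorem~\ref{karolyi-thm}.
\item For infinite $K$, use Hamidoune's isoperimetric method. Let $\kappa$ be the minimum of $|X+B|-|X|$ over finite nonempty $X$. The inequality $|(X\cap Y)+B|+|(X\cup Y)+B|\le |X+B|+|Y+B|$ shows that two minimizers that meet intersect in a minimizer. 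Take a minimizer of least size and left-translate it to contain $0$; the result is a set $H$ with $-h+H=H$ for all $h\in H$, i.e.\ a finite subgroup.
\item If $H=\{0\}$, then $\kappa=|B|-1$. If $H\ne\{0\}$, then $B\not\subseteq H$ because $K$ is infinite, so $H+B$ contains two right cosets of $H$ and $\kappa\ge |H|\ge p(G)$.
\end{enumerate}
In both cases $|A+B|\ge |A|+\kappa\ge \min(p(G),|A|+|B|-1)$.
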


An easy generalization of the above theorem is the following result.

\begin{theorem}\label{devos-extension-reg-sum}
	Let $\ell \geq 2$ be an integer. Let $A_1, \ldots, A_{\ell}$ be nonempty finite subsets of a group $G$ with $|A_i| \geq 2$ for each $i \in [1, \ell]$. Then 
	\[|A_1 + \cdots + A_{\ell}| \geq \min (p(G), |A| + \cdots + |A_{\ell}| - \ell + 1).\] 
\end{theorem}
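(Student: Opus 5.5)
We argue by induction on $\ell$, taking Theorem \ref{devos-extension} as the base case $\ell = 2$. This mirrors the proof of Lemma \ref{kemp-gen-thm-tf}, where Theorem \ref{kemp-thm-tf} plays the same role. (The sum $|A| + \cdots + |A_{\ell}|$ in the statement is evidently a typo for $|A_1| + \cdots + |A_{\ell}|$, and we prove it in that form.) The hypothesis $|A_i| \geq 2$ is not needed for the argument; only nonemptiness is used.

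Assume $\ell \geq 3$ and that the inequality holds for $\ell - 1$ sets. Put $S = A_1 + \cdots + A_{\ell - 1}$. This is a nonempty finite subset of $G$, so Theorem \ref{devos-extension} applied to the pair $S, A_\ell$ gives
\[|A_1 + \cdots + A_{\ell}| = |S + A_{\ell}| \geq \min\bigl(p(G), |S| + |A_{\ell}| - 1\bigr).\]
By the induction hypothesis, $|S| \geq \min\bigl(p(G), |A_1| + \cdots + |A_{\ell - 1}| - \ell + 2\bigr)$. We now split according to which term realizes this minimum.

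\textbf{Case 1:} $|S| \geq p(G)$. Then $p(G)$ is finite, and since $A_\ell$ is nonempty,
\[|S + A_{\ell}| \geq |S| \geq p(G) \geq \min\bigl(p(G), |A_1| + \cdots + |A_{\ell}| - \ell + 1\bigr).\]

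\textbf{Case 2:} $|S| \geq |A_1| + \cdots + |A_{\ell - 1}| - \ell + 2$. Then
\[|S| + |A_\ell| - 1 \geq |A_1| + \cdots + |A_\ell| - \ell + 1,\]
and the displayed inequality from Theorem \ref{devos-extension} gives the claim directly.

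The case $p(G) = \infty$ is handled by the convention $\min(\infty, x) = x$, and it falls under Case 2 automatically.

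We expect no real obstacle here. The only point needing care is the bookkeeping of the nested minima, in particular that $|S + A_\ell| \geq |S|$ lets the $p(G)$ branch pass through the inductive step unchanged. Note also that Theorem \ref{devos-extension} is stated for arbitrary (not necessarily abelian) groups, so the sumset $S + A_\ell$ is taken with the group operation in the given order. No commutativity is used anywhere in the induction.
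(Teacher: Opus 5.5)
Your induction is correct and is the argument the paper intends. The paper gives no proof: it calls the result an easy generalization of Theorem \ref{devos-extension}, paralleling Lemma \ref{kemp-gen-thm-tf}, whose bound comes from the same induction on $\ell$ applied to Theorem \ref{kemp-thm-tf}. You also make explicit the one detail the paper leaves unstated, namely that the $p(G)$ branch of the minimum passes through the inductive step via $|S + A_\ell| \geq |S|$, and you handle it correctly.
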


Kemperman proved the following result.

\begin{theorem}[\cite{kemperman1960}, Corollary]\label{kemp-inv-thm-abl}
	Let $A$ and $B$ be nonempty finite subsets of an abelian  group $G$. Let
	\[|A + B| = |A| + |B| - 1 \leq p(G) - 2.\]  
	Then $A, B$ are arithmetic progressions with common difference. 
\end{theorem}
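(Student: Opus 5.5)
This is Kemperman's corollary, which the paper cites from \cite{kemperman1960}. If one wanted to reprove it with the tools at hand, my plan is to follow Vosper's strategy for $\mathbb{Z}_p$. The primality of $p$ is replaced by the hypothesis $|A+B| \le p(G)-2$, with Theorem \ref{kneser-thm} and Theorem \ref{devos-extension} standing in for Cauchy--Davenport.

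\textbf{Reductions.} First I translate so that $0 \in A \cap B$ and replace $G$ by the subgroup generated by $A \cup B$. This subgroup is finitely generated abelian, so it is $\mathbb{Z}^r \times T$ with $T$ finite, and $p(G)$ can only increase under this passage.
\begin{itemize}
\item If $T$ is trivial, $G$ is torsion-free. Then Theorem \ref{brail-inv-thm-tf}, which is commutative here, gives directly that $A$ and $B$ are arithmetic progressions with a common difference.
\item Otherwise I project to a finite quotient $\mathbb{Z}_N^r \times T$, with $N$ a large prime exceeding $p(T)$ and exceeding the diameters of $A$ and $B$ in the $\mathbb{Z}^r$ coordinates. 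This projection is injective on $A$, $B$ and $A+B$, and it preserves $p(\cdot)=p(T)$. Arithmetic progressions pull back to arithmetic progressions, since only bounded multiples of the difference are involved.
\end{itemize}
So it suffices to treat a finite abelian group $G$.

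\textbf{No periodic sets.} Every intermediate sumset will be aperiodic. If some $X+Y$ with $|X+Y| < p(G)$ had a nontrivial stabilizer $H$, then $X+Y$ would be a union of $H$-cosets, forcing $|X+Y| \ge |H| \ge p(G)$, a contradiction. Hence Kneser's theorem (Theorem \ref{kneser-thm}) reduces to the plain inequality $|X+Y| \ge |X|+|Y|-1$ for all sets that arise.

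\textbf{Complement step.} Set $C = G \setminus (A+B)$. Then $|C| \ge 2$, because $|A+B| \le p(G)-2 \le |G|-2$. Since $(A+B) \cap C = \emptyset$, we get $A \cap (C-B) = \emptyset$, so
\[|C + (-B)| \le |G|-|A| = |C|+|B|-1.\]
Combined with the lower bound from Theorem \ref{devos-extension}, this gives equality for the pair $(C,-B)$, which is the standard Vosper duality. Note, however, that $|C|$ may exceed $p(G)$, so the aperiodicity argument needs care here. This is exactly where $p(G)-2$ rather than $p(G)$ matters, and where I expect the main obstacle. My first attempt is to keep the whole induction inside small sumsets, below $p(G)$, and avoid the complement altogether.

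\textbf{Induction.} I induct on $|B|$ using the Dyson $e$-transform: $A(e) = A \cup (B+e)$ and $B(e) = B \cap (A-e)$. This preserves $A(e)+B(e) \subseteq A+B$ and the sum $|A(e)|+|B(e)| = |A|+|B|$. Choose $e$ so that $1 \le |B(e)| < |B|$; this is possible unless $B$ is a coset-like set, which is excluded by aperiodicity. Then $|A(e)+B(e)| = |A(e)|+|B(e)|-1$, and by induction $A(e)$ and $B(e)$ are progressions with a common difference $d$. The hard part is lifting the structure back to $A$ and $B$. For this I would use the Kemperman-style key observation: in the critical case, $A+B$ is itself a progression of difference $d$ obtained by adding one element at a time. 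I would verify this using the case $|B|=2$, where $|A+\{0,b\}| = |A|+1$ forces $A$ to be a $b$-progression because $b$ has order at least $p(G) > |A|+1$, combined with Lemma \ref{gp-lem5}.
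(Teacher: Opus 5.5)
The paper gives no proof of this statement: it quotes it from Kemperman, where it follows from his structure theorem for critical pairs. Your Dyson-transform induction is therefore an independent route. Your aperiodicity argument and the existence of $e$ with $1\le|B(e)|<|B|$ are fine, but the outline has two genuine gaps.

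\textbf{First gap: singletons.} As printed, the statement needs $|A|,|B|\ge 2$: if $B=\{b\}$, then $|A+B|=|A|+|B|-1$ for every $A$. This matters for your induction. You only arrange $1\le|B(e)|<|B|$, and when $|B(e)|=1$ the pair $(A(e),B(e))$ carries no structure. You therefore need an $e$ with $2\le|B(e)|<|B|$, or a separate treatment of the case where none exists, and you supply neither.

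\textbf{Second gap: the lifting step.} This is the more serious problem. The lifting is not carried out, and the tools you name cannot carry it out. Aperiodicity, the choice of $e$, your $|B|=2$ case and Lemma \ref{gp-lem5} all work verbatim under the weaker hypothesis $|A+B|\le p(G)-1$, and there the conclusion is false. In $\mathbb{Z}_7$ take $A=\{0,1,3\}$ and $B=\{1,2,3,5\}$. Then $A+B=\mathbb{Z}_7\setminus\{0\}$ has size $6=|A|+|B|-1$. With $e=-1$ you get $A(e)=\{0,1,2,3,4\}$ and $B(e)=\{1,2\}$, both progressions of difference $1$, and $A+B$ is also a progression of difference $1$. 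Yet $A$ is not a progression. So knowing that $(A(e),B(e))$ and $A+B$ are $d$-progressions does not determine $A$ and $B$ unless $|A+B|\le p(G)-2$ is used. Your plan stops using it once you abandon the complement step.

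\textbf{How to repair the lifting.} The complement trick does work, just not in $G$. Once the induction applies, $A+B\supseteq A(e)+B(e)$. The latter is a $d$-progression of size $|A(e)|+|B(e)|-1=|A+B|$, because $\mathrm{ord}(d)\ge p(G)>|A+B|$. Hence $P:=A+B$ is a $d$-progression with $|P|\le \mathrm{ord}(d)-2$. It follows that $A$ and $B$ each lie in a single coset of $\langle d\rangle$, so translate them into the finite cyclic group $\langle d\rangle$.
\begin{enumerate}
\item There the complement $C=\langle d\rangle\setminus P$ is a $d$-progression with $|C|\ge 2$. This is exactly where the $-2$ enters.
\item Since $(C-B)\cap A=\emptyset$, you get $|C-B|\le \mathrm{ord}(d)-|A|=|C|+|B|-1<\mathrm{ord}(d)$.
\item The gap count behind your $|B|=2$ case proves more generally: if $X\subseteq\langle d\rangle$ is a $d$-progression with $|X|\ge 2$ and $|X+Y|\le|X|+|Y|-1<\mathrm{ord}(d)$, then $Y$ is a $d$-progression.
\item Apply this to $(X,Y)=(C,-B)$, which shows $B$ is a $d$-progression, and then to $(X,Y)=(B,A)$, which shows $A$ is.
\end{enumerate}

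\textbf{A smaller point.} Pulling progressions back from $\mathbb{Z}_N^r\times T$ needs $N$ larger than twice the relevant diameters, not merely larger than them. For example, the integers $-1,0,3$ have diameter $4<7$, yet they reduce modulo $7$ to the progression $0,3,6$.
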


The following generalizations of above theorem follows from similar arguments used in the proof of Lemma \ref{vosper-gen-inv-thm1} and Lemma \ref{vosper-gen-inv-thm2}.

\begin{lemma}\label{kemp-inv-thm-abl-lem1}
	Let $\ell \geq 2$ be a positive integer. Let $A_1, \ldots, A_{\ell}$ be nonempty subsets of an abelian group $G$ such that $|A_i| \geq 2$ for each  $i \in [1, \ell]$. Let
	\[|A_1 + \cdots + A_{\ell}| = |A_1| + \cdots + |A_{\ell}| - \ell  + 1 \leq p(G) - 2.\]
	Then $A_1, \ldots, A_{\ell}$ are arithmetic progressions with the same common difference.	
\end{lemma}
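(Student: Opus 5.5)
We argue by induction on $\ell$, following the proof of Lemma \ref{vosper-gen-inv-thm1} with $\mathbb{Z}_p$ replaced by $G$, Theorem \ref{cauchy-davenport-thm} replaced by Theorem \ref{devos-extension}, and Theorem \ref{vosperthm} replaced by Theorem \ref{kemp-inv-thm-abl}. For $\ell = 2$ the hypothesis reads $|A_1 + A_2| = |A_1| + |A_2| - 1 \leq p(G) - 2$, so Theorem \ref{kemp-inv-thm-abl} gives directly that $A_1$ and $A_2$ are arithmetic progressions with a common difference.

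For $\ell \geq 3$, assume the statement for $\ell - 1$ sets and put $S = A_1 + \cdots + A_{\ell - 1}$. First note that $|S| \leq |S + A_{\ell}| \leq p(G) - 2 < p(G)$. Theorem \ref{devos-extension-reg-sum} then gives
\[|S| \geq |A_1| + \cdots + |A_{\ell-1}| - \ell + 2,\]
and Theorem \ref{devos-extension} gives
\[|S + A_{\ell}| \geq \min(p(G), |S| + |A_{\ell}| - 1) = |S| + |A_{\ell}| - 1,\]
where the minimum is the second term because $|S + A_{\ell}| < p(G)$. Chaining these two inequalities exactly as in \eqref{vosper-gen-inv-thm1-eq1}, the hypothesis forces both to be equalities:
\[|S| = |A_1| + \cdots + |A_{\ell - 1}| - \ell + 2 \quad\text{and}\quad |S + A_{\ell}| = |S| + |A_{\ell}| - 1.\]
Since $|S| \leq p(G) - 2$, the induction hypothesis applies to $A_1, \ldots, A_{\ell-1}$. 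So these are arithmetic progressions with a common difference $d \neq 0$, and hence $S$ is an arithmetic progression with difference $d$.

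It remains to show that $A_{\ell}$ has the same difference $d$. Theorem \ref{kemp-inv-thm-abl} applied to $S$ and $A_{\ell}$ (valid since $|S + A_{\ell}| = |S| + |A_{\ell}| - 1 \leq p(G) - 2$) says that $S$ and $A_{\ell}$ are arithmetic progressions with some common difference $d'$. This is the main obstacle: in a general abelian group, unlike the torsion-free setting of Lemma \ref{gp-lem1}, a progression can have several admissible differences. We would handle it as follows. Since $|S| \geq |A_1| + |A_2| - 1 \geq 3$ and $|S| \leq p(G) - 2$, the progression $S$ does not fill a coset of $\langle d \rangle$. If $|S| < \mathrm{ord}(d) - 1$, the differences of $S$ are only $\pm d$. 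The remaining boundary case $|S| = \mathrm{ord}(d) - 1$ is excluded by $|S| \leq p(G) - 2$ whenever $\mathrm{ord}(d) \geq p(G)$. When $\mathrm{ord}(d)$ is infinite, the same argument as in Lemma \ref{gp-lem1} works. Hence $d' = \pm d$, so $A_{\ell}$ is an arithmetic progression with difference $d$, which completes the induction.
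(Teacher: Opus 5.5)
Your proof is correct and is essentially the paper's argument: the paper only says the lemma follows by rerunning the induction of Lemma \ref{vosper-gen-inv-thm1}, with Theorem \ref{devos-extension} (and its multi-set form) in place of Cauchy--Davenport and Theorem \ref{kemp-inv-thm-abl} in place of Vosper. Your extra step showing that the difference of $S$ is determined up to sign is a point the paper passes over silently, and you can drop the hedge ``whenever $\mathrm{ord}(d) \geq p(G)$'': $\langle d \rangle$ is a nontrivial subgroup, so $\mathrm{ord}(d) \geq p(G)$ always holds.
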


\begin{lemma}\label{kemp-inv-thm-abl-lem2}
	Let $\ell \geq 3$ be an integer. Let $A_1, \ldots, A_{\ell}$ be nonempty subsets of an abelian group $G$ such that $|A_i| \geq 2$ for each  $i \in [1, \ell]$. Let
	\[|A_1 + \cdots + A_{\ell}| = |A_1| + \cdots + |A_{\ell}| - \ell  + 1 < p(G).\]
	Then $A_1, \ldots, A_{\ell}$ are arithmetic progressions with the same common difference.	
\end{lemma}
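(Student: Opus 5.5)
The plan is to mirror the proof of Lemma \ref{vosper-gen-inv-thm2}, with Theorem \ref{devos-extension-reg-sum} in place of Theorem \ref{cauchy-davenport-thm} and Lemma \ref{kemp-inv-thm-abl-lem1} in place of Lemma \ref{vosper-gen-inv-thm1}. Put $N = |A_1| + \cdots + |A_{\ell}| - \ell + 1$. If $N \leq p(G) - 2$, Lemma \ref{kemp-inv-thm-abl-lem1} applies directly. So the only remaining case is $N = p(G) - 1$; in particular $p(G)$ is finite.

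\textbf{Step 1: every partial sum of $\ell - 1$ sets is extremal.} Fix an index $i \in [1, \ell]$ and let $S_i = \sum_{j \neq i} A_j$. This is legitimate because $G$ is abelian, so the order of summation does not matter. Put $N_i = \sum_{j \neq i}|A_j| - \ell + 2$, so that $N_i = N - |A_i| + 1 \leq N - 1 = p(G) - 2$, using $|A_i| \geq 2$.
\begin{enumerate}
\item By Theorem \ref{devos-extension-reg-sum} (for $\ell - 1 \geq 2$ sets), $|S_i| \geq \min(p(G), N_i) = N_i$.
\item Since $N_i \leq p(G) - 2$, we have $\min(p(G), N_i) = N_i$ and also $\min(p(G), |S_i|+|A_i|-1) \geq \min(p(G), N) = N$.
\item Theorem \ref{devos-extension} gives $N = |S_i + A_i| \geq \min(p(G), |S_i| + |A_i| - 1)$. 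If $|S_i| \geq N_i + 1$, then $|S_i| + |A_i| - 1 \geq N + 1$, so the right side is at least $\min(p(G), N+1) = p(G) = N + 1$, a contradiction.
\end{enumerate}
Hence $|S_i| = N_i \leq p(G) - 2$. Lemma \ref{kemp-inv-thm-abl-lem1} then shows that the sets $A_j$ with $j \neq i$ are arithmetic progressions with a common difference $d_i$.

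\textbf{Step 2: glue the differences together.} Take $i = \ell$ and $i = 1$. Because $\ell \geq 3$, the set $A_2$ occurs in both families, so $A_2$ is an arithmetic progression with difference $d_\ell$ and also with difference $d_1$. Since the two families cover all of $A_1, \ldots, A_{\ell}$, it remains to show $d_1 = \pm d_\ell$.

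\textbf{Main obstacle.} In a general abelian group, an arithmetic progression can have several unrelated common differences. For example, a set that is a coset of a subgroup minus one element is an arithmetic progression for many choices of difference.
\begin{enumerate}
\item If $|A_2| \geq 2$ and $|A_2| \leq p(G) - 2$, then the difference is determined up to sign. First try to prove this directly. Write $A_2 = \{a + jd\}$. The differences $x - y$ with $x, y \in A_2$ equal to $\pm d$ occur exactly $|A_2| - 1$ times, whereas every other difference $e$ occurs fewer times, by an argument on the orders of the elements.
\item If that direct argument fails, use the extremal structure instead. By Step 1, $A_1 + A_2$ is extremal with $|A_1 + A_2| \leq p(G) - 2$. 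Theorem \ref{kemp-inv-thm-abl} combined with $|A_2| < |A_2 + A_3| \leq p(G) - 2$ should then force $d_1 = \pm d_\ell$: the progression $A_2 + A_3$ has difference $d_\ell$, and it is also a sum of progressions with difference $d_1$.
\end{enumerate}
Replacing $d_1$ by $-d_1$ if necessary, all of $A_1, \ldots, A_{\ell}$ are then arithmetic progressions with the same common difference.
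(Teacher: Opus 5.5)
Your proposal is correct and takes essentially the paper's route: by analogy with Lemma \ref{vosper-gen-inv-thm2}, the paper reduces to the case $|A_1|+\cdots+|A_\ell|-\ell+1 = p(G)-1$, shows that a sum of $\ell-1$ of the sets is extremal of size at most $p(G)-2$, applies Lemma \ref{kemp-inv-thm-abl-lem1} to $A_1,\ldots,A_{\ell-1}$ and to $A_2,\ldots,A_{\ell}$, and glues the two families (you use Theorems \ref{devos-extension-reg-sum} and \ref{devos-extension} for the extremality step). What you add is an explicit identification $d_1=\pm d_\ell$, which the paper uses silently, and your direct counting argument in item (1) already settles it: since $\mathrm{ord}(d)\ge p(G)\ge |A_2|+3$, the only nonzero differences occurring exactly $|A_2|-1$ times among ordered pairs from $A_2$ are $\pm d$, so the fallback in item (2) is unnecessary.
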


Using the above results, we can prove the following theorems by similar arguments used in the proof of Theorem \ref{zp-dir-thm} and Theorem \ref{zp-inv-thm} in the cyclic group $\mathbb{Z}_p$.

\begin{theorem}\label{sigma-abl-dir-thm}
	Let $\ell$ and $m$ be positive integers such that $\ell \leq m$. Let $\mathcal{A} = (A_1, \ldots, A_m)$ be a sequence of finite subsets of a group $G$, and let $A = A_1 \cup \cdots \cup A_m$. Then
	\begin{equation}\label{sigma-abl-dir-thm-eq1}
		|\Sigma^{\ell}(\mathcal{A})| \geq \min \biggl(p(G),  \sum_{a \in A} \mu (a) - \ell + 1\biggr).
	\end{equation}
	The lower bond in \eqref{sigma-abl-dir-thm-eq1} is best possible.
\end{theorem}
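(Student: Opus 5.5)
The plan is to copy the proof of Theorem \ref{zp-dir-thm}, replacing the Cauchy--Davenport Theorem by Theorem \ref{devos-extension-reg-sum}. Two degenerate situations come first. If $\ell = 1$, then $\Sigma^{1}(\mathcal{A}) = A$ and $\mu(a) = 1$ for every $a \in A$, so the right-hand side is $\min(p(G), |A|) \le |A|$ and there is nothing to prove. Empty sets $A_i$ contribute nothing to $\mu$; if fewer than $\ell$ of the $A_i$ are nonempty, then the right-hand side becomes $\le 0$ (each $a$ has $\mu(a)$ at most the number of nonempty sets, and a short count handles this). We may therefore discard empty sets and assume $2 \le \ell \le m$ with every $A_i$ nonempty.

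For the main step I would invoke Lemma \ref{aux-lem-existance}. Its construction of $A_j^1 = A_j \cup B_j \cup M$ and then $A_j^2$, together with the verification of \eqref{aux-lem-existance-eq3}--\eqref{aux-lem-existance-eq5}, is purely combinatorial and is valid in an arbitrary group. (For $\ell = m$ it gives $A_j^2 = A_j$.) It produces sets $A_1^2, \ldots, A_\ell^2 \subseteq A$ with $A_1^2 + \cdots + A_\ell^2 \subseteq \Sigma^{\ell}(\mathcal{A})$ and $\sum_j |A_j^2| = \sum_{a\in A}\mu(a)$. Each $A_j^2$ is nonempty, because $A_j \subseteq A_j^2$.

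Then I apply the multi-set DeVos bound. Theorem \ref{devos-extension-reg-sum} is stated under the hypothesis $|A_i|\ge 2$, but this is not needed: it follows from Theorem \ref{devos-extension} by induction on $\ell$ for arbitrary nonempty sets. Indeed,
\[|A_1+\cdots+A_\ell| \ge \min\bigl(p(G), |A_1+\cdots+A_{\ell-1}| + |A_\ell| - 1\bigr),\]
and we insert the inductive bound, noting that if the partial sumset already has size $\ge p(G)$ the claim is immediate. Applying this to the sets $A_j^2$ gives
\[|\Sigma^{\ell}(\mathcal{A})| \ge |A_1^2+\cdots+A_\ell^2| \ge \min\Bigl(p(G), \sum_{a\in A}\mu(a) - \ell + 1\Bigr).\]
If $G$ is non-abelian, the inclusion $A_1^2+\cdots+A_\ell^2 \subseteq \Sigma^\ell(\mathcal{A})$ must be read with $\Sigma^\ell$ including all orders. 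The proof of \eqref{aux-lem-existance-eq5} gives exactly a product in the order $1,\ldots,\ell$ from distinct sets, so that is fine.

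For sharpness, I would take all $A_i$ to be arithmetic progressions with a common difference $d$ in $\mathbb{Z}$, or in any torsion-free or cyclic subgroup, chosen so that their sums do not wrap around. The example in Appendix \ref{best-lower-bound} then transfers. When $p(G)$ is finite, a sumset filling a subgroup of order $p(G)$ attains the minimum. The main obstacle is only bookkeeping: checking that Lemma \ref{aux-lem-existance} and the induction step genuinely never used torsion-freeness or commutativity, and handling the $\ell=1$ and empty-set edge cases.
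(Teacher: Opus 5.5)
Your main argument is the paper's own route: take $A_1^2,\dots,A_\ell^2$ from Lemma~\ref{aux-lem-existance}, then apply DeVos's bound iterated over $\ell$ sets. Your side remarks are correct:
\begin{itemize}
\item the construction and the proof of \eqref{aux-lem-existance-eq5} are purely combinatorial;
\item the iterated DeVos bound needs only $A_j^2\neq\varnothing$, not $|A_j^2|\ge 2$;
\item $\ell=1$ must be treated separately;
\item in a non-abelian group, $\Sigma^\ell(\mathcal{A})$ has to be read as allowing all orders of the summands.
\end{itemize}

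The step that fails is your treatment of empty sets. You claim the right-hand side is $\le 0$ when fewer than $\ell$ of the $A_i$ are nonempty, but this is false. The bound $\mu(a)\le k$, where $k<\ell$ is the number of nonempty sets, controls each summand, not the sum over $a\in A$. In that situation every multiplicity is below $\ell$, so $\sum_{a\in A}\mu(a)=\sum_i|A_i|$, which can be arbitrarily large.

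Concretely, take $G=\mathbb{Z}_5$ (so $p(G)=5$), $\ell=m=2$, $A_1=\{0,1\}$ and $A_2=\varnothing$. Then $\Sigma^2(\mathcal{A})=A_1+A_2=\varnothing$, while the right-hand side is $\min(5,\,2-2+1)=1$.

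So the statement as literally written (``finite subsets'') is false in this regime, and no argument can cover it. You must instead assume the $A_i$ are nonempty, or at least that $\ell$ of them are. The paper tacitly does the same: Lemma~\ref{aux-lem-existance} is stated for nonempty sets, and Theorem~\ref{zp-dir-thm} carries that hypothesis.

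Under that hypothesis, your reduction is correct: discarding empty sets when at least $\ell$ nonempty ones remain changes neither $\Sigma^\ell(\mathcal{A})$ nor $\mu$. The rest of the proof then goes through.

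For sharpness, which the paper also only sketches, check one thing when you transfer integer examples into a subgroup of order $p(G)$. The map must be injective on $A$, so that $\mu$ is unchanged. Then the sumset can only shrink, and the lower bound forces equality whenever $\sum_{a}\mu(a)-\ell+1\le p(G)$.
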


The following theorem is an inverse theorem which characterizes the extremal sets.
\begin{theorem}\label{sigma-abl-inv-thm}
	Let $\ell$ and $m$ be integers such that $2 \leq \ell < m$. Let $\mathcal{A} = (A_1, \ldots, A_m)$ be a sequence of finite subsets of an abelian group $G$, and let $A = A_1 \cup \cdots \cup A_m$. Let
	\begin{equation*}
		|\Sigma^{\ell}(\mathcal{A})| <
		\begin{cases}
			p(G) - 1, & \mbox{if } \ell = 2; \\
			p(G), & \mbox{if } \ell \geq 3.
		\end{cases}
	\end{equation*}
	Furthermore, assume that $|A_i|  \geq 2$ for each $i  \in [1, m]$, and let $M = \{a \in A: \mu(a) = \ell\}$. Then
	\begin{equation}\label{sigma-abl-inv-thm-eq1}
		|\Sigma^{\ell}(\mathcal{A})| =  \sum_{a \in A} \mu (a) - \ell + 1,
	\end{equation}
	if and only if $\mathcal{A}$ is a $(\ell, d)$ minimizing sequence of sets. Furthermore, if \eqref{sigma-abl-inv-thm-eq1} holds, then the following conclusions hold:
	\begin{enumerate}
		\item If $\ell = 2$, then $A_1 \cup M, \ldots, A_m \cup M$ are arithmetic progressions.
		\item If $\ell \geq 3$, then the sets $A, A_1 \cup M, \ldots, A_m \cup M$ are arithmetic progressions with the same common difference. 
	\end{enumerate}
\end{theorem}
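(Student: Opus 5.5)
The plan is to follow the $\mathbb{Z}_p$ argument of Theorem \ref{zp-inv-thm} almost verbatim, replacing Theorem \ref{cauchy-davenport-thm} by Theorem \ref{devos-extension-reg-sum}, and Lemmas \ref{vosper-gen-inv-thm1} and \ref{vosper-gen-inv-thm2} by Lemmas \ref{kemp-inv-thm-abl-lem1} and \ref{kemp-inv-thm-abl-lem2}.

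The \emph{if} direction is Lemma \ref{aux-lem-inv3}. It holds in any group, read additively: the sets $B_j$ are arithmetic progressions with a common difference $d$. Here one needs $B_1+\cdots+B_\ell$ to have exactly $\sum|B_j|-\ell+1$ elements. This holds because its size is below $p(G)$, so the multiples $jd$ involved are distinct; the order of $d$ is at least $p(G)$.

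For the \emph{only if} direction, apply Lemma \ref{aux-lem-existance}, which is valid in an arbitrary group, to obtain sets $A_1^2,\dots,A_\ell^2$ with the following properties:
\begin{itemize}
\item $|A_j^2|\ge 2$ for each $j$;
\item $\mu(a)=\sum_j\chi_{A_j^2}(a)$ for each $a\in A$;
\item $\sum_j|A_j^2|=\sum_{a}\mu(a)$;
\item $A_1^2+\cdots+A_\ell^2\subseteq\Sigma^\ell(\mathcal{A})$.
\end{itemize}
Since $|\Sigma^\ell(\mathcal{A})|<p(G)$, Theorem \ref{devos-extension-reg-sum} forces
\[|A_1^2+\cdots+A_\ell^2|\ge \sum_a\mu(a)-\ell+1=|\Sigma^\ell(\mathcal{A})|.\]
Hence equality holds throughout and $A_1^2+\cdots+A_\ell^2=\Sigma^\ell(\mathcal{A})$. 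The hypothesis on $|\Sigma^\ell(\mathcal{A})|$ (at most $p(G)-2$ when $\ell=2$, below $p(G)$ when $\ell\ge3$) is exactly what Lemmas \ref{kemp-inv-thm-abl-lem1} and \ref{kemp-inv-thm-abl-lem2} require. Either lemma then shows that the $A_j^2$ are arithmetic progressions with a common difference $d$, so conditions (1) through (4) of Definition \ref{min-seq} hold and $\mathcal{A}$ is $(\ell,d)$-minimizing.

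For the structural conclusions, I would redo the argument of Lemma \ref{aux-lem-special-case} when $\ell=2$ and of Lemma \ref{aux-lem-inv-tf} when $\ell\ge3$. In each case one exhibits a rearranged family of $\ell$ sets whose sum lies in $\Sigma^\ell(\mathcal{A})$ and whose sizes sum to $\sum_a\mu(a)$. Examples are $(A_r^0\cup C^0, A_s^0)$, or $A_1^2,\dots,A_{\ell-2}^2, A_\ell^0\cup C, A_{\ell-1}^0$. The equality case of Theorem \ref{devos-extension-reg-sum} again yields arithmetic progressions via Lemma \ref{kemp-inv-thm-abl-lem1} or \ref{kemp-inv-thm-abl-lem2}. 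Symmetry under permuting the $A_i$ then gives this for every $A_i\cup M$. When $\ell\ge3$, the sets $A_1^2$ and $A_{\ell-1}^0$ lie in a common equality configuration. Gluing as in Case 2 of the proof of Corollary \ref{productset-tf-inv-thm1-cor}, using Lemma \ref{gp-lem5} in additive form, which is valid in abelian groups, gives that $A$ is an arithmetic progression with difference $d$.

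I expect the main obstacle to be the \emph{same common difference} transfer, which replaces Lemmas \ref{gp-lem1} and \ref{gp-lem2}. Those lemmas relied on torsion-freeness. Here, if a set $P$ with $2\le|P|<p(G)-1$ is an arithmetic progression with difference $d$ and also with difference $d'$, I must show $d'=\pm d$. I would argue that $\langle d\rangle$ is a subgroup of order at least $p(G)>|P|+1$, and use Kemperman's characterization to identify $d'$ with $\pm d$. The same bound must also justify Lemma \ref{gp-lem5} in the torsion setting: two overlapping progressions with difference $d$ must have union a progression, which requires that they do not wrap around $\langle d\rangle$. That follows because $|A|\le|\Sigma^\ell(\mathcal{A})|<p(G)$.
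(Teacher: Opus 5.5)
Your proposal is correct and follows the paper's route. The paper obtains this theorem only by pointing back to the $\mathbb{Z}_p$ argument, and through it to the torsion-free one: Lemma \ref{aux-lem-existance} gives the ``only if'' direction, and the permuted configurations of Lemmas \ref{aux-lem-special-case} and \ref{aux-lem-inv-tf} give the structural conclusions. Theorem \ref{devos-extension-reg-sum} replaces Cauchy--Davenport, and Lemmas \ref{kemp-inv-thm-abl-lem1}--\ref{kemp-inv-thm-abl-lem2} replace the Vosper-type lemmas. You also fill in two points the paper leaves implicit: the no-wrap-around check in the ``if'' direction, and uniqueness up to sign of the common difference of a short progression, which replaces Lemmas \ref{gp-lem1}--\ref{gp-lem2}. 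The one unnecessary step is the wrap-around worry in the gluing via Lemma \ref{gp-lem5}. Two intersecting progressions with difference $d$ always have as union a progression with difference $d$, possibly a whole coset of $\langle d\rangle$. So you do not need the bound $|A|\le|\Sigma^\ell(\mathcal{A})|$, which you assert without proof.
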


The following theorem in abelian groups corresponds to Theorem \ref{productset-tf-inv-thm2} and its proof is similar to the proof that theorem.
\begin{theorem}\label{sigma-abl-inv-thm1}
	Let $\ell$ and $m$ be integers such that $2 \leq \ell < m$. Let $\mathcal{A} = (A_1, \ldots, A_m)$ be a sequence of finite subsets of an abelian group $G$, and let $A = A_1 \cup \cdots \cup A_m$. Let $\mu (a) = \ell$ for some $a \in A$. Furthermore, assume that $|\{a \in A : \mu(a) \geq 2\}| \geq 2$. Let
	\begin{equation*}
		|\Sigma^{\ell}(\mathcal{A})| =  \sum_{a \in A} \mu (a) - \ell + 1 <
		\begin{cases}
			p(G) - 1, & \mbox{if } \ell = 2; \\
			p(G), & \mbox{if } \ell \geq 3,
		\end{cases}
	\end{equation*}
	and
	\[A_j' = \{a \in A : \mu (a) \geq j\}\]
	for each $j \in [1, \ell]$, and let $k$ be the largest integer such that $|A_j'| \geq 2$ for each $j \in [1, k]$. Then  for each $j \in [1, k]$, the set $A_1' = A, \ldots, A_{\ell}'$ are arithmetic progressions with the same common difference. 
\end{theorem}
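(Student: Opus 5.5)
We follow the proof of Theorem \ref{productset-tf-inv-thm2}, in additive notation, with Theorem \ref{devos-extension-reg-sum} replacing Lemma \ref{kemp-gen-thm-tf} and Lemmas \ref{kemp-inv-thm-abl-lem1} and \ref{kemp-inv-thm-abl-lem2} replacing Lemma \ref{brail-gen-inv-thm-tf}.

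\textbf{Step 1.} Since $\mu(a)=\ell$ for some $a\in A$, Lemma \ref{aux-lem-dir-tf} applies; it is valid in an arbitrary group and carries over verbatim to additive notation. It gives $A_1'+\cdots+A_\ell'\subseteq\Sigma^{\ell}(\mathcal{A})$ and $\sum_j|A_j'|=\sum_{a\in A}\mu(a)$. Put $S=\sum_{j=1}^{\ell}|A_j'|-\ell+1$. Then
\[
|A_1'+\cdots+A_\ell'|\le |\Sigma^{\ell}(\mathcal{A})| = S < p(G).
\]
Theorem \ref{devos-extension-reg-sum} gives $|A_1'+\cdots+A_\ell'|\ge\min(p(G),S)$. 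Here $\min(p(G),S)=S$, so $|A_1'+\cdots+A_\ell'|=S$. Theorem \ref{devos-extension-reg-sum} is stated with the hypothesis $|A_i|\ge 2$, but it holds for all nonempty sets (by induction from Theorem \ref{devos-extension}), so sets of size one cause no problem.

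\textbf{Step 2 (the bookkeeping).} Since $A_\ell'\subseteq\cdots\subseteq A_1'=A$, the hypothesis $|\{a:\mu(a)\ge2\}|\ge2$ gives $k\ge2$. Also $|A_j'|=1$ for $j>k$, because $A_\ell'\ne\varnothing$. Each singleton summand only translates the sumset, so
\[
|A_1'+\cdots+A_k'| = |A_1'+\cdots+A_\ell'| = \sum_{j=1}^{k}|A_j'|-k+1 .
\]
This value equals $|\Sigma^{\ell}(\mathcal{A})|$.

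\textbf{Step 3 (the main obstacle: matching the $p(G)$ hypotheses).} The bounds available from the statement are phrased in terms of $\ell$, but the inverse lemmas must now be applied to $k$ sets, with $2\le k\le\ell$.
\begin{itemize}
\item If $k\ge3$, then $\ell\ge3$, so the sum is $<p(G)$ and Lemma \ref{kemp-inv-thm-abl-lem2} applies directly.
\item If $k=\ell=2$, the sum is $\le p(G)-2$ and Lemma \ref{kemp-inv-thm-abl-lem1} applies.
\item If $k=2<\ell$, we only know the sum is $\le p(G)-1$. The sharp value $p(G)-1$ is where Kemperman's two-set theorem can fail.
\end{itemize}

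For the last case we first try to exploit the singleton summands. The sets $A_3',\ldots,A_\ell'$ are singletons $\{c_3\},\ldots,\{c_\ell\}$. Since $A_j'\subseteq A_2'$, each $c_j\in A_2'$, and in fact $A_\ell'=M\subseteq A_2'$.

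We then build a three-set sum with the same count. Write $A_2'=X\cup\{c\}$ with $c\in M$, and consider $A_1'+X+\{c,c'\}$ or a similar regrouping. Alternatively, we look for a different extremal $(\ell,\cdot)$-arrangement to which Lemma \ref{kemp-inv-thm-abl-lem2} applies.

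If neither works, we fall back on a direct argument for the equality $|A+B|=|A|+|B|-1=p(G)-1$. Kemperman's structure theorem yields an arithmetic progression unless $B$ is the complement-type set $-(G\setminus(A+B))$ modulo a subgroup. Such a set would have size about $p(G)$, which we expect to contradict $|A_2'|\le|A_1'|$ together with $|A_1'|+|A_2'|=p(G)$ when $|A_2'|\ge2$. If this can be arranged, the argument shows that $A_1',\ldots,A_k'$ are arithmetic progressions with a common difference.

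Finally, the sets $A_j'$ with $j>k$ are singletons, which count as trivial progressions with any difference. This gives the conclusion for all $A_j'$.
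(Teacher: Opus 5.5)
Your Steps 1--2 and the first two cases of Step 3 are what the paper's one-line proof (``similar to Theorem \ref{productset-tf-inv-thm2}'') amounts to, and you correctly located the weak point. The last case of Step 3, $k=2<\ell$ with $|A_1'|+|A_2'|-1=p(G)-1$, contains no argument. None can be supplied, because the statement is false in that case.

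The paper's proof has the same hole. For $k=2$ the two-set Kemperman theorem needs the bound $p(G)-2$, and the hypothesis gives that only when $\ell=2$.

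Your heuristic is also wrong. In $\mathbb{Z}_p$, the pairs with $|A+B|=|A|+|B|-1=p-1$ are exactly $B=x-(\mathbb{Z}_p\setminus A)$ with $\{x\}=\mathbb{Z}_p\setminus(A+B)$, for any $A$ with $2\le |A|\le p-2$. There is no size obstruction, and $B\subseteq A$ can occur.

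Concretely, let $G=\mathbb{Z}_7$, $\ell=3$, $m=4$, and take $A_1=\{2,4,5,6\}$, $A_2=\{2,4,5\}$, $A_3=A_4=\{5\}$.
\begin{itemize}
\item The multiplicities are $\mu(5)=3$, $\mu(2)=\mu(4)=2$, $\mu(6)=1$, so $\sum_{a\in A}\mu(a)-\ell+1=6$.
\item Since $A_1+A_2=\mathbb{Z}_7\setminus\{5\}$, we get $A_1+A_2+A_3=A_1+A_2+A_4=\mathbb{Z}_7\setminus\{3\}$.
\item The remaining triples give $A_1+A_3+A_4=\{0,1,2,5\}$ and $A_2+A_3+A_4=\{0,1,5\}$.
\item Hence $|\Sigma^{3}(\mathcal{A})|=6<7=p(G)$, and every hypothesis of the statement holds.
\end{itemize}
Here $A_1'=\{2,4,5,6\}$ and $A_2'=\{2,4,5\}=5-(\mathbb{Z}_7\setminus A_1')$, so $k=2$. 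Neither set is an arithmetic progression: the complement $\{0,1,3\}$ of $A_1'$ has no element that is the midpoint of the other two. No regrouping of the $A_j'$ can rescue this case.

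So your argument proves the theorem only under the extra assumption $k\ge 3$ or $|\Sigma^{\ell}(\mathcal{A})|\le p(G)-2$. The example uses singleton sets, which this statement allows, unlike Theorem \ref{productset-tf-inv-thm2}.

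If you add $|A_i|\ge 2$ for all $i$, the case $\ell\ge 3$ can be handled without any two-set inverse theorem, but not with the sets $A_j'$ alone. The route is as follows.
\begin{itemize}
\item Apply Lemma \ref{kemp-inv-thm-abl-lem2} to the sets $A_1^2,\dots,A_\ell^2$ of Lemma \ref{aux-lem-existance}. These all have size at least $2$ and satisfy $\mu(a)=\sum_j\chi_{A_j^2}(a)$.
\item The lemma makes them arithmetic progressions with one common difference $d$, and all of them contain a fixed $c\in M$.
\item Then $A_j'$ is the set of points lying in at least $j$ of these progressions, hence itself an arithmetic progression with difference $d$. No wrap-around occurs, since $|A|\le|\Sigma^{\ell}(\mathcal{A})|<p(G)$.
\end{itemize}
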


\section{Applications to subsequence sums}\label{applications-to-subseq-sum}
In this section, we assume that the group $G$ is written additively with additive identity $0$. Let $p(G)$ denote the order of the smallest nontrivial subgroup of a group $G$, or $\infty$ if no such subgroups exist. Let $\mathbf{a} = (a_1, \ldots, a_m)$ be a sequence of elements of $G$ (not necessarily abelian), where $m$ is a positive integer. Given a positive integer $\ell$ with $\ell \leq m$, recall that
\begin{equation*}\label{gen-subseq-sum-eq1}
  \Sigma^{\ell} (\mathbf{a}) = \{a_{i_1} + \cdots + a_{i_{\ell}}: i_j \in [1, m]~ \text{for}~ j = 1, \ldots, \ell\}.
\end{equation*}
Following the notation in \cite{devos2009}, we define
	\[\rho_a(\mathbf{a}) = |\{i \in [1, m] : a_i = a\}|.\]
We use the following notation also:
	\[\mu_{\mathbf{a}} (a) = \min \{\ell, \rho_a(\mathbf{a})\}.\]

It is easy to see that for each $a \in A$, we have 
\[\rho_a(\mathbf{a}) = \sum_{j = 1}^m \chi_{A_j}(a),\]
and
\[\mu_{\mathbf{a}} (a) = \mu_{\mathcal{A}}(a) = \min \bigg (\ell, \sum_{j = 1}^m \chi_{A_j}(a) \bigg).\]

The following result follows easily from Theorem \ref{productset-tf-dir-thm}.
\begin{theorem}\label{gen-subseq-sum-tf-dir-thm1}
Let $\ell$ and $m$ be positive integers with $\ell \leq m$. Let $\mathbf{a} = (a_1, \ldots, a_m)$ be a sequence of elements from a torsion-free group $G$, and let $A$ be the set of distinct terms of $\mathbf{a} = (a_1, \ldots, a_m)$. Then
\begin{equation}\label{gen-subseq-sum-tf-dir-thm-eq1}
 |\Sigma^{\ell} (\mathbf{a})| \geq \sum_{a \in A}\mu_{\mathbf{a}} (a) - \ell + 1
\end{equation}
\end{theorem}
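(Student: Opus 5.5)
The plan is to reduce Theorem \ref{gen-subseq-sum-tf-dir-thm1} directly to Theorem \ref{productset-tf-dir-thm}, applied to the sequence of singletons. Let $\mathbf{a} = (a_1, \ldots, a_m)$ be given, and define $\mathcal{A} = (A_1, \ldots, A_m)$ with $A_i = \{a_i\}$ for each $i \in [1, m]$. These are nonempty finite subsets of the torsion-free group $G$, and $A_1 \cup \cdots \cup A_m$ is exactly the set $A$ of distinct terms of $\mathbf{a}$. The argument then has three steps.

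\textbf{Step 1.} Check that $\Sigma^{\ell}(\mathbf{a}) = \Sigma^{\ell}(\mathcal{A})$, written additively in place of $\Pi^{\ell}$. This is the identity $\Pi^{\ell}(\mathbf{a}) = \Pi^{\ell}(\mathcal{A})$ already recorded in the introduction. An element of $\Sigma^{\ell}(\mathcal{A})$ is a sum, in some order, of $\ell$ elements taken from $\ell$ distinct sets $A_{i_1}, \ldots, A_{i_\ell}$. Since each $A_{i}$ is the singleton $\{a_i\}$, such an element is precisely a sum of $\ell$ terms of $\mathbf{a}$ with distinct indices, in any order. Here the indices $i_j$ in the definition of $\Sigma^{\ell}(\mathbf{a})$ must be read as distinct, which is the intended meaning of ``$\ell$ terms of the sequence''.

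\textbf{Step 2.} Check that $\mu_{\mathbf{a}}(a) = \mu_{\mathcal{A}}(a)$ for every $a \in A$. This holds because
\[
\sum_{j=1}^m \chi_{A_j}(a) = |\{j \in [1, m] : a_j = a\}| = \rho_a(\mathbf{a}),
\]
as already noted just before the statement. Taking the minimum with $\ell$ on both sides gives the claim.

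\textbf{Step 3.} Apply Theorem \ref{productset-tf-dir-thm} to $\mathcal{A}$, with the group operation written additively. This gives
\[
|\Sigma^{\ell}(\mathbf{a})| = |\Sigma^{\ell}(\mathcal{A})| \geq \sum_{a \in A}\mu_{\mathcal{A}}(a) - \ell + 1 = \sum_{a \in A}\mu_{\mathbf{a}}(a) - \ell + 1,
\]
which is the desired bound.

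The only point requiring care is that Theorem \ref{productset-tf-dir-thm} is applied with singleton sets, so no hypothesis $|A_i| \geq 2$ may be used. This is fine: that theorem, through Lemma \ref{aux-lem-existance} and Lemma \ref{kemp-gen-thm-tf}, needs only nonempty finite sets. Apart from this, the main obstacle is purely notational, namely confirming that the ``any order'' convention for $\Pi^{\ell}$ in a possibly nonabelian group matches the definition of $\Sigma^{\ell}(\mathbf{a})$. Both definitions allow arbitrary orderings of the chosen distinct indices, so they agree. The degenerate case $\ell = 1$ is also covered, since then both sides reduce to $|A|$.
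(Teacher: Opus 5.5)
Your proof is correct and is essentially the paper's argument: the paper states that this result follows directly from Theorem \ref{productset-tf-dir-thm} applied to the singleton sequence $\mathcal{A}=(\{a_1\},\ldots,\{a_m\})$, using $\Sigma^{\ell}(\mathbf{a})=\Sigma^{\ell}(\mathcal{A})$ and $\mu_{\mathbf{a}}=\mu_{\mathcal{A}}$. Your additional checks are accurate: reading the indices as distinct, noting that no $|A_i|\ge 2$ hypothesis is needed, and treating $\ell=1$ separately, where the bound is trivially $|A|\ge |A|$.
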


Similary, in case of arbitrary groups, the following theorem follows easily from Theorem \ref{sigma-abl-dir-thm}.
\begin{theorem}\label{gen-subseq-sum-gp-dir-thm1}
Let $\ell$ and $m$ be positive integers with $\ell \leq m$. Let $\mathbf{a} = (a_1, \ldots, a_m)$ be a sequence of elements from a group $G$, and let $A$ be the set of distinct terms of $\mathbf{a} = (a_1, \ldots, a_m)$. Then
\begin{equation}\label{gen-subseq-sum-gp-dir-thm-eq1}
 |\Sigma^{\ell} (\mathbf{a})| \geq \min \biggl(p(G),~\sum_{a \in A}\mu_{\mathbf{a}} (a) - \ell + 1\biggr).
\end{equation}
\end{theorem}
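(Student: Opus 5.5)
The plan is to reduce Theorem \ref{gen-subseq-sum-gp-dir-thm1} to Theorem \ref{sigma-abl-dir-thm} by encoding the sequence as a sequence of singleton sets. Put $A_i = \{a_i\}$ for each $i \in [1, m]$ and $\mathcal{A} = (A_1, \ldots, A_m)$, so that $A_1 \cup \cdots \cup A_m$ is exactly the set $A$ of distinct terms of $\mathbf{a}$. As observed in the introduction (the identity $\Pi^{\ell}(\mathbf{a}) = \Pi^{\ell}(\mathcal{A})$, written additively), we have $\Sigma^{\ell}(\mathbf{a}) = \Sigma^{\ell}(\mathcal{A})$. The reason is that a sum of $\ell$ terms of $\mathbf{a}$ with distinct indices, taken in any order, is precisely a sum of one element from each of $\ell$ distinct sets of $\mathcal{A}$, taken in the same order.

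Next I would check the multiplicity identity already recorded just before the statement. For each $a \in A$, $\chi_{A_j}(a) = 1$ if and only if $a_j = a$. Hence $\sum_{j=1}^m \chi_{A_j}(a) = \rho_a(\mathbf{a})$, and therefore $\mu_{\mathcal{A}}(a) = \min(\ell, \rho_a(\mathbf{a})) = \mu_{\mathbf{a}}(a)$. Substituting these identities into the bound of Theorem \ref{sigma-abl-dir-thm} gives
\[|\Sigma^{\ell}(\mathbf{a})| = |\Sigma^{\ell}(\mathcal{A})| \geq \min\Bigl(p(G), \sum_{a \in A}\mu_{\mathbf{a}}(a) - \ell + 1\Bigr),\]
which is the claim.

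The only point needing care is that Theorem \ref{sigma-abl-dir-thm} is applied with no size hypothesis on the $A_i$. That is fine, because that theorem requires only finite (nonempty) sets, so singletons are allowed. The restrictions $|A_i| \geq 2$ appear only in the inverse results, not in this direct bound.

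I would also handle $\ell = 1$ separately for completeness. In that case $\Sigma^{1}(\mathbf{a}) = A$ and $\sum_{a \in A} \mu_{\mathbf{a}}(a) = |A|$, so the bound holds trivially.

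I do not expect any genuine obstacle: the whole argument is bookkeeping, and the real content lies in Theorem \ref{sigma-abl-dir-thm}.
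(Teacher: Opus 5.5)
Your proposal is correct and matches the paper's argument. The paper also derives this theorem directly from Theorem \ref{sigma-abl-dir-thm} by taking singleton sets $A_i = \{a_i\}$ and using $\Sigma^{\ell}(\mathbf{a}) = \Sigma^{\ell}(\mathcal{A})$ and $\mu_{\mathbf{a}}(a) = \mu_{\mathcal{A}}(a)$; your separate check of $\ell = 1$ is harmless but unnecessary, since Theorem \ref{sigma-abl-dir-thm} already covers $\ell = 1$.
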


Hamidoune \cite{hamidoune2003} proved the following result.
	
\begin{theorem}\label{hamidoune-susbseq-sum-thm}
	Let $\ell$ and $m$ be positive integers with $\ell \leq m \leq 2 \ell - 1$. Let $\mathbf{a} = (a_1, \ldots, a_m)$ be a sequence of elements from an abelian group $G$. Then one of the following holds:
	\begin{enumerate}
		\item $|\Sigma^{\ell} (\mathbf{a})| \geq m - \ell + 1$.
		\item There exists $i \in [1, m]$ such that $\ell a_i \in \Sigma^{\ell} (\mathbf{a})$.
	\end{enumerate}
\end{theorem}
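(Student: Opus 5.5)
The plan is to deduce the statement from the DeVos-Goddyn-Mohar Theorem (Theorem \ref{dgm-thm}) applied to singleton sets. Write the group additively and put $\mathcal{A} = (A_1, \ldots, A_m)$ with $A_i = \{a_i\}$, so that $\Sigma^{\ell}(\mathbf{a}) = \Sigma^{\ell}(\mathcal{A})$. Assume that alternative (2) fails, i.e.\ $\ell a_i \notin \Sigma^{\ell}(\mathbf{a})$ for every $i \in [1, m]$. We will show that alternative (1) then holds.

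\textbf{Step 1: every coset of the stabilizer contains fewer than $\ell$ terms.} Let $H = \mathrm{Stab}(\Sigma^{\ell}(\mathcal{A}))$, and fix a coset $Q \in G/H$. For this $Q$,
\[\sum_{i=1}^{m}\chi_{A_i}(Q) = |\{i \in [1,m] : a_i \in Q\}|.\]
Suppose this number is at least $\ell$. Then we can choose $\ell$ distinct indices $i_1, \ldots, i_\ell$ with $a_{i_j} \in Q$, and fix one of them, say $a_{i}$. Writing each $a_{i_j} = a_i + h_j$ with $h_j \in H$, we get
\[s = a_{i_1} + \cdots + a_{i_\ell} \in \ell a_i + H.\]
Now $s \in \Sigma^{\ell}(\mathcal{A})$, and $\Sigma^{\ell}(\mathcal{A})$ is a union of $H$-cosets because $H$ is its stabilizer. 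Hence the whole coset $\ell a_i + H$ lies in $\Sigma^{\ell}(\mathcal{A})$, and in particular $\ell a_i \in \Sigma^{\ell}(\mathbf{a})$. This contradicts our assumption. Therefore every coset $Q$ contains fewer than $\ell$ terms of $\mathbf{a}$, and so $\mu_{\mathcal{A}}(Q)$ equals the number of indices $i$ with $a_i \in Q$. Summing over all cosets gives
\[\sum_{Q \in G/H}\mu_{\mathcal{A}}(Q) = m.\]

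\textbf{Step 2: apply Theorem \ref{dgm-thm}.} It gives
\[|\Sigma^{\ell}(\mathbf{a})| \geq |H|(m - \ell + 1) \geq m - \ell + 1.\]
The last inequality uses $|H| \geq 1$ together with $m - \ell + 1 \geq 1$, which holds since $\ell \leq m$. This is alternative (1).

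The argument never uses the hypothesis $m \leq 2\ell - 1$; it is harmless to keep it, and the proof goes through as written. The only real obstacle is the case of a nontrivial stabilizer $H$. If $H$ is trivial, Step 1 reduces to the observation that no single element occurs $\ell$ times in $\mathbf{a}$. For nontrivial $H$ one must pass from coset counts to the multiplicities of actual elements, and this is handled by the periodicity argument in Step 1: a sum of $\ell$ terms from one coset forces $\ell a_i$ into the periodic set $\Sigma^{\ell}(\mathbf{a})$.
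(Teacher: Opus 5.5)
Your proof is correct. The only place the stabilizer could cause trouble is the coset count, and your periodicity argument handles it: $s\in \ell a_i+H$ and $s+H\subseteq \Sigma^{\ell}(\mathbf{a})$ force $\ell a_i\in\Sigma^{\ell}(\mathbf{a})$. After that, Theorem \ref{dgm-thm} gives $|\Sigma^{\ell}(\mathbf{a})|\geq |H|(m-\ell+1)$. (Implicitly $H$ is finite, since $\Sigma^{\ell}(\mathbf{a})$ is a nonempty finite set, so the bound makes sense.)

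The paper does not prove this statement; it only quotes it from Hamidoune. The closest arguments in the paper are the proofs of Theorem \ref{gen-subseq-sum-tf-dir-thm2} and Theorem \ref{gen-subseq-sum-gp-dir-thm2}. These split on element multiplicities. If some $\rho_{a_i}(\mathbf{a})$ is large, then $\ell a_i\in\Sigma^{\ell}(\mathbf{a})$ directly. Otherwise $\sum_{a\in A}\mu_{\mathbf{a}}(a)=m$, and one applies the element-level bound $\sum_{a\in A}\mu(a)-\ell+1$ (Theorem \ref{gen-subseq-sum-tf-dir-thm1}, resp.\ Theorem \ref{gen-subseq-sum-gp-dir-thm1}).

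That route works in torsion-free groups that need not be abelian. In a general group, however, it only yields $\min(p(G),\,m-\ell+1)$, which is strictly weaker than the statement whenever $p(G)<m-\ell+1$.

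Your route replaces the element-level bound with the coset-level DeVos--Goddyn--Mohar bound. This needs commutativity and the full Kneser-type theorem, which the paper only cites and re-proves just for $\mathbb{Z}_p$. In exchange, you get exactly Hamidoune's conclusion with no $p(G)$ truncation, in fact the stronger bound $|H|(m-\ell+1)$. Your argument also shows the hypothesis $m\leq 2\ell-1$ is superfluous, consistent with the paper dropping it in its own analogues.
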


Using Theorem \ref{gen-subseq-sum-tf-dir-thm1}, we prove the following anlogous theorem in torsion free groups.

\begin{theorem}\label{gen-subseq-sum-tf-dir-thm2}
Let $\ell$ and $m$ be positive integers with $\ell \leq m$. Let $\mathbf{a} = (a_1, \ldots, a_m)$ be a sequence of elements from a torsion-free group $G$. Then one of the following holds:
	\begin{enumerate}
		\item $|\Sigma^{\ell} (\mathbf{a})| \geq m - \ell + 1$.
		\item There exists $i \in [1, m]$ such that $\ell a_i \in \Sigma^{\ell} (\mathbf{a})$.
	\end{enumerate}
\end{theorem}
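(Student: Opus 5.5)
Let $A$ be the set of distinct terms of $\mathbf{a}$ and, for each $a\in A$, let $\rho_a=\rho_a(\mathbf{a})$ and $\mu_{\mathbf{a}}(a)=\min(\ell,\rho_a)$. The plan is to split according to whether some value occurs at least $\ell$ times in $\mathbf{a}$, and to use Theorem \ref{gen-subseq-sum-tf-dir-thm1} in the remaining case.

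\textbf{Case 1: $\rho_a\ge \ell$ for some $a\in A$.} Choose an index $i$ with $a_i=a$. Then $\ell$ distinct indices carry the value $a$. Summing $a$ over these $\ell$ indices gives $\ell a_i\in\Sigma^{\ell}(\mathbf{a})$, so alternative (2) holds. Nothing deeper is needed here.

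\textbf{Case 2: $\rho_a\le \ell-1$ for every $a\in A$.} Then $\mu_{\mathbf{a}}(a)=\rho_a$ for every $a$, and hence
\[
\sum_{a\in A}\mu_{\mathbf{a}}(a)=\sum_{a\in A}\rho_a=m .
\]
Theorem \ref{gen-subseq-sum-tf-dir-thm1} then gives
\[
|\Sigma^{\ell}(\mathbf{a})|\ge \sum_{a\in A}\mu_{\mathbf{a}}(a)-\ell+1=m-\ell+1,
\]
which is alternative (1).

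The argument is essentially immediate, so there is no real obstacle. The only point to check is that Theorem \ref{gen-subseq-sum-tf-dir-thm1} applies to the singleton family $A_i=\{a_i\}$ with no extra hypotheses. That theorem comes from Theorem \ref{productset-tf-dir-thm}, which needs only finite sets, and the remark in Section \ref{applications-to-subseq-sum} identifies $\rho_a$ with $\sum_j\chi_{A_j}(a)$, so this holds.

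Unlike Hamidoune's Theorem \ref{hamidoune-susbseq-sum-thm}, no restriction $m\le 2\ell-1$ is needed. Torsion-freeness already supplies the Kemperman bound $|AB|\ge|A|+|B|-1$ with no cap of the form $p(G)$, and the dichotomy only has to handle the saturation $\mu_{\mathbf{a}}(a)=\ell$, which is exactly what Case 1 covers.
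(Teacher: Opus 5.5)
Your proof is correct and is essentially the paper's argument: if some value has high multiplicity, alternative (2) holds; otherwise $\mu_{\mathbf{a}}(a)=\rho_a(\mathbf{a})$ for every $a$, so $\sum_{a\in A}\mu_{\mathbf{a}}(a)=m$, and Theorem \ref{gen-subseq-sum-tf-dir-thm1} gives alternative (1). The only difference is the threshold: you split at $\rho_a\ge\ell$, while the paper splits at $\rho_{a_i}\ge\ell+1$ and handles $\ell=1$ separately; both splits work, and your threshold covers $\ell=1$ automatically.
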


\begin{proof}
For $\ell = 1$, the result is trivial. Now assume that $\ell \geq 2$. If $\rho_{a_i}(\mathbf{a}) \geq \ell + 1$ for some $i \in [1, m]$, then $\ell a_i \in \Sigma^{\ell} (\mathbf{a})$. Now we assume that $\rho_{a_i}(\mathbf{a}) \leq \ell$ for each $i \in [1, m]$. In this case,
	\[\sum_{a \in A} \mu_{\mathbf{a}} (a) = |\{a_1\}| + \cdots + |\{a_m\}| = m.\]
Hence it follows from Theorem \ref{gen-subseq-sum-tf-dir-thm1} that
	\[|\Sigma^{\ell} (\mathbf{a})| \geq \sum_{a \in A} \mu_{\mathbf{a}} (a) - \ell + 1 = m - \ell + 1.\]
\end{proof}

Similarly, using Theorem \ref{gen-subseq-sum-gp-dir-thm1}, one can prove the following theorem in arbitrary groups.

\begin{theorem}\label{gen-subseq-sum-gp-dir-thm2}
Let $\ell$ and $m$ be positive integers with $\ell \leq m$. Let $\mathbf{a} = (a_1, \ldots, a_m)$ be a sequence of elements from a group $G$, and let $A$ be the set of distinct terms of $\mathbf{a} = (a_1, \ldots, a_m)$. Then one of the following holds:
	\begin{enumerate}
		\item $|\Sigma^{\ell} (\mathbf{a})| \geq \min \biggl(p(G),  m - \ell + 1\biggr)$.
		\item There exists $i \in [1, m]$ such that $\ell a_i \in \Sigma^{\ell} (\mathbf{a})$.
	\end{enumerate}
\end{theorem}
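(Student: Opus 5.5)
The plan mirrors the proof of Theorem \ref{gen-subseq-sum-tf-dir-thm2}, with Theorem \ref{gen-subseq-sum-gp-dir-thm1} in place of Theorem \ref{gen-subseq-sum-tf-dir-thm1}.

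\textbf{Case $\ell = 1$.} Here $\Sigma^{1}(\mathbf{a}) = \{a_1, \ldots, a_m\}$, so $a_1 = 1\cdot a_1 \in \Sigma^{1}(\mathbf{a})$ and alternative (2) holds trivially. From now on assume $\ell \geq 2$.

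\textbf{Case $\rho_{a_i}(\mathbf{a}) \geq \ell$ for some $i \in [1, m]$.} Choose $\ell$ distinct indices $j_1, \ldots, j_{\ell}$ with $a_{j_t} = a_i$. Then
\[\ell a_i = a_{j_1} + \cdots + a_{j_{\ell}} \in \Sigma^{\ell}(\mathbf{a}),\]
so (2) holds. This uses only that the indices in the definition of $\Sigma^{\ell}(\mathbf{a})$ are distinct. Multiplicity $\ell$ already suffices, so we do not need $\ell + 1$ as in the torsion-free proof.

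\textbf{Case $\rho_{a}(\mathbf{a}) \leq \ell - 1$ for every $a \in A$.} Then $\mu_{\mathbf{a}}(a) = \rho_a(\mathbf{a})$ for all $a \in A$, and summing multiplicities over the distinct terms gives
\[\sum_{a \in A} \mu_{\mathbf{a}}(a) = \sum_{a \in A} \rho_a(\mathbf{a}) = m.\]
With $A_j = \{a_j\}$ we have $\Sigma^{\ell}(\mathbf{a}) = \Sigma^{\ell}(\mathcal{A})$ and $\mu_{\mathbf{a}} = \mu_{\mathcal{A}}$. Theorem \ref{gen-subseq-sum-gp-dir-thm1} then gives
\[|\Sigma^{\ell}(\mathbf{a})| \geq \min\bigl(p(G), m - \ell + 1\bigr),\]
which is (1).

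The only real point of care is the non-abelian setting. Theorem \ref{gen-subseq-sum-gp-dir-thm1} (through Theorem \ref{sigma-abl-dir-thm}) is stated for arbitrary groups, and $\Sigma^{\ell}(\mathbf{a})$ allows any order of summands, so reordering causes no problem. I take the validity of Theorem \ref{sigma-abl-dir-thm} in that generality as given. If one doubts it, the fallback is to run Lemma \ref{aux-lem-existance}, which holds in arbitrary groups, to get sets $A_1^2, \ldots, A_{\ell}^2$ with $A_1^2 + \cdots + A_{\ell}^2 \subseteq \Sigma^{\ell}(\mathcal{A})$ and total size $\sum_{a \in A}\mu(a)$. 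One then applies Theorem \ref{devos-extension-reg-sum}, which only needs nonempty sets (the hypothesis $|A_i| \geq 2$ there is inessential for the lower bound, by iterating Theorem \ref{devos-extension}). I expect this verification to be the only potential obstacle; everything else is bookkeeping.
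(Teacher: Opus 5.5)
Your proof is correct and follows the paper's route: the paper proves this by repeating the case split from Theorem \ref{gen-subseq-sum-tf-dir-thm2}, with Theorem \ref{gen-subseq-sum-gp-dir-thm1} in place of the torsion-free bound. Using the threshold $\rho_{a_i}(\mathbf{a}) \geq \ell$ instead of the paper's $\ell + 1$ is harmless, since in the complementary case you still have $\mu_{\mathbf{a}}(a) = \rho_a(\mathbf{a})$ and $\sum_{a \in A}\mu_{\mathbf{a}}(a) = m$; and your remark that the $|A_i| \geq 2$ hypothesis in Theorem \ref{devos-extension-reg-sum} can be dropped covers the singleton sets $A_j = \{a_j\}$, which the paper passes over.
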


\begin{proof}
	The proof is similar to the proof of Theorem \ref{gen-subseq-sum-tf-dir-thm2}.
\end{proof}

The following theorem gives some structural information about the sequence. This theorem is analogous to Theorem \ref{productset-tf-inv-thm2}.
\begin{theorem}\label{gen-subseq-sum-tf-inv-thm1}
    Let $\ell$ and $m$ be integers such that $2 \leq \ell \leq m$. Let $\mathbf{a} = (a_1, \ldots, a_m)$ be a sequence of elements from a torsion-free group $G$, and let $A$ be the set of distinct terms of the given sequence. Let $\mu_{\mathbf{a}} (a) = \ell$ for some $a \in A$. Furthermore, assume that $|\{a \in A : \mu_{\mathbf{a}} (a) \geq 2\}| \geq 2$, and let
    \[|\Sigma^{\ell}(\mathbf{a})| =  \sum_{a \in A} \mu_{\mathbf{a}} (a) - \ell + 1.\]
    Then the set $A$ is an arithmetic progression.	
\end{theorem}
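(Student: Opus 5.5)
The plan is to reduce to Theorem \ref{productset-tf-inv-thm2} applied to singleton sets. Put $A_i = \{a_i\}$ and $\mathcal{A} = (A_1, \ldots, A_m)$. Then $\Sigma^{\ell}(\mathbf{a}) = \Sigma^{\ell}(\mathcal{A})$ and $\mu_{\mathbf{a}} = \mu_{\mathcal{A}}$ on $A$. So the hypotheses say exactly that some $a$ has $\mu_{\mathcal{A}}(a) = \ell$, that at least two elements have $\mu_{\mathcal{A}} \geq 2$, and that equality holds in the bound of Theorem \ref{productset-tf-dir-thm}.

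Theorem \ref{productset-tf-inv-thm2} itself cannot be quoted, because it assumes $|A_i| \geq 2$ and singletons violate this. The proof of that theorem, however, only uses Lemma \ref{aux-lem-dir-tf}, which holds for arbitrary nonempty sets, together with Lemma \ref{brail-gen-inv-thm-tf}. I would therefore rerun that argument directly. Define $A_j' = \{a \in A : \mu(a) \geq j\}$. Lemma \ref{aux-lem-dir-tf} gives
\[A_1' + \cdots + A_{\ell}' \subseteq \Sigma^{\ell}(\mathbf{a}) \quad\text{and}\quad \sum_j |A_j'| = \sum_a \mu(a).\]
Combined with Lemma \ref{kemp-gen-thm-tf} (in additive notation) and the assumed equality, this yields
\[|A_1' + \cdots + A_{\ell}'| = \sum_{j=1}^{\ell} |A_j'| - \ell + 1.\]
The sets are nested, $A_{\ell}' \subseteq \cdots \subseteq A_1' = A$. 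Since some $a$ has $\mu(a) = \ell$, every $A_j'$ is nonempty. Let $k$ be the largest index with $|A_j'| \geq 2$ for all $j \leq k$; the hypothesis on two elements of multiplicity at least $2$ gives $k \geq 2$, and $|A_j'| = 1$ for $j > k$. Adding a singleton is just a translation, so
\[|A_1' + \cdots + A_k'| = \sum_{j=1}^{k} |A_j'| - k + 1.\]
Lemma \ref{brail-gen-inv-thm-tf} then shows that $A_1' = A$ is a progression of type $(\alpha_1, g, \beta_1)$ with $g \neq 0$.

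The remaining point, and the main obstacle, is turning ``progression of type $(\alpha, g, \beta)$'' into ``arithmetic progression.'' In additive notation a type-$(\alpha, g, \beta)$ set is $\{\alpha + jg + \beta : j \in [0, |A| - 1]\}$. By Remark \ref{gp-remark}(2) this is an arithmetic progression with first term $\alpha + \beta$ and common difference $-\beta + g + \beta$, a conjugate of $g$, which is nonzero. Hence $A$ is an arithmetic progression, even though $G$ is only torsion-free and not assumed abelian.

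One subtlety needs checking: with noncommutative addition, $\Sigma^{\ell}(\mathbf{a})$ allows all orderings of the chosen terms. That is harmless here, because Lemma \ref{aux-lem-dir-tf} only needs the ordered sum $A_1' + \cdots + A_{\ell}'$ to be contained in $\Sigma^{\ell}(\mathbf{a})$, and it is.
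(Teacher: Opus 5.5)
Your argument is correct and is essentially the proof the paper intends. The paper gives no separate proof of this theorem and only calls it the analogue of Theorem \ref{productset-tf-inv-thm2}; that theorem's proof is exactly your chain of Lemma \ref{aux-lem-dir-tf}, then Lemma \ref{kemp-gen-thm-tf}, then discarding the singleton levels $A_j'$ for $j>k$, then Lemma \ref{brail-gen-inv-thm-tf} applied to $A_1'=A$. Your two extra points are both correct and supply details the paper leaves implicit: Theorem \ref{productset-tf-inv-thm2} must be rerun rather than cited because singletons violate $|A_i|\ge 2$, and a type-$(\alpha,g,\beta)$ set is a genuine arithmetic progression with nonzero common difference $-\beta+g+\beta$.
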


The following theorem is a further generalization of Theorem \ref{gen-subseq-sum-tf-inv-thm1}.	
	\begin{theorem}\label{gen-subseq-sum-tf-inv-thm2}
		Let $\ell$ and $m$ be positive integers such that $\ell \leq m - 2$. Let $\mathbf{a} = (a_1, \ldots, a_m)$ be a sequence of elements from a torsion-free group $G$, and let $X = \{a \in A : \mu_{\mathbf{a}}(a) \geq 2\}$. Let $|X| \geq 2$, and assume that there exists the smallest positive integer $t \geq 2$ such that
		\[\sum_{i = 1}^t \rho_{x_i}(\mathbf{a}) \geq \ell + t\]
		for some $x_1, \ldots, x_t \in X$. Let
		\[|\Sigma^{\ell}(\mathbf{a})| =  \sum_{a \in A} \mu_{\mathbf{a}} (a) - \ell + 1.\]
		Then the set $A$ of the distinct terms of the sequence $\mathbf{a} = (a_1, \ldots, a_m)$ is an arithmetic progression.
	\end{theorem}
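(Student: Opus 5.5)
The plan is to reduce Theorem \ref{gen-subseq-sum-tf-inv-thm2} to Theorem \ref{productset-tf-inv-thm2} (equivalently Theorem \ref{gen-subseq-sum-tf-inv-thm1}), applied to the singleton sequence $\mathcal{A} = (\{a_1\}, \ldots, \{a_m\})$. We have $\Sigma^{\ell}(\mathbf{a}) = \Sigma^{\ell}(\mathcal{A})$ and $\mu_{\mathbf{a}} = \mu_{\mathcal{A}}$. The singletons violate the hypothesis $|A_i| \geq 2$, so we cannot quote Theorem \ref{productset-tf-inv-thm1} directly. Instead we use the chain from Lemma \ref{aux-lem-dir-tf}, whose proof does not need $|A_i| \geq 2$.

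Set $A_j' = \{a \in A : \mu(a) \geq j\}$. Lemma \ref{aux-lem-dir-tf} gives $A_1' \cdots A_\ell' \subseteq \Sigma^{\ell}(\mathbf{a})$ and $\sum_j |A_j'| = \sum_a \mu(a)$. It assumes some $\mu(a) = \ell$, but the containment argument only needs each $A_j'$ to be nonempty. Let $k$ be the largest index with $A_k' \neq \emptyset$ and run the chain with $k$ factors. If $k < \ell$, the count $\sum_{j \le k}|A_j'| - k + 1$ exceeds $\sum_a\mu(a)-\ell+1$, so to preserve equality I will first handle the case where no element has multiplicity $\ell$.

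This is where the hypothesis on $t$ enters. The elements $x_1,\dots,x_t \in X$ with $\sum_i \rho_{x_i} \geq \ell+t$ allow a redistribution analogous to Lemma \ref{aux-lem-existance}. I will build nested-type sets $B_1 \supseteq \cdots$ as follows: $B_1 = A$, and the $x_i$ are placed in enough further layers that every product $b_1\cdots b_\ell$ with $b_j \in B_j$ can be realized by $\ell$ distinct indices. The sets must also satisfy $\sum_j |B_j| = \sum_a \mu(a)$ and $|B_1|, |B_2| \geq 2$. Concretely, assign layer $j$ to elements greedily: each $x \in X$ occupies layers $1,\dots,\mu(x)$, and since $\sum\rho_{x_i}\ge \ell+t$ together with minimality of $t$ forces the total layer count to reach $\ell$ with at least two elements in layer $2$, all $\ell$ layers are nonempty. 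The minimality of $t$ is what guarantees that a distinct-index realization exists (a Hall-type matching argument). I expect checking this realization to be the main obstacle, and I would model it on Claim 1 in the proof of Lemma \ref{aux-lem-existance}.

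With these sets in hand, Lemma \ref{kemp-gen-thm-tf} and the assumed equality give
\[\sum_a\mu(a)-\ell+1 = |\Sigma^{\ell}(\mathbf{a})| \ge |B_1\cdots B_\ell| \ge \sum_j|B_j| - \ell+1 = \sum_a\mu(a)-\ell+1.\]
Hence equality holds throughout. Discarding the singleton factors, which contribute exactly $1$ each to both sides, Lemma \ref{brail-gen-inv-thm-tf} applies to the factors of size at least $2$, which include $B_1 = A$. Therefore $A$ is a progression of type $(\alpha,g,\beta)$ with $g \neq 0$, which in additive notation is an arithmetic progression (by Remark \ref{gp-remark} it is a progression with first term $\alpha+\beta$ and common difference a conjugate of $g$). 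The hypothesis $|X|\ge 2$ is exactly what ensures the second factor has size at least $2$, so that Lemma \ref{brail-gen-inv-thm-tf} is applicable rather than vacuous.
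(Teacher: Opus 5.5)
There is a genuine gap, and it lies in exactly the case you set aside: no value has multiplicity $\ge\ell$.

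Your concrete rule, ``each $x\in X$ occupies layers $1,\dots,\mu(x)$'', just reproduces the nested layers $B_j=A_j'=\{a:\mu(a)\ge j\}$. So every layer $j>\max_a\mu(a)$ is empty. The hypothesis $\sum_{i=1}^t\rho_{x_i}\ge\ell+t$ bounds the total number of (value, layer) incidences, not the height of a nested stack, so it does not make all $\ell$ layers nonempty. Take $\ell=5$ and a sequence with one value occurring $4$ times and another occurring $3$ times, so $t=2$. Then your $B_5$ is empty, and $|\Sigma^{\ell}(\mathbf a)|\ge|B_1+\cdots+B_\ell|$ gives nothing. Your preliminary ``chain with $k$ factors'' fails for the same reason: a sum of $k<\ell$ layers lies in $\Sigma^{k}(\mathbf a)$, not in $\Sigma^{\ell}(\mathbf a)$. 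When some value has multiplicity $\ge\ell$, your nested layers do work, as in the proof of Theorem \ref{productset-tf-inv-thm2}, but that is not the case the $t$-hypothesis is there for.

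The missing idea, which is how the paper proceeds, is to give up nestedness. The construction is:
\begin{enumerate}
\item Order $X$ by decreasing multiplicity. Form the list consisting of $x_1$ repeated $\rho_{x_1}$ times, $x_i$ repeated $\rho_{x_i}-1$ times for $2\le i\le t-1$, and $x_t$ repeated $\rho_{x_t}-2$ times. Its length is $\sum_{i\le t}\rho_{x_i}-t\ge\ell$, and this is precisely where the hypothesis is used.
\item Assign the first $\ell$ entries of the list, one each, to layers $1,\dots,\ell$. Delete those terms from $\mathbf a$ to get $\mathbf b$.
\item Let $X_j$ be the designated entry of layer $j$ together with $\{a:\rho_a(\mathbf b)\ge j\}$.
\end{enumerate}
The reserved copies (one of each of $x_2,\dots,x_{t-1}$ and two of $x_t$) give $X_1=A$ and $|X_2|\ge2$. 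One then checks that $\sum_j\chi_{X_j}(a)=\mu(a)$. In the example above, the two values occupy layers $1,\dots,4$ and layers $1,2,5$ respectively.

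You have also misplaced the difficulty. For a sequence of singletons, realizing each sum by distinct indices is automatic. A value $a$ lies in at most $\mu(a)\le\rho_a(\mathbf a)$ layers, so it is used at most $\rho_a(\mathbf a)$ times in any $b_1+\cdots+b_\ell$ with $b_j\in X_j$. No Hall argument, and no minimality of $t$, is needed for that step.

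Once layers like these are in hand, your endgame is correct and matches the paper: equality throughout the chain, discarding the singleton factors, and applying Lemma \ref{brail-gen-inv-thm-tf} with $X_1=A$.
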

	
	\begin{proof}
		Let $|X| = n$. Without loss of generality, by rearranging the terms of the sequence $\mathbf{a}$, we may assume that
		\[X = \{a_1, \ldots, a_n\},\]
		where
		\[\rho_{a_1}(\mathbf{a}) \geq \cdots \geq \rho_{a_n}(\mathbf{a}).\]
		Now we construct sets $X_1, \ldots, X_{\ell} \subseteq A$ satisfying the following conditions:
		\begin{equation}\label{gen-subseq-sum-tf-inv-thm2-eq1}
			X_1 = A,
		\end{equation}
\begin{equation}\label{gen-subseq-sum-tf-inv-thm2-eq2}
			\mu_{\bold{a}} (a) = \sum_{j = 1}^{\ell} \chi_{X_j}(a)~\text{for each}~ a \in A
		\end{equation}
		\begin{equation}\label{gen-subseq-sum-tf-inv-thm2-eq3}
			X_1 + \cdots + X_{\ell} \subseteq \Sigma^{\ell}(\mathbf{a}),
		\end{equation}
		\begin{equation}\label{gen-subseq-sum-tf-inv-thm2-eq4}
			|X_1| + \cdots + |X_{\ell}| = \sum_{a \in A} \mu_{\mathbf{a}} (a),
		\end{equation}
		and
		\begin{equation}\label{gen-subseq-sum-tf-inv-thm2-eq5}
			|X_1| \geq |X_2| \geq \cdots \geq |X_r| \geq 2,
		\end{equation}
		for some $r \in [2, \ell]$. Let $t \in [1, n]$ be the least integer such that 
		\[\sum_{i = 1}^t \rho_{a_i}(\mathbf{a}) \geq \ell + t\]
		and let
		\[k = \rho_{a_1}(\mathbf{a}) + \sum_{i = 2}^{t - 1}(\rho_{a_i}(\mathbf{a}) - 1) + (\rho_{a_t}(\mathbf{a}) - 2) \geq \ell.\]
Now consider the sequence $\mathbf{x} = (x_1, x_2, \ldots, x_k)$ defined as
\[\mathbf{x} = (\underbrace{a_1, \ldots, a_1}_{\rho_{a_1}(\mathbf{a})\ \text{times}}, \underbrace{a_2, \ldots, a_2}_{\rho_{a_2}(\mathbf{a}) - 1\ \text{times}}, \ldots, \underbrace{a_{t - 1}, \ldots, a_{t - 1}}_{\rho_{a_{t-1}}(\mathbf{a}) - 1\ \text{times}}, \underbrace{a_t, \ldots, a_t}_{\rho_{a_t}(\mathbf{a}) - 2 \ \text{times}}).\]  

Now, we define $A_i = \{x_i\}$ for each $i \in [1, \ell]$. Let $\mathbf{b}$ be the sequence obtained by deleting the terms $x_1, \ldots, x_{\ell}$ from $(a_1, \ldots, a_m)$, and let $B$ be set of all distinct elements of the sequence $\mathbf{b}$. Then it easy to see that 
		\[a_t \in B ~\text{and}~ \rho_{a_t} (\mathbf{b}) \geq 2.\]
		Now, we define the sets $A_1', \ldots, A_{\ell}'$ as follows
		\[A_j' = \{a \in B : \rho_{a} (\mathbf{b}) \geq j\}\]
		for each $j \in [1, \ell]$. Let 
		\[X_j = A_j \cup A_j'\]
		for each $j \in [1, \ell]$. Then it is easy to see that the sets $X_1, \ldots, X_{\ell}$ satisfies \eqref{gen-subseq-sum-tf-inv-thm2-eq1}, \eqref{gen-subseq-sum-tf-inv-thm2-eq2}, \eqref{gen-subseq-sum-tf-inv-thm2-eq3}, \eqref{gen-subseq-sum-tf-inv-thm2-eq4}, and \eqref{gen-subseq-sum-tf-inv-thm2-eq5}.
		Since $|\Sigma^{\ell}(\mathbf{a})| =  \sum_{a \in A} \mu_{\mathbf{a}}(a) - \ell + 1$, it follows from \eqref{gen-subseq-sum-tf-inv-thm2-eq3}, \eqref{gen-subseq-sum-tf-inv-thm2-eq4} and Lemma \ref{kemp-gen-thm-tf} that
		\begin{align*}
			\sum_{a \in A} \mu_{\mathbf{a}}(a) - \ell + 1 &= |\Sigma^{\ell}(\mathbf{a})|\\
			& \geq |X_1 + \cdots + X_{\ell}|\\
			&\geq |X_1| + \cdots + |X_{\ell}| - \ell + 1\\
			& = \sum_{a \in A} \mu_{\mathbf{a}}(a) - \ell + 1,
		\end{align*}
		and so
		\[|X_1 + \cdots + X_{\ell}| = |X_1| + \cdots + |X_{\ell}| - \ell + 1.\]
		Let $r \in [2, \ell]$ be the largest integer such that $|X_1| \geq |X_2| \geq \cdots \geq |X_r| \geq 2$. Then $|X_i| = 1$ for each $[1, \ell] \setminus [1, r]$. It is easy to see that
		\[|X_1 + \cdots + X_r| =|X_1 + \cdots + X_{\ell}|,\]
		and so
		\[|X_1 + \cdots + X_r| = |X_1| + \cdots + |X_{\ell}| - \ell + 1 = |X_1| + \cdots + |X_r| - r + 1.\]
		Since $|X_1 + \cdots + X_r| = |X_1| + \cdots + |X_r| - r + 1$, it follows from Lemma \ref{brail-gen-inv-thm-tf} that for each $i \in [1, r]$, the set $X_i$ is an arithmetic progression of type $(\alpha_i, g, \beta_i)$ for some $\alpha_i, \beta_i \in G$, where $\alpha_{i + 1} = - \beta_i$ and $g \neq 1$. Since $X_1 = A$, it follows that $A$ is an arithmetic progression. This completes the proof.
	\end{proof}

One can also prove the following theorem analogous to Theorem \ref{sigma-abl-inv-thm1}.
\begin{theorem}\label{gen-subseq-sum-ab-inv-thm1}
	Let $\ell$ and $m$ be integers such that $2 \leq \ell \leq m$. Let $\mathbf{a} = (a_1, \ldots, a_m)$ be a sequence of elements from an abelian group $G$, and let $A$ be the set of distinct terms of the given sequence. Let
	\begin{equation*}
		|\Sigma^{\ell}(\mathcal{A})| <
		\begin{cases}
			p(G) - 1, & \mbox{if } \ell = 2; \\
			p(G), & \mbox{if } \ell \geq 3.
		\end{cases}
	\end{equation*}
	Let $\mu (a) = \ell$ for some $a \in A$. Furthermore, $|\{a \in A : \mu_{\mathbf{a}}(a) \geq 2\}| \geq 2$, and let
	\[|\Sigma^{\ell}(\mathbf{a})| =  \sum_{a \in A} \mu_{\mathbf{a}} (a) - \ell + 1.\]
	Then the set $A$ is an arithmetic progression.	
\end{theorem}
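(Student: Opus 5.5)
We would run the argument of Theorem \ref{productset-tf-inv-thm2} in the additive abelian setting, with Theorem \ref{devos-extension-reg-sum} replacing Lemma \ref{kemp-gen-thm-tf} and Lemmas \ref{kemp-inv-thm-abl-lem1}, \ref{kemp-inv-thm-abl-lem2} replacing Lemma \ref{brail-gen-inv-thm-tf}.

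\textbf{Reduction to singleton sets.} Put $A_i=\{a_i\}$ and $\mathcal{A}=(A_1,\ldots,A_m)$. Then $\Sigma^{\ell}(\mathbf{a})=\Sigma^{\ell}(\mathcal{A})$ and $\mu_{\mathbf{a}}=\mu_{\mathcal{A}}$. We do not use Theorem \ref{sigma-abl-inv-thm1} directly, since its hypothesis $|A_i|\ge 2$ fails here. Lemma \ref{aux-lem-dir-tf} does not need that hypothesis and holds in any group. So for $A_j'=\{a\in A:\mu_{\mathbf{a}}(a)\ge j\}$, $j\in[1,\ell]$, we get
\[A_1'+\cdots+A_\ell'\subseteq\Sigma^{\ell}(\mathbf{a}),\qquad \sum_{j=1}^{\ell}|A_j'|=\sum_{a\in A}\mu_{\mathbf{a}}(a).\]
Since some $a$ has $\mu_{\mathbf{a}}(a)=\ell$, every $A_j'$ is nonempty. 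Moreover $A_1'=A\supseteq A_2'\supseteq\cdots\supseteq A_\ell'$.

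\textbf{Equality in the chain.} Let $k$ be the largest index with $|A_k'|\ge 2$. The hypothesis $|\{a:\mu_{\mathbf{a}}(a)\ge2\}|\ge2$ gives $k\ge2$. For $j>k$ each $A_j'$ is a singleton, so the sum is a translate of $A_1'+\cdots+A_k'$. Theorem \ref{devos-extension-reg-sum} then gives
\[\sum_{a}\mu_{\mathbf{a}}(a)-\ell+1=|\Sigma^\ell(\mathbf{a})|\ \ge\ |A_1'+\cdots+A_k'|\ \ge\ \min\Bigl(p(G),\ \sum_{j\le k}|A_j'|-k+1\Bigr).\]
The right-hand term satisfies $\sum_{j\le k}|A_j'|-k+1=\sum_{a}\mu_{\mathbf{a}}(a)-\ell+1$. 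Because $|\Sigma^\ell(\mathbf{a})|<p(G)$, the minimum is this quantity, so equality holds throughout:
\[|A_1'+\cdots+A_k'|=|A_1'|+\cdots+|A_k'|-k+1 .\]

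\textbf{Applying the inverse lemmas.} We split on $k$; note $k$ and $\ell$ need not agree, so the case split must be on $k$.
\begin{itemize}
\item If $k\ge3$: the common value is $<p(G)$, so Lemma \ref{kemp-inv-thm-abl-lem2} applies. Hence $A_1'=A$ is an arithmetic progression.
\item If $k=2$ and $\ell=2$: the common value is $\le p(G)-2$, so Lemma \ref{kemp-inv-thm-abl-lem1} applies.
\item If $k=2<\ell$: the hypothesis only yields $|A_1'+A_2'|\le p(G)-1$, which does not meet the $p(G)-2$ threshold of Theorem \ref{kemp-inv-thm-abl}.
\end{itemize}

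\textbf{Main obstacle: the case $k=2<\ell$ with $|A_1'+A_2'|=p(G)-1$.} Here we would first try to exploit the nested structure $A_2'\subseteq A_1'$. Write $n=|A_1'|$ and $s=|A_2'|$, so $|A_1'+A_2'|=n+s-1=p(G)-1$. Since $|A_1'+A_2'|<p(G)$, the sumset generates no subgroup it could fill. Now compare $A_1'+A_2'$ with $A_1'+A_1'$: the latter contains the former and lies in $\Sigma^{\ell}(\mathbf{a})$ whenever $\mu\ge 2$ on the relevant elements. Failing that, we would reorganise the multiplicities so that three of the sets have size at least $2$, and then use Lemma \ref{kemp-inv-thm-abl-lem2}. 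We expect this case to be where the real work (or an extra hypothesis) lies.
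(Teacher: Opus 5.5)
\textbf{There is a genuine gap: the case $k=2<\ell$ is left open.} Your reduction via Lemma~\ref{aux-lem-dir-tf}, the equality chain, and the cases $k\ge 3$ and $k=\ell=2$ are correct. This is what the paper has in mind; it gives no proof, only ``one can also prove''. The hole you found is real, not just in your proposal. The hypothesis branches on $\ell$, but the inverse lemmas must be applied to the $k$ sets $A_1',\dots,A_k'$. For $k=2<\ell$, neither Lemma~\ref{kemp-inv-thm-abl-lem1} nor Lemma~\ref{kemp-inv-thm-abl-lem2} applies. The transfer of the proof of Theorem~\ref{productset-tf-inv-thm2} that the paper alludes to breaks at exactly this point.

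No argument using only the pair $(A_1',A_2')$ can close this case. Vosper-type pairs show that $A_2'\subseteq A_1'$ and $|A_1'+A_2'|=|A_1'|+|A_2'|-1=p(G)-1$ do not force $A_1'$ to be a progression. For example, take $A=\{0,1,4,6\}$ and $B=\{0,4,6\}$ in $\mathbb{Z}_7$: then $A+B=\mathbb{Z}_7\setminus\{2\}$, yet $A$ is not an arithmetic progression. Your first idea also fails: $A_1'+A_1'+(\ell-2)a_0\not\subseteq\Sigma^{\ell}(\mathbf a)$, because $2x$ is unavailable when $x$ occurs only once in $\mathbf a$.

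\textbf{Your second idea does close the case; here it is carried out.} Write $A=A_1'$, $B=A_2'$ and $A_3'=\dots=A_\ell'=\{a_0\}$. Then $\rho_{a_0}(\mathbf a)\ge\ell$, every $b\in B\setminus\{a_0\}$ occurs exactly twice, every $x\in A\setminus B$ occurs once, and $|\Sigma^{\ell}(\mathbf a)|=|A|+|B|-1<p(G)$. If $|A|=2$ there is nothing to prove. Otherwise fix $b\in B\setminus\{a_0\}$ and set $X_1=A\setminus\{b\}$, $X_2=B$, $X_3=\{a_0,b\}$.

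Consider any sum $x_1+x_2+x_3+(\ell-3)a_0$ with $x_i\in X_i$. In it:
\begin{itemize}
\item $a_0$ is used at most $\ell$ times;
\item $b$ is used at most twice, since $b\notin X_1$;
\item other elements of $B$ are used at most twice;
\item elements of $A\setminus B$ are used at most once.
\end{itemize}
Hence $X_1+X_2+X_3+(\ell-3)a_0\subseteq\Sigma^{\ell}(\mathbf a)$.

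Moreover $|X_1|+|X_2|+|X_3|-2=|A|+|B|-1<p(G)$, and each $X_i$ has at least two elements. So this containment and Theorem~\ref{devos-extension-reg-sum} force $|X_1+X_2+X_3|=|X_1|+|X_2|+|X_3|-2$. Lemma~\ref{kemp-inv-thm-abl-lem2}, applied to three sets with bound $<p(G)$, then makes $X_1,X_2,X_3$ arithmetic progressions with a common difference $d$.

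Consequently $b=a_0\pm d$, with $a_0\in X_1$ and $b\notin X_1$. If $a_0$ were an interior term of $X_1$, both $a_0\pm d$ would lie in $X_1$, which is impossible. So $a_0$ is an end term and $b$ extends $X_1$ by one term. Thus $A=X_1\cup\{b\}$ is an arithmetic progression.
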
	

Using the similar argument as in the proof of Theorem \ref{gen-subseq-sum-tf-inv-thm2}, one can prove the following generalization of Theorem \ref{gen-subseq-sum-ab-inv-thm1}.	
	\begin{theorem}
		Let $\ell$ and $m$ be positive integers such that $\ell \leq m - 2$. Let $\mathbf{a} = (a_1, \ldots, a_m)$ be a sequence of elements from an abelian group $G$, and let $X = \{a \in A : \mu_{\mathbf{a}}(a) \geq 2\}$. Let $|X| \geq 2$, and assume that there exists the smallest positive integer $t \geq 2$ such that
		\[\sum_{i = 1}^t \rho_{x_i}(\mathbf{a}) \geq \ell + t\]
		for some $x_1, \ldots, x_t \in X$. Let
		\begin{equation*}
			|\Sigma^{\ell}(\mathcal{A})| =  \sum_{a \in A} \mu_{\mathbf{a}}(a) - \ell + 1 <
			\begin{cases}
				p(G) - 1, & \mbox{if } \ell = 2; \\
				p(G), & \mbox{if } \ell \geq 3.
			\end{cases}
		\end{equation*}
		Then $A$ is an arithmetic progression. 
	\end{theorem}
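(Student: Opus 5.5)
We follow the proof of Theorem \ref{gen-subseq-sum-tf-inv-thm2}, replacing the torsion-free tools by their abelian counterparts with the bound $p(G)$. Write $\mathbf{a}$ additively. Let $X = \{a_1, \ldots, a_n\}$ be the set of elements with $\mu_{\mathbf{a}}(a) \geq 2$, ordered so that $\rho_{a_1}(\mathbf{a}) \geq \cdots \geq \rho_{a_n}(\mathbf{a})$. Let $t$ be the minimal index with $\sum_{i=1}^t \rho_{a_i}(\mathbf{a}) \geq \ell + t$. Following that proof, we split off $\ell$ terms $x_1, \ldots, x_\ell$ of $\mathbf{a}$, taken from the repeated elements $a_1, \ldots, a_t$, and set $A_i = \{x_i\}$. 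Let $\mathbf{b}$ be the remaining subsequence, and put $A_j' = \{a : \rho_a(\mathbf{b}) \geq j\}$ and $X_j = A_j \cup A_j'$.

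The first step is to check the combinatorial properties of these sets. We need $X_1 = A$, $\mu_{\mathbf{a}}(a) = \sum_j \chi_{X_j}(a)$, $\sum_j |X_j| = \sum_{a \in A}\mu_{\mathbf{a}}(a)$, $X_1 + \cdots + X_\ell \subseteq \Sigma^\ell(\mathbf{a})$, and $|X_1| \geq |X_2| \geq 2$. The containment holds by the same greedy distinct-index selection as in Lemma \ref{aux-lem-dir-tf}. The bound $|X_2| \geq 2$ uses that $a_t$ survives in $\mathbf{b}$ with multiplicity at least $2$, together with some other element in $X_2$; this is where the minimality of $t$ and the hypothesis $|X| \geq 2$ enter. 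These are purely combinatorial facts, valid in any group, and we would take them from the torsion-free proof as established.

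Next, let $r \geq 2$ be largest with $|X_r| \geq 2$. The sets $X_j$ with $j > r$ are singletons and only translate the sum. So
\[|X_1 + \cdots + X_r| = |X_1 + \cdots + X_\ell| \leq |\Sigma^\ell(\mathbf{a})| = \sum_{a\in A}\mu_{\mathbf{a}}(a) - \ell + 1 = \sum_{j=1}^r |X_j| - r + 1.\]
Theorem \ref{devos-extension-reg-sum} gives the reverse inequality $|X_1+\cdots+X_r| \geq \min(p(G), \sum_{j=1}^r|X_j| - r + 1)$, and the minimum is the second term because the right side above is less than $p(G)$. Hence equality holds, with value below $p(G)-1$ if $\ell = 2$ and below $p(G)$ if $\ell \geq 3$.

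Finally, apply Lemma \ref{kemp-inv-thm-abl-lem1} when the sum is at most $p(G)-2$, and Lemma \ref{kemp-inv-thm-abl-lem2} otherwise, to conclude that $X_1 = A$ is an arithmetic progression. The main obstacle is matching hypotheses. The case split in the theorem is by $\ell$, but these lemmas split by $r$, and Lemma \ref{kemp-inv-thm-abl-lem2} needs $r \geq 3$. If $r = 2$, $\ell \geq 3$ and the sum equals $p(G)-1$, Kemperman's two-set result (Theorem \ref{kemp-inv-thm-abl}) does not directly apply. To handle this, I would first try to show $r \geq 3$ whenever $\ell \geq 3$ by using $\ell \leq m-2$ and the construction to put a second repeated element in $X_3$. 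Failing that, I would apply the two-set Kemperman structure theorem in its $p(G)-1$ form, whose exceptional configurations (complements of progressions in a cyclic $p(G)$-subgroup) still force $X_1$ to be an arithmetic progression once $|X_2| \geq 2$ and the sum avoids the full coset.
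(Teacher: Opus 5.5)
Your route is the paper's own: its proof of this statement is only ``argue as in Theorem \ref{gen-subseq-sum-tf-inv-thm2}'', using Theorem \ref{devos-extension-reg-sum} and Lemmas \ref{kemp-inv-thm-abl-lem1}, \ref{kemp-inv-thm-abl-lem2} as tools. You correctly found the step that sketch passes over. The inverse lemmas are indexed by the number $r$ of non-singleton $X_j$, not by $\ell$. So the case $\ell\ge 3$, $r=2$, $|\Sigma^\ell(\mathbf a)|=p(G)-1$ is covered neither by Theorem \ref{kemp-inv-thm-abl}, which needs $\le p(G)-2$, nor by Lemma \ref{kemp-inv-thm-abl-lem2}, which needs three non-singleton sets. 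However, neither of your remedies closes this case, so there is a genuine gap.

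First, the case really occurs, and $r\ge3$ cannot be forced. Take $G=\mathbb{Z}_5$, $\ell=3$, $\mathbf a=(0,0,0,1,1,4)$. All hypotheses hold: $\Sigma^3(\mathbf a)=\{0,1,2,4\}$, $\sum_a\mu_{\mathbf a}(a)-2=4<5$, $m=6$, $|X|=2$, $t=2$. The construction gives $X_1=A=\{0,1,4\}$, $X_2=\{0,1\}$, $X_3=\{0\}$. In fact any $Y_1=A,Y_2,Y_3$ with $\sum_j\chi_{Y_j}=\mu_{\mathbf a}$ has $\{Y_2,Y_3\}=\{\{0,1\},\{0\}\}$. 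Here $|X_1+X_2|=4=p-1$. The conclusion happens to hold, but your tools do not prove it.

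Second, the two-set critical case at $p-1$ does not force progressions, even with $X_2\subseteq X_1$. In $\mathbb{Z}_7$, take $A=\{2,4,5,6\}$ and $X_2=\{2,4,5\}$. Then $A+X_2=\mathbb{Z}_7\setminus\{5\}$, so $|A+X_2|=|A|+|X_2|-1=p-1$. Yet $A$ is not an arithmetic progression, since its complement $\{0,1,3\}$ is not.

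What closes the case is the freedom to redistribute, which sequences allow. If $Y_1,\dots,Y_\ell\subseteq A$ satisfy $\sum_j\chi_{Y_j}(a)=\mu_{\mathbf a}(a)$ for all $a$, then $Y_1+\cdots+Y_\ell\subseteq\Sigma^\ell(\mathbf a)$. This holds because each $a$ is chosen at most $\mu_{\mathbf a}(a)\le\rho_a(\mathbf a)$ times. Put $N=\sum_{a\in A}(\mu_{\mathbf a}(a)-1)=\sum_{j\ge2}|X_j|$. Your construction gives $N\ge\ell$, with equality iff $|X_2|=2$ and $X_3,\dots,X_\ell$ are singletons. So in the bad case, either $|X_2|\ge3$ (hence $N\ge\ell+1$) or $|X_2|=2$.

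\emph{Case $N\ge\ell+1$.} Keep $Y_1=A$. List the extra copies with equal elements consecutive, each at most $\ell-1$ times, and deal them cyclically into $Y_2,\dots,Y_\ell$. Then $Y_2$ and $Y_3$ each get two distinct elements, and every $Y_j$ is nonempty. Theorem \ref{devos-extension-reg-sum} and Lemma \ref{kemp-inv-thm-abl-lem2}, now applied to at least three non-singleton sets, show that $A=Y_1$ is a progression.

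\emph{Case $X_2=\{u,v\}$.} Then $|A\cup(A+v-u)|=|A+X_2|=|A|+1<p(G)\le\mathrm{ord}(v-u)$. Split $A$ into maximal $(v-u)$-strings; none is a whole coset of $\langle v-u\rangle$, since $|A|<\mathrm{ord}(v-u)$. Each string contributes exactly one element of $(A+v-u)\setminus A$, so there is only one string. Hence $A$ is a progression with difference $v-u$, and no Kemperman-type theorem is needed.
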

 	
% ------------------------------------------------------------------------

%\subsection*{Acknowledgment}
%Many thanks to our \TeX-pert for developing this class file.

% ------------------------------------------------------------------------

\appendix 

\section{Best possibility of lower bound in Theorem \ref{productset-tf-dir-thm}}\label{best-lower-bound}
Here we show that the lower bound in \eqref{productset-tf-dir-thm-eq1} is best possible. For this purpose, we construct the sequence $\mathcal{A} = (A_1, \ldots, A_m)$, where $A_1, \ldots, A_m$ are subsets of a cyclic subgroup $H$ of $G$ generated by a nonidentity element $g$. Since $G$ is a torsion-free group, it follows that $H$ is isomorphic to $\mathbb{Z}$. Therefore, it suffices to construct the sets $A_1, \ldots, A_m$ which are subsets of $\mathbb{Z}$, and which satisfy the following identity:
\begin{equation}\label{productset-tf-dir-thm-eq2}
	|\Sigma^{\ell}(\mathcal{A})| = \sum_{a \in A} \mu (a) - \ell + 1. 
\end{equation}
	
If $m = \ell$, then the identity \eqref{productset-tf-dir-thm-eq2} holds for the arithmetic progressions $A_1, \ldots, A_m$ which follows from the following special case of Lemma \ref{kemp-gen-thm-tf} and Lemma \ref{brail-gen-inv-thm-tf}. 
\begin{theorem}[\cite{nath, nath1}] \label{regular-hfold-direct-thm}
	Let $\ell \geq 2$. Let $A_1, \ldots, A_{\ell}$ be $\ell$ nonempty finite sets of integers. Then
\begin{equation}\label{regular-hfold-direct-thm-eq}
 |A_1 + \cdots + A_{\ell}| \geq |A_1| + \cdots + |A_{\ell}| - \ell + 1.
\end{equation}
The equality in \eqref{regular-hfold-direct-thm-eq} holds if and only if the sets $A_1, \ldots, A_{\ell}$ are arithmetic progressions with the same common difference.
\end{theorem}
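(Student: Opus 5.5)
The statement at the end is Theorem \ref{regular-hfold-direct-thm}: the lower bound $|A_1+\cdots+A_\ell|\ge \sum|A_i|-\ell+1$ for finite sets of integers, with equality if and only if the $A_i$ are arithmetic progressions with a common difference.

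For the lower bound I will use the standard ordering argument in $\mathbb{Z}$ and induct on $\ell$. For $\ell=2$, write $A=\{a_1<\cdots<a_r\}$ and $B=\{b_1<\cdots<b_s\}$. The chain
\[a_1+b_1<a_2+b_1<\cdots<a_r+b_1<a_r+b_2<\cdots<a_r+b_s\]
consists of $r+s-1$ distinct elements of $A+B$. For $\ell\ge 3$, I apply this to $(A_1+\cdots+A_{\ell-1})+A_\ell$ together with the induction hypothesis; this is exactly the telescoping already done in the proof of Lemma \ref{brail-gen-inv-thm-tf}. Alternatively, the bound is simply Lemma \ref{kemp-gen-thm-tf} applied in the torsion-free group $\mathbb{Z}$.

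For the equality case, the ``if'' direction is direct. If $A_i=\{c_i+jd : 0\le j<n_i\}$, then the sum is $\{\sum c_i+jd : 0\le j\le \sum n_i-\ell\}$, which has size $\sum n_i-\ell+1$.

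For ``only if'', I first reduce to $\ell=2$, as in Lemma \ref{brail-gen-inv-thm-tf}. Equality for $\ell$ sets forces equality for $A_1+\cdots+A_{\ell-1}$ and for $(A_1+\cdots+A_{\ell-1})+A_\ell$. Sets of size $1$ are translates and can be discarded, so the progression claim only matters for $|A_i|\ge 2$, and I assume that.

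The key step is the two-set inverse statement: $|A+B|=|A|+|B|-1$ with $|A|,|B|\ge2$ forces $A$ and $B$ to be arithmetic progressions with the same difference. This is the special case of Lemma \ref{aux-lem-inv1} in the abelian group $\mathbb{Z}$, where type $(a,g,b)$ just means an arithmetic progression with difference $g$. For a self-contained argument I would compare a second chain,
\[a_1+b_1<a_1+b_2<a_2+b_2<a_2+b_3<\cdots,\]
where the indices are advanced alternately. Both chains are increasing and of full length $r+s-1$, so at equality each must list all of $A+B$ in order. Matching the two chains term by term gives $a_{i+1}-a_i=b_{j+1}-b_j$ for all admissible $i,j$. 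All consecutive gaps are then equal to one $d$.

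The main obstacle is propagating the common difference through the induction. By induction, $A_1,\ldots,A_{\ell-1}$ are progressions with difference $d$, so $S=A_1+\cdots+A_{\ell-1}$ is a progression with difference $d$ and $|S|\ge2$. The two-set result then says $S$ and $A_\ell$ are progressions with a common difference $d'$. A set of at least two integers has a unique positive common difference as a progression, so $d'=\pm d$, and therefore $A_\ell$ has difference $d$ as well.
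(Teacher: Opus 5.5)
Your main line matches how the paper treats this statement. The paper gives no separate proof of Theorem \ref{regular-hfold-direct-thm}. It cites Nathanson and calls it the case $G=\mathbb{Z}$ of Lemma \ref{kemp-gen-thm-tf} and Lemma \ref{brail-gen-inv-thm-tf}, whose two-set step is Lemma \ref{aux-lem-inv1}. Your lower bound, your telescoping reduction, and your propagation of the difference (unique up to sign) are all fine, so the proof is complete through that citation. Two things you claim along the way are wrong, though.

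\textbf{The self-contained two-set argument fails as stated.} Comparing only the L-shaped chain with the alternating staircase does not give $a_{i+1}-a_i=b_{j+1}-b_j$ for all $i,j$. If $r<s$, the staircase reaches $a_r+b_r$ at position $2r-1$. From then on both chains read $a_r+b_r<a_r+b_{r+1}<\cdots<a_r+b_s$, so the gaps $b_{j+1}-b_j$ with $j\ge r$ never enter any equation. Concretely, for $A=\{0,1\}$ and $B=\{0,1,5\}$ both chains are $0,1,2,6$ term by term, yet $B$ is not a progression; both chains miss $5=a_1+b_3$.

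The repair uses more paths. At equality, \emph{every} monotone lattice path from $(1,1)$ to $(r,s)$ lists $A+B$ in increasing order. For each $i\in[1,r-1]$ and $j\in[1,s-1]$, compare two such paths that coincide except that one passes through $a_{i+1}+b_j$ and the other through $a_i+b_{j+1}$. This gives $a_{i+1}+b_j=a_i+b_{j+1}$ for all such $i,j$, hence one common gap $d$. This is the standard argument in Nathanson's book.

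\textbf{The singleton reduction hides a counterexample to the statement as printed.} Your remark that ``sets of size $1$ are translates and can be discarded'' is not a harmless reduction. Take $\ell=2$, $A_1=\{0\}$ and $A_2=\{0,1,3\}$. Then $|A_1+A_2|=3=|A_1|+|A_2|-1$, but $A_2$ is not an arithmetic progression. Discarding singletons is only legitimate when at least two sets of size $\ge 2$ remain, with singletons regarded as progressions of any difference.

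So the ``only if'' direction genuinely needs an extra hypothesis. You need $|A_i|\ge 2$ for all $i$, as in Lemma \ref{brail-gen-inv-thm-tf}, or at least that two of the sets have two or more elements. You should state this as a correction to the theorem, not as a without-loss-of-generality step.
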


Now assume that $2 \leq \ell < m$. We construct the sets $A_1, \ldots, A_m$ in each of the following cases:
\begin{enumerate}
  \item $\sum_{j = 1}^m \chi_{A_j}(a) \leq \ell~\text{for all}~ a \in A$.
  \item $\sum_{j = 1}^m \chi_{A_j}(a) \leq \ell~\text{for some}~ a \in A$, where $2 \leq \ell < m \leq 2 \ell$.
  \item $\sum_{j = 1}^m \chi_{A_j}(a) \leq \ell~\text{for some}~ a \in A$, where $m > 2 \ell$.
\end{enumerate}	
	
\noindent {\textbf{Construction 1.}}\label{case:1} Here we construct the sequence $\mathcal{A} = (A_1, \ldots, A_m)$ satisfying the lower bound in \eqref{productset-tf-dir-thm-eq1} and satisfying the following condition:
\[\sum_{j = 1}^m \chi_{A_j}(a) \leq \ell~\text{for all}~ a \in A.\]

Let $k_i = 0$ for $i \leq 0$, and let $k_1, \ldots, k_m$ be positive integers such that $k_1 \leq \cdots \leq k_m$. Let $n$ be a positive integer such that $n \ell \geq m$. Now for each $j \in [1, m]$, define
\[A_j = [k_{j - \ell} + \cdots + k_{j - n \ell} + 1, k_j + k_{j - \ell} + \cdots + k_{j - n \ell}].\]
Then $A_j = [1, k_j]$ for each $j \in [1, \ell]$, $|A_j| = k_j$ for each $j \in [1, m]$, and $\max(A_j) \leq \max(A_{j + 1})$ for each $j \in [1, m-1]$. Furthermore, it is easy to verify that
\[k_1 \leq \cdots \leq k_{\ell}  \leq \max(A_{\ell + 1}) \leq \cdots \leq \max(A_m),\]
and
\begin{equation}\label{productset-tf-dir-thm-eq3}
\max(A_{j - \ell}) = k_{j - \ell} + \cdots + k_{j - n \ell} < k_{j - \ell} + \cdots + k_{j - n \ell} + 1 = \min(A_j)
\end{equation}
for each $j \in [1, m]$.

Let $A_0$ be the empty set. Let $a \in A$ be an arbitrary element, and let $i_0 \in [1, m]$ be least integer such that $a \in A_{i_0}$. Then it follows from \eqref{productset-tf-dir-thm-eq3} that $a \in A_{i_0} \cup \cdots \cup A_{i_0 + \ell - 1}$ but $a \not\in A_{i_0 + 1} \cup \cdots \cup A_m$. Hence 
\[\sum_{j = 1}^m \chi_{A_j}(a) = \sum_{j = i_0}^{i_0 + \ell - 1} \chi_{A_j}(a)\leq \ell\]
which implies that $\mu(a) \leq \ell$.
 
It is easy to see that $\min (\Sigma^{\ell}(\mathcal{A})) = \ell$ and $\max (\Sigma^{\ell}(\mathcal{A})) = k_1 + \cdots + k_m$, and so
\[\Sigma^{\ell}(\mathcal{A}) \subseteq [\ell, k_1 + \cdots + k_m].\]
This implies that
\begin{equation}\label{productset-tf-dir-thm-eq4}
  |\Sigma^{\ell}(\mathcal{A})| \leq k_1 + \cdots + k_m - \ell + 1 =  \sum_{a \in A} \mu (a) - \ell + 1.
\end{equation}
It follows from \eqref{productset-tf-dir-thm-eq1} that
\begin{equation}\label{productset-tf-dir-thm-eq5}
	|\Sigma^{\ell}(\mathcal{A})| \geq  \sum_{a \in A} \mu (a) - \ell + 1. 
\end{equation}
Therefore, it follows from \eqref{productset-tf-dir-thm-eq4} and \eqref{productset-tf-dir-thm-eq5} that
\[|\Sigma^{\ell}(\mathcal{A})| =  \sum_{a \in A} \mu (a) - \ell + 1.\]	

\noindent {\textbf{Construction 2.}}\label{case:2} Here we construct the sequence $\mathcal{A} = (A_1, \ldots, A_m)$ satisfying the lower bound in \eqref{productset-tf-dir-thm-eq1} and satisfying the following condition:
\[\sum_{j = 1}^m \chi_{A_j}(a) > \ell~\text{for some}~ a \in A,\]
where $\ell < m \leq 2 \ell$.
 
Let $k_i = 0$ for $i \leq 0$, and let $k_1, \ldots, k_m$ be positive integers such that $2 \leq k_1 \leq \cdots \leq k_m$. Let $n_1, \ldots, n_{m - \ell}$ be positive integers such that $1 \leq n_j \leq k_j - 1$ for each $j \in [1, m - \ell]$ and $n_1 \geq n_2 \geq \cdots \geq n_{m - \ell}$ . Furthermore, for each $i \leq 0$, let $n_i = 0$. For each $j \in [1, m]$, define
\[A_j = [k_{j - \ell} - n_{j - \ell} + 1, k_j + k_{j - \ell} - n_{j - \ell}].\]
It is obvious that $A_i = [1, k_i]$ for each $i \in [1, \ell]$. Since $k_1 \leq \cdots \leq k_{\ell}$ and $k_1 \in A_{\ell + 1} = [k_1 - n_1 + 1, k_{\ell + 1} + k_1 - n_1]$, it follows that
\[\sum_{j = 1}^m \chi_{A_j}(k_1) \geq \sum_{j = 1}^{\ell + 1} \chi_{A_j}(k_1) = \ell + 1 > \ell.\]
It is easy to verify that
\[\min(\Sigma^{\ell}(\mathcal{A})) = \ell,\]
and
\[\max(\Sigma^{\ell}(\mathcal{A})) = k_1 + \cdots + k_m - n_1 - \cdots - n_{m - \ell}.\]
Hence
\[\Sigma^{\ell}(\mathcal{A}) \subseteq [\ell, k_1 + \cdots + k_m - n_1 - \cdots - n_{m - \ell}],\]
and so
\begin{equation}\label{productset-tf-dir-thm-eq6}
	|\Sigma^{\ell}(\mathcal{A})| \leq k_1 + \cdots + k_m - n_1 - \cdots - n_{m - \ell} - \ell + 1.
\end{equation}
Now for each $j \in [\ell + 1, m]$, define
\[B_j = [k_{j - \ell} + 1, k_j + k_{j - \ell} - n_{j - \ell}].\]
Let $\mathcal{B} =(A_1, \ldots, A_{\ell}, B_{\ell + 1}, \ldots, B_m)$ and $B = A_1 \cup \cdots \cup A_{\ell} \cup B_{\ell + 1} \cup \cdots \cup B_m$. Then it is easy to verify that $A = B$, $\Sigma^{\ell}(\mathcal{B}) \subseteq \Sigma^{\ell}(\mathcal{A})$, and
\[\sum_{j = 1}^{\ell} \chi_{A_j}(a) + \sum_{j = \ell + 1}^m \chi_{B_j}(a) \leq \ell\]
for all $ a \in B$. Therefore, with respect to the sequence $\mathcal{B}$, we have
\begin{align*}
	\mu_{\mathcal{B}} (a) = \min \bigg (\ell, \sum_{j = 1}^{\ell} \chi_{A_j}(a) + \sum_{j = \ell + 1}^m \chi_{B_j}(a) \bigg) = \sum_{j = 1}^{\ell} \chi_{A_j}(a) + \sum_{j = \ell + 1}^m \chi_{B_j}(a),
\end{align*}
and so
\begin{align*}
	\sum_{a \in B} \mu_{\mathcal{B}} (a) = \sum_{j = 1}^{\ell} |A_j| + \sum_{j = \ell + 1}^m |B_j| = k_1 + \cdots + k_m - n_1 - \cdots - n_{m - \ell}.
\end{align*}

Therefore, 
\begin{align}\label{productset-tf-dir-thm-eq7}
	|\Sigma^{\ell}(\mathcal{A})| \geq |\Sigma^{\ell}(\mathcal{B})| &\geq \sum_{a \in B}  \mu_{\mathcal{B}} (a) - \ell + 1 \notag \\
& = k_1 + \cdots + k_m - n_1 - \cdots - n_{m - \ell} - \ell + 1.
\end{align}
Therefore, it follows from \eqref{productset-tf-dir-thm-eq6} and \eqref{productset-tf-dir-thm-eq7} that
\[|\Sigma^{\ell}(\mathcal{A})| = k_1 + \cdots + k_m - n_1 - \cdots - n_{m - \ell} - \ell + 1.\]	

Also, it can be verified that with respect to the sequence $\mathcal{A}$, we have
\[\sum_{a \in A} \mu_{\mathcal{A}} (a) = k_1 + \cdots + k_m - n_1 - \cdots - n_{m - \ell}.\]
and so
\[|\Sigma^{\ell}(\mathcal{A})| = \sum_{a \in A} \mu_{\mathcal{A}} (a) - \ell + 1.\]

\noindent {\textbf{Construction 3.}}\label{case:3} Here we construct the sequence $\mathcal{A} = (A_1, \ldots, A_m)$ satisfying the lower bound in \eqref{productset-tf-dir-thm-eq1} and satisfying the following condition:
\[\sum_{j = 1}^m \chi_{A_j}(a) > \ell~\text{for some}~ a \in A,\]
where $m > 2 \ell$.
 
Let $k_i = 0$ for $i \leq 0$, and let $k_1, \ldots, k_m$ be integers such that $2 \leq k_1 \leq \cdots \leq k_m$. Let $n_1, \ldots, n_{\ell}$ be positive integers such that $1 \leq n_i \leq k_i - 1$ for each $i \in [1, \ell]$, and $n_1 \geq n_2 \geq \cdots \geq n_{\ell}$. For each $j \in [1, \ell]$, define
\[A_j = [1, k_j],\]
and for $j > \ell$, we define
\[A_j = [k_{j - \ell} + \cdots + k_{j - n \ell} - n_{j - n \ell} + 1, k_j + k_{j - \ell} + \cdots + k_{j - n \ell} - n_{j - n \ell}],\]
where $n = \lfloor j/\ell \rfloor$. It is easy to see that
\[\max(A_{i - 1}) \leq \max(A_{i})\]
for $i = 2, \ldots, m$, and
\[\max(A_{j - \ell}) < \min(A_j)\]
for $j >2\ell$. Also it can be verified that
\[\sum_{j = 1}^m \chi_{A_j}(k_1) \geq \ell + 1 > \ell,\]
and
\[\Sigma^{\ell}(\mathcal{A}) \subseteq [\ell , k_1 + \cdots + k_m - n_1 - \cdots - n_{\ell}],\]
and so
\begin{equation}\label{productset-tf-dir-thm-eq8}
	|\Sigma^{\ell}(\mathcal{A})| \leq k_1 + \cdots + k_m - n_1 - \cdots - n_{\ell} -\ell + 1.
\end{equation}
Now, for each $j \in [\ell + 1, 2 \ell]$, define
\[B_j = [k_{j - \ell} + 1, k_j + k_{j - \ell} - n_j]\]
Let 
\[\mathcal{B} = (A_1, \ldots, A_{\ell}, B_{\ell + 1}, \ldots, B_{2 \ell}, A_{2 \ell + 1}, \ldots, A_m)\]
and 
\[B = A_1 \cup \cdots \cup A_{\ell} \cup B_{\ell + 1} \cup \ldots \cup B_{2 \ell} \cup A_{2 \ell + 1} \cdots \cup A_m.\]
Then it is easy to verify that $A = B$, $\Sigma^{\ell}(\mathcal{B}) \subseteq \Sigma^{\ell}(\mathcal{A})$, and
\[\sum_{j = 1}^{\ell} \chi_{A_j}(a) + \sum_{j = \ell + 1}^{2 \ell} \chi_{B_j}(a) + \sum_{j = 2 \ell + 1}^m \chi_{A_j}(a)\leq \ell\]
for all $a \in B$. Therefore,
\[\sum_{a \in B} \mu_{\mathcal{B}} (a) = \sum_{j = 1}^{\ell} |A_i| + \sum_{j = \ell + 1}^{2 \ell} |B_i| + \sum_{j = 2 \ell + 1}^m |A_i|
= k_1 + \cdots + k_m - n_1 - \cdots - n_{\ell},\]
and so
\begin{equation}\label{productset-tf-dir-thm-eq9}
	|\Sigma^{\ell}(\mathcal{A})| \geq |\Sigma^{\ell}(\mathcal{B})| \geq  \sum_{a \in A} \mu_{\mathcal{B}} (a) - \ell + 1. k_1 + \cdots + k_m - n_1 - \cdots - n_{\ell} - \ell + 1.
\end{equation}
Therefore, it follows from \eqref{productset-tf-dir-thm-eq8} and \eqref{productset-tf-dir-thm-eq9} that
\[|\Sigma^{\ell}(\mathcal{A})| = k_1 + \cdots + k_m - n_1 - \cdots - n_{\ell} - \ell + 1.\]
Also, it can be verified that with respect to the sequence $\mathcal{A}$, we have
\[\sum_{a \in A} \mu_{\mathcal{A}} (a) = k_1 + \cdots + k_m - n_1 - \cdots - n_{m - \ell}.\]
and so
\[|\Sigma^{\ell}(\mathcal{A})| = \sum_{a \in A} \mu_{\mathcal{A}} (a) - \ell + 1.\]	

Thus we have constructed the sequence $\mathcal{A}$ in all possible cases which gives the lower bound in \eqref{productset-tf-dir-thm-eq1}.

%%%%%%%%%%%%%%%%%%%%%%%%%%%%%%%%%%

\end{document}